\numberwithin{equation}{section}
\theoremstyle{plain}
\newtheorem{thm}[equation]{Theorem}
\newtheorem{cor}[equation]{Corollary}
\newtheorem{lem}[equation]{Lemma}
\newtheorem{prop}[equation]{Proposition}
\newtheorem{conj}[equation]{Conjecture}
\theoremstyle{definition}
\newtheorem{defn}[equation]{Definition}
\theoremstyle{remark}
\newtheorem{rem}[equation]{Remark}
\newtheorem{ex}[equation]{Example}
\DeclareRobustCommand\C{\mathbb{C}}
\newcommand{\cc}{\mathcal{C}}
\renewcommand{\S}{\mathcal{S}}
\newcommand{\T}{\mathbb{T}}
\newcommand{\Z}{\mathbb{Z}}
\newcommand{\F}{\mathbb{F}}
\newcommand{\tr}{^\mathrm{T}}
\newcommand{\g}{\mathfrak{g}}
\newcommand{\inv}{^{-1}}
\newcommand{\diag}{\mathrm{diag}}
\newcommand{\xx}{\mathbf{x}}
\newcommand{\slt}{\mathfrak{sl}_2}
\newcommand{\cqg}{\C_q[G]}
\newcommand{\prim}{\mathrm{Prim}\,}
\newcommand{\rr}{\mathrm{r}}
\newcommand{\supp}{\mathrm{Supp}}
\newcommand{\uqg}{U_q(\mathfrak{g})}
\newcommand{\spec}{\mathrm{Spec}\,}
\newcommand{\ww}{w_1,w_2}
\newcommand{\trs}{\mathcal{T}}
\newcommand{\I}{\mathcal{I}}
\newcommand{\ii}{\mathbf{i}}
\newcommand{\Span}{\mathrm{Span}}
\newcommand{\wtw}{W\times W}
\newcommand{\cqt}{\C_q[SL_2]}
\newcommand{\N}{\mathbb{Z}_{\ge0}}
\newcommand{\cqb}[1]{\mathbb{C}_{q_{#1}}[G_{#1}]}
\newcommand{\cqbp}[1]{\mathbb{C}_{q_{#1}}[B^+_{#1}]}
\newcommand{\cqbm}[1]{\mathbb{C}_{q_{#1}}[B^-_{#1}]}
\newcommand{\iim}{(i_1,i_2,\cdots,i_m)}
\newcommand{\np}[2]{c_{#1,#2}^+}
\newcommand{\nm}[2]{c_{#1,#2}^-}
\newcommand{\ee}{\mathbf{e}}
\newcommand{\sgn}{\mathrm{sgn}}
\newcommand{\tm}{\tilde{{M}}}
\newcommand{\tpi}{\tilde{\pi}}
\renewcommand{\aa}{\mathbf{a}}
\newcommand{\lqh}{L_q[H]}
\newcommand{\lqt}[1]{L_{q_{#1}}(2)}
\newcommand{\muu}{\boldsymbol{\mu}}
\newcommand{\pp}{\mathcal{P}}
\newcommand{\tildeflag}[1]{#1}
\newcommand{\ti}{\tildeflag{i}}
\newcommand{\tii}{\tildeflag{\ii}}
\newcommand{\tiii}[1]{(\ti_1,\ti_2,\cdots,\ti_{#1})}
\newcommand{\tiim}{(\ti_1,\ti_2,\cdots,\ti_m)}
\newcommand{\nn}{\mathbf{n}}
\newcommand{\nnm}{(n_1,n_2,\cdots,n_m)}
\newcommand{\zz}{\mathcal{Z}}
\renewcommand{\wp}[1]{w_{>#1}^+}
\newcommand{\wm}[1]{w_{>#1}^-}
\newcommand{\nl}[2]{c_{#1,#2}^l}
\newcommand{\nr}[2]{c_{#1,#2}^r}
\newcommand{\bb}{\mathbf{b}}
\newcommand{\nuu}{\boldsymbol{\nu}}
\renewcommand{\th}{\tilde{H}}
\newcommand{\hh}{\hat{H}}
\newcommand{\ta}{\tilde{A}}
\newcommand{\tb}{\tilde{B}}
\newcommand{\tc}{\tilde{C}}
\newcommand{\tp}{\tilde{P}}
\newcommand{\jj}{\mathbf{j}}
\newcommand{\te}{\tilde{e}}
\newcommand{\tee}{\tilde{\mathbf{e}}}
\newcommand{\mm}{\mathbf{m}}
\newcommand{\irr}{\mathrm{Irr}\,}
\newcommand{\tirr}{\widetilde{\mathrm{Irr}}\,}
\newcommand{\omegaa}{\boldsymbol{\omega}}
\newcommand{\tPhi}{\tilde{\Phi}}
\newcommand{\tOmega}{\tilde{{\Omega}}}
\newcommand{\id}{\mathrm{Id}}
\renewcommand{\tt}{\mathbf{t}}
\newcommand{\ds}{\mathbf{s}}
\newcommand{\rrr}{\mathbf{r}}
\newcommand{\uu}{\mathbf{u}}
\newcommand{\epss}{\boldsymbol{\varepsilon}}
\newcommand{\ttii}{\trs_{\tii}}
\newcommand{\ttiit}{\trs_{\tii}^\times}
\newcommand{\tmii}{\tm_{\tii}}
\newcommand{\ttiz}{\trs_{\tii}^0}
\newcommand{\ttizt}{(\trs_{\tii}^0)^\times}
\newcommand{\cM}{\mathcal{M}}
\newcommand{\taa}{\tilde{\mathbf{a}}}
\newcommand{\tla}{\tilde{a}}
\newcommand{\ccc}{\mathbf{c}}
\newcommand{\lqi}{L_q(\tii)}
\newcommand{\specm}{\mathrm{Specm}\,}
\newcommand{\zew}{\Z_{E_{\ww}^{\pm1}}}
\newcommand{\ppp}{\mathbf{p}}
\newcommand{\tpp}{\tilde{\mathbf{p}}}
\newcommand{\boxx}[2]{
	\ifnum #2>0
		\draw (#1-1,#2-1)--(#1,#2) (#1-1,#2-1)--(#1,#2-1) (#1-1,#2)--(#1,#2);
		\node at (#1-1,#2-1) [circle,fill,inner sep=1pt] {};
		\node at (#1,#2) [circle,fill,inner sep=1pt] {};
	\else
		\draw (#1-1,-#2)--(#1,-#2-1) (#1-1,-#2-1)--(#1,-#2-1) (#1-1,-#2)--(#1,-#2);
		\node at (#1,-#2-1) [circle,fill,inner sep=1pt] {};
		\node at (#1-1,-#2) [circle,fill,inner sep=1pt] {};
	\fi
	%\draw[thick,dashed] (#1-1,#2-1)--(#1-1,#2) (#1,#2-1)--(#1,#2);
}
\newcommand{\boxl}[2]{
	\boxx{#1}{#2}
	\ifnum #2>0
		\node at (#1-0.5,#2-0.85) {$\scriptstyle x$};
		\node at (#1-0.5,#2+0.15) {$\scriptstyle x^{\scaleto{-1}{3pt}}$};
		\node at (#1-0.5,#2-0.35) {$\scriptstyle y$};
	\else
		\node at (#1-0.5,-#2-0.85) {$\scriptstyle x$};
		\node at (#1-0.5,-#2+0.15) {$\scriptstyle x^{\scaleto{-1}{3pt}}$};
		\node at (#1-0.5,-#2-0.35) {$\scriptstyle y$};
	\fi
}
\newcommand{\boxs}[1]{
	\setcounter{cnt}{0}
	\foreach \s in #1 {
		\stepcounter{cnt}
		\boxx{\value{cnt}}{\s}
	}
}
\newcommand{\boxsl}[1]{
	\setcounter{cnt}{0}
	\foreach \s in #1 {
		\stepcounter{cnt}
		\boxl{\value{cnt}}{\s}
	}
}
\newcommand{\lines}[2]{
	\foreach \s in {1,...,#1} {
		\draw (0,\s-1)--(#2,\s-1);
		\foreach \ss in {0,...,#2} {
			\node at (\ss,\s-1) [circle,fill,inner sep=1pt] {};
		}
		\node at (-0.2,\s-1) {$\s$};
		\node at (#2+0.2,\s-1) {$\s$};
	}
}
\newcommand{\linesl}[2]{
	\setcounter{cnt}{0}
	\foreach \s in #2 {
		\stepcounter{cnt}
		\setcounter{cnt1}{-1}
		\foreach \ss in {1,...,#1} {
			\ifnum \ss<\s
				\node at (\value{cnt}-0.5,\ss-0.85) {$\scriptstyle 1$};
			\else
				\ifnum \ss<-\s
					\node at (\value{cnt}-0.5,\ss-0.85) {$\scriptstyle 1$};
				\fi
			\fi
			\stepcounter{cnt1}
			\ifnum \value{cnt1}>\s
				\ifnum \value{cnt1}>-\s
					\node at (\value{cnt}-0.5,\ss-0.85) {$\scriptstyle 1$};
				\fi
			\fi
		}
	}
}
\newcommand{\boxdiagram}[2]{
	\boxs{#2}
	\lines{#1}{\value{cnt}}
}
\newcommand{\boxdiagraml}[2]{
	\boxsl{#2}
	\lines{#1}{\value{cnt}}
	\linesl{#1}{#2}
}
\newcommand{\drawpath}[1]{
	\setcounter{cnt1}{0}
	\foreach \s in #1 {
		\stepcounter{cnt1}
		\setcounter{cnt2}{-1}
		\foreach \ss in #1 {
			\stepcounter{cnt2}
			\ifnum \value{cnt1}=\value{cnt2}
				\draw[red,ultra thick,->] (\value{cnt1}-1,\s-1)--(\value{cnt2},\ss-1);
			\fi
		}
	}
}
\newcounter{cnt}
\newcounter{cnt1}
\newcounter{cnt2}
\begin{document}

	\title{Representations of Quantum Coordinate Algebras at Generic $q$ and Wiring Diagrams}

	\author[He Zhang]{He Zhang}
	\address{Department of Mathematical Sciences, Tsinghua University, Beijing,  China}
	\email{he-zhang17@mails.tsinghua.edu.cn}
	\author[Hechun Zhang]{Hechun Zhang}
	\thanks{Hechun Zhang was partially supported by National Natural Science Foundation of China grants No. 12031007 and No. 11971255}
	\address{Department of Mathematical Sciences, Tsinghua University, Beijing,  China}
	\email{hczhang@tsinghua.edu.cn}
	\author[R. B. Zhang]{Ruibin Zhang}
	\thanks{Ruibin Zhang was partially supported by Australian Research Council grant DP170104318.}
	\address{School of Mathematics and Statistics,
	The University of Sydney, Sydney, NSW 2006,  Australia}
	\email{ruibin.zhang@sydney.edu.au}
	
	\begin{abstract}
		This paper is devoted to the representation theory of quantum coordinate algebra $\cqg$, for a semisimple Lie group $G$ and a generic parameter $q$. By inspecting the actions of normal elements on tensor modules, we generalize a result of Levendorski\u{\i} and Soibelman in \cite{LS} for highest weight modules. For a double Bruhat cell $G^{\ww}$, we describe the primitive spectra $\prim\cqg_{\ww}$ in a new fashion, and construct a bundle of $(\ww)$ type simple modules onto $\prim\cqg_{\ww}$, provided $\supp(w_1)\cap\supp(w_2)=\varnothing$ or enough pivot elements. The fibers of the bundle are shown to be products of the spectrums of simple modules of 2-dimensional quantum torus $L_q(2)$. As an application of our theory, we deduce an equivalent condition for the tensor module to be simple, and construct some simple modules for each primitive ideal when $G=SL_3(\C)$. This completes the Dixmier's program for $\C_q[SL_3]$. The wiring diagrams, introduced by Fomin and Zelevinsky in their study of total positivity (cf. \cite{BFZ,FZ}), is the main tool to compute the action of generalized quantum minors on tensor modules in the type A case. We obtain a quantum version of Lindstr\"{o}m's lemma, which plays an important role in transforming representation problems into combinatorial ones of wiring diagrams.
	\end{abstract}

	\maketitle
	\tableofcontents
	
	\section{Introduction}

	Let $G$ be a finite dimensional semisimple Lie group over $\C$ with Lie algebra $\g$. The quantized universal enveloping algebra $\uqg$ and the \emph{quantum coordinate algebra} $\cqg$ are dual Hopf algebras in some appropriate sense. While the representation theory of $\uqg$ is in many aspects standard, the representation theory of $\cqg$ is to a large extent unknown. However, the latter one is more suitable for the consideration of deformation of $G$, following the general philosophy of studying a geometric object through the sheaf of functions on it. Hence, the representation theory of $\cqg$ is worth studying in more detail (c.f. \cite{Jo,LS,DP,Zh}).

	Throughout the paper, we assume the parameter $q\in\C^*$ is generic, i.e. $q$ is not a root of 1, unless otherwise stated.

	The initial goal of studying the representation theory of $\cqg$ is to classify the simple $\cqg$-modules, which unfortunately is unattainable (as for most algebras, c.f. \cite{Bl}). We instead follow Dixmier's proposal to classify the primitive ideals (the annihilators of simple modules), and construct at least one simple module for each primitive ideal.

	The first step of the Dixmier's program for $\cqg$ was achieved by Joseph \cite{Jo,Jo1}, who proved that the primitive ideals of $\cqg$ are in bijection with the symplectic leaves of $G$ as Poisson-Lie group (also see \cite{HLT} for a generalization of Joseph's results to the multi-parameters quantum groups). Furthermore, Yakimov \cite{Ya} proved that the bijection is actually equivariant with respect to some torus action. Goodearl and Letzter \cite{GL} proved $\cqg$ satisfies the \emph{Dixmier-Moeglin equivalence}.

	Let $W$ be the Weyl group of $G$. According to Joseph (Theorem \ref{thm-stratification}), the prime spectrum of $\cqg$ has the decomposition
    \[\spec\cqg=\bigsqcup_{(\ww)\in \wtw}\spec\cqg_{\ww},\]
	where $\cqg_{\ww}$ is the quantum coordinate algebra corresponding to the double Bruhat cell $G^{\ww}=B^+w_1B^+\cap B^-w_2B^-$. Furthermore, $\spec\cqg_{\ww}$ is homeomorphic to the prime spectrum of its center $Z_{\ww}=Z(\cqg_{\ww})$, and the primitive spectrum $\prim\cqg_{\ww}$ is homeomorphic to the maximal spectrum of $Z_{\ww}$, via extension and contraction.

	However, the second step of the Dixmier's program for $\cqg$ is largely open; only a small class of irreducible representations are known. Levendorsky and Soibelman \cite{LS} constructed a class of simple $\cqg$-modules corresponding to the double Bruhat cells $G^{w,w}$, which we call \emph{Soibelman modules}. Recently, Saito \cite{Sa}, Tanisaki \cite{Ta} gave some new constructions of Soibelman modules and related them to the PBW bases of $\uqg$. Roughly speaking, the Soibelman modules are the simple highest weight $\cqg$-modules with a properly chosen triangular decomposition of $\cqg$. We also call Soibelman modules Fock representations as the underlying spaces are polynomial algebras and $\cqg$ acts by some difference operators. In \cite{Na}, Narayanan generalized Soibelman's result to arbitrary Kac-Moody algebra with symmetrizable generalized Cartan matrix.

	The Dixmier's program for $\cqt$ lays a foundation for the general cases. The primitive ideals of $\cqt$, characterized by a pair of Weyl group elements $(e,e),(s,e),(e,s)$ or $(s,s)$, are given explicitly in (\ref{eq-sltprim}) (c.f. \cite{HLT}). The type $(e,e)$ simple $\cqt$-modules are $1$-dimensional and thus of little interest. The simple $\cqt$-modules of the other types, under some semisimple conditions, are determined completely in Section \ref{sec-TypMod}. As vector spaces, they are isomorphic to either the polynomial algebra or the Laurent polynomial algebra in one variable. We refer to them as \emph{typical $\cqt$-modules}. By Theorem \ref{prop-typmod}, they are the only simple $\cqt$-modules with semisimple actions of some maximal commutative subalgebra.

	For general $G$, there is a natural and straightforward construction of $\cqg$-modules from tensor products of lifted $\cqt$-modules, following the construction of Soibelman modules. Namely, corresponding to each simple root $\alpha_i$ of $G$, there is a restriction map $\pi_i:\cqg\twoheadrightarrow\C_{q_i}[SL_2]_i$, where $\C_{q_i}[SL_2]_i$ is the quantum coordinate algebra of the subgroup $SL_2\subset G$ associated to $\alpha_i$. The maps $\pi_i$'s enable one to lift any $\C_{q_i}[SL_2]_i$-modules to $\cqg$-modules. We show that for any $w\in W$ with a reduced expression $(i_1,i_2,\cdots,i_m)\in\I_w$, the tensor product of $\pi_{i_k}$-lifted $(s,s)$-typical $\C_{q_{i_k}}[SL_2]_{i_k}$-modules (in the order specified by the reduced expression) leads to a simple $\cqg$-module of $(w,w)$ type.

	The proof of this result boils down to a detailed analysis of the actions of some special elements of $\cqg$, called \emph{(quasi-)normal elements}, and makes essential use of structural properties of Demazure modules. The Soibelman modules constructed in \cite{LS} are the special case with all tensor factors being $(s,s)$-typical \emph{highest weight modules} (see Section \ref{sec-TypMod}).

	To construct simple modules of $(\ww)$ type for arbitrary $(\ww)\in\wtw$, we proceed by tensoring lifted $(s,e),(e,s)$-typical modules according to some double reduced expression $\tii=\tiim\in\I_{\ww}$. Here in $\tii=\tiim$ we use negative indices for simple reflections in the first copy of $W$, and positive indices in the second copy. By Corollary \ref{cor-w1w2type}, we obtain a tensor module $\tmii$ of $(\ww)$ type in the sense of \cite{HLT}.
	
	However, $\tmii$ may not be simple, thus we need to analyze its structure in more detail. Let $m=l(w_1)+l(w_2)$ be the length of $\tii$. Denote $L_q(2)$ the 2-dimensional quantum torus algebra with generators $x^{\pm1},y^{\pm1}$ subject to $xy=qyx$. The tensor module $\tmii$ then admits an $\lqi$-module structure, where $\lqi=\lqt{|\ti_1|}\otimes\lqt{|\ti_2|}\otimes\cdots\otimes\lqt{|\ti_m|}$, and $\cqg$ acts on $\tmii$ via some map $\tpi_{\tii}:\cqg\to \lqi$. We introduce the notion of \emph{weight string}, which is a tuple of $m$ weights of specific form depending on $\tii$ (Definition \ref{defn-wtstr}). For each weight string $\muu$ we associate a unit $I(\muu)$ in $\lqi$. A calculation shows images of matrix coefficients under $\tpi_{\tii}$ are linear combinations of $I(\muu)$'s (Proposition \ref{prop-pii}). Let $R_{\tii}$ be the image of $\tpi_{\tii}$, $S_{\tii}$ be the intersection of $R_{\tii}$ with $\lqi^\times$ (set of units in $\lqi$). One of our main theorems (Theorem \ref{thm-localization}) shows $\ttii:=R_{\tii}[S_{\tii}\inv]$ is a quantum torus embedded in $\lqi$, generated by $q$-commute elements $I(\muu)^{\pm1}$, where $\muu$ runs through the weight strings of type $\tii$. The structure of $\ttii$ is studied explicitly. In particular, the center $Z(\ttii)$ of $\ttii$ is shown to be $Z_{\ww}$, so the primitive spectra $\prim\cqg_{\ww}$ can be characterized as the maximal spectrum of $Z(\ttii)$ (Proposition \ref{prop-prim}). The dimension of $Z(\ttii)$ is calculated to be $\dim\ker(w_1-w_2)$ using the root data of $\tii$ (see Section \ref{sec-appen1}), which agrees with the result by De Concini and Procesi in \cite{DP}.
	
	The tensor module $\tmii$ as $\ttii$-module admits abundant maximal $\ttii$-submodules. By Theorem \ref{thm-simquot}, we are able to construct a bundle $\tirr\ttii$ of simple quotient $\ttii$-modules of $\tmii$ onto $\prim\cqg_{\ww}$, with fibers (see (\ref{eq-tirrti}))
	\[\irr L_{q^{m_{\tii,1}}}(2)\times\irr L_{q^{m_{\tii,2}}}(2)\times\cdots\times\irr L_{q^{m_{\tii,k(\tii)}}}(2)\]
	for some $m_{\tii,1},m_{\tii,2},\cdots,m_{\tii,k(\tii)}\in\N$, where $\irr\lqt{}$ stands for the spectrum of simple modules of $\lqt{}$, $k(\tii)=(l(w_1)+l(w_2)-|\supp(\ww)|-\dim\ker(w_1-w_2))/2$.
	
	Some additional conditions are needed to show $R_{\tii}$-submodules of $\tmii$ are in bijection with $\ttii$-submodules. Namely, the existence of enough \emph{pivot elements} in $R_{\tii}$ (Theorem \ref{prop-quanlind}). The idea of pivot elements is to find some elements in $R_{\tii}$ whose action on $\tmii$ is ``invertible'' in $R_{\tii}$. A special case of enough pivot elements is when $\supp(w_1)\cap\supp(w_2)=\varnothing$ (Proposition \ref{prop-expivot}). If these conditions are satisfied, we can obtain the same bundle $\tirr\ttii$ of $(\ww)$ type simple $\cqg$-modules onto $\prim\cqg_{\ww}$. 
	
	As a direct application of the theory above, we deduce that the tensor module $\tmii$ is simple if and only if $|\supp(\ww)|=l(w_1)+l(w_2)$ (Theorem \ref{thm-irrtenmod}). Besides, our theory suggests a possibility of realizing the Dixmier's program for $\cqg$ by finding enough pivot elements in $R_{\tii}$. In the type A case, this can be done in a combinatorial way. We use the wiring diagram, which was introduced by Fomin and Zelevinsky in their study of total positivity (cf. \cite{BFZ,FZ}), to compute actions of quantum minors on tensor modules $\tmii$. The main calculation tool is a quantum version of Lindstr\"{o}m's lemma, which shows the image of quantum minors under $\tpi$ is the sum of weights of families of paths (Proposition \ref{prop-quanlind}). In the $G=SL_3(\C)$ case, we list the construction of enough pivot elements in Table \ref{tab-constructions}, which realizes the Dixmier's program in this case.

	We point out that the study of $\cqg$'s representation theory has gone much further when $q$ is a root of 1, which is of more geometric flavor. De Concini and Procesi in \cite{DP} establish a geometric theory of representations of $\cqg$ when $q$ is an $l$-th primitive root of 1. The main theorem states that simple modules of $(\ww)$ type form a principal bundle onto the double Bruhat cell $G^{\ww}$, and gives their Galois group (\cite{DP}*{Theorem 4.10}). Besides, the simple modules of $(\ww)$ type all have dimension $l^{\frac{1}{2}\dim\mathcal{O}_{\ww}}$, where $\mathcal{O}_{\ww}$ is one of the symplectic leaves contained in $G^{\ww}$ (all conjugate under the action of maxiaml torus) and
	\[\dim\mathcal{O}_{\ww}=l(w_1)+l(w_2)+\rr(w_1-w_2).\]
	The authors then describe the irreducibility of the tensor module using dimensions of the symplectic leaves: given two simple modules of $(\ww)$ type and of $(w_1',w_2')$ type such that $l(w_1w_1')=l(w_1)+l(w_1')$ and $l(w_2w_2')=l(w_2)+l(w_2')$, the tensor module of the two is simple if and only if
	\[\dim\mathcal{O}_{w_1w_1',w_2w_2'}=\dim\mathcal{O}_{\ww}+\dim\mathcal{O}_{w_1',w_2'}.\]
	When $q$ is generic, we notice the Soibelman modules satisfies the above relation of dimensions of symplectic leaves automatically. Besides, the condition in Theorem \ref{thm-irrtenmod} is also equivalent to the relation above. This indicates some similarities in the representation theories of $\cqg$ no matter $q$ is a root of 1 or not.

	\section{Preliminaries}

	\subsection{Quantized Universal Enveloping Algebra}

	We recall some basic concepts and notations from quantized universal enveloping algebras, i.e. quantum groups of Drinfeld-Jimbo type. Let $\g$ be a semisimple Lie algebra of rank $n$. Denote by $\Pi=\{\alpha_1,\alpha_2,\cdots,\alpha_n\}$ the set of simple roots. Let $(\cdot,\cdot)$ be the inner product of the Euclidean space spanned by $\alpha_1,\alpha_2,\cdots,\alpha_n$. The quantized universal enveloping algebra $\uqg$ is defined as the $\C$-algebra with generators $E_i,F_i,K_i^\pm,1\le i\le n$ and relations
	\begin{equation}
		\begin{gathered}\label{eq-DJ}
			K_iK_i\inv=1=K_i\inv K_i,K_iK_j=K_jK_i,\\
			K_iE_jK_i\inv=q_i^{c_{ij}}E_j,\\
			K_iF_jK_i\inv=q_i^{-c_{ij}}F_j,\\
			E_iF_j-F_jE_i=\delta_{ij}\frac{K_i-K_i\inv}{q_i-q_i\inv},\\
			\sum_{k=0}^{1-c_{ij}}(-1)^kE_i^{(1-c_{ij}-k)}E_jE_i^{(k)}=0,i\ne j,\\
			\sum_{k=0}^{1-c_{ij}}(-1)^kF_i^{(1-c_{ij}-k)}F_jF_i^{(k)}=0,i\ne j,
		\end{gathered}	
	\end{equation}
	where $c_{ij}=(\alpha_i^\vee,\alpha_j),q_i=q^{(\alpha_i,\alpha_i)/2}$. As usual, the $q$-integers, $q$-factorials, and $q$-binomial coefficients are denoted by
	\[[k]_q=\frac{q^k-q^{-k}}{q-q\inv},[k]_q!=[1]_q[2]_q\cdots[k]_q,\begin{bmatrix}m\\k\end{bmatrix}_q=\frac{[m]_q!}{[k]_q![m-k]_q!},k,m\in\N,k\le m,\]
	and the divided power is denoted by
	\[E_i^{(k)}=\frac{E_i^k}{[k]_{q_i}!},F_i^{(k)}=\frac{F_i^k}{[k]_{q_i}!}.\]
	The Hopf algebra structure of $\uqg$ is given by
	\[\Delta(K_i)=K_i\otimes K_i,\varepsilon(K_i)=1,S(K_i)=K_i\inv,\]
	\[\Delta(E_i)=E_i\otimes1+K_i\otimes E_i,\varepsilon(E_i)=0,S(E_i)=-K_i\inv E_i,\]
	\[\Delta(F_i)=1\otimes F_i+F_i\otimes K_i\inv,\varepsilon(F_i)=0,S(F_i)=-F_iK_i.\]

	Denote by $W$ the Weyl group. $W$ is generated by simple reflections $s_i,1\le i\le n$. Denote by $l(w)$ the length of $w\in W$, and $w_0$ the longest element in $W$. Given $a,b\in\Z\cup\{\pm\infty\},a\le b$, let $[a,b]$ be the closed interval of integers between $a,b$. Denote by $\I_w$ be the set of reduced expressions of $w\in W$, i.e.
	\[\I_w=\{\ii=(i_1,i_2,\cdots,i_{l(w)})\in[1,n]^{l(w)}|w=s_{i_1}s_{i_2}\cdots s_{i_{l(w)}}\}.\]
	Let $\I_{\ww}$ be the set of reduced expressions of $(\ww)\in \wtw$. To avoid confusion, we will use the indices $-1,-2\cdots,-n$ for the simple reflections in the first copy of $W$, and $1,2\cdots,n$ for the second copy. In this way $\I_{\ww}$ can be written as
	\[\I_{\ww}=\{\tii=\tiii{l(w_1)+l(w_2)}\in([-n,n]\backslash\{0\})^{l(w_1)+l(w_2)}\big|w_1=\prod_{\ti_k<0}s_{-\ti_k},w_2=\prod_{\ti_k>0}s_{\ti_k}\}.\]
	For $\ii\in\I_w$ or $\tii\in\I_{\ww}$, denote by $l(\ii)=l(w)$ or $l(\tii)=l(w_1)+l(w_2)$ the length of $\ii$. Denote by $\supp(\ww)$ the set $\{|\ti_1|,|\ti_2|,\cdots,|\ti_{l(\ii)}|\}$ for some $\tii\in\I_{\ww}$. Define $\supp(w)$ to be $\supp(e,w)$. One can check that the definition of $\supp(\ww)$ is independent of the choice of $\tii\in\I_{\ww}$. 

	From now on we abbreviate $\uqg$ to $U$. For $1\le i\le n$, define $U(i)=\langle E_i,K_i^\pm\rangle$, $U(-i)=\langle F_i,K_i^\pm\rangle$ and $U_i=\langle E_i,F_i,K_i^\pm\rangle$. For $\ii=\iim\in\I_w$ define $U_{\ii}=U_{i_1}U_{i_2}\cdots U_{i_m}$. For $\tii=\tiim\in\I_{\ww}$ define $U(\tii)=U(\ti_1)U(\ti_2)\cdots U(\ti_m)$. Denote by $U^+=\langle E_i\rangle_{i=1}^n$, $U^-=\langle F_i\rangle_{i=1}^n$, and $H=\langle K_i^\pm\rangle_{i=1}^n$.

	\subsection{Braid Actions and Demazure Modules}

	Denote by $P$ and $P^+$ the set of integral weights and dominant integral weights respectively. Let $\omega_1,\omega_2,\cdots,\omega_n$ be the fundamental weights. Denote by $\cc$ the category of finitely dimensional type 1 $U$-modules. Recall that a simple module in $\cc$ is parameterized by its highest weight $\lambda$ in $P^+$, and denote it by $V(\lambda)$. All modules in $\cc$ are direct sums of $V(\lambda)$'s.

	Let $M(\lambda)$ be the Vermma module of $U$ with highest weight $\lambda\in P^+$ and highest weight vector $u_\lambda$. $M(\lambda)$ is a free $U^-$-module. $V_\lambda$ is realized as a quotient module of $M(\lambda)$ through
	\begin{equation}\label{eq-hwm}
		V(\lambda)\simeq M(\lambda)/\big(\sum_{i=1}^nUF_i^{(\lambda,\alpha_{i}^\vee)+1}u_\lambda\big).
	\end{equation}

	For some $V\in\cc$ and $\mu\in P$ define $V_\mu=\{v\in V|K_iv=q^{(\alpha_i,\mu)}v,1\le i\le n\}$ to be the weight space of $\mu$. The braid group action on $V$ are characterized by Lusztig's symmetries $T_i,1\le i\le n$. Namely, for $v\in V_\mu$,
	\begin{equation}\label{eq-Ti}
		T_i(v)=\sum_{-a+b-c=(\mu,\alpha_i^\vee)}(-1)^bq_i^{b-ac}E_i^{(a)}F_i^{(b)}E_i^{(c)}v,
	\end{equation}
	cf. \cite{Ja}*{Chapter 8}. For any reduced expression $w=s_{i_1}s_{i_2}\cdots s_{i_r}$ of $w\in W$, define $T_w$ to be $T_{i_1}T_{i_2}\cdots T_{i_r}$. It is well known that $T_w$ is independent of the reduced expression chosen. The actions of $T_w$ on $U$ and on $V$ are compatible in the sense that $T_w(x.v)=T_w(x).T_w(v),x\in U,v\in V$.

	The $T_w$'s action on $V$ satisfies $T_w(V_\mu)=V_{w(\mu)}$ for $\mu\in P$. This implies $\dim V(\lambda)_{w(\lambda)}=1$ for any $w\in W$. For each $V(\lambda),\lambda\in P^+$, choose some highest weight vector $v_\lambda\in V(\lambda)_\lambda$. Define $v_{w(\lambda)}:=T_w(v_\lambda)\in V(\lambda)_{w(\lambda)}$, choose $v^*_{w(\lambda)}\in V(\lambda)^*$ such that $v^*_{w(\lambda)}(v_{w(\lambda)})=1$, and $v^*_{w(\lambda)}$ vanishes on any other weight spaces of $V(\lambda)$.

	The following result concerns $T_w$'s action on tensor modules, due to \cite{KR,LS1,Ta}.

	\begin{prop}\label{prop-DeltaTw}
		Choose some $w\in W$ and $\ii=\iim\in\I_w$, then
		\[\Delta(T_w)=(T_w\otimes T_w)\exp_{q_{i_1}}(X_1)\exp_{q_{i_2}}(X_2)\cdots\exp_{q_{i_m}}(X_m),\]
		where
		\[\exp_q(x)=\sum_{k=0}^\infty\frac{q^{k(k-1)/2}}{[k]_q!}x^k,\]
		\[X_k=(q_{i_k}-q_{i_k}\inv)\,T_{i_m}\inv\cdots T_{i_{k+1}}\inv(F_{i_k})\otimes T_{i_m}\inv\cdots T_{i_{k+1}}\inv(E_{i_k}).\]
	\end{prop}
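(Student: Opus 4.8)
The plan is to prove the formula by induction on $m=l(w)$, bootstrapping from the rank-one case $w=s_i$. For $m=1$ the statement reads $\Delta(T_i)=(T_i\otimes T_i)\exp_{q_i}\big((q_i-q_i\inv)F_i\otimes E_i\big)$, an identity of operators on $V\otimes V'$ for all $V,V'\in\cc$. First I would observe that, since $\Delta$ maps $U_i=\langle E_i,F_i,K_i^\pm\rangle$ into $U_i\otimes U_i$, the $U_i$-module structure on $V\otimes V'$ is the $U_{q_i}(\slt)$-tensor product of the $U_i$-restrictions of $V$ and $V'$; and $T_i$, the operator $T_i\otimes T_i$, and $\exp_{q_i}(F_i\otimes E_i)$ all depend only on these $U_i$-module structures. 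Hence the base case is a statement about $U_{q_i}(\slt)$-modules, and one verifies it by decomposing $V,V'$ into simple summands and checking the equality on a weight basis of a tensor product of two simple $U_{q_i}(\slt)$-modules using the explicit formula (\ref{eq-Ti}). (This rank-one identity is classical; cf. \cite{KR,LS1}.)

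Before running the induction I would record that each $X_k$ acts locally nilpotently on any $V\otimes V'$ with $V,V'\in\cc$: by the stated compatibility of the $T$-actions on $U$ and on modules, $X_k$ is the conjugate of $(q_{i_k}-q_{i_k}\inv)F_{i_k}\otimes E_{i_k}$ by the invertible operator $\big(T_{i_m}\inv\cdots T_{i_{k+1}}\inv\big)\otimes\big(T_{i_m}\inv\cdots T_{i_{k+1}}\inv\big)$, and $F_{i_k}\otimes E_{i_k}$ is locally nilpotent on $V\otimes V'$; so each $\exp_{q_{i_k}}(X_k)$ is a finite sum on every vector and the right-hand side is well defined.

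For the inductive step, take $m>1$ and write $w=s_{i_1}w'$ with $w'=s_{i_2}\cdots s_{i_m}$, so $(i_2,\dots,i_m)\in\I_{w'}$ and $T_w=T_{i_1}T_{w'}$. Applying the inductive hypothesis to $w'$ gives $\Delta(T_{w'})=(T_{w'}\otimes T_{w'})\exp_{q_{i_2}}(X_2)\cdots\exp_{q_{i_m}}(X_m)$, where the $X_k$ with $k\ge2$ are exactly those in the statement for $w$ (the twist $T_{i_m}\inv\cdots T_{i_{k+1}}\inv$ only involves the tail of the word). Multiplying on the left by $\Delta(T_{i_1})=(T_{i_1}\otimes T_{i_1})\exp_{q_{i_1}}\big((q_{i_1}-q_{i_1}\inv)F_{i_1}\otimes E_{i_1}\big)$ from the base case, the task reduces to commuting the factor $T_{w'}\otimes T_{w'}$ to the left past $\exp_{q_{i_1}}\big((q_{i_1}-q_{i_1}\inv)F_{i_1}\otimes E_{i_1}\big)$. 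Here I would invoke the compatibility $T_{w'}(x.v)=T_{w'}(x).T_{w'}(v)$, which says that conjugation by the module operator $T_{w'}$ realizes the algebra automorphism $T_{w'}$; applying it on both tensor legs and using that conjugation commutes with the exponential (a finite sum on each vector) yields
\[\exp_{q_{i_1}}\!\big((q_{i_1}-q_{i_1}\inv)F_{i_1}\otimes E_{i_1}\big)\,(T_{w'}\otimes T_{w'})=(T_{w'}\otimes T_{w'})\,\exp_{q_{i_1}}\!\big((q_{i_1}-q_{i_1}\inv)\,T_{w'}\inv(F_{i_1})\otimes T_{w'}\inv(E_{i_1})\big).\]
Since $T_{w'}\inv=T_{i_m}\inv\cdots T_{i_2}\inv$, the exponent on the right is precisely $X_1$, and $(T_{i_1}\otimes T_{i_1})(T_{w'}\otimes T_{w'})=T_w\otimes T_w$; substituting gives the claimed expression for $\Delta(T_w)$ and closes the induction. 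As a by-product, $\prod_k\exp_{q_{i_k}}(X_k)$ is independent of $\ii\in\I_w$, since $\Delta(T_w)$ is.

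I expect the main obstacle to be the base case itself: the honest verification of $\Delta(T_i)=(T_i\otimes T_i)\exp_{q_i}\big((q_i-q_i\inv)F_i\otimes E_i\big)$ on $U_{q_i}(\slt)$-modules from the operator (\ref{eq-Ti}) is the only genuinely computational ingredient. Everything after that is formal bookkeeping; the one point deserving care is that conjugating by the module operator $T_{w'}$ produces $T_{w'}\inv$ applied to the root vectors $F_{i_1},E_{i_1}$, and that this inverse is exactly the twist $T_{i_m}\inv\cdots T_{i_2}\inv$ appearing in the statement.
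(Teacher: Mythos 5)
The paper offers no proof of this proposition: it is quoted as a known result of Kirillov--Reshetikhin, Levendorski\u{\i}--Soibelman and Tanisaki \cite{KR,LS1,Ta}, so there is no in-paper argument to compare against. Your induction is the standard route taken in those references and it is sound: the tail twists $T_{i_m}\inv\cdots T_{i_{k+1}}\inv$ for $k\ge2$ indeed coincide for $w$ and for $w'=s_{i_2}\cdots s_{i_m}$, and the key commutation $\exp_{q_{i_1}}\bigl((q_{i_1}-q_{i_1}\inv)F_{i_1}\otimes E_{i_1}\bigr)(T_{w'}\otimes T_{w'})=(T_{w'}\otimes T_{w'})\exp_{q_{i_1}}(X_1)$ follows correctly from $x\circ T_{w'}=T_{w'}\circ T_{w'}\inv(x)$, which is exactly the stated compatibility of the braid action on $U$ and on modules, applied legwise; the local nilpotency remark makes the exponentials legitimate. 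The one piece you do not actually carry out is the rank-one identity $\Delta(T_i)=(T_i\otimes T_i)\exp_{q_i}\bigl((q_i-q_i\inv)F_i\otimes E_i\bigr)$, which is the entire analytic content of the proposition (it is the $U_{q_i}(\slt)$ quasi-$R$-matrix computation, sensitive to the normalization of $T_i$ in (\ref{eq-Ti}) and of $\exp_q$); your plan to verify it on weight bases of tensor products of simple $U_{q_i}(\slt)$-modules, or to import it from \cite{KR,LS1}, is the right way to close that step, but as written it is deferred rather than proved.
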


	As a corollary, we want to mention the following property which we will use later.

	\begin{cor}\label{cor-Twaction}
		Let $\lambda_1,\lambda_2\in P^+$, then $T_w(v_{\lambda_1}\otimes v_{\lambda_2})=T_w(v_{\lambda_1})\otimes T_w(v_{\lambda_2})$.
	\end{cor}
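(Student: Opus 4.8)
The plan is a direct computation with the expansion of $\Delta(T_w)$ supplied by Proposition \ref{prop-DeltaTw}. Fix a reduced word $\ii=(i_1,\ldots,i_m)\in\I_w$; then $T_w$ acts on $V(\lambda_1)\otimes V(\lambda_2)$ through $\Delta(T_w)=(T_w\otimes T_w)\exp_{q_{i_1}}(X_1)\cdots\exp_{q_{i_m}}(X_m)$, where $X_k=(q_{i_k}-q_{i_k}\inv)A_k\otimes B_k$ with $A_k=T_{i_m}\inv\cdots T_{i_{k+1}}\inv(F_{i_k})$ and $B_k=T_{i_m}\inv\cdots T_{i_{k+1}}\inv(E_{i_k})$. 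Each $X_k$ acts nilpotently on the finite-dimensional module $V(\lambda_1)\otimes V(\lambda_2)$, so the exponentials are finite sums. It therefore suffices to show that $\exp_{q_{i_1}}(X_1)\cdots\exp_{q_{i_m}}(X_m)$ fixes $v_{\lambda_1}\otimes v_{\lambda_2}$; applying $T_w\otimes T_w$ afterwards then yields the claim.

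The key step will be to check that $B_k.v_{\lambda_2}=0$ for every $k$. Write $v=s_{i_{k+1}}s_{i_{k+2}}\cdots s_{i_m}$, so that $B_k=T_v\inv(E_{i_k})$. The tail $(i_k,i_{k+1},\ldots,i_m)$ is a reduced subword of $\ii$, hence $l(s_{i_k}v)=l(v)+1$, equivalently $v\inv(\alpha_{i_k})$ is a positive root. Since Lusztig's symmetries carry root vectors to root vectors, $B_k$ is a weight vector of weight $v\inv(\alpha_{i_k})$, and the standard behaviour of $T_i^{\pm1}$ on $U^+$ (cf. \cite{Ja}*{Chapter 8}) gives $B_k\in U^+$; thus $B_k$ lies in the augmentation ideal of $U^+$ and annihilates the highest weight vector $v_{\lambda_2}$. (Alternatively, using $T_v\inv(x.\xi)=T_v\inv(x).T_v\inv(\xi)$ one gets $B_k.v_{\lambda_2}=T_v\inv\big(E_{i_k}.v_{v(\lambda_2)}\big)$, which vanishes since $\langle v(\lambda_2),\alpha_{i_k}^\vee\rangle=\langle\lambda_2,(v\inv\alpha_{i_k})^\vee\rangle\ge0$ forces the extremal weight vector $v_{v(\lambda_2)}$ to be killed by $E_{i_k}$.) It follows that $X_k(\xi\otimes v_{\lambda_2})=0$ for every $\xi\in V(\lambda_1)$, because the right leg $B_k$ of $X_k$ acts on the second tensor factor.

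It remains to expand $\exp_{q_{i_1}}(X_1)\cdots\exp_{q_{i_m}}(X_m)$ as a linear combination of monomials $X_1^{a_1}\cdots X_m^{a_m}$ and apply it to $v_{\lambda_1}\otimes v_{\lambda_2}$, evaluating the factors from right to left. At every intermediate stage the resulting vector has the form $\xi\otimes v_{\lambda_2}$ with $\xi\in V(\lambda_1)$, since each $A_k$ only alters the first tensor factor; hence any factor $X_k$ with $a_k\ge1$ produces $0$, and only the constant term $a_1=\cdots=a_m=0$ survives, equal to $v_{\lambda_1}\otimes v_{\lambda_2}$. This gives $T_w(v_{\lambda_1}\otimes v_{\lambda_2})=(T_w\otimes T_w)(v_{\lambda_1}\otimes v_{\lambda_2})=T_w(v_{\lambda_1})\otimes T_w(v_{\lambda_2})$. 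The only ingredient that is not purely formal is the assertion that each right leg $B_k$ lands in the augmentation ideal of $U^+$; this is precisely where reducedness of the tail $(i_k,\ldots,i_m)$ and the behaviour of Lusztig's symmetries on $U^+$ are needed, and I expect it to be the main (if modest) obstacle.
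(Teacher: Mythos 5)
Your proof is correct and is exactly the derivation the paper intends: Corollary \ref{cor-Twaction} is stated as an immediate consequence of Proposition \ref{prop-DeltaTw}, and your argument supplies the missing details by checking that each right leg $T_{i_m}^{-1}\cdots T_{i_{k+1}}^{-1}(E_{i_k})$ annihilates $v_{\lambda_2}$ (since the tail is reduced, $v^{-1}(\alpha_{i_k})$ is positive, so this element kills the highest weight vector), whence every exponential factor acts as the identity on $v_{\lambda_1}\otimes v_{\lambda_2}$.
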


	For $\lambda\in P^+$ and $w\in W$, define the \emph{Demazure modules} $V_{w,\lambda}^\pm$ as
	\[V_{w,\lambda}^+=U^+V(\lambda)_{w(\lambda)}\subset V(\lambda),\]
	\[V_{w,\lambda}^-=U^-V(-w_0(\lambda))_{-w(\lambda)}\subset V(-w_0(\lambda)).\]
	We will frequently look into the structure of the Demazure modules, which is demonstrated in the following proposition.

	\begin{prop}[\cite{DP}*{Proposition 2.2}]\label{prop-DemMod}
		Let $\lambda\in P^+,w\in W$ and $\ii=\iim\in\I_w$. Then $V_{w,\lambda}^+$ is spanned by elements $F_{i_1}^{h_1}F_{i_2}^{h_2}\cdots F_{i_m}^{h_m}v_\lambda,h_k\ge0,1\le k\le m$. Similar result holds for $V_{w,\lambda}^-$.
	\end{prop}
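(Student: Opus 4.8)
The plan is to argue by induction on $m=l(w)$, proving the statement for $V^+_{w,\lambda}$; the statement for $V^-_{w,\lambda}$ then follows by the mirror argument, in which the roles of $U^+$ and $U^-$, of $E_i$ and $F_i$, and of highest and lowest weight vectors are interchanged, so I will not repeat it. The base case $m=0$ is immediate: $w=e$, $v_{e(\lambda)}=v_\lambda$, and $V^+_{e,\lambda}=U^+v_\lambda=\C v_\lambda$ is spanned by the empty product. For the inductive step I peel off a simple reflection on the left, writing $w=s_{i_1}w'$ with $w'=s_{i_2}\cdots s_{i_m}$, so that $(i_2,\dots,i_m)\in\I_{w'}$ and $l(s_{i_1}w')=l(w')+1$. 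Put $i=i_1$ and $N=(w'(\lambda),\alpha_i^\vee)$, a non-negative integer because $s_iw'>w'$ forces $(w')^{-1}(\alpha_i)$ to be a positive root and $\lambda$ is dominant. Everything reduces to the Demazure-type recursion
\[
 V^+_{w,\lambda}=\sum_{h\ge0}F_i^{(h)}\,V^+_{w',\lambda},
\]
since, granting it and the inductive identification $V^+_{w',\lambda}=\Span\{F_{i_2}^{h_2}\cdots F_{i_m}^{h_m}v_\lambda\}$, one obtains $V^+_{w,\lambda}=\Span\{F_{i_1}^{h_1}F_{i_2}^{h_2}\cdots F_{i_m}^{h_m}v_\lambda\}$ (divided and ordinary powers of $F_{i_1}$ differ only by nonzero scalars).

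To establish the recursion I would first use that $v_{w'(\lambda)}=T_{w'}(v_\lambda)$ is annihilated by $E_i$. Granting this, each module in $\cc$ restricts semisimply over $U_i=\langle E_i,F_i,K_i^\pm\rangle$, so the cyclic module $U_iv_{w'(\lambda)}$ is the $(N+1)$-dimensional irreducible; hence $F_i^{(h)}v_{w'(\lambda)}=0$ for $h>N$, and in the formula (\ref{eq-Ti}) for $T_i$ applied to $v_{w'(\lambda)}$ only the term with $a=c=0$ and $b=N$ survives, giving $v_{w(\lambda)}=T_i(v_{w'(\lambda)})=(-1)^Nq_i^NF_i^{(N)}(v_{w'(\lambda)})$. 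Two consequences follow at once: $F_iv_{w(\lambda)}$ is a scalar multiple of $F_i^{(N+1)}v_{w'(\lambda)}=0$; and $v_{w'(\lambda)}$ is a nonzero scalar multiple of $E_i^{(N)}v_{w(\lambda)}$, so $V^+_{w',\lambda}=U^+v_{w'(\lambda)}\subseteq U^+v_{w(\lambda)}=V^+_{w,\lambda}$. For the inclusion $V^+_{w,\lambda}\subseteq\sum_{h\ge0}F_i^{(h)}V^+_{w',\lambda}$, I check that the right-hand side is $H$-stable (it is a sum of weight spaces), stable under $E_j$ for $j\ne i$ (as $[E_j,F_i]=0$ and $V^+_{w',\lambda}$ is $E_j$-stable), and stable under $E_i$ (commute $E_i$ rightwards using $E_iF_i^{(h)}=F_i^{(h)}E_i+F_i^{(h-1)}\cdot(\text{element of }H)$), and contains $v_{w(\lambda)}$; hence it contains $U^+v_{w(\lambda)}$. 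For the reverse inclusion, $V^+_{w,\lambda}=U^+v_{w(\lambda)}$ is itself $F_i$-stable, since commuting $F_i$ to the right past a monomial in the $E_j$'s produces only monomials in the $E_j$'s times elements of $H$, all applied to $F_iv_{w(\lambda)}=0$ (the $H$-factors acting on weight vectors by scalars); therefore $\sum_{h\ge0}F_i^{(h)}V^+_{w',\lambda}\subseteq V^+_{w,\lambda}$ using $V^+_{w',\lambda}\subseteq V^+_{w,\lambda}$. This completes the induction.

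The one genuinely nontrivial ingredient, which I expect to be the main obstacle, is the auxiliary claim that $E_iv_{w'(\lambda)}=0$ whenever $s_iw'>w'$ — that is, that an extremal weight vector is a highest weight vector for the corresponding rank-one subalgebra under this length condition. The quickest route is to invoke it from the theory of Lusztig's braid symmetries, cf. \cite{Ja}*{Chapter 8}. A self-contained route is to strengthen the induction, proving simultaneously that $E_jv_{w(\lambda)}=0$ for every $j$ with $s_jw>w$: in the inductive step the needed instance $E_{i_1}v_{w'(\lambda)}=0$ (note $s_{i_1}w'=w>w'$) is supplied by the hypothesis at $w'$, and deducing $E_jv_{w(\lambda)}=0$ for $s_jw>w$ from $v_{w(\lambda)}\propto F_{i_1}^{(N)}v_{w'(\lambda)}$ reduces, via $[E_j,F_{i_1}]=0$ (here $j\ne i_1$) and a short string/weight computation in the relevant rank-two subalgebra (invoking the quantum Serre relations when $j$ and $i_1$ are adjacent), to that same hypothesis applied at $w'$. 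With the auxiliary claim in hand the argument above goes through, and the $V^-$ statement follows by the mirror reasoning.
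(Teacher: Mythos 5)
Your induction via the Demazure-type recursion $V^+_{w,\lambda}=\sum_{h\ge0}F_{i_1}^{(h)}V^+_{w',\lambda}$, $w=s_{i_1}w'$, is correct and is essentially the standard argument for this fact; the paper itself gives no proof, deferring to \cite{DP}*{Proposition 2.2}, whose proof runs along the same lines (the identity $v_{w(\lambda)}=(-1)^Nq_{i_1}^NF_{i_1}^{(N)}v_{w'(\lambda)}$, the two stability checks for $E_j$, and the $F_{i_1}$-stability of $U^+v_{w(\lambda)}$ all go through as you state). The one soft spot is your ``self-contained'' fallback for the auxiliary claim $E_{i_1}v_{w'(\lambda)}=0$: strengthening the induction to ``$E_jv_{w(\lambda)}=0$ whenever $s_jw>w$'' does not reduce to the hypothesis at $w'$ via $[E_j,F_{i_1}]=0$ alone, since $s_jw>w$ does not imply $s_jw'>w'$ (take $w=s_1s_2$, $w'=s_2$, $j=2$ in type $A_2$, where $E_jv_{w'(\lambda)}\ne0$ in general), so the promised reduction needs genuinely more than the Serre relations. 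This does not damage your main route, which invokes the braid-symmetry theory of \cite{Ja}*{Chapter 8}; alternatively the claim admits a one-line elementary proof making everything self-contained: $s_{i_1}w'>w'$ gives $(w'(\lambda),\alpha_{i_1}^\vee)\ge0$, hence $(w'(\lambda)+\alpha_{i_1},w'(\lambda)+\alpha_{i_1})>(\lambda,\lambda)$, while every weight $\mu$ of $V(\lambda)$ satisfies $(\mu,\mu)\le(\lambda,\lambda)$, so $w'(\lambda)+\alpha_{i_1}$ is not a weight and $E_{i_1}v_{w'(\lambda)}=0$.
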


	\begin{cor}\label{cor-DemMod}
		(i) Let $\lambda\in P^+,w\in W$ and $\ii\in\I_w$. Then $V_{w,\lambda}^+=U_{\ii}v_\lambda$, $V_{w,\lambda}^-=U_{\ii}v_{-\lambda}$.

		(ii) Let $\lambda\in P^+,(\ww)\in\wtw$ and $\tii\in\I_{\ww}$. Then $V_{w_1,\lambda}^+=U(\tii)v_\lambda$, $V_{w_2,\lambda}^-=U(\tii)v_{-\lambda}$.
	\end{cor}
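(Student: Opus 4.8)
The plan is to deduce both parts from Proposition \ref{prop-DemMod} together with the definitions of the subalgebras $U_{\ii}$, $U(\tii)$ and the braid action on $V(\lambda)$. For part (i), I would first note that each factor $U_{i_k}=\langle E_{i_k},F_{i_k},K_{i_k}^{\pm}\rangle$ certainly contains all powers $F_{i_k}^{h_k}$, so the spanning set $\{F_{i_1}^{h_1}\cdots F_{i_m}^{h_m}v_\lambda\}$ from Proposition \ref{prop-DemMod} is visibly contained in $U_{i_1}U_{i_2}\cdots U_{i_m}v_\lambda=U_{\ii}v_\lambda$; hence $V_{w,\lambda}^+\subseteq U_{\ii}v_\lambda$. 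For the reverse inclusion, I would argue that $U_{\ii}v_\lambda\subseteq V_{w,\lambda}^+$. The key point is that $V_{w,\lambda}^+=U^+ v_{w(\lambda)}$ is stable under each $U_{i_k}$ for $i_k$ in the reduced word: this is the standard "parabolic'' invariance of Demazure modules, which follows from the $\mathfrak{sl}_2$-theory applied to the root $\alpha_{i_k}$, using that $v_{w(\lambda)}=T_w v_\lambda$ has a weight $w(\lambda)$ whose pairing with $\alpha_{i_k}^\vee$ has a controlled sign along the reduced word (an induction on $m$, peeling off $s_{i_1}$, makes this precise: $V_{w,\lambda}^+ = U_{i_1} V_{s_{i_1}w,\lambda}^+$ when $l(s_{i_1}w)=l(w)-1$). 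Since $v_\lambda\in V_{w,\lambda}^+$ and the module is $U_{\ii}$-stable, $U_{\ii}v_\lambda\subseteq V_{w,\lambda}^+$, giving equality. The statement for $V_{w,\lambda}^-$ is entirely parallel, working inside $V(-w_0(\lambda))$ with lowest-weight conventions and $U^-$ in place of $U^+$; alternatively one transports the $V^+$ statement through the duality $V(\lambda)^*\cong V(-w_0\lambda)$.

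For part (ii), I would reduce to part (i) by choosing a compatible reduced word. Given $\tii=\tiim\in\I_{\ww}$, let $\ii^-\in\I_{w_1}$ be the subword $(|\ti_k|:\ti_k<0)$ read in order, and $\ii^+\in\I_{w_2}$ the subword $(\ti_k:\ti_k>0)$. Then $U(\tii)=U(\ti_1)\cdots U(\ti_m)$, and since each $U(\ti_k)$ contains $F_{|\ti_k|}$ when $\ti_k<0$ and $E_{\ti_k}$ when $\ti_k>0$, one checks that $U(\tii)v_\lambda$ and $U_{\ii^-}v_\lambda$ span the same subspace: the positive-index factors $U(\ti_k)=\langle E_{\ti_k},K_{\ti_k}^{\pm}\rangle$ with $\ti_k>0$ act on the highest weight vector $v_\lambda$ (and on the Demazure module $V_{w_1,\lambda}^+$) by scalars, since $E_j v_\lambda=0$ and $V_{w_1,\lambda}^+$ is a sum of weight spaces on which $K_j$ acts by scalars — more carefully, $U(j)v_\lambda=\C v_\lambda$ for $j>0$, and interleaving such trivial factors among the $U(-i)$'s does not change the span. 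Hence $U(\tii)v_\lambda=U_{\ii^-}v_\lambda=V_{w_1,\lambda}^+$ by part (i). Symmetrically, $U(\tii)v_{-\lambda}=U_{\ii^+}v_{-\lambda}=V_{w_2,\lambda}^-$, now using that the negative-index factors $U(-i)=\langle F_i,K_i^{\pm}\rangle$ act on the lowest weight vector $v_{-\lambda}\in V(-w_0\lambda)$ by scalars.

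The main obstacle is the $U_{i_k}$-invariance of the Demazure module $V_{w,\lambda}^+$ along a reduced word — i.e. the inclusion $U_{\ii}v_\lambda\subseteq V_{w,\lambda}^+$ in part (i). The spanning statement of Proposition \ref{prop-DemMod} gives one containment for free, but promoting the $F$-monomial span to genuine $U_{\ii}$-stability requires knowing that applying $E_{i_k}$ or $K_{i_k}^{\pm}$ keeps us inside the module; this is where one must invoke the $\mathfrak{sl}_2$-structure of $V(\lambda)$ restricted to the $i_k$-th copy of $U_{q}(\mathfrak{sl}_2)$ and an inductive peeling argument on the length of $w$. Once that invariance is in hand, everything else is bookkeeping with the definitions of $U_{\ii}$ and $U(\tii)$ and the observation that the "wrong-sign'' generators act by scalars on the relevant extreme weight vector.
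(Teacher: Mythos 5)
Your overall route — deduce everything from Proposition \ref{prop-DemMod} plus the structure of $V_{w,\lambda}^{\pm}$ as $U^{\geq 0}$- (resp.\ $U^{\leq 0}$-) stable subspaces — is the intended one (the paper offers no argument, treating the corollary as immediate), and the containment $V_{w,\lambda}^+\subseteq U_{\ii}v_\lambda$ and the reduction of (ii) to the ``unmixed'' case are fine. However, one intermediate assertion in part (i) is false as stated: $V_{w,\lambda}^+$ is \emph{not} stable under $U_{i_k}$ for every letter $i_k$ of the reduced word, only under those $U_i$ with $s_iw<w$ (e.g.\ $i=i_1$). Concretely, for $\g=\mathfrak{sl}_3$, $w=s_1s_2$, $\lambda=\omega_1$, the module $V_{w,\omega_1}^+$ is spanned by $v_{\omega_1}$ and $F_1v_{\omega_1}$, and $F_2(F_1v_{\omega_1})$ lies outside it, so $V_{w,\omega_1}^+$ is not $U_{2}$-stable even though $2$ occurs in the word; in particular ``the module is $U_{\ii}$-stable'' is not available, and your concluding sentence of part (i) rests on it. The repair is exactly the recursion you also state: $V_{w,\lambda}^+=U_{i_1}V_{s_{i_1}w,\lambda}^+$ when $l(s_{i_1}w)=l(w)-1$, applied inductively along the word (peel $i_1$, then $i_2$ from $s_{i_1}w$, etc.), which yields $U_{\ii}v_\lambda=V_{w,\lambda}^+$ directly. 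Note moreover that this recursion need not be imported as an external ``standard fact'': it follows from Proposition \ref{prop-DemMod} itself, since by PBW every element of $U_{i_1}$ is a combination of $F_{i_1}^{a}K_{i_1}^{b}E_{i_1}^{c}$, and $V_{s_{i_1}w,\lambda}^+=U^+v_{s_{i_1}w(\lambda)}$ is visibly stable under $E_{i_1}$ and $K_{i_1}^{\pm}$, so $U_{i_1}V_{s_{i_1}w,\lambda}^+=\sum_{a\geq0}F_{i_1}^{a}V_{s_{i_1}w,\lambda}^+$, which Proposition \ref{prop-DemMod} identifies with $V_{w,\lambda}^+$.

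The same imprecision recurs in part (ii): the factors $U(j)$ with $j>0$ do not ``act by scalars'' on the relevant Demazure modules ($E_j$ moves extremal weight vectors around inside $V_{w_1,\lambda}^+$); the correct and sufficient statement is that each intermediate span $U(\ti_{k+1})\cdots U(\ti_m)v_\lambda$, which by induction is a module $V_{w',\lambda}^+$, is \emph{stable} under $E_j$ and $K_j^{\pm}$, so the unital subalgebras $U(j)$, $j>0$, neither shrink nor enlarge it, and only the negative letters advance the induction via the recursion above; symmetrically for $V_{w_2,\lambda}^-$ and $v_{-\lambda}$ with the roles of $E$ and $F$ exchanged. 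With those two local corrections your argument is complete and is, in substance, the proof the paper has in mind.
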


	\subsection{Quantum Coordinate Algebra}
	
	Let $G$ be some semisimple Lie group over $\C$ with Lie algebra $\g$. By definition the \emph{quantum coordinate algebra} $\C_q[G]$ is the subspace of $U^*$ spanned by matrix coefficients $c_{f,v}^V$, where $V\in\cc,f\in V^*,v\in V$, and
	\[c^V_{f,v}(u)=f(u.v),\forall u\in U.\]
	If $V^*$ is given a $U$-module structure by $(xf)(v)=f(S(x)v),x\in U,f\in V^*,v\in V$, then $\C_q[G]$ is endowed with a dual Hopf algebra structure
	\begin{equation}\label{eq-cqghopf}
		\begin{gathered}
			c^{V_1}_{f_1,v_1}+c^{V_2}_{f_2,v_2}=c^{V_1\oplus V_2}_{f_1\oplus f_2,v_1\oplus v_2},c^{V_1}_{f_1,v_1}c^{V_2}_{f_2,v_2}=c^{V_1\otimes V_2}_{f_1\otimes f_2,v_1\otimes v_2},\\
			\Delta(c^V_{f,v})=\sum_i c^V_{f,\mathbf{v}_i}\otimes c^V_{\mathbf{f}_i,v},\varepsilon(c^V_{f,v})=f(v),S(c^V_{f,v})=c^{V^*}_{v,f}.
		\end{gathered}
	\end{equation}
	where $V_i\in\cc,f_i\in V_i^*,v_i\in V_i,\{\mathbf{f}_i\},\{\mathbf{v}_i\}$ is a pair of dual basis for $V^*$ and $V$.

	$\cqg$ has a left $U\otimes U$-module structure defined by $(x_1\otimes x_2)c_{f,v}^V=c_{x_1f,x_2v}^V,x_1,x_2\in U$. The following is a well-known theorem concerning structure of $\cqg$.

	\begin{thm}[Peter-Weyl]\label{thm-PW}
		The $U\otimes U$-homomorphisms $V(\lambda)^*\otimes V(\lambda)\to\cqg$ for $\lambda\in P^+$, defined by taking matrix coefficients, sum up to give a $U\otimes U$-isomorphism
		\[\bigoplus_{\lambda\in P^+}V(\lambda)^*\otimes V(\lambda)\to\cqg\]
	\end{thm}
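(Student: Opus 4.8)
The plan is to show that the matrix-coefficient map
\[
\Phi\colon\ \bigoplus_{\lambda\in P^+}V(\lambda)^*\otimes V(\lambda)\ \longrightarrow\ \cqg,\qquad
f\otimes v\ \longmapsto\ c^{V(\lambda)}_{f,v}\ \ (f\in V(\lambda)^*,\ v\in V(\lambda)),
\]
is a bijection; its $U\otimes U$-equivariance is automatic, since the $U\otimes U$-action on $\cqg$ is $(x_1\otimes x_2)c^V_{f,v}=c^V_{x_1f,x_2v}$ and the source carries exactly the matching action on each summand. So the two things to check are surjectivity and injectivity, which I would treat separately.

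For surjectivity I would use that, by definition, $\cqg$ is spanned by the coefficients $c^V_{f,v}$ with $V\in\cc$ \emph{arbitrary}, and then reduce to simple $V$ using semisimplicity of $\cc$: writing $V=\bigoplus_kV_k$ with each $V_k\simeq V(\mu_k)$ simple, and decomposing $v=\sum_kv_k$, $f=\sum_kf_k$ with $f_k\in V_k^*$ extended by zero on the other summands, the first relation in \eqref{eq-cqghopf} gives $c^V_{f,v}=\sum_k c^{V_k}_{f_k,v_k}$, which lies in the image of $\Phi$. Hence $\Phi$ is onto.

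For injectivity, note that any element of the source is supported on a finite set $F\subset P^+$, so it suffices to prove $\Phi$ is injective on $\bigoplus_{\lambda\in F}V(\lambda)^*\otimes V(\lambda)$ for every finite $F$. Let $\rho_\lambda\colon U\to\mathrm{End}_\C V(\lambda)$ be the representation; under the identification $\mathrm{End}_\C V(\lambda)^*\cong V(\lambda)^*\otimes V(\lambda)$ the restriction of $\Phi$ to $\bigoplus_{\lambda\in F}V(\lambda)^*\otimes V(\lambda)$ is precisely the transpose of the algebra homomorphism
\[
\rho_F=\bigoplus_{\lambda\in F}\rho_\lambda\colon\ U\ \longrightarrow\ \bigoplus_{\lambda\in F}\mathrm{End}_\C V(\lambda).
\]
Each $V(\lambda)$ is a finite-dimensional simple $U$-module with $\mathrm{End}_U V(\lambda)=\C$ (Schur), and the $V(\lambda)$, $\lambda\in F$, are pairwise non-isomorphic (distinct highest weights), so by the Jacobson density theorem $\rho_F$ is surjective; its transpose is therefore injective, which is exactly what is needed. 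Combined with surjectivity, $\Phi$ is a $U\otimes U$-isomorphism.

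The only step with any real content is the injectivity argument, and there the crux is the density/semisimplicity input — that the finitely many simple modules appearing are non-isomorphic with scalar endomorphism rings, so that the diagonal representation fills up $\bigoplus_{\lambda\in F}\mathrm{End}_\C V(\lambda)$. Both ingredients are part of the standard highest-weight theory of $\cc$ recalled above, so no genuine obstacle remains; the rest is formal.
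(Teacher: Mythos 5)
Your argument is correct: equivariance is immediate from the definition of the $U\otimes U$-action, surjectivity follows from semisimplicity of $\cc$ exactly as you describe, and injectivity via identifying the restriction of $\Phi$ to a finite set $F\subset P^+$ with the transpose of $\rho_F$ and invoking Schur's lemma plus Jacobson density is the standard route to the linear independence of matrix coefficients of pairwise non-isomorphic simples. Note that the paper itself states Theorem \ref{thm-PW} as a well-known result without proof, so there is no in-paper argument to compare against; your proof is the classical one this citation implicitly rests on, and it goes through verbatim in the quantum setting since only finite-dimensionality, simplicity, and $\mathrm{End}_U V(\lambda)=\C$ are used.
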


	The theory of $R$-matrix implies the following commutative relations for matrix coefficients:

	\begin{prop}[\cite{LS,DP}]\label{prop-RmatComRel}
		Let $V_1,V_2\in\cc,v_1\in (V_1)_{\mu_1},v_2\in (V_2)_{\mu_2},f_1\in (V_1^*)_{\nu_1},f_2\in (V_2^*)_{\nu_2}$. Then
		\[c_{f_1,v_1}^{V_1}c_{f_2,v_2}^{V_2}=q^{(\nu_1,\nu_2)-(\mu_1,\mu_2)}c_{f_2,v_2}^{V_2}c_{f_1,v_1}^{V_1}+\sum_i c_{f_{2i},v_{2i}}^{V_2}c_{f_{1i},v_{1i}}^{V_1},\]
		where
		\[f_{2i}\otimes f_{1i}=p_i(q)(M_i(E)\otimes M_i(F))(f_2\otimes f_1),v_{2i}\otimes v_{1i}=p_i'(q)(M_i'(E)\otimes M_i'(F))(v_2\otimes v_1),\]
		here $p_i(q),p_i'(q)\in\C(q)$ and $M_i,M_i'$ are monomials of which at least one is not constant.
	\end{prop}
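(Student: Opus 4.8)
The plan is to deduce the relation from the quasi-triangular structure of $U$, using only two ingredients: the multiplicativity $c^{V_1}_{f_1,v_1}c^{V_2}_{f_2,v_2}=c^{V_1\otimes V_2}_{f_1\otimes f_2,\,v_1\otimes v_2}$ recorded in (\ref{eq-cqghopf}), and the invariance of matrix coefficients under isomorphisms in $\cc$: if $\phi\colon V\xrightarrow{\ \sim\ }V'$ is such an isomorphism, then $c^{V}_{f,v}=c^{V'}_{f\circ\phi\inv,\,\phi(v)}$ for all $f\in V^*$, $v\in V$, which is immediate from $u\cdot\phi(v)=\phi(u\cdot v)$. I would apply this with $\phi$ a braiding isomorphism $\check R\colon V_1\otimes V_2\to V_2\otimes V_1$ built from the universal $R$-matrix $\mathcal{R}$ of $U$. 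On any tensor product of modules from $\cc$, $\mathcal{R}$ acts as an honest finite-rank operator; its intertwining identity $\mathcal{R}\Delta=\Delta^{\mathrm{op}}\mathcal{R}$ turns $\check R$ into a morphism in $\cc$; and its triangular shape shows that, for weight vectors $v_1\in(V_1)_{\mu_1}$ and $v_2\in(V_2)_{\mu_2}$,
\[
\check R(v_1\otimes v_2)=q^{-(\mu_1,\mu_2)}\,v_2\otimes v_1+\sum_i p_i'(q)\,\bigl(M_i'(E)\otimes M_i'(F)\bigr)(v_2\otimes v_1),
\]
where the Cartan part of $\mathcal{R}$ produces the scalar $q^{-(\mu_1,\mu_2)}$ and the remaining terms involve nonconstant monomials $M_i'$ in the $E_j$'s and the $F_j$'s.

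Feeding $\phi=\check R$ into the multiplicativity identity gives
\[
c^{V_1}_{f_1,v_1}c^{V_2}_{f_2,v_2}=c^{V_1\otimes V_2}_{f_1\otimes f_2,\,v_1\otimes v_2}=c^{V_2\otimes V_1}_{(f_1\otimes f_2)\circ\check R\inv,\ \check R(v_1\otimes v_2)},
\]
so it remains to describe $(f_1\otimes f_2)\circ\check R\inv\in(V_2\otimes V_1)^*$. Expressing $\check R\inv$ back in terms of $\mathcal{R}$, using $f(xw)=(S\inv(x)f)(w)$, and noting that $f_i\in(V_i^*)_{\nu_i}$ is supported on the weight space $(V_i)_{-\nu_i}$, the same triangular analysis on the dual side yields
\[
(f_1\otimes f_2)\circ\check R\inv=q^{(\nu_1,\nu_2)}\,f_2\otimes f_1+\sum_i p_i(q)\,\bigl(M_i(E)\otimes M_i(F)\bigr)(f_2\otimes f_1),
\]
with $M_i$ nonconstant, the Cartan and antipode factors produced by $S\inv$ and the dual action being scalars on weight vectors that get absorbed into the $p_i(q)\in\C(q)$. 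Substituting the last two displays into the previous one and expanding by bilinearity of the matrix-coefficient pairing, the ``leading times leading'' term equals $q^{(\nu_1,\nu_2)-(\mu_1,\mu_2)}\,c^{V_2\otimes V_1}_{f_2\otimes f_1,\,v_2\otimes v_1}=q^{(\nu_1,\nu_2)-(\mu_1,\mu_2)}\,c^{V_2}_{f_2,v_2}c^{V_1}_{f_1,v_1}$, and every remaining term has the form $c^{V_2}_{f_{2i},v_{2i}}c^{V_1}_{f_{1i},v_{1i}}$ with $f_{2i}\otimes f_{1i}=p_i(q)(M_i(E)\otimes M_i(F))(f_2\otimes f_1)$ and $v_{2i}\otimes v_{1i}=p_i'(q)(M_i'(E)\otimes M_i'(F))(v_2\otimes v_1)$; a leading factor paired with a correction gives one nonconstant monomial and two corrections give two, so at least one of $M_i,M_i'$ is always nonconstant.

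The one genuinely delicate point is the bookkeeping of conventions: among the several natural intertwiners $V_1\otimes V_2\to V_2\otimes V_1$ that can be assembled from $\mathcal{R}$, its inverse, and their flips, one must select the one whose weight-diagonal scalar is $q^{-(\mu_1,\mu_2)}$ on the vector side and $q^{(\nu_1,\nu_2)}$ on the functional side---so that the product scalar comes out as $q^{(\nu_1,\nu_2)-(\mu_1,\mu_2)}$ and not its inverse---while its triangular part simultaneously places the $E$-monomials on the $V_2$-factor and the $F$-monomials on the $V_1$-factor, exactly as the statement demands; with the usual normalization of $\mathcal{R}$ this $\check R$ is the inverse of the ``opposite'' braiding $V_2\otimes V_1\to V_1\otimes V_2$. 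No analytic subtlety arises, since $V_1,V_2$ are finite-dimensional, so $\mathcal{R}$ acts by a finite sum and all the expansions above are finite.
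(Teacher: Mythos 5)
Your argument is correct and is essentially the proof the paper implicitly relies on: the paper gives no proof, citing \cite{LS,DP}, and those sources establish the relation exactly as you do, by viewing $c^{V_1}_{f_1,v_1}c^{V_2}_{f_2,v_2}$ as a matrix coefficient of $V_1\otimes V_2$, transporting it through the braiding isomorphism built from the universal $R$-matrix, and reading off the Cartan scalar $q^{(\nu_1,\nu_2)-(\mu_1,\mu_2)}$ plus correction terms with nonconstant $E$- and $F$-monomials. Your explicit flagging of the convention choice for $\check R$ is the right place to be careful, and the rest of the bookkeeping (finiteness, absorption of Cartan and antipode factors into the scalars $p_i(q),p_i'(q)$) is sound.
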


	\begin{rem}\label{rem-quanmat}
		One can deduce generators and relations of $\cqg$ for specific $G$. When $G=SL_{n+1}$, it has Weyl group $W\simeq S_{n+1}$ generated by simple reflections $s_i=(i,i+1),1\le i\le n$. Take simple roots as $\alpha_i=\varepsilon_i-\varepsilon_{i+1}$, where $\varepsilon_1,\varepsilon_2,\cdots,\varepsilon_{n+1}$ form a orthonormal basis in some Euclidean space. $V(\omega_1)$ is the so-called \emph{natural representation} of $U$, and every $V(\omega_i)$ appears as a direct summand of some tensor power of $V(\omega_1)$.  Write $V(\omega_1)=\C v_1\oplus\C v_2\oplus\cdots\oplus\C v_{n+1}$, where $v_1$ is some highest vector and $v_{i+1}=F_iv_i$. We can choose generators of $\C_q[SL_{n+1}]$ to be $x_{ij}=c^{V(\omega_1)}_{v_i^*,v_j},1\le i,j\le n+1$, where $v_i^*(v_j)=\delta_{ij}$, and the relations are
		\begin{equation}\label{eq-quanmat}
			\begin{gathered}
				x_{ij}x_{il}=qx_{il}x_{ij},j<l,\\
				x_{ij}x_{kj}=qx_{kj}x_{ij},i<k,\\
				x_{il}x_{kj}=x_{kj}x_{il},i<k,j<l,\\
				x_{ij}x_{kl}-x_{kl}x_{ij}=(q-q^{-1})x_{il}x_{kj},i<k,j<l,\\
				\mathrm{det}_q(x_{ij})=1,
			\end{gathered}
		\end{equation}
		where $\det_q$ stands for the quantum determinant: for indeterminates $y_{ij},1\le i,j\le k$, define
		\[\mathrm{det}_q(y_{ij}):=\sum_{\tau\in S_{k}}(-q)^{l(\tau)}y_{1\tau(1)}y_{2\tau(2)}\cdots y_{k\tau(k)}.\]
	\end{rem}

	\subsection{Spectra of $\cqg$}

	In this subsection we review some results on spectra of $\cqg$ in Joseph \cite{Jo} and Yakimov \cite{Ya}. Define the subalgebras of $\cqg$
	\[R^+=\Span\{c_{f,v_\lambda}^{V(\lambda)}|\lambda\in P^+,f\in V(\lambda)^*\},\]
	\[R^-=\Span\{c_{f,v_{w_0(\lambda)}}^{V(\lambda)}|\lambda\in P^+,f\in V(\lambda)^*\}.\]
	Joseph proved that
	\begin{equation}\label{eq-R+R-}
		\cqg=R^+R^-=R^-R^+.
	\end{equation}
	Define
	\[J_w^+=\Span\{c_{f,v_\lambda}^{V(\lambda)}|\lambda\in P^+,f\in(V_{w,\lambda}^+)^\perp\}\subset R^+,w\in W,\]
	\[J_w^-=\Span\{c_{f,v_{-\lambda}}^{V(-w_0(\lambda))}|\lambda\in P^+,f\in(V_{w,\lambda}^-)^\perp\}\subset R^-,w\in W,\]
	\[J_{\ww}=J_{w_1}^+R^-+R^+J_{w_2}^-\subset\cqg,(\ww)\in \wtw.\]
	By \emph{$H$-prime}, we mean an $H$-invariant completely prime ideal of $R^\pm$ or $\cqg$ with respect to the $1\otimes H$-action. In fact, $H$-primes are finite in number and can be characterized in the following manner (first due to Joseph, \cite{Jo}*{Proposition 10.1.8, Proposition 10.3.5}).
	\begin{thm}[Yakimov, \cite{Ya}*{Theorem 2.1}]\label{thm-Hprime}
		(i) For each $w\in W$, $J_w^\pm$ is an $H$-prime of $R^\pm$. All $H$-primes of $R^\pm$ are of this form.

		(ii) For each $(\ww)\in \wtw$, $J_{\ww}$ is an $H$-prime of $\cqg$. All $H$-primes of $\cqg$ are of this form.
	\end{thm}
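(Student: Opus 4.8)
The plan is to establish part (i) for $R^\pm$ first and then obtain part (ii) by contracting an $H$-prime of $\cqg$ along the factorization (\ref{eq-R+R-}). For (i), the structural claims are quick. By the Peter--Weyl theorem \ref{thm-PW} and the definition of $R^+$, the assignment $f\mapsto c^{V(\lambda)}_{f,v_\lambda}$ identifies $\bigoplus_{\lambda\in P^+}V(\lambda)^*$ with $R^+$ as a vector space, and the multiplication law in (\ref{eq-cqghopf}) reads $c^{V(\lambda)}_{f,v_\lambda}c^{V(\mu)}_{g,v_\mu}=c^{V(\lambda)\otimes V(\mu)}_{f\otimes g,\,v_\lambda\otimes v_\mu}$; since $v_\lambda\otimes v_\mu$ generates the Cartan component $V(\lambda+\mu)\hookrightarrow V(\lambda)\otimes V(\mu)$, this exhibits $R^+$ as a subalgebra whose product is the projection dual to that embedding. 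Each $c^{V(\lambda)}_{f,v_\lambda}$ is a $1\otimes H$-weight vector of weight $\lambda$, so $R^+$ and $J_w^+$ are $1\otimes H$-stable. That $J_w^+$ is a two-sided ideal follows from multiplicativity of Demazure modules: the embedding $\iota\colon V(\lambda+\mu)\hookrightarrow V(\lambda)\otimes V(\mu)$ sends $v_{w(\lambda+\mu)}$ to $v_{w(\lambda)}\otimes v_{w(\mu)}$ (Corollaries \ref{cor-Twaction} and \ref{cor-DemMod}) and hence, using $\Delta(U^+)\subseteq(U^+H)\otimes U^+$ and $V_{w,\nu}^+=U^+v_{w(\nu)}$, carries $V_{w,\lambda+\mu}^+$ into $V_{w,\lambda}^+\otimes V_{w,\mu}^+$; therefore $f\perp V_{w,\lambda}^+$ forces $\iota^*(f\otimes g)\perp V_{w,\lambda+\mu}^+$, and symmetrically on the other side. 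The assertions for $R^-,J_w^-$ follow by a symmetric argument, e.g. via the Chevalley-type automorphism of $\cqg$ interchanging $R^+$ and $R^-$.

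The real content of (i) is that $R^+/J_w^+$ is a domain and that the $J_w^+$ exhaust the $H$-primes of $R^+$. For complete primeness I would identify $R^+/J_w^+$ with the quantized coordinate ring of a Schubert cell $B^+wB^+/B^+$ and invoke its PBW basis, which presents it as an iterated Ore extension over $\C$, hence a domain; equivalently one produces a faithful representation of $R^+/J_w^+$ by $q$-difference operators (a one-sided Soibelman/Fock module attached to a reduced word of $w$) whose image has no zero divisors, the kernel being pinned to $J_w^+$ by a weight-and-support argument on matrix coefficients. For the classification, an $H$-prime $Q^+\subseteq R^+$ is forced by $1\otimes H$-stability to be $\bigoplus_\lambda\{f:c^{V(\lambda)}_{f,v_\lambda}\in Q^+\}$ with each summand the annihilator of a $U^+H$-submodule $N_\lambda\subseteq V(\lambda)$; the Cartan-multiplication compatibility plus primeness, transported to the classical limit $q\to1$ where the prime $B^+$-stable subvarieties of $G/B^+$ are precisely the (irreducible) Schubert varieties $\overline{B^+wB^+/B^+}$, forces each $N_\lambda$ to be the Demazure module $V_{w,\lambda}^+$ for one and the same $w$, so $Q^+=J_w^+$. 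I expect this exhaustiveness step---controlling \emph{all} $H$-stable primes, not merely verifying that the listed ones are prime---to be the main obstacle.

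For (ii) I would bootstrap. Given an $H$-prime $Q\subseteq\cqg$, the intersections $Q\cap R^\pm$ are $1\otimes H$-stable primes of the subalgebras $R^\pm$, hence equal $J_{w_1}^+$ and $J_{w_2}^-$ for unique $w_1,w_2\in W$ by (i); since $Q$ is a two-sided ideal and $\cqg=R^+R^-=R^-R^+$, this already gives $J_{\ww}=J_{w_1}^+R^-+R^+J_{w_2}^-\subseteq Q$. That $J_{\ww}$ is itself a two-sided $1\otimes H$-stable ideal is checked using (\ref{eq-R+R-}) and the $R$-matrix relations of Proposition \ref{prop-RmatComRel} to commute $R^-$ past $J_{w_1}^+$ and $R^+$ past $J_{w_2}^-$ modulo lower terms that remain in $J_{\ww}$. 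For complete primeness one shows $\cqg/J_{\ww}$ is, as a vector space, $(R^+/J_{w_1}^+)\otimes(R^-/J_{w_2}^-)$ with a crossed-product-type twisted multiplication governed by Proposition \ref{prop-RmatComRel}; since both factors are domains by (i) and the twist is by filtered automorphisms, $\cqg/J_{\ww}$ is a domain. Finally $Q=J_{\ww}$ because, after inverting the images of the standard normal elements, $\cqg/J_{\ww}$ becomes a quantum torus, whose only $1\otimes H$-stable prime ideal is $(0)$; thus $Q/J_{\ww}=(0)$. The delicate points here mirror those in (i): establishing the twisted-tensor decomposition and the ``only $(0)$'' statement for the localization, i.e. once again exhaustiveness rather than primeness of the exhibited list.
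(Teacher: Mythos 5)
This theorem is not proved in the paper: it is quoted verbatim from Yakimov (Theorem 2.1), with the original result due to Joseph, so there is no in-paper argument to compare against; your proposal has to be measured against the standard Joseph/Hodges--Levasseur line, whose architecture (prove (i) for $R^\pm$, then bootstrap to $\cqg$ via $\cqg=R^+R^-$) you do follow. The routine parts of your sketch are fine: the identification of $R^+$ with $\bigoplus_\lambda V(\lambda)^*$ with Cartan-component multiplication, the two-sidedness of $J_w^+$ via $\iota(V^+_{w,\lambda+\mu})\subseteq V^+_{w,\lambda}\otimes V^+_{w,\mu}$ and $\Delta(U^+)\subseteq U^+H\otimes U^+$, and $Q\supseteq J_{\ww}$ together with $Q\cap E_{\ww}=\varnothing$ for an $H$-prime $Q$ (the latter because $c^\pm_{w_i,\lambda}\notin Q\cap R^\pm=J^\pm_{w_i}$ and $Q$ is completely prime) are all correct. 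But the two steps that carry the actual content are not proofs as written. In (i), "transport to the classical limit $q\to1$, where $B^+$-stable irreducible subvarieties are Schubert varieties" is a heuristic, not an argument: nothing in the proposal shows that the subspaces $N_\lambda$ cut out by an $H$-prime are $U^+H$-submodules, that they are coherent across $\lambda$ (one and the same $w$), or that generic-$q$ ideal classification can be read off from the semiclassical picture -- this is precisely the hard part of Joseph's Chapters 9--10, and you acknowledge it is missing.

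In (ii) the final step rests on a false statement: $(\cqg/J_{\ww})[E_{\ww}^{-1}]$ is in general \emph{not} a quantum torus. Already for $G=SL_2$ and $(\ww)=(s,s)$ one has $J_{s,s}=0$ and $\cqg_{s,s}=\cqt[x_{12}^{-1},x_{21}^{-1}]$, in which $x_{11}$ is not invertible: the highest weight type module $M^h(\eta)$ of (\ref{eq-sshi}) extends to this localization because $x_{12},x_{21}$ act by nonzero scalars, and $x_{11}$ kills $e_0$ there. This is exactly why the paper must pass to the finer localization $\ttii=R_{\tii}[S_{\tii}^{-1}]$ at \emph{all} units $S_{\tii}\supsetneq$ (image of $E_{\ww}$) before obtaining a quantum torus (Theorem \ref{thm-localization}). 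The statement you actually need -- that $(0)$ is the only $H$-stable prime of $\cqg_{\ww}$ -- is true but requires a different argument, e.g. contracting to the center and using that the generators of $Z_{\ww}$ in Theorem \ref{thm-genzww} are $1\otimes H$-weight vectors with $\Z$-independent weights, so every nonzero $H$-stable ideal of $Z_{\ww}$ contains a unit; and if you go that route you must be careful not to quote Theorem \ref{thm-stratification}, since its standard proof (the Goodearl--Letzter stratification over the $H$-primes, and the homeomorphism $\spec\cqg_{\ww}\simeq\spec Z_{\ww}$) already presupposes the classification of $H$-primes you are trying to establish. As it stands, both exhaustiveness in (i) and the terminal step of (ii) are genuine gaps.
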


	Define \emph{normal elements} of $\cqg$ to be
	\[\np{w}{\lambda}=c_{v_{w(\lambda)}^*,v_\lambda}^{V(\lambda)},\nm{w}{\lambda}=c_{v_{-w(\lambda)}^*,v_{-\lambda}}^{V(-w_0(\lambda))},\]
	for $\lambda\in P^+,w\in W$. By Corollary \ref{cor-Twaction}, one has
	\begin{equation}\label{eq-NmElt}
		\np{w}{\lambda_1}\np{w}{\lambda_2}=\np{w}{\lambda_1+\lambda_2},\nm{w}{\lambda_1}\nm{w}{\lambda_2}=\nm{w}{\lambda_1+\lambda_2},\lambda_1,\lambda_2\in P^+,w\in W.
	\end{equation}
	So for any $\mu\in P,\mu=\lambda_1-\lambda_2,\lambda_1,\lambda_2\in P^+$, one can define $c_{w,\mu}^\pm=c_{w,\lambda_1}^\pm(c_{w,\lambda_2}^\pm)\inv$, which is independent of the choice of $\lambda_1,\lambda_2\in P^+$ by (\ref{eq-NmElt}). Noticing Equation (\ref{eq-R+R-}) and Theorem \ref{thm-Hprime}, the $R$-matrix commutative relations in Proposition \ref{prop-RmatComRel} for $\np{w}{\lambda},\nm{w}{\lambda}$ can be rewritten as:
	\begin{prop}\label{prop-NormComRel}
		Let $w\in W,\lambda\in P^+,V\in\cc,f\in (V^*)_{-\nu},v\in V_\mu,\nu,\mu\in P$. Then
		\[c_{f,v}^V\np{w}{\lambda}=q^{(w(\lambda),\nu)-(\lambda,\mu)}\np{w}{\lambda}c_{f,v}^V\;\;\mathrm{mod}\,J_w^+R^-,\]
		\[c_{f,v}^V\nm{w}{\lambda}=q^{(w(\lambda),\nu)-(\lambda,\mu)}\nm{w}{\lambda}c_{f,v}^V\;\;\mathrm{mod}\,R^+J_w^-.\]
	\end{prop}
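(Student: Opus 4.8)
The plan is to read both congruences off the $R$-matrix commutation relation of Proposition~\ref{prop-RmatComRel}, the one subtlety being to place the normal element on the side of $c_{f,v}^V$ that forces the ``extra'' generators appearing in the correction terms to act on a fixed extremal weight vector from the annihilating side.

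\emph{The congruence for $\np{w}{\lambda}$.} Apply Proposition~\ref{prop-RmatComRel} to the product $c_{f,v}^V\,\np{w}{\lambda}$, taking $\np{w}{\lambda}=c^{V(\lambda)}_{v_{w(\lambda)}^*,v_\lambda}$ in the role of $c^{V_2}_{f_2,v_2}$. Since $v_\lambda\in V(\lambda)_\lambda$ and $v_{w(\lambda)}^*\in(V(\lambda)^*)_{-w(\lambda)}$, the scalar produced by the formula $q^{(\nu_1,\nu_2)-(\mu_1,\mu_2)}$ is exactly $q^{(w(\lambda),\nu)-(\lambda,\mu)}$, so it suffices to show that the correction sum $\sum_i c^{V(\lambda)}_{f_{2i},v_{2i}}\,c^V_{f_{1i},v_{1i}}$ lies in $J_w^+R^-$. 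Here $v_{2i}$ is a scalar multiple of $M_i'(E)v_\lambda$; as $v_\lambda$ is a highest weight vector, $E_jv_\lambda=0$ for all $j$, so the $i$-th term vanishes unless $M_i'$ is the trivial monomial. In the surviving case $M_i$ is non-constant by Proposition~\ref{prop-RmatComRel}, hence $f_{2i}$ is a scalar multiple of $M_i(E)v_{w(\lambda)}^*$, a functional of weight $-w(\lambda)+\beta_i$ with $\beta_i$ a non-zero non-negative integral combination of simple roots; it is therefore supported on the weight space $V(\lambda)_{w(\lambda)-\beta_i}$, which is disjoint from the support of the Demazure module $V_{w,\lambda}^+=U^+v_{w(\lambda)}$ (whose weights have the form $w(\lambda)+\gamma$ with $\gamma$ a non-negative integral combination of simple roots). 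Thus $M_i(E)v_{w(\lambda)}^*\in(V_{w,\lambda}^+)^\perp$ and $c^{V(\lambda)}_{f_{2i},v_{2i}}\in J_w^+$. Finally $J_w^+$ is an ideal of $R^+$ by Theorem~\ref{thm-Hprime}(i), so $J_w^+R^+=J_w^+$, and with $\cqg=R^+R^-$ from \eqref{eq-R+R-} this gives $J_w^+\cqg=J_w^+R^-$; hence each surviving correction term lies in $J_w^+\cqg=J_w^+R^-$, which is the first congruence.

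\emph{The congruence for $\nm{w}{\lambda}$.} Now apply Proposition~\ref{prop-RmatComRel} to the product $\nm{w}{\lambda}\,c_{f,v}^V$, taking $\nm{w}{\lambda}=c^{V(-w_0(\lambda))}_{v_{-w(\lambda)}^*,v_{-\lambda}}$ in the role of $c^{V_1}_{f_1,v_1}$, and then solve for $c_{f,v}^V\,\nm{w}{\lambda}$: the scalar that appears is $q^{-(w(\lambda),\nu)+(\lambda,\mu)}$, whose inverse $q^{(w(\lambda),\nu)-(\lambda,\mu)}$ is the one we want. The correction terms now read $c^V_{f_{2i},v_{2i}}\,c^{V(-w_0(\lambda))}_{f_{1i},v_{1i}}$ with $v_{1i}$ a scalar multiple of $M_i'(F)v_{-\lambda}$; since $v_{-\lambda}$ is the lowest weight vector of $V(-w_0(\lambda))$, $F_jv_{-\lambda}=0$ for all $j$, so a term survives only when $M_i'$ is trivial, in which case $M_i$ is non-constant and $f_{1i}$ is a scalar multiple of $M_i(F)v_{-w(\lambda)}^*$, a functional supported on a weight space disjoint from the support of $V_{w,\lambda}^-=U^-v_{-w(\lambda)}$ (weights of the form $-w(\lambda)-\gamma$, $\gamma$ a non-negative integral combination of simple roots); hence $M_i(F)v_{-w(\lambda)}^*\in(V_{w,\lambda}^-)^\perp$ and $c^{V(-w_0(\lambda))}_{f_{1i},v_{1i}}\in J_w^-$. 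Since $J_w^-$ is an ideal of $R^-$, $\cqg J_w^-=R^+R^-J_w^-=R^+J_w^-$, so every correction term lies in $\cqg J_w^-=R^+J_w^-$, and solving for $c_{f,v}^V\,\nm{w}{\lambda}$ yields the second congruence.

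\emph{Expected main obstacle.} The substantive work is pinning down the $R$-matrix conventions in Proposition~\ref{prop-RmatComRel} precisely enough to confirm that, for each correction term, one of $M_i,M_i'$ is non-constant and that --- once the normal element is put on the correct side --- the non-constant vector-monomial always hits the fixed extremal weight vector from the side that kills it. Granting that, the remainder is a weight-support comparison inside a single $V(\lambda)$ (respectively $V(-w_0(\lambda))$), together with the elementary observation that $J_w^\pm$ absorbs the leftover factor of $\cqg$ into $J_w^+R^-$ (respectively $R^+J_w^-$).
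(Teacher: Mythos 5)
Your proof is correct and follows exactly the route the paper intends: the paper states this proposition as a direct rewriting of Proposition~\ref{prop-RmatComRel} using $\cqg=R^+R^-$ and the ideal property of $J_w^\pm$ from Theorem~\ref{thm-Hprime}, and your argument fills in precisely those details (the extremal weight vector killing the non-constant vector-side monomials, the weight-support comparison placing the surviving coefficient in $J_w^\pm$, and the absorption $J_w^+\cqg\subset J_w^+R^-$, $\cqg J_w^-\subset R^+J_w^-$). The choice of which factor plays $V_1$ versus $V_2$ in each case is also the right one to land the correction terms on the correct side.
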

	
	Let $w\in W$ and $(\ww)\in \wtw$. By abuse of notation we may denote the images of $c_{w,\lambda}^\pm$ in $R^\pm/J_w^\pm$ or $\cqg/J_{\ww}$ by the same symbols. Denote the multiplicative subset of $R^\pm, R^\pm/J_w^\pm$ or $\cqg/J_{\ww}$
	\[E_w^\pm=\{c_{w,\lambda}^\pm|\lambda\in P^+\}.\]
	Denote the multiplicative subset of $\cqg$ or $\cqg/J_{\ww}$
	\[E_{\ww}=E_{w_1}^+E_{w_2}^-.\]
	Proposition \ref{prop-NormComRel} suggests that $E_{\ww}\subset\cqg/J_{\ww}$ satisfies the left and right Ore conditions. Thus one may define the localization
	\[\cqg_{\ww}=(\cqg/J_{\ww})[E_{\ww}\inv]\]
	and its center
	\[Z_{\ww}=Z(\cqg_{\ww}).\]

	Before stating Joseph's Stratification Theorem, which is first due to Joseph \cite{Jo,Jo1}, recall the torus $\T^n=(\C^*)^n$ acts on $\cqg$ by
	\begin{equation}\label{eq-torus-action}
		t\cdot c_{f,v}^V=\prod_{i=1}^nt_i^{(\mu,\alpha_i^\vee)}\,c_{f,v}^V,\ t=(t_1,t_2,\cdots,t_n)\in\T^n,V\in\cc,f\in V^*,v\in V_\mu.
	\end{equation}
	This extends the $1\otimes H$-action on $\cqg$.

	\begin{thm}[Yakimov, \cite{Ya}*{Theorem 2.3}]\label{thm-stratification}
		(i) For each prime ideal $J$ of $\cqg$, there exists unique $(\ww)\in \wtw$ such that $J\supset J_{\ww}$ and $J\cap E_{\ww}=\varnothing$, i.e.
		\[\spec\cqg=\bigsqcup_{(\ww)\in \wtw}\spec\cqg_{\ww}.\]
		(ii) For each $(\ww)\in\wtw$, $Z_{\ww}$ is isomorphic to a Laurent polynomial ring of dimension $\dim\ker(w_1-w_2)$. Denote by $i_{\ww}:Z_{\ww}\to\cqg_{\ww}$ the natural embedding. Then $i_{\ww}^*$ gives a homeomorphism from $\spec\cqg_{\ww}$ to $\spec Z_{\ww}$.

		(iii) For each $(\ww)\in\wtw$, the primitive spectrum of $\cqg_{\ww}$
		\[\prim\cqg_{\ww}=(i_{\ww}^*)\inv(\specm Z_{\ww}),\]
		and $\prim\cqg_{\ww}$ is a single $\T^n$-orbit.
	\end{thm}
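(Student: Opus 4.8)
The plan is to deduce the theorem from the $H$-stratification theory of Goodearl and Letzter, fed by the structural facts assembled above. Since $\cqg$ is Noetherian and, by Theorem~\ref{thm-Hprime}(ii), has only finitely many $1\otimes H$-primes --- the ideals $J_{\ww}$ --- that machinery partitions $\spec\cqg$ according to the $1\otimes H$-core $\bigcap_{h}h\cdot P$ of a prime $P$, the stratum over $J_{\ww}$ consisting of the primes $P\supseteq J_{\ww}$ meeting no nonzero $1\otimes H$-eigenvector of $\cqg/J_{\ww}$. The first task is to identify this stratum with $\spec\cqg_{\ww}$. Proposition~\ref{prop-NormComRel} already shows the $c_{w,\lambda}^\pm$ are $1\otimes H$-eigenvectors that are normal modulo $J_{\ww}$, so $E_{\ww}$ is a left and right Ore set and $\cqg_{\ww}$ is defined; what remains is to prove that $\cqg_{\ww}$ is $1\otimes H$-simple, i.e.\ that inverting $E_{\ww}$ already inverts \emph{every} nonzero $1\otimes H$-eigenvector modulo $J_{\ww}$. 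I would prove this one Borel half at a time: using $\cqg=R^+R^-$ from \eqref{eq-R+R-} together with $J_{\ww}=J_{w_1}^+R^-+R^+J_{w_2}^-$, one shows that inverting $E_{w_1}^+$ (resp.\ $E_{w_2}^-$) turns every $1\otimes H$-eigenvector of $R^+/J_{w_1}^+$ (resp.\ $R^-/J_{w_2}^-$) into a unit --- this is where the Demazure-module descriptions of the $J_w^\pm$ and of the normal elements are used --- and then splices the two statements via \eqref{eq-R+R-}. Granting $1\otimes H$-simplicity, the stratum over $J_{\ww}$ equals $\{P\in\spec\cqg\mid P\supseteq J_{\ww},\ P\cap E_{\ww}=\varnothing\}=\spec\cqg_{\ww}$, which is part~(i).

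For part~(ii): once $\cqg_{\ww}$ is $1\otimes H$-simple, the standard argument shows that it is generated over its centre $Z_{\ww}$ by $1\otimes H$-eigenvectors and that every prime $P$ satisfies $P=(P\cap Z_{\ww})\cqg_{\ww}$; hence contraction $i_{\ww}^*$ and extension are mutually inverse inclusion-preserving bijections, so $i_{\ww}^*$ is a homeomorphism from $\spec\cqg_{\ww}$ to $\spec Z_{\ww}$. To describe $Z_{\ww}$ itself, observe that by the Peter--Weyl theorem (Theorem~\ref{thm-PW}) and \eqref{eq-R+R-} the algebra $\cqg_{\ww}$ is spanned by products of weight matrix coefficients, which $q$-commute up to lower-order terms by Proposition~\ref{prop-RmatComRel}; passing to a convenient $1\otimes H$-eigenbasis exhibits $\cqg_{\ww}$ as a quantum-torus-type algebra whose centre is spanned by the monomials on which the governing skew bicharacter is trivial. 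That bicharacter is built from $w_1-w_2$ acting on a sublattice of $P$, and the root-data computation --- carried out explicitly later in the proof of Theorem~\ref{thm-localization} and in Section~\ref{sec-appen1}, in agreement with De~Concini--Procesi \cite{DP} --- identifies the rank of its radical with $\dim\ker(w_1-w_2)$; thus $Z_{\ww}$ is a Laurent polynomial ring of that dimension.

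For part~(iii): by \cite{GL} the algebra $\cqg$ satisfies the Dixmier--Moeglin equivalence, so the primitive ideals lying in a given stratum are exactly the rational ones. Transported through $i_{\ww}^*$, a prime $P$ is rational iff the centre of the fraction ring of $\cqg_{\ww}/P$ is algebraic over $\C$; since $Z(\cqg_{\ww})=Z_{\ww}$, this centre is the fraction field of $Z_{\ww}/(P\cap Z_{\ww})$, which is algebraic over $\C$ iff $P\cap Z_{\ww}$ is maximal (Nullstellensatz for the affine algebra $Z_{\ww}$). Hence $\prim\cqg_{\ww}=(i_{\ww}^*)\inv(\specm Z_{\ww})$. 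Finally, $\T^n$ acts on $\cqg$ via \eqref{eq-torus-action}, extending the $1\otimes H$-action, preserves each $J_{\ww}$ and scales the elements of $E_{\ww}$, so it acts on $\spec\cqg_{\ww}$ and on $\specm Z_{\ww}$; writing $Z_{\ww}\cong\C[z_1^{\pm1},\dots,z_d^{\pm1}]$ with the $z_j$ being $1\otimes H$-eigenvectors whose weights are linearly independent in $P$, the induced homomorphism $\T^n\to(\C^*)^d$ is surjective, so $\T^n$ acts transitively on $\specm Z_{\ww}\cong(\C^*)^d$.

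The main obstacle is the $1\otimes H$-simplicity of $\cqg_{\ww}$ together with the explicit identification of $Z_{\ww}$ and its dimension: everything else above is essentially formal once this is in hand, but establishing it requires the fine structure of the $H$-primes $J_w^\pm$ via Demazure modules and a careful analysis of the $q$-commutation combinatorics --- which is precisely why much of the remainder of the paper (Proposition~\ref{prop-NormComRel}, Theorem~\ref{thm-localization}, and the root-data computation of Section~\ref{sec-appen1}) is devoted to making this localization completely explicit.
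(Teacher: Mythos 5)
This theorem is not proved in the paper at all: it is imported verbatim from the literature (Yakimov \cite{Ya}, building on Joseph \cite{Jo,Jo1}, \cite{HLT} and Goodearl--Letzter \cite{GL}), so there is no internal proof to compare your argument against. Judged on its own, your outline does follow the standard route --- Goodearl--Letzter $H$-stratification over the finitely many $H$-primes $J_{\ww}$, $H$-simplicity of the localization $\cqg_{\ww}$, the Dixmier--Moeglin equivalence for part (iii), and a torus-weight argument for transitivity --- which is essentially how the result is established in those references.

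However, as a proof it has genuine gaps rather than just compressed details. First, the $H$-simplicity of $\cqg_{\ww}$ (that inverting $E_{\ww}$ already inverts every nonzero $1\otimes H$-eigenvector modulo $J_{\ww}$) is the technical heart of Joseph's work; your ``one Borel half at a time, then splice via $\cqg=R^+R^-$'' is a placeholder, and the splicing step in particular is nontrivial, since $H$-simplicity of localizations of $R^\pm/J_{w_i}^\pm$ does not formally pass to the double cell without an analysis of how the two halves interact. Second, for the statement that $Z_{\ww}$ is a Laurent polynomial ring of dimension $\dim\ker(w_1-w_2)$ you defer to the paper's Theorem \ref{thm-localization} and Section \ref{sec-appen1}; this is circular in the present context, because Proposition \ref{prop-prim} identifies $Z_{\ww}$ with $Z(\ttii)$ precisely by invoking Theorem \ref{thm-stratification}(ii)--(iii) together with Theorem \ref{thm-genzww} (itself another quoted result of Yakimov), and moreover $\ttii$ is a strictly larger localization than $\cqg_{\ww}$, so its center need not a priori equal $Z_{\ww}$ --- that equality is exactly what must be proved, not assumed. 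Similarly, your transitivity argument for the $\T^n$-action presupposes explicit $H$-eigenvector generators of $Z_{\ww}$ with $\Z$-linearly independent weights, which again is the content of Theorem \ref{thm-genzww}. So the plan is sound in outline but the two ingredients you yourself flag as ``the main obstacle'' are precisely the substance of the cited theorems, and they are not supplied.
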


	\begin{thm}[Yakimov, \cite{Ya}*{Theorem 3.1}]\label{thm-genzww}
		Let $\lambda^{(1)},\lambda^{(2)},\cdots,\lambda^{(p(\ww))}$ be the $\Z$-basis of
		\[\ker(w_1-w_2)\bigcap\bigoplus_{i\in\supp(\ww)}\Z\omega_i\subset P,\]
		where $p(\ww)=\dim\ker(w_1-w_2)+|\supp(\ww)|-n$, then $Z_{\ww}$ is generated by
		\[\np{w_1,\omega_i},i\notin\supp(\ww);\np{w_1}{\lambda^{(j)}}(\nm{w_2}{\lambda^{(j)}})\inv,1\le j\le p(\ww),\]
		and their inverses in $E_{\ww}E_{\ww}\inv$ as Laurent polynomial algebra.
	\end{thm}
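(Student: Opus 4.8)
The plan is to combine Joseph's description of $Z_{\ww}$ as a Laurent polynomial ring (part (ii) of Theorem \ref{thm-stratification}) with the commutation relations in Proposition \ref{prop-NormComRel} to pin down which normal elements — and which products of normal elements — actually land in the center after localization. First I would establish that every element listed is central in $\cqg_{\ww}$. For $i\notin\supp(\ww)$ the reflections $s_{i}$ do not occur in any reduced word for $(\ww)$, so $w_1(\omega_i)=\omega_i=w_2(\omega_i)$, and moreover $\np{w_1,\omega_i}=\nm{w_2,\omega_i}$ modulo $J_{\ww}$ since both equal the matrix coefficient $c^{V(\omega_i)}_{v_{\omega_i}^*,v_{\omega_i}}$ on the one-dimensional extremal weight spaces that coincide; plugging $w=w_1$, $\lambda=\omega_i$ into Proposition \ref{prop-NormComRel} and using $w_1(\omega_i)=\omega_i$ gives the scalar $q^{(\omega_i,\nu)-(\omega_i,\mu)}$, which upon pairing against an arbitrary matrix coefficient $c^V_{f,v}$ with $f\in(V^*)_{-\nu}$, $v\in V_\mu$ forces $\nu=\mu$ whenever the product is nonzero modulo $J_{\ww}$ — hence centrality. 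For the elements $\np{w_1}{\lambda^{(j)}}(\nm{w_2}{\lambda^{(j)}})\inv$ with $\lambda^{(j)}\in\ker(w_1-w_2)$, apply Proposition \ref{prop-NormComRel} twice: $c^V_{f,v}$ past $\np{w_1}{\lambda^{(j)}}$ produces $q^{(w_1(\lambda^{(j)}),\nu)-(\lambda^{(j)},\mu)}$ and past $(\nm{w_2}{\lambda^{(j)}})\inv$ produces $q^{-(w_2(\lambda^{(j)}),\nu)+(\lambda^{(j)},\mu)}$; since $w_1(\lambda^{(j)})=w_2(\lambda^{(j)})$ these cancel, so the product is genuinely central (and it is a unit, lying in $E_{\ww}E_{\ww}\inv$).

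Next I would show these elements generate all of $Z_{\ww}$. The key input is that $\cqg_{\ww}$ is, by Theorem \ref{thm-stratification}(ii), a localization whose center is a Laurent polynomial ring of rank $\dim\ker(w_1-w_2)$, so it suffices to exhibit $\dim\ker(w_1-w_2)$ multiplicatively independent central units among our list and argue they generate a Laurent subring that cannot be enlarged. The count $p(\ww)+\big(n-|\supp(\ww)|\big)=\dim\ker(w_1-w_2)$ is exactly the rank, by definition of $p(\ww)$. For multiplicative independence I would track the \emph{weights} of these normal elements under the $\T^n$-action of (\ref{eq-torus-action}): $\np{w_1,\omega_i}$ has weight $\omega_i$, while $\np{w_1}{\lambda^{(j)}}(\nm{w_2}{\lambda^{(j)}})\inv$ has weight $\lambda^{(j)}-(-w_0 \text{-twisted contribution})$; more cleanly, using that $\lambda^{(j)}\in\bigoplus_{i\in\supp(\ww)}\Z\omega_i$ and the $\omega_i$ for $i\notin\supp(\ww)$ span a complementary sublattice, the weight vectors of the full list are $\Z$-linearly independent in $P$, hence the corresponding units are multiplicatively independent in $E_{\ww}E_{\ww}\inv$.

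Finally, to see there is nothing more in the center, I would use the structure of $\cqg_{\ww}$ as a quantum-torus-like algebra: by (\ref{eq-R+R-}), $\cqg/J_{\ww}$ is generated over $E_{\ww}$ by matrix coefficients $c^V_{f,v}$, and Proposition \ref{prop-NormComRel} shows that conjugation by any generator acts on $E_{\ww}$ by a $\T^n$-weight. An element of $\cqg_{\ww}$ is central iff all these weight-scalars are trivial, which forces its $\T^n$-weight to annihilate the image of $w_1-w_2$ on $P$ — i.e. to lie in $\ker(w_1-w_2)$ intersected with the relevant weight lattice constraints — and that is precisely the lattice spanned by the weights of the listed generators. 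A clean way to make this final step rigorous without re-deriving Joseph's rank computation is simply: the listed central units generate a Laurent polynomial subring $Z'\subseteq Z_{\ww}$ of the full rank $\dim\ker(w_1-w_2)$, and a Laurent polynomial ring has no proper Laurent subring of the same rank that is not equal to it unless there is a nontrivial quotient in the lattice — which the weight-independence of the previous paragraph rules out by a saturation check (the weight lattice of $Z'$ is saturated in $P$ because the $\lambda^{(j)}$ were chosen as a $\Z$-basis of the intersection lattice and the $\omega_i$, $i\notin\supp(\ww)$, are part of a basis of $P$).

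\textbf{Main obstacle.} The delicate point is the last step: upgrading ``$Z'$ has the right rank and saturated weight lattice'' to ``$Z'=Z_{\ww}$''. In principle a central element need not be a single $\T^n$-weight vector, so I must first invoke that $\cqg_{\ww}$ admits a $\T^n$-eigenbasis (from the Peter–Weyl decomposition, Theorem \ref{thm-PW}, localized) so that $Z_{\ww}$ is spanned by $\T^n$-homogeneous central elements, reducing to the weight-vector case; then the weight-lattice/saturation argument closes it. Verifying that $\np{w_1,\omega_i}\equiv\nm{w_2,\omega_i}$ and more generally controlling these identities modulo $J_{\ww}$ for $i\notin\supp(\ww)$ — and that the relevant extremal weight coefficients are nonzero in the quotient so the localization is well-behaved — is the other place where care is needed, but this follows from Proposition \ref{prop-DemMod} together with the definitions of $J_w^\pm$.
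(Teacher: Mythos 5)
First, a point of comparison: the paper offers no proof of this statement at all --- it is imported verbatim from Yakimov \cite{Ya}*{Theorem 3.1} --- so your argument has to be judged on its own terms rather than against an internal proof. Your centrality check for $\np{w_1}{\lambda^{(j)}}(\nm{w_2}{\lambda^{(j)}})\inv$ (two applications of Proposition \ref{prop-NormComRel} and cancellation using $w_1(\lambda^{(j)})=w_2(\lambda^{(j)})$) is correct, and the rank count and weight-independence of the listed units are fine. But the centrality argument for $\np{w_1}{\omega_i}$, $i\notin\supp(\ww)$, is wrong as written: nonvanishing of the product modulo $J_{\ww}$ does not force $\nu=\mu$ (already $x_{12}\notin J_{s,s}=0$ in $\cqt$), and the claimed congruence $\np{w_1}{\omega_i}\equiv\nm{w_2}{\omega_i}$ mod $J_{\ww}$ is false --- these are coefficients of different modules ($V(\omega_i)$ versus $V(-w_0\omega_i)$) with opposite weights $\pm\omega_i$ under (\ref{eq-torus-action}), and $J_{\ww}$ is $\T^n$-stable; in $\cqt/J_{e,e}$ this would assert $x_{11}\equiv x_{22}$, whereas there they are mutually inverse units. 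The correct mechanism is the one you only gesture at in your last sentence: by Proposition \ref{prop-DemMod} and the definition of $J_w^\pm$, the coefficients spanning $R^+/J_{w_1}^+$ and $R^-/J_{w_2}^-$ have $\nu-\mu\in\sum_{j\in\supp(\ww)}\Z\alpha_j$, which is orthogonal to $\omega_i$, so the scalar $q^{(\omega_i,\nu-\mu)}$ from Proposition \ref{prop-NormComRel} equals $1$.

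The serious gap is the completeness step, which is the actual content of the theorem. You rightly observe that equality of ranks does not force equality of Laurent polynomial rings, but the saturation argument you use to close this is not available: the action (\ref{eq-torus-action}) sees only the vector $v$, so $\np{w_1}{\lambda^{(j)}}(\nm{w_2}{\lambda^{(j)}})\inv$ has weight $2\lambda^{(j)}$, and the weight lattice of your subring $Z'$, generated by $\{\omega_i\}_{i\notin\supp(\ww)}$ and $\{2\lambda^{(j)}\}$, is in general not saturated. The theorem itself shows this is unavoidable: for $\cqt$ and $(\ww)=(s,s)$ the center is $\C[(\np{s}{\omega_1}(\nm{s}{\omega_1})\inv)^{\pm1}]=\C[(x_{21}x_{12}\inv)^{\pm1}]$, whose weight lattice is $2\Z\subsetneq\Z$. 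So ``full rank plus saturated weight lattice'' is both false for $Z'$ and, in any case, not a criterion that yields the needed inclusion $Z_{\ww}\subseteq Z'$. Proving that inclusion --- that every $\T^n$-homogeneous central element of $\cqg_{\ww}$ is a scalar multiple of a monomial in the listed units --- is the hard part: it requires analyzing a homogeneous central element in the Peter--Weyl spanning set of $\cqg/J_{\ww}$ and exploiting its commutation with all of $E_{\ww}$ (information not captured by the (\ref{eq-torus-action})-weight alone, which ignores $\nu$), together with the fact that weight-zero central elements are scalars (e.g.\ via the transitivity in Theorem \ref{thm-stratification}(iii)). None of this is supplied, so the proposal does not establish the generation statement; it is essentially the content of Yakimov's proof that is being assumed.
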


	\subsection{Quantum Torus}\label{sec-qt}

	\sloppy A \emph{quantum torus of dimension $k$} is by definition an associative algebra with generators $x_1^{\pm1},x_2^{\pm1},\cdots,x_k^{\pm1}$ subject to relations $x_ix_j=q^{h_{ij}}x_jx_i$, where $q\in\C^*$ is a parameter, and $H=(h_{ij})$ is an $k\times k$ skew-symmetric integer matrix. We denote the algebra by $\lqh$.
	
	We summarize some results on the structure of $\lqh$ below.

	\begin{prop}\label{prop-qtiso}
		For two $k\times k$ skew symmetric interger matrix $H,H'$, $\lqh\simeq L_q[H']$ as algebra if there exits some $P\in GL_k(\Z)$, such that $P\tr HP=H'$.
	\end{prop}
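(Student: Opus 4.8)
The plan is to exhibit an explicit change of generators realizing the isomorphism. Given skew-symmetric integer matrices $H, H'$ with $P^{\mathrm{T}} H P = H'$ for some $P = (p_{ij}) \in GL_k(\Z)$, write $\lqh$ with generators $x_1^{\pm 1}, \dots, x_k^{\pm 1}$ satisfying $x_i x_j = q^{h_{ij}} x_j x_i$. For each column $j$ of $P$, I would define a candidate new generator as the ordered monomial $z_j = q^{e_j} x_1^{p_{1j}} x_2^{p_{2j}} \cdots x_k^{p_{kj}}$, where $e_j \in \frac12\Z$ (or an integer, after checking parity) is a normalization constant chosen so that $z_j$ is a genuine element of $\lqh$ and behaves well; one can also simply omit the prefactor and track the resulting scalar. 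First I would record the basic commutation identity in a quantum torus: if $u = x_1^{a_1}\cdots x_k^{a_k}$ and $v = x_1^{b_1}\cdots x_k^{b_k}$ are two ordered monomials, then $uv = q^{\langle a, b\rangle_H} vu$ where $\langle a,b\rangle_H = \sum_{i<l}(a_i b_l - a_l b_i) h_{il} = a^{\mathrm{T}} H' b$-type bilinear form — more precisely $\langle a, b\rangle_H = \sum_{i,l} a_i b_l h_{il}$ restricted appropriately; I would state it as $\langle a,b\rangle_H = a^{\mathrm{T}} \hat H b$ for the strictly-upper-triangular part, and then observe that because $H$ is skew-symmetric this equals $\tfrac12(a^{\mathrm{T}} H b - b^{\mathrm{T}} H a) = $ (for the relevant sign convention) essentially $a^{\mathrm{T}} H b$ up to the diagonal which vanishes. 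The upshot is that $z_i z_j = q^{c_i^{\mathrm{T}} H c_j} z_j z_i$ where $c_i, c_j$ are the $i$-th and $j$-th columns of $P$, and $c_i^{\mathrm{T}} H c_j = (P^{\mathrm{T}} H P)_{ij} = h'_{ij}$.

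Next I would verify the $z_j$ are units: each $z_j$ is a product of the units $x_i^{\pm 1}$, hence invertible, with inverse the reverse-ordered monomial times an appropriate scalar. Then I would check that the $z_j$ generate all of $\lqh$: since $P \in GL_k(\Z)$, the inverse matrix $P^{-1}$ has integer entries, so each $x_i$ can be written as an ordered monomial in the $z_j^{\pm 1}$ (times a scalar), using the commutation relations to reorder; alternatively, one observes that the subalgebra generated by $z_1^{\pm1}, \dots, z_k^{\pm 1}$ contains monomials $x_1^{a_1}\cdots x_k^{a_k}$ for $a$ ranging over the $\Z$-span of the columns of $P$, which is all of $\Z^k$. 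This gives a surjective algebra homomorphism $L_q[H'] \to \lqh$ sending the standard generators $y_j^{\pm 1}$ of $L_q[H']$ to $z_j^{\pm 1}$; it is well-defined because the $z_j$ satisfy exactly the defining relations $z_i z_j = q^{h'_{ij}} z_j z_i$. Finally, symmetry: applying the same construction with $P^{-1}$ in place of $P$ gives a homomorphism the other way, and composing shows both are isomorphisms (or one invokes that a surjection between quantum tori with the same "rank" over the base — here both are free of rank $k$ as modules with the obvious PBW-type basis $\{x^a\}_{a \in \Z^k}$ — is automatically injective by comparing dimensions of graded pieces).

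The main obstacle I anticipate is bookkeeping with the scalar prefactors: the naive monomial $x_1^{p_{1j}}\cdots x_k^{p_{kj}}$ is perfectly well-defined, but when one reorders products of such monomials to read off the commutation constant one accumulates powers of $q$ coming from the strictly-upper-triangular versus full bilinear form, and one must be careful that the exponent one extracts is genuinely $(P^{\mathrm{T}} H P)_{ij}$ and not off by the diagonal contribution or a factor of $2$. Since $H$ is skew-symmetric its diagonal is zero, so $c_i^{\mathrm{T}} H c_i = 0$ and there is no self-commutation issue, and the cross terms assemble correctly; but I would state the monomial-commutation lemma carefully enough that this is transparent. Everything else — invertibility, generation, well-definedness — is routine once that lemma is in hand. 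Note this proposition only claims the implication in one direction (the condition is sufficient), so I do not need to address whether $GL_k(\Z)$-congruence is also necessary for isomorphism.
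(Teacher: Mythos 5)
Your proposal is correct: the change-of-generators argument (set $z_j$ equal to the ordered monomial $x^{c_j}$ for $c_j$ the $j$-th column of $P$, use that skew-symmetry gives $x^{a}x^{b}=q^{a\tr Hb}x^{b}x^{a}$ exactly, so $z_iz_j=q^{(P\tr HP)_{ij}}z_jz_i$, and use $P\inv\in GL_k(\Z)$ for surjectivity and either the reverse map or linear independence of the monomial basis for injectivity) is the standard proof of this fact. The paper states Proposition \ref{prop-qtiso} without proof, as a summary of known structural results on quantum tori, and your argument is exactly the intended one; your worry about a stray diagonal or factor-of-$2$ contribution is indeed harmless since $H$ is skew-symmetric with zero diagonal.
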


	For $\aa=(a_1,a_2,\cdots,a_k)\tr\in\Z^k$, write $x^{\aa}$ for the monomial $x_1^{a_1}x_2^{a_2}\cdots x_k^{a_k}$. Then $\{x^{\aa}|\aa\in\Z^k\}$ forms basis of $\lqh$. For any $k\times k$ skew-symmetric integer matrix $H$, there exists some $Q\in GL_k(\Z)$ such that
	\[Q\tr HQ=\mathrm{diag}(m_1S,m_2S,\cdots,m_lS,0,\cdots,0),S=\begin{pmatrix}0&1\\-1&0\end{pmatrix},\]
	where $m_1\le m_2\le\cdots\le m_l\in\N$ and $2l$ equals the rank $\rr(H)$ of $H$. For simplicity, we will denote $L_q[S]$ by $L_q(2)$, with generators $x^{\pm1},y^{\pm1}$ subject to $xy=qyx$. The following is a direct application of Proposition \ref{prop-qtiso}.

	\begin{cor}\label{cor-qtcenter}
		Keep notations above, then
		\[\lqh\simeq L_{q^{m_1}}(2)\otimes L_{q^{m_2}}(2)\otimes\cdots\otimes L_{q^{m_l}}(2)\otimes Z,\]
		where $Z$ is the center of $\lqh$, which is a Laurent polynomial algebra in $k-\rr(H)$ variables with a basis $\{x^{\aa}|H\aa=0,\aa\in\Z^k\}$.
	\end{cor}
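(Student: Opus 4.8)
The plan is to pass to the normal form recalled just before the statement and then compute the center directly in monomial coordinates.

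First I would apply Proposition~\ref{prop-qtiso}: choosing $Q\in GL_k(\Z)$ with $Q\tr H Q=\diag(m_1S,m_2S,\ldots,m_lS,0,\ldots,0)$, where $2l=\rr(H)$ and each $m_j\ge1$, gives an algebra isomorphism $\lqh\simeq L_q[Q\tr H Q]$. Writing the generators of $L_q[Q\tr H Q]$ as $z_1^{\pm1},\ldots,z_k^{\pm1}$, the defining relations read $z_{2j-1}z_{2j}=q^{m_j}z_{2j}z_{2j-1}$ for $1\le j\le l$, with all other pairs of generators commuting; in particular $z_{2l+1},\ldots,z_k$ are central. Hence the subalgebra generated by $z_{2j-1}^{\pm1},z_{2j}^{\pm1}$ is a copy of $L_{q^{m_j}}(2)$, these $l$ subalgebras pairwise commute, the subalgebra $Z_0$ generated by $z_{2l+1}^{\pm1},\ldots,z_k^{\pm1}$ is a Laurent polynomial algebra in $k-2l=k-\rr(H)$ variables, and comparing the basis $\{z^{\aa}:\aa\in\Z^k\}$ of $L_q[Q\tr H Q]$ with the product of the bases of these factors shows that the multiplication map
\[L_{q^{m_1}}(2)\otimes L_{q^{m_2}}(2)\otimes\cdots\otimes L_{q^{m_l}}(2)\otimes Z_0\longrightarrow L_q[Q\tr H Q]\]
is an algebra isomorphism.

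Next I would identify the center. For a general quantum torus, conjugation of a basis element by $x_i$ is $x_ix^{\aa}x_i^{-1}=q^{(H\aa)_i}x^{\aa}$, so $\sum_{\aa}c_{\aa}x^{\aa}$ is central iff $c_{\aa}(q^{(H\aa)_i}-1)=0$ for all $\aa$ and all $i$. Since $q$ is generic (not a root of unity), $q^{(H\aa)_i}=1$ forces $(H\aa)_i=0$, so $Z(\lqh)$ has basis $\{x^{\aa}:H\aa=0,\ \aa\in\Z^k\}$. The same computation in the $z$-coordinates shows the central monomials of $L_q[Q\tr H Q]$ are exactly those supported on $z_{2l+1},\ldots,z_k$, i.e. $Z(L_q[Q\tr H Q])=Z_0$; transporting through the isomorphism of the previous step identifies $Z_0$ with the center $Z$ of $\lqh$ and yields the displayed decomposition. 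Finally, $\{\aa\in\Z^k:H\aa=0\}$ is a saturated (hence direct-summand) sublattice of $\Z^k$ of rank $k-\operatorname{rank}_{\mathbb{Q}}H=k-\rr(H)$, so $Z$ is a Laurent polynomial algebra in $k-\rr(H)$ variables, as claimed.

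There is no serious obstacle; the argument is essentially bookkeeping. The only places where a hypothesis is genuinely used are the implication $q^{(H\aa)_i}=1\Rightarrow(H\aa)_i=0$ (genericity of $q$, and likewise $q^{m_j}$ not a root of unity so that each $L_{q^{m_j}}(2)$ has trivial center), together with the fact that the normal form has exactly $\tfrac12\rr(H)$ nontrivial $2\times2$ blocks, which is what makes the count of central generators come out to $k-\rr(H)$.
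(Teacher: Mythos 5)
Your argument is correct and follows exactly the route the paper intends: Corollary \ref{cor-qtcenter} is stated there as a direct application of Proposition \ref{prop-qtiso}, i.e.\ pass to the normal form $Q\tr HQ$, read off the $L_{q^{m_j}}(2)$ factors and the central Laurent part, and use genericity of $q$ to see that the central monomials are exactly those $x^{\aa}$ with $H\aa=0$. Your explicit verification of the center and of the rank count $k-\rr(H)$ is just the bookkeeping the paper leaves implicit, so nothing further is needed.
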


	Corollary \ref{cor-qtcenter} suggests that simple modules of $\lqh$ can be obtained from those of $\lqt{}$'s. Denote $\irr\lqt{}$ the spectrum of simple modules of $\lqt{}$. Write $\lqh$ in the form of Corollary \ref{cor-qtcenter}, one then constructs
	\begin{equation}\label{eq-tirr}
		\tirr\lqh=\irr L_{q^{m_1}}(2)\times\irr L_{q^{m_2}}(2)\times\cdots\times\irr L_{q^{m_l}}(2)\times \T^{k-\rr(H)}
	\end{equation}
	to be the spectrum of simple modules of $\lqh$ obtained from tensor modules of $\lqt{}$ cases. Namely, for $N_i\in\irr L_{q^{m_i}}(2),1\le i\le l,a\in\T^{k-\rr(H)}$, the corresponding tensor module is
	\[N_1\otimes N_2\otimes\cdots\otimes N_l\otimes \C\mathbf{1}_a,\]
	where elements of $Z$ (viewed as Laurent polynomials) act on $\mathbf{1}_a$ via evaluation at $a\in\T^{k-\rr(H)}$. By Jacobson density theorem, the tensor module is a simple $\lqh$-module.

	We take a further look into $\irr\lqt{}$. Bavula \cite{Ba}*{Section 3} classifies all the simple modules of $\lqt{}$ through the approach of quantum Weyl algebra. Rosenberg in \cite{Ro}*{Chapter II, 4.4.5} gives similar results, using the theory of hyperbolic rings. By their results elements of $\irr\lqt{}$ are of the form $\lqt{}/\lqt{}(xy-c),c\in\C^*$ or $\lqt{}/\lqt{}\cap(\C(xy)[x,y]f)$ for some irreducible element $f\in\C(xy)[x,y]$. In practice, we often construct simple $\lqt{}$-modules through twisting the following \emph{basic $\lqt{}$-module} by $\lqt{}$-automorphisms:
	\[V=\C[t,t\inv]\text{ as vector space},xf(t)=f(qt),yf(t)=tf(t),f(t)\in V.\]

	The following properties are easy to check.

	\begin{prop}\label{prop-qtunit}
		(i) Any unit in $\lqh$ is a non-zero scalar multiple of $x^{\aa}$ for some $\aa\in\Z^k$.

		(ii) $\lqh$ as (left) $\lqh$-module is indecomposable.
	\end{prop}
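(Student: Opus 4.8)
The plan is to prove both parts directly from the basis structure of $\lqh$ recorded above, namely that $\{x^{\aa}\mid\aa\in\Z^k\}$ is a $\C$-basis and $x^{\aa}x^{\bb}=q^{\langle\aa,\bb\rangle}x^{\aa+\bb}$ for a suitable bilinear form $\langle\cdot,\cdot\rangle$ built from $H$ (antisymmetrized). For part~(i), suppose $u=\sum_{\aa\in F}\lambda_{\aa}x^{\aa}$ with $F$ finite and all $\lambda_{\aa}\ne0$, and that $uv=1$ for some $v=\sum_{\bb\in G}\mu_{\bb}x^{\bb}$. First I would introduce a $\Z^k$-grading: since each defining relation $x_ix_j=q^{h_{ij}}x_jx_i$ is homogeneous of degree $0$ in no useful sense, instead I grade $\lqh$ by $\Z^k$ with $\deg x^{\aa}=\aa$; this is an algebra grading because $x^{\aa}x^{\bb}$ is a scalar times $x^{\aa+\bb}$. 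Then $1$ is homogeneous of degree $0$, so the product $uv$, which is supported in $F+G$, must collapse to degree $0$; picking the lexicographically largest $\aa_0\in F$ and largest $\bb_0\in G$ forces $\aa_0+\bb_0$ to be the unique maximal element of $F+G$ with nonzero coefficient (no cancellation is possible at an extreme corner of the Minkowski sum), hence $\aa_0+\bb_0=0$, and similarly with ``largest'' replaced by ``smallest.'' Combining the two shows $F$ and $G$ are singletons, so $u=\lambda_{\aa_0}x^{\aa_0}$, a scalar multiple of a monomial.

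For part~(ii), I would argue by contradiction: if $\lqh=M\oplus N$ as left $\lqh$-modules with both summands nonzero, write $1=e+f$ with $e\in M$, $f\in N$; then $e,f$ are orthogonal idempotents summing to $1$, so it suffices to show $\lqh$ has no idempotents other than $0$ and $1$. Using the same $\Z^k$-grading, an idempotent $e=\sum\lambda_{\aa}x^{\aa}$ satisfies $e^2=e$; comparing the degree-$0$ component and, more efficiently, applying the counit-type argument or a degree/leading-term analysis as in part~(i): the top-degree term of $e^2$ is a nonzero scalar times $x^{2\aa_{\max}}$, which must equal the top term of $e$, forcing $\aa_{\max}=0$, and likewise $\aa_{\min}=0$, so $e=\lambda x^{0}=\lambda\cdot1$ is a scalar, whence $\lambda^2=\lambda$ gives $\lambda\in\{0,1\}$. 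Therefore the only idempotents are $0$ and $1$, $\lqh$ is indecomposable as a left module over itself, and part~(ii) follows.

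The only mild subtlety — and the one place I would be careful — is justifying the ``no cancellation at an extreme corner of the Minkowski sum'' step rigorously: one must check that for the lexicographically (or any fixed linear-functional) maximal elements $\aa_0\in F$, $\bb_0\in G$, the monomial $x^{\aa_0+\bb_0}$ occurs in $uv$ with coefficient exactly $\lambda_{\aa_0}\mu_{\bb_0}q^{\langle\aa_0,\bb_0\rangle}\ne0$ and in no other product $x^{\aa}x^{\bb}$ with $(\aa,\bb)\in F\times G$. This is immediate once one fixes a generic linear functional $\varphi:\Z^k\to\R$ separating all the finitely many points involved, since $\varphi(\aa+\bb)<\varphi(\aa_0+\bb_0)$ for every other pair; I would phrase the whole leading-term argument in terms of such a $\varphi$ to keep it clean. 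Everything else is routine bookkeeping with the commutation scalars.
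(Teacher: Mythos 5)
Your argument is correct: the paper offers no proof of this proposition (it is stated as ``easy to check''), and your leading-exponent argument with a generic linear functional $\varphi:\Z^k\to\R$ — units must be monomials because the extreme corner of the Minkowski sum of supports cannot cancel, and idempotents must be trivial because equality of supports in $e^2=e$ forces $2\aa_{\max}=\aa_{\max}$ and $2\aa_{\min}=\aa_{\min}$ — is exactly the standard verification, including the correct reduction of (ii) to the absence of nontrivial idempotents in $\lqh$. The one point you flag (no cancellation at the $\varphi$-maximal corner) is handled correctly by choosing $\varphi$ injective on the finitely many exponents involved, so nothing is missing.
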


	\section{Representation Theory}
	
	\subsection{Typical $\cqt$-Modules}\label{sec-TypMod}

	In the notation of Remark \ref{rem-quanmat}, $\cqt$ is generated by $x_{11},x_{12},x_{21},x_{22}$ subject to the following relations:
	\begin{equation}\label{eq-relcqt}
		\begin{gathered}
			x_{11}x_{12}=qx_{12}x_{11},x_{11}x_{21}=qx_{21}x_{11},x_{12}x_{22}=qx_{22}x_{12},x_{21}x_{22}=qx_{22}x_{21},\\
			x_{12}x_{21}=x_{21}x_{12},x_{11}x_{22}-x_{22}x_{11}=(q-q\inv)x_{12}x_{21},x_{11}x_{22}-qx_{12}x_{21}=1.
		\end{gathered}
	\end{equation}
	Write the Weyl group $W=\{e,s\}$. Primitive ideals of $\cqt$ are listed by type as follows (cf. \cite{HL1}*{Theorem B.1.1}):
	\begin{equation}\label{eq-sltprim}
		\begin{aligned}
			(e,e)&: P_{(e,e),\gamma}=(x_{12},x_{21},x_{11}-\gamma,x_{22}-\gamma\inv),\gamma\in\C^*,\\
			(s,e)&: P_{(s,e)}=(x_{12}),\\
			(e,s)&: P_{(e,s)}=(x_{21}),\\
			(s,s)&: P_{(s,s),\gamma}=(x_{12}-\gamma x_{21}),\gamma\in\C^*.
		\end{aligned}
	\end{equation}
	From above we see any $(e,e)$ type simple $\cqt$-module is one dimensional, so we neglect simple modules of this type. For each of the types left, we construct several simple $\cqt$-modules, called \emph{typical $\cqt$-modules}.

	\paragraph{$(s,e)$ type} Construct module $M^-(\gamma)$ spanned by basis elements $e_i,i\in\Z$ with module actions
	\begin{equation}\label{eq-semod}
		x_{11}e_{i}=e_{i-1},x_{22}e_i=e_{i+1},x_{12}e_i=0,x_{21}e_i=\gamma q^ie_{i},i\in\Z
	\end{equation}
	for some parameter $\gamma\in\C^*$.

	\paragraph{$(e,s)$ type} Construct module $M^+(\eta)$ spanned by basis elements $e_i,i\in\Z$ with module actions
	\begin{equation}\label{eq-esmod}
		x_{11}e_{i}=e_{i-1},x_{22}e_i=e_{i+1},x_{12}e_i=\eta q^ie_{i},x_{21}e_i=0,i\in\Z
	\end{equation}
	for some parameter $\eta\in\C^*$.

	\paragraph{$(s,s)$ type} Construct \emph{Laurent type module} $M(\gamma,\eta)$ spanned by basis elements $e_i,i\in\Z$ with module actions
	\begin{equation}\label{eq-ssla}
		x_{11}e_{i}=(1+\gamma\eta q^{2i-1})e_{i-1},x_{22}e_i=e_{i+1},x_{12}e_i=\eta q^ie_{i},x_{21}e_i=\gamma q^ie_{i},i\in\Z
	\end{equation}
	for parameters $\gamma,\eta\in\C^*$ satisfying $\gamma\eta\ne -q^{2k+1},\forall k\in\Z$.

	Construct \emph{highest weight type module} $M^h(\eta)$ spanned by basis elements $e_i,i\in\N$ with module actions
	\begin{equation}\label{eq-sshi}
		x_{11}e_{i}=(1-q^{2i})e_{i-1},x_{11}e_0=0,x_{22}e_i=e_{i+1},x_{12}e_i=\eta q^ie_i,x_{21}e_i=-\eta\inv q^{i+1}e_{i},i\in\N
	\end{equation}
	for some parameters $\eta\in\C^*$.

	Construct \emph{lowest weight type module} $M^l(\gamma)$ spanned by basis elements $e_i,i\in\Z_{\le0}$ with module actions
	\begin{equation}\label{eq-sslo}
		x_{11}e_{i}=e_{i-1},x_{22}e_i=(1-q^{2i})e_{i+1},x_{22}e_0=0,x_{12}e_i=\gamma q^{i}e_i,x_{21}e_i=-\gamma\inv q^{i-1}e_{i},i\in\Z_{\le0}
	\end{equation}
	for some parameters $\gamma\in\C^*$.

	All the modules constructed above are simple. This can be shown using a Vandermonde determinant argument, which tells that any nonzero element in the module generates some eigenvector $e_i$ of $x_{12}$ and $x_{21}$, and $e_i$ then generates the whole module. These modules are viewed as ``typical" in the following sense.

	\begin{thm}\label{prop-typmod}
		The typical modules constructed above are the only possible simple $\cqt$-modules with semisimple $\langle x_{12},x_{21}\rangle$-actions.
	\end{thm}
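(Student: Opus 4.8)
The plan is to reduce the whole question to a single chain of joint eigenvectors for $x_{12}$ and $x_{21}$. First I would replace the two relations $x_{11}x_{22}-x_{22}x_{11}=(q-q\inv)x_{12}x_{21}$ and $x_{11}x_{22}-qx_{12}x_{21}=1$ of (\ref{eq-relcqt}) by the equivalent pair
\[x_{11}x_{22}=1+qx_{12}x_{21},\qquad x_{22}x_{11}=1+q\inv x_{12}x_{21},\]
which shows that the two products $x_{11}x_{22},x_{22}x_{11}$ are determined by the single element $x_{12}x_{21}$, and this element acts as a scalar on each joint eigenvector. Since the $\langle x_{12},x_{21}\rangle$-action on $M$ is semisimple and $\langle x_{12},x_{21}\rangle$ is commutative, $M$ is a direct sum of joint eigenspaces; pick a nonzero joint eigenvector $v$ with $x_{12}v=\beta v$, $x_{21}v=\gamma v$. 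Using $x_{11}x_{12}=qx_{12}x_{11}$, $x_{11}x_{21}=qx_{21}x_{11}$, $x_{12}x_{22}=qx_{22}x_{12}$, $x_{21}x_{22}=qx_{22}x_{21}$, one checks that $x_{11}$ carries a joint eigenvector with eigenvalue pair $(q^i\beta,q^i\gamma)$ to one with pair $(q^{i-1}\beta,q^{i-1}\gamma)$, while $x_{22}$ shifts it to $(q^{i+1}\beta,q^{i+1}\gamma)$. Setting $v_0=v$, $v_i=x_{22}^{\,i}v$ for $i>0$ and $v_i=x_{11}^{\,-i}v$ for $i<0$, the subspace $\Span\{v_i\}$ is visibly stable under all four generators (the action of $x_{11},x_{22}$ being computed from the two displayed relations and the $q$-commutations), so by simplicity $M=\Span\{v_i\}$; and when $q$ is not a root of unity and $(\beta,\gamma)\ne(0,0)$ the nonzero $v_i$ lie in pairwise distinct weight spaces, hence are linearly independent and the weight spaces of $M$ are one-dimensional.

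Next I would determine the shape of $M$. The relations give $x_{11}v_{j+1}=(1+q^{2j+1}\beta\gamma)v_j$ for $j\ge0$ and $x_{22}v_j=(1+q^{2j+1}\beta\gamma)v_{j+1}$ for $j\le-1$ (together with $x_{22}v_j=v_{j+1}$ for $j\ge0$ and $x_{11}v_j=v_{j-1}$ for $j\le0$). These show that $I=\{i:v_i\ne0\}$ is an interval of $\Z$ containing $0$, and that a finite endpoint forces the value of $p:=\beta\gamma$: if $v_k\ne0$ but $v_{k+1}=0$ with $k\ge0$ then $p=-q^{-2k-1}$, and if $v_{-l}\ne0$ but $v_{-l-1}=0$ with $l\ge0$ then $p=-q^{2l+1}$. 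Since $q$ is not a root of unity these two conditions are incompatible, so $I$ is never a finite interval; thus either $I=\Z$, or $I$ is a half-line bounded below, or a half-line bounded above. The degenerate case $\beta=\gamma=0$ is treated separately: there $x_{12}=x_{21}=0$ on $M$, so $x_{11},x_{22}$ are mutually inverse and $M$ is a cyclic $\C[x_{11}^{\pm1}]$-module, which is simple only when it is one-dimensional; so $M$ is then a one-dimensional $(e,e)$-type module, which is excluded from consideration.

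It remains to identify each surviving case with one of the listed modules by rescaling the $v_i$ (possible because, on each relevant weight space, one of $x_{11},x_{22}$ is injective by the displayed relations). If $\beta=0\ne\gamma$ (resp. $\gamma=0\ne\beta$) then $x_{11}x_{22}=x_{22}x_{11}=1$ and $I=\Z$, and normalizing so that $x_{22}v_i=v_{i+1}$, $x_{11}v_i=v_{i-1}$ gives exactly (\ref{eq-semod}), i.e. $M\cong M^-(\gamma)$ (resp. (\ref{eq-esmod}), $M\cong M^+(\beta)$). If $\beta\gamma\ne0$ and $I=\Z$, taking $e_0=v$ and $e_{i+1}=x_{22}e_i$ yields $x_{22}e_i=e_{i+1}$, $x_{12}e_i=\beta q^ie_i$, $x_{21}e_i=\gamma q^ie_i$, $x_{11}e_i=(1+\beta\gamma q^{2i-1})e_{i-1}$; the requirement that no coefficient $1+q^{2i+1}\beta\gamma$ vanish — forced by simplicity, since a vanishing one would produce the proper nonzero ``tail'' submodule $\Span\{v_j:j\ge i+1\}$ — is precisely $\beta\gamma\ne-q^{2k+1}$ for all $k$, so $M\cong M(\gamma,\beta)$ as in (\ref{eq-ssla}). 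If $\beta\gamma\ne0$ and $I$ is bounded below by $-l$, then $p=-q^{2l+1}$, and re-indexing with $e_0=v_{-l}$, $e_{i+1}=x_{22}e_i$ the forced value of $p$ collapses the coefficients to $x_{11}e_0=0$, $x_{11}e_i=(1-q^{2i})e_{i-1}$, with $x_{12}e_i=\eta q^ie_i$, $x_{21}e_i=-\eta\inv q^{i+1}e_i$ for $\eta=\beta q^{-l}$; this is (\ref{eq-sshi}), $M\cong M^h(\eta)$. The case $I$ bounded above is symmetric and yields $M\cong M^l(\gamma')$ as in (\ref{eq-sslo}). I expect the main obstacle to be bookkeeping rather than any conceptual hurdle: one must carefully track how the forced value of $\beta\gamma$ turns the generic coefficients $1+q^{2j+1}\beta\gamma$ into the clean forms in (\ref{eq-sshi}) and (\ref{eq-sslo}), and one must repeatedly invoke simplicity via the ``a one-sided tail is a submodule'' trick — in conjunction with $q$ not being a root of unity — to exclude interior vanishings of these coefficients and to rule out finite-dimensional or otherwise reducible possibilities.
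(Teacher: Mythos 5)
Your proposal is correct and follows essentially the same route as the paper's proof: produce a (joint) eigenvector of $x_{12},x_{21}$ (the paper extracts it from the primitive-ideal type, you directly from semisimplicity), generate a chain by applying $x_{11},x_{22}$, and split cases according to whether $\beta\gamma=-q^{2k+1}$, recovering (\ref{eq-semod})--(\ref{eq-sslo}) exactly as the paper does. One small slip worth fixing: when a coefficient $1+q^{2i+1}\beta\gamma$ vanishes with $i\le-1$, the set $\Span\{v_j:j\ge i+1\}$ is \emph{not} a submodule (there $x_{11}v_{i+1}=v_i$ falls outside it); the proper nonzero submodule to take is the lower tail $\Span\{v_j:j\le i\}$, which is stable because then $x_{22}v_i=0$, and with this replacement your simplicity argument goes through unchanged.
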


	\begin{proof}
		For the $(s,e)$ type simple $\cqt$-module $V$ with semisimple $\langle x_{12},x_{21}\rangle$-actions, it turns out that $x_{12}V=0$ and $x_{21}$ has an eigenvector in $V$. Denote the eigenvector by $e_0\ne0$, then $x_{21}e_0=\gamma e_0,\gamma\in\C^*$. Let $e_i=x_{22}^ie_0,i\ge0$ and $e_j=x_{11}^{-j}e_0,j<0$. Using (\ref{eq-relcqt}) we see $e_k,k\in\Z$ are nonzero, and linearly independent as eigenvectors of $x_{21}$ with different eigenvalues, thus span a $\cqt$-module with module actions as (\ref{eq-semod}). Therefore $V=\bigoplus_{k\in\Z}\C e_k=M^-(\gamma)$. The $(e,s)$ case is similar.

		For the $(s,s)$ type simple $\cqt$-module $V$ with semisimple $\langle x_{12},x_{21}\rangle$-actions, $x_{12}$ and $x_{21}$'s actions on $V$ differ by a nonzero constant, thus admit a common eigenvector in $V$. Denote the common eigenvector by $\te_0\ne0$, then $x_{12}\te_0=\eta\te_0,x_{21}\te_0=\gamma\te_0,\gamma,\eta\in\C^*$. Using (\ref{eq-relcqt}) we see if $\gamma\eta\ne -q^{2k+1},\forall k\in\Z$, then $x_{22}^k\te_0\ne0,x_{11}^k\te_0\ne0,k\ge0$. Let $e_i=x_{22}^i\te_0,i\ge0$ and $e_j=C_jx_{11}^{-j}\te_0,j<0,C_j\in\C^*$ such that $x_{22}e_k=e_{k+1},k\in\Z$, then $e_k,k\in\Z$ are linearly independent as eigenvectors of $x_{12},x_{21}$, thus span a $\cqt$-module with module actions as (\ref{eq-ssla}). Therefore $V=\bigoplus_{k\in}\C e_k=M(\gamma,\eta)$. If $\gamma\eta=-q^{2k+1}$ for some $k\in\Z$, then $x_{22}^l\te_0=0$ or $x_{11}^{l'}\te_0=0,\exists l,l'\ge0$. In this case $\te_0$ generates a module as (\ref{eq-sshi}) or (\ref{eq-sslo}), using a similar treatment.
	\end{proof}

	\subsection{Tensor Modules and Weight Strings}

	For $1\le i\le n$, denote by $\cqb{i}=\C_{q_i}[SL_2]_i,\cqbp{i}=\cqb{i}/(x_{21})$ and $\cqbm{i}=\cqb{i}/(x_{12})$ the quantum coordinate algebras corresponding to $U_i,U(i)$ and $U(-i)$ respectively. Then $M^\pm(*)$ is in fact $\C_{q_i}[B_i^\pm]$-modules. Furthermore, one can embed $\C_{q_i}[B^\pm]$ into $\lqt{i}$ via $x_{11}\mapsto x,x_{22}\mapsto x\inv$ and $x_{12}\mapsto y$ (or $x_{21}\mapsto y$). Denote the embeddings by $\iota_i^\pm$. In this way $M^\pm(*)$ can be made into $\lqt{i}$-modules.
	
	The natural embedding $U_i\hookrightarrow U$ induces the surjective restriction map $\pi_i:\cqg\twoheadrightarrow\cqb{i}$. Denote the composition of $\pi_i$ with quotient map $\cqb{i}\twoheadrightarrow\C_{q_i}[B_i^\pm]$ by $\pi_i^\pm$. Via $\pi_i$ or $\pi_i^\pm$ one can lift $M^\bullet(*)$ to simple $\cqg$-modules, and denote them by $M_i^{\bullet}(*)$.

	First by Levendorski\u{\i} and Soibelman \cite{LS}, then by Joseph \cite{Jo}, Saito \cite{Sa} and Tanisaki \cite{Ta}, the following theorem concerns tensoring lifted $(s,s)$-typical modules of highest weight type:
	\begin{thm}\label{thm-HWM}
		Let $\ii=\iim\in\I_w$, fix $\gamma\in\C^*$, then $M_{{i_1}}^h(\gamma)\otimes M_{{i_2}}^h(\gamma)\otimes\cdots\otimes M_{{i_m}}^h(\gamma)$ is a simple $\C_q[G]$-module depending only on $w$ (up to isomorphism), and it is of type $(w,w)$.
	\end{thm}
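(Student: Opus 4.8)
The plan is to establish, for the module $M:=M_{i_1}^h(\gamma)\otimes\cdots\otimes M_{i_m}^h(\gamma)$ attached to a fixed reduced expression $\ii=\iim\in\I_w$ (so $m=l(w)$), three facts in turn: that $M$ is of type $(w,w)$, i.e.\ $J_{w,w}\subseteq\mathrm{Ann}\,M$ while $E_{w,w}\cap\mathrm{Ann}\,M=\varnothing$; that $M$ is simple; and that $M$ does not depend on $\ii$ up to isomorphism. Throughout I use the basis $\{e_j\}_{j\in\N}$ of each factor from (\ref{eq-sshi}) with parameter $\gamma$, the induced basis $e_\aa=e_{a_1}\otimes\cdots\otimes e_{a_m}$ of $M$ ($\aa\in\N^m$), the vacuum $\vv_0=e_0\otimes\cdots\otimes e_0$, and the fact that $\cqg$ acts on $M$ through the $m$-fold coproduct followed by $\pi_{i_1}\otimes\cdots\otimes\pi_{i_m}$. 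The line of argument follows the analysis of Levendorski\u{\i} and Soibelman.

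First I would prove the type statement. For $J_{w,w}=J_w^+R^-+R^+J_w^-$ it is enough to check that $J_w^+$ and $J_w^-$ annihilate $M$. Take $c^{V(\lambda)}_{f,v_\lambda}\in J_w^+$, so $f\in(V^+_{w,\lambda})^\perp$, and expand it by the iterated coproduct in a weight basis of $V(\lambda)$ whose first members exhaust the $U_{i_m}$-string through $v_\lambda$. Reading the tensor factors from right to left and using that $M^h_{i_k}(\gamma)$ is a $\cqb{i_k}$-module — so that among the matrix coefficients of $V(\lambda)|_{U_{i_k}}$ only those supported on the $U_{i_k}$-cyclic hull of the incoming vector act nontrivially on it — a term survives on $e_\aa$ only if the vector fed into the $k$-th slot lies in $U_{i_k}U_{i_{k+1}}\cdots U_{i_m}v_\lambda$; in particular the vector in the first slot lies in $U_{\ii}v_\lambda=V^+_{w,\lambda}$ by Corollary \ref{cor-DemMod}(i), and then $f$ kills the whole contribution. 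The same argument with $V^-_{w,\lambda}=U_{\ii}v_{-\lambda}$ gives $J_w^-\subseteq\mathrm{Ann}\,M$. For the complementary statement I would compute the action of the normal element $\np{w}{\lambda}=c^{V(\lambda)}_{v_{w\lambda}^*,v_\lambda}$: since $v_{w\lambda}=T_wv_\lambda$ spans the one-dimensional weight space $V(\lambda)_{w\lambda}$, only the ``extremal path'' $v_\lambda\mapsto\cdots\mapsto v_{w\lambda}$ survives in the coproduct expansion, and its $k$-th factor restricts on $U_{i_k}$ to a power of the off-diagonal generator $x^{(i_k)}_{21}$ of $\cqb{i_k}$, which by (\ref{eq-sshi}) acts on $e_{a_k}$ as a nonzero multiple of $q^{a_k}$. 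Hence $\np{w}{\lambda}$ acts on $M$ diagonally, $\np{w}{\lambda}e_\aa=\kappa^+_\lambda\, q^{\sum_k d_k(\lambda)a_k}e_\aa$ with $\kappa^+_\lambda\in\C^*$, so invertibly; likewise $\nm{w}{\lambda}$. Thus $E_{w,w}\cap\mathrm{Ann}\,M=\varnothing$, $M$ descends to a $\cqg_{w,w}$-module, and feeding the generators of $Z_{w,w}$ from Theorem \ref{thm-genzww} into these formulas (note $\dim\ker(w-w)=n$, so $p(w,w)=|\supp w|$) shows $Z_{w,w}$ acts by a single character $\chi_{w,\gamma}$: $\np{w}{\omega_i}$ acts as the identity for $i\notin\supp w$ (the extremal path collapses, $w\omega_i=\omega_i$) and each $\np{w}{\lambda^{(j)}}(\nm{w}{\lambda^{(j)}})\inv$ by a scalar, the $a_k$-dependence cancelling because $x^{(i_k)}_{12}$ and $x^{(i_k)}_{21}$ have eigenvalues proportional to $q^{a_k}$ on $e_{a_k}$.

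The hard part will be simplicity, which I would prove by induction on $m$. The base case is that $M^h_i(\gamma)$ is a simple $\cqg$-module (noted just after its construction) of type $(s_i,s_i)$. For the step, write $M=N\otimes M^h_{i_m}(\gamma)$ with $N$ the simple module of type $(w',w')$ for $w'=s_{i_1}\cdots s_{i_{m-1}}$, where $l(w's_{i_m})=l(w')+1$. Given $0\ne P\subseteq M$ and $0\ne p=\sum_{j\le j_0}n_j\otimes e_j\in P$ with $n_{j_0}\ne0$, I would use elements of $\cqg$ whose action on the last factor is through $x^{(i_m)}_{11},x^{(i_m)}_{12},x^{(i_m)}_{21}$ (the scalars $1-q^{2j}$ being nonzero for $j\ge1$ since $q$ is not a root of unity) together with the invertibility on $N$ of the normal elements found above, to lower $j_0$ step by step until some $n\otimes e_0\in P$ with $n\ne0$; then the inductive hypothesis forces $N\otimes e_0\subseteq P$, and the operators that raise the $j$-index of the last factor (modulo strictly lower $j$-degree) build all of $M$ out of $N\otimes e_0$. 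The genuine obstacle here is that the coproduct disperses each generator $x^{(i_m)}_{\ast\ast}$ over both tensor factors, so the ``raising/lowering/diagonal'' trichotomy only holds for the leading terms of a suitable filtration, and one must verify those leading terms; a Vandermonde-determinant argument in the spirit of the proof of Theorem \ref{prop-typmod} is then used to isolate individual basis vectors $e_\aa$.

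Finally, for independence of $\ii$ it suffices, since any two reduced expressions are connected by braid moves, to treat a single braid move. A commuting move ($c_{i_ki_{k+1}}=0$) is immediate: $U_{i_k}$ and $U_{i_{k+1}}$ commute, hence so do the images of $\pi_{i_k}$ and $\pi_{i_{k+1}}$, and the adjacent factors may be swapped. A non-commuting move is a statement of rank $\le 2$, which I would settle by observing that both modules are simple of type $(w,w)$ with central character $\chi_{w,\gamma}$ and both are ``highest weight'' for the same triangular structure — $\vv_0$ being cyclic and annihilated by the same left ideal, which is governed by the Demazure module $V^+_{w,\lambda}$ and hence by $w$ alone — so they coincide; alternatively one identifies $M$ with the representation on the PBW monomials attached to $\ii$ and appeals to the braid operators $T_w$ (Proposition \ref{prop-DeltaTw}) to match PBW bases for different reduced words. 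I expect this last part, like the type computation, to be essentially bookkeeping once simplicity is in hand.
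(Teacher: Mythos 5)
Your treatment of the ``simple of type $(w,w)$'' part is essentially the route the paper takes for its generalization (Theorem \ref{thm-wwmod}): kill $J_{w,w}$ through the Demazure description $U_{\ii}v_\lambda=V^+_{w,\lambda}$ of Corollary \ref{cor-DemMod}, observe that the (quasi-)normal elements have $\pi_{\ii}$-images concentrated in a single tensor slot and hence act as invertible diagonal, lowering or raising operators (Proposition \ref{prop-PiNm}), and run an induction on $m$ with Vandermonde separation of the tensor components. The only real differences are that you peel off the last factor where the paper peels off the first, and that the step you leave blank (``one must verify those leading terms'') is exactly where the paper inserts its key device: it acts by $c^V_{T_{i_1}(f),v}$, for which the leading weight string contributes a first slot that is a pure polynomial in $x_{12},x_{21}$ (the exponent $(s_{i_1}\nu+\nu,\alpha_{i_1}^\vee)/2$ of $x_\pm$ vanishes thanks to the twist), so that modulo components with a shifted first index one realizes $1\otimes\pi_{\ii'}(c^V_{f,v})$ inside the submodule; your mirrored version needs the analogous twist of the vector by the Lusztig symmetry at $i_m$. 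That omission is fixable and your plan would go through along the paper's lines.

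The genuine gap is in the clause ``depending only on $w$ up to isomorphism'', which the paper itself does not prove (Theorem \ref{thm-HWM} is quoted from Levendorski\u{\i}--Soibelman, Joseph, Saito and Tanisaki; the paper's own Theorem \ref{thm-wwmod} makes no such claim), and which your proposal does not actually prove either. The reduction to braid moves and the commuting move are fine, but the non-commuting (rank $2$) move is settled by assertion: ``both simple of type $(w,w)$ with the same central character'' does not force an isomorphism, since non-isomorphic infinite-dimensional simple modules can share an annihilator, let alone a central character; and the statement that $\vv_0$ is ``annihilated by the same left ideal, which is governed by $V^+_{w,\lambda}$ and hence by $w$ alone'' is precisely what has to be proved. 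The elements you actually know annihilate $\vv_0$ (for instance the quasi-normal elements $\np{w}{\lambda}(j)$, $j\ge1$, whose images are $x_{21}^{a_1-j}x_{11}^j\otimes x_{21}^{a_2}\otimes\cdots$) are defined in terms of the chosen reduced word, and you give no reduced-word-free description of $\mathrm{Ann}_{\cqg}(\vv_0)$ from which equality of annihilators for two reduced expressions would follow. Your alternative route (identifying $M$ with PBW monomials and matching PBW bases across braid moves) is the substance of Saito's and Tanisaki's papers, not bookkeeping. As it stands, the independence claim needs either explicit rank-$2$ intertwiners (for $A_2$, $B_2$, $G_2$) or a genuinely $\ii$-independent characterization of the cyclic vector and its annihilator.
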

	\begin{rem}
		The character formula of $M_{{i_1}}^h(\gamma)\otimes M_{{i_2}}^h(\gamma)\otimes\cdots\otimes M_{{i_m}}^h(\gamma)$ is explicitly calculated, see \cite{LS,Jo}.
	\end{rem}

	Let $w\in W$, we want to generalize Theorem \ref{thm-HWM} by considering the tensor module
	\begin{equation}\label{eq-Mii}
		M_{\ii}:=M_{i_1}\otimes M_{i_2}\otimes\cdots\otimes M_{i_m},\ii=\iim\in\I_w,
	\end{equation}
	where $M_{i_k}$ stands for one of $M_{i_k}^h(\gamma_k)$, $M_{i_k}^l(\eta_k)$ and $M_{i_k}(\gamma_k,\eta_k)$, $1\le k\le m$ and $\gamma_k,\eta_k$ satisfies conditions in (\ref{eq-ssla})-(\ref{eq-sslo}). $M_{\ii}$ is a natural $\cqb{i_1}\otimes\cqb{i_2}\otimes\cdots\otimes\cqb{i_m}$-module, and $\cqg$ acts on $M_{\ii}$ via
	\[\pi_{\ii}=(\pi_{i_1}\otimes\pi_{i_2}\otimes\cdots\otimes\pi_{i_m})\circ\Delta^{(m-1)}.\]

	Furthermore, let $(\ww)\in\wtw$, we consider the tensor module
	\begin{equation}\label{eq-Mtii}
		\tmii:=\tm_{{\ti_1}}\otimes\tm_{{\ti_2}}\otimes\cdots\otimes\tm_{{\ti_m}},\tii=\tiim\in\I_{\ww},
	\end{equation}
	where $\tm_{\ti_k}$ stands for $M_{|\ti_k|}^{\sgn(\ti_k)}(\gamma_k)$, $1\le k\le m$, $\sgn(\ti_k)$ is the sign of $\ti_k$ and $\gamma_k\in\C^*$. $\tmii$ is a natural $\lqi:=\lqt{|\ti_1|}\otimes\lqt{|\ti_2|}\otimes\cdots\otimes\lqt{|\ti_m|}$-module, and $\cqg$ acts on $\tmii$ via
	\[\tpi_{\tii}=(\iota_{|\ti_1|}^{\sgn(\ti_1)}\otimes\iota_{|\ti_2|}^{\sgn(\ti_2)}\otimes\cdots\otimes\iota_{|\ti_m|}^{\sgn(\ti_m)})\circ(\pi_{|\ti_1|}^{\sgn(\ti_1)}\otimes\pi_{|\ti_2|}^{\sgn(\ti_2)}\otimes\cdots\otimes\pi_{|\ti_m|}^{\sgn(\ti_m)})\circ\Delta^{(m-1)}.\]
	Denote $\{x^{\aa}y^{\bb}|\aa,\bb\in\Z^m\}$ a basis of $\lqi$, where 
	\[x^{\aa}=x_1^{a_1}\otimes x_2^{a_2}\otimes\cdots\otimes x_m^{a_m},\aa=(a_1,a_2,\cdots,a_m)\tr\in\Z^m,\]
	\[y^{\bb}=y_1^{b_1}\otimes y_2^{b_2}\otimes\cdots\otimes y_m^{b_m},\bb=(b_1,b_2,\cdots,b_m)\tr\in\Z^m,\]
	and $x_k,y_k$ are generators of $\lqt{|\ti_k|},1\le k\le m$, c.f. Section \ref{sec-qt}.

	To analyze structure of tensor modules, first we deal with the $G=SL_2$ case. Let $V(l)$ be the $U_q(\slt)$-module with highest weight $l\in\N$. Let $v_{li}$ be some weight vector of weight $i\in\{l,l-2,\cdots,-l\}$ in $V(l)$, such that $v_{l,-l}=T(v_{ll})=(-q)^lF^{(l)}v_{ll}$. Let $\{v_{li}^*\}$ be the dual basis of $\{v_{li}\}$. Denote $x_{\pm}^k=x_{11}^k$, if $k\ge0$, and $x_{\pm}^k=x_{22}^{-k}$ if $k<0$. Let $x_{ij}^{(l)}=c_{v_{li}^*,v_{lj}}^{V(l)}\in\cqt$, we calculate $x_{ij}^{(l)}$ explicitly in the following lemma.

	\begin{lem}\label{lem-xlij}
		Let $l\in\N$, $i,j\in\{l,l-2,\cdots,-l\}$.

		(i) There exist nonzero polynomials $P_{lij}\in\C[u_1,u_2]$ such that
		\[x_{ij}^{(l)}=P_{lij}(x_{12},x_{21})x_{\pm}^{(i+j)/2}.\]

		(ii) There exist scalars $c_{lij}^\pm\in\C$ such that
		\[\iota^\pm\circ\pi^\pm(x_{ij}^{(l)})=c_{lij}^\pm x^{(i+j)/2}y^{|i-j|/2},\]
		where $c_{lij}^\pm$ is nonzero if and only if $\pm(i-j)\ge0$.

		(iii) $x_{ll}^{(l)}=x_{11}^l,x_{l,-l}^{(l)}=x_{12}^l,x_{-l,l}^{(l)}=x_{21}^l,x_{-l,-l}^{(l)}=x_{22}^l$.
	\end{lem}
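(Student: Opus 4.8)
The plan is to exploit the realization of $V(l)$ inside the $l$-th symmetric power of the natural module $V(1)$ of $U_q(\slt)$, together with the comultiplication formula (\ref{eq-cqghopf}) for matrix coefficients, and then reduce everything to the explicit generators $x_{11},x_{12},x_{21},x_{22}$ of $\cqt$ and their relations (\ref{eq-relcqt}). First I would recall that $V(l)$ is (up to isomorphism) the submodule of $V(1)^{\otimes l}$ generated by $v_{11}^{\otimes l}$, and that under this identification a weight vector $v_{lj}$ of weight $j$ is a scalar multiple of the symmetrization of $v_{11}^{\otimes (l+j)/2}\otimes v_{12}^{\otimes (l-j)/2}$, while $v_{li}^*$ pairs with the corresponding symmetric tensor in $(V(1)^*)^{\otimes l}$. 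Using the multiplicativity $c^{V_1}_{f_1,v_1}c^{V_2}_{f_2,v_2}=c^{V_1\otimes V_2}_{f_1\otimes f_2,v_1\otimes v_2}$, one obtains that $x_{ij}^{(l)}$ is, up to a nonzero scalar coming from the normalization of $v_{lj}$ and $v_{li}^*$, a sum of ordered monomials of length $l$ in the $x_{ab}$'s, where the number of factors from row $1$ equals $(l+i)/2$, the number from column $1$ equals $(l+j)/2$, etc. This is the standard ``quantum minor of a symmetric power'' computation.

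For part (i): from the combinatorial description above, each monomial appearing in $x_{ij}^{(l)}$ has exactly $(l+i)/2$ factors $x_{1\ast}$ and $(l+j)/2$ factors $x_{\ast1}$, hence $(l-i)/2$ factors $x_{2\ast}$ and $(l-j)/2$ factors $x_{\ast2}$; counting $x_{11}$ minus $x_{22}$ in each monomial and using the relations (\ref{eq-relcqt}) to push all $x_{12}$ and $x_{21}$ to the left (they are central modulo scalars relative to the others, and $x_{12}x_{21}=x_{21}x_{12}$), one rewrites each monomial as a scalar times $P(x_{12},x_{21})x_\pm^{(i+j)/2}$; the key bookkeeping is that $x_{11}x_{22}=1+qx_{12}x_{21}$ lets one collapse any $x_{11}x_{22}$ or $x_{22}x_{11}$ pair into a polynomial in $x_{12},x_{21}$, so the net power of $x_{11}$ vs $x_{22}$ is the invariant $(i+j)/2$. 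That the resulting polynomial $P_{lij}$ is nonzero follows by specializing, e.g., by looking at the image under $\pi^+$ or $\pi^-$, which is handled next. For part (ii): applying $\iota^\pm\circ\pi^\pm$ sends $x_{11}\mapsto x$, $x_{22}\mapsto x^{-1}$, and exactly one of $x_{12},x_{21}$ to $y$ while the other goes to $0$; so $P_{lij}(x_{12},x_{21})x_\pm^{(i+j)/2}$ becomes $P_{lij}(y,0)$ or $P_{lij}(0,y)$ times $x^{(i+j)/2}$, and since $P_{lij}$ is a sum of monomials in $x_{12},x_{21}$ of total ``$y$-degree'' $|i-j|/2$ in a graded sense (each monomial of $x_{ij}^{(l)}$ has $(l-i)/2$ second-row factors and $(l-j)/2$ second-column factors, whose overlap governs how many $x_{12}$ and $x_{21}$ appear), the surviving term is a scalar multiple of $x^{(i+j)/2}y^{|i-j|/2}$; the sign condition $\pm(i-j)\ge0$ records precisely whether the single surviving monomial (the one using $x_{12}$, resp. $x_{21}$, the correct number of times with no factor of the killed generator) has a nonzero coefficient, which one checks is the leading coefficient and equals a $q$-binomial-type quantity, hence nonzero. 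Part (iii) is the degenerate case $i,j\in\{l,-l\}$ where only one monomial survives at the level of (\ref{eq-cqghopf}) — e.g. $v_{ll}=v_{11}^{\otimes l}$ forces $x_{ll}^{(l)}=(c^{V(1)}_{v_{11}^*,v_{11}})^l=x_{11}^l$, and similarly for the other three — so it drops out with no computation once the normalizations $v_{l,-l}=T(v_{ll})$ and $v_{1,-1}=T(v_{11})=-qF v_{11}$ are pinned down consistently.

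The main obstacle I anticipate is \emph{tracking the scalars precisely enough to prove nonvanishing}, both for $P_{lij}\ne 0$ in (i) and for the exact ``nonzero iff $\pm(i-j)\ge 0$'' statement in (ii). The combinatorial shape of $x_{ij}^{(l)}$ is routine, but to rule out cancellation one must identify a monomial in $x_{12},x_{21}$ whose coefficient is manifestly a product of $q$-integers (hence nonzero for generic $q$); the natural candidate is the ``extremal'' monomial in which all second-row and second-column factors coincide, and its coefficient should match the one already visible in the $SL_2$-minor formula, but verifying it does not collide with another monomial after applying the relations (\ref{eq-relcqt}) requires a careful induction on $l$ (or on $(l-i)/2$). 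I would organize this induction by peeling off one tensor factor at a time in $V(1)^{\otimes l}=V(1)^{\otimes (l-1)}\otimes V(1)$, using Proposition \ref{prop-RmatComRel} / the relations (\ref{eq-relcqt}) to reorder, and reducing to the already-known structure of $x_{ij}^{(l-1)}$; the base case $l=1$ is (iii) with $l=1$. Everything else — the weight/degree bookkeeping and the two substitutions defining $\iota^\pm\circ\pi^\pm$ — is bookkeeping that I would carry out but not belabor.
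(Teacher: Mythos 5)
Your proposal follows essentially the same route as the paper's proof: embed $V(l)$ into $V(1)^{\otimes l}$, use multiplicativity of matrix coefficients to expand $x_{ij}^{(l)}$ into length-$l$ monomials in the $x_{ab}$, count row/column factors to get the net exponent $(i+j)/2$ and collapse $x_{11}x_{22}$ pairs via (\ref{eq-relcqt}), specialize under $\iota^\pm\circ\pi^\pm$ with the counting $l_{12}=(i-j)/2$ (resp. $l_{21}=(j-i)/2$) giving the sign condition, and prove (iii) exactly as in the paper via $v_{ll}=v_{11}^{\otimes l}$ and Corollary \ref{cor-Twaction}. The cancellation worry you raise in (ii) is not treated more elaborately in the paper either (nonvanishing of $P_{lij}$ already follows from $x_{ij}^{(l)}\ne 0$ by Peter--Weyl, and the image under $\pi^\pm$ is a matrix coefficient for the Borel part, so your proposed induction is an admissible but unnecessary safeguard), so your argument is correct and matches the paper's.
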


	\begin{proof}
		Embed $V(l)$ into $V(1)^{\otimes l}$. Then $v_{lj}$ and $v_{li}^*$ are of the form
		\[v_{lj}=\sum_{\tiny j_k=\pm 1,\Sigma j_k=j}c_{j_1j_2\cdots j_l}v_{1j_1}\otimes v_{1j_2}\otimes\cdots\otimes v_{1j_l},\]
		\[v_{li}^*=\sum_{\tiny i_k=\pm 1,\Sigma i_k=i}c'_{i_1i_2\cdots i_l}v_{1i_1}^*\otimes v_{1i_2}^*\otimes\cdots\otimes v_{1i_l}^*,\]
		for some $c_{j_1j_2\cdots j_l},c'_{i_1i_2\cdots i_l}\in\C$ depending on $v_{lj}$ and $v_{li}^*$. Thus
		\[x_{ij}^{(l)}=\sum c_{j_1j_2\cdots j_l}c'_{i_1i_2\cdots i_l}c_{v_{1i_1}^*,v_{1j_1}}^{V(1)}c_{v_{1i_2}^*,v_{1j_2}}^{V(1)}\cdots c_{v_{1i_l}^*,v_{1j_l}}^{V(1)},\]
		where the sum is over $i_k=\pm 1,\sum i_k=i,j_k=\pm 1,\sum j_k=j$, and $c_{v_{1i_k}^*,v_{1j_k}}^{V(1)}$ is a non-zero scalar multiple of $x_{(3-i_k)/2,(3-j_k)/2}$. For some arbitrary summand term, denote by $l_{ab}$ the number of appearances of $x_{ab}$ in it, $a,b=1,2$. Then we see
		\begin{gather*}
			l_{11}+l_{12}=|\{k|i_k=1\}|=(l+i)/2,\\
			l_{11}+l_{21}=|\{k|j_k=1\}|=(l+j)/2,\\
			l_{11}+l_{12}+l_{21}+l_{22}=l,
		\end{gather*}
		so we get $l_{11}-l_{22}=(i+j)/2$. Use commutative relations (\ref{eq-relcqt}), in each summand term we move $x_{11}$'s and $x_{22}$'s to the right side of $x_{12}$'s and $x_{21}$'s, then use $x_{11}x_{22}=1+qx_{12}x_{21}$ or $x_{22}x_{11}=1+q\inv x_{12}x_{21}$ to eliminate $x_{11}$'s or $x_{22}$'s. This implies (i).
		
		If we let $l_{21}=0$ (resp. $l_{12}=0$), then we get $l_{12}=(i-j)/2$ (resp. $l_{21}=(j-i)/2$), which is possible if and only if $i-j\ge0$ (resp. $j-i\ge0$). This implies (ii).

		For (iii), notice that if we choose $v_{ll}=v_{11}\otimes v_{11}\otimes\cdots\otimes v_{11}$, then by Corollary \ref{cor-Twaction}
		\[v_{l,-l}=T(v_{ll})=T(v_{11})\otimes T(v_{11})\otimes\cdots\otimes T(v_{11})=v_{1,-1}\otimes v_{1,-1}\otimes\cdots\otimes v_{1,-1},\]
		so (iii) follows.
	\end{proof}

	Back to the $G$-semisimple case. We introduce the concept of \emph{weight strings}.

	\begin{defn}\label{defn-wtstr}
		Let $w\in W,\ii=\iim\in\I_w,\nu,\mu\in P$. A tuple of weights $\muu=(\mu_0,\mu_1,\cdots,\mu_m)\in P^{m+1}$ is called a \emph{weight string from $\nu$ to $\mu$ of type $\ii$}, if (i) $\mu_0=\nu$, $\mu_m=\mu$, (ii) $\mu_{k-1}-\mu_k\in\Z\alpha_{i_k},1\le k\le m$. Denote the set of these weight strings by $\pp^{\pm}_{\ii,\nu,\mu}$.

		Similarly, for $(\ww)\in\wtw,\tii=\tiim\in\I_{\ww}$ and $\nu,\mu\in P$, a tuple of weights $\muu=(\mu_0,\mu_1,\cdots,\mu_m)\in P^m$ is called a \emph{weight string from $\nu$ to $\mu$ of type $\tii$}, if (i) $\mu_0=\nu$, $\mu_m=\mu$, (ii) $\mu_{k-1}-\mu_k\in\N\,\sgn(\ti_k)\alpha_{\ti_k},1\le k\le m$. Denote the set of these weight strings by $\pp_{\tii,\nu,\mu}$.
	\end{defn}

	\begin{rem}
		View $P^m$ as $\Z$-lattice with component-wise addition and scalar multiplication. Then $\pp^{\pm}_{\ii,\nu,\mu}$ is a sublattice of $P^m$, and $\pp_{\tii,\nu,\mu}$ is a submonoid of $P^m$.
	\end{rem}

	We calculate images of matrix coefficients under $\pi_\ii$ or $\tpi_{\tii}$ with weight strings.

	\begin{prop}\label{prop-pii}
		Let $V\in\cc,f\in (V^*)_{-\nu},v\in V_\mu,\nu,\mu\in P$.

		(i) For $w\in W,\ii=\iim\in\I_w$, there exist polynomials $P_{\muu,k}\in\C[u_1,u_2],\muu\in\pp^{\pm}_{\ii,\nu,\mu},1\le k\le m$ depending on $f,v$ such that
		\begin{equation}\label{eq-pii}
			\pi_{\ii}(c_{f,v}^V)=\sum_{\muu\in\pp^{\pm}_{\ii,\nu,\mu}}\bigotimes_{k=1}^m P_{\muu,k}(x_{12},x_{21})x_\pm^{(\mu_{k-1}+\mu_k,\alpha_{i_k}^\vee)/2}.
		\end{equation}

		(ii) For $(\ww)\in\wtw,\tii=\tiim\in\I_{\ww}$, there exist scalars $c_{\muu}\in\C,\muu\in\pp_{\tii,\nu,\mu}$ depending on $f,v$ such that
		\begin{equation}\label{eq-tpii}
			\tpi_{\tii}(c_{f,v}^V)=\sum_{\muu\in\pp_{\tii,\nu,\mu}}c_{\muu}x^{\aa(\muu)}y^{\bb(\muu)},
		\end{equation}
		where 
		\[\aa(\muu)=(a_{\muu,1},a_{\muu,2},\cdots,a_{\muu,m})\tr,a_{\muu,k}=(\mu_{k-1}+\mu_k,\alpha_{|\ti_k|}^\vee)/2,1\le k\le m,\]
		\[\bb(\muu)=(b_{\muu,1},b_{\muu,2},\cdots,b_{\muu,m})\tr,b_{\muu,k}=|(\mu_{k-1}-\mu_k,\alpha_{|\ti_k|}^\vee)|/2,1\le k\le m.\]
	\end{prop}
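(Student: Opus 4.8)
The plan is to expand $\pi_{\ii}(c^V_{f,v})$ (resp. $\tpi_{\tii}(c^V_{f,v})$) through the iterated coproduct written in a weight basis of $V$, to observe that only weight strings survive, and then to reduce each tensor factor to the rank one situation already settled in Lemma \ref{lem-xlij}. Since matrix coefficients are additive in $V$ and $\Delta^{(m-1)},\pi_{\ii},\tpi_{\tii}$ are linear, it suffices to treat $V=V(\lambda')$ simple, with $f\in(V^*)_{-\nu}$, $v\in V_\mu$ as given. Fix a weight basis $\{e_\beta\}$ of $V$ and its dual basis $\{e^\beta\}\subset V^*$; then $e^\beta\in(V^*)_{-\mathrm{wt}(e_\beta)}$, and a weight vector $g\in(V^*)_{-\sigma}$ vanishes on $e_\beta$ whenever $\mathrm{wt}(e_\beta)\ne\sigma$. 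By (\ref{eq-cqghopf}) and induction on $m$,
\[\Delta^{(m-1)}(c^V_{f,v})=\sum_{\beta_1,\dots,\beta_{m-1}}c^V_{f,e_{\beta_1}}\otimes c^V_{e^{\beta_1},e_{\beta_2}}\otimes\cdots\otimes c^V_{e^{\beta_{m-1}},v};\]
set $\mu_0=\nu$, $\mu_m=\mu$, $\mu_k=\mathrm{wt}(e_{\beta_k})$, and regard $\beta_0,\beta_m$ as formal labels for $f,v$.

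The crux is a \emph{vanishing step}: for weight vectors $g\in(V^*)_{-\sigma}$, $u\in V_\tau$ one has $\pi_i(c^V_{g,u})=0$ unless $\sigma-\tau\in\Z\alpha_i$, because $\pi_i(c^V_{g,u})(x)=g(x.u)$ for $x\in U_i$, while $x.u\in\bigoplus_{n}V_{\tau+n\alpha_i}$ and $g$ is supported on $V_\sigma$. Applying $\pi_{i_1}\otimes\cdots\otimes\pi_{i_m}$ (resp. the maps defining $\tpi_{\tii}$) to the displayed sum term-by-term, every summand whose weight tuple $(\mu_0,\dots,\mu_m)$ fails $\mu_{k-1}-\mu_k\in\Z\alpha_{i_k}$ for some $k$ is killed. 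Hence only weight strings $\muu\in\pp^\pm_{\ii,\nu,\mu}$ contribute, and — by the sharper claim in the last paragraph — for $\tpi_{\tii}$ only $\muu\in\pp_{\tii,\nu,\mu}$.

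For a surviving tuple and a fixed factor $k$, decompose $V|_{U_{i_k}}\cong\bigoplus_\alpha V(l_\alpha)$ as a $U_{q_{i_k}}(\slt)$-module with inclusions $\iota_\alpha$ and projections $p_\alpha$, so that $\pi_{i_k}(c^V_{g,u})=\sum_\alpha c^{V(l_\alpha)}_{g\circ\iota_\alpha,\,p_\alpha u}$, where here $g\in(V^*)_{-\mu_{k-1}}$, $u\in V_{\mu_k}$. Each $p_\alpha u$, resp. $g\circ\iota_\alpha$, is a scalar multiple of the $\slt$-weight-$(\mu_k,\alpha_{i_k}^\vee)$ vector of $V(l_\alpha)$, resp. of the dual of the weight-$(\mu_{k-1},\alpha_{i_k}^\vee)$ vector, so $\pi_{i_k}(c^V_{g,u})$ is a linear combination of the elements $x^{(l_\alpha)}_{(\mu_{k-1},\alpha_{i_k}^\vee),\,(\mu_k,\alpha_{i_k}^\vee)}$ of Lemma \ref{lem-xlij}. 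By Lemma \ref{lem-xlij}(i) this equals $Q(x_{12},x_{21})\,x_\pm^{(\mu_{k-1}+\mu_k,\alpha_{i_k}^\vee)/2}$ for a polynomial $Q$, the exponent being an integer since $\mu_{k-1}-\mu_k\in\Z\alpha_{i_k}$ forces $(\mu_{k-1}+\mu_k,\alpha_{i_k}^\vee)$ even. Substituting back and collecting the terms according to the weight string they determine gives (\ref{eq-pii}); the $x_\pm$-exponents depend only on $\muu$ and so factor out of the inner sum over the $\beta_k$'s.

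For (ii) one applies, after $\pi_{|\ti_k|}$, the quotient onto $\cqbp{|\ti_k|}$ or $\cqbm{|\ti_k|}$ followed by $\iota_{|\ti_k|}^{\sgn(\ti_k)}$; by Lemma \ref{lem-xlij}(ii) the image of $x^{(l)}_{ij}$ is the \emph{monomial} $c^\pm_{lij}\,x^{(i+j)/2}y^{|i-j|/2}$, which is nonzero exactly when $\sgn(\ti_k)(i-j)\ge0$. With $i=(\mu_{k-1},\alpha_{|\ti_k|}^\vee)$, $j=(\mu_k,\alpha_{|\ti_k|}^\vee)$ and $\mu_{k-1}-\mu_k\in\Z\alpha_{|\ti_k|}$, this sign condition is precisely $\mu_{k-1}-\mu_k\in\N\,\sgn(\ti_k)\alpha_{|\ti_k|}$, i.e. the defining condition of $\pp_{\tii,\nu,\mu}$ in Definition \ref{defn-wtstr}; this is the promised extra restriction. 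Since the monomial $x^{\aa(\muu)}y^{\bb(\muu)}$ attached to a weight string depends only on $\muu$, summing the resulting scalars over the constituents $\alpha$ and over $\beta_1,\dots,\beta_{m-1}$ produces one scalar $c_\muu\in\C$ per $\muu$, which is exactly (\ref{eq-tpii}). I do not anticipate a deep obstacle: Lemma \ref{lem-xlij} already carries the computational content, and the remaining work is the bookkeeping of weights through $\Delta^{(m-1)}$ and restriction to $\slt$ — the one genuinely load-bearing point being the vanishing step, which is what makes the weight strings (and, in (ii), their one-sidedness) appear.
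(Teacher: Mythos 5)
Your argument is correct and is essentially the paper's proof: both expand the iterated coproduct of $c^V_{f,v}$ in a dual weight basis and then apply Lemma \ref{lem-xlij} factor by factor, with the weight (resp.\ sign) constraints on each $\pi_{i_k}$ (resp.\ $\iota^{\pm}\circ\pi^{\pm}$) producing exactly the weight strings of Definition \ref{defn-wtstr}. The only organizational difference is that the paper runs an induction on $m$, peeling off the last tensor factor and using a weight basis of $U_{i_m}v$, whereas you expand $\Delta^{(m-1)}$ in one pass and spell out the vanishing step and the $\slt$-isotypic decomposition explicitly.
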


	\begin{proof}
		(i) Use induction on $m=l(\ii)$. When $m=1,\ii=(i_1)$, $\pp^{\pm}_{\ii,\nu,\mu}$ is non-empty (i.e. $\{(\nu,\mu)\}$) if and only if $\nu-\mu=j_1\alpha_{i_1}$ for some $j_1\in\Z$. So (\ref{eq-pii}) follows from Lemma \ref{lem-xlij} (i). Suppose (\ref{eq-pii}) is true for $\ii'=(i_1,i_2,\cdots,i_{m-1}),m\ge2$, we prove it's true for $\ii$. Let $v_1,v_2,\cdots,v_t$ be a basis of $U_{i_m}v$ formed by weight vectors. Extend $\{v_i\}_{i=1}^t$ to $\{v_i\}_{i=1}^l$ which form a basis of $V$, and let $\{v_i^*\}_{i=1}^l$ be the dual basis. Note $\pi_{i_m}(c_{v_i^*,v}^V)=0,i>t$. By definition of $\pi_{\ii}$ and (\ref{eq-cqghopf}) we calculate
		\begin{align*}
			\pi_{\ii}(c_{f,v}^V)&=(\pi_{i_1}\otimes\pi_{i_2}\otimes\cdots\otimes\pi_{i_m})\circ(\Delta^{(m-2)}\otimes 1)(\Delta(c_{f,v}))\\
			&=(\pi_{i_1}\otimes\pi_{i_2}\otimes\cdots\otimes\pi_{i_m})\circ(\Delta^{(m-2)}\otimes 1)(\sum_{i=1}^l c_{f,v_i}^V\otimes c_{v_i^*,v}^V)\\
			&=\sum_{i=1}^t\pi_{\ii'}(c_{f,v_i}^V)\otimes\pi_{i_m}(c_{v_i^*,v}^V).
		\end{align*}
		In the result above, substitute $\pi_{\ii'}(c_{f,v_i}^V)$ with induction hypothesis and $\pi_{i_m}(c_{v_i^*,v}^V)$ with Lemma \ref{lem-xlij} (i), we get the right side of (\ref{eq-pii}) for $\ii=\iim$.

		(ii) The proof is similar to that of (i), using Lemma \ref{lem-xlij} (ii).
	\end{proof}

	\begin{rem}\label{rem-pii}
		Denote $X_{i}^j=E_i^j$ if $j\ge0$, and $X_{i}^j=F_i^{-j}$ if $j<0$. In (\ref{eq-pii}) or (\ref{eq-tpii}), for weight string $\muu=(\mu_0,\mu_1,\cdots,\mu_m)$ such that $\mu_{k-1}-\mu_k=j_k\alpha_{|\ti_k|}$, the summand term corresponding to $\muu$ is nonzero if and only if $X_{|\ti_k|}^{j_k}X_{|\ti_{k+1}|}^{j_{k+1}}\cdots X_{|\ti_m|}^{j_m}v\ne 0,1\le k\le m$ and $f(X_{|\ti_1|}^{j_1}X_{|\ti_2|}^{j_2}\cdots X_{|\ti_m|}^{j_m}v)\ne0$.
	\end{rem}

	\subsection{Modules of $(w,w)$ Type}\label{sec-ss}

	Fix some $w\in W$ and $\ii=\iim\in\I_w$, we prove the following theorem.

	\begin{thm}\label{thm-wwmod}
		The tensor module $M_{\ii}$ defined in (\ref{eq-Mii}) is simple of $(w,w)$ type.
	\end{thm}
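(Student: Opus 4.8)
The plan is to prove two things: that $M_\ii$ is of $(w,w)$ type in the sense of the $H$-prime stratification (i.e. $\mathrm{Ann}(M_\ii)$ contains $J_{w,w}$ and meets $E_{w,w}$ trivially), and that $M_\ii$ is simple as a $\cqg$-module. First I would pin down the type. Since each tensor factor $M_{i_k}$ is a lift of an $(s,s)$-typical $\cqbm{i_k}$-module, the module $M_\ii$ carries a natural action of $\lqi := \lqt{i_1}\otimes\cdots\otimes\lqt{i_m}$ through $\tpi$-style maps, and I would use Proposition \ref{prop-pii}(ii) (with all indices positive, so $w_1=e$, $w_2=w$) to write $\pi_\ii(c^V_{f,v})$ as a sum over weight strings of type $\ii$ of monomials $c_\muu\, x^{\aa(\muu)}y^{\bb(\muu)}$. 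Using Remark \ref{rem-pii}, the summand for a weight string $\muu$ with $\mu_{k-1}-\mu_k=j_k\alpha_{i_k}$ is nonzero exactly when $F_{i_k}^{j_k}\cdots F_{i_m}^{j_m}v\ne 0$ for all $k$ and $f$ does not annihilate $F_{i_1}^{j_1}\cdots F_{i_m}^{j_m}v$; by Corollary \ref{cor-DemMod}, the vectors $F_{i_1}^{h_1}\cdots F_{i_m}^{h_m}v_\lambda$ span the Demazure module $V^-_{w,\lambda}$ (taking $v=v_{-\lambda}$), so $\pi_\ii$ kills exactly the matrix coefficients $c^{V(\lambda)}_{f,v_{-\lambda}}$ with $f\perp V^-_{w,\lambda}$, i.e. the generators of $J_w^-$. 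A parallel argument on the $R^+$ side, noting the $(s,s)$-typical modules are neither highest- nor lowest-weight-truncated, shows $\pi_\ii$ also kills the generators of $J_w^+$, and hence $\mathrm{Ann}(M_\ii)\supseteq J_{w,w}$. To see the annihilator meets $E_{w,w}$ trivially, I would compute $\pi_\ii(\np{w}{\lambda})$ and $\pi_\ii(\nm{w}{\lambda})$ explicitly from Lemma \ref{lem-xlij}(iii): these are products of the $x_{11}$- or $x_{22}$-type generators across the factors, which act invertibly on the Laurent-type modules $M_{i_k}(\gamma_k,\eta_k)$, so their images are units in $\lqi$ and in particular act injectively on $M_\ii$.

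Next, simplicity. The strategy is the standard one for Soibelman-type modules: produce a single generating vector that every nonzero submodule must contain. Writing $M_\ii$ in its basis $e_{\mathbf n} = e_{n_1}\otimes\cdots\otimes e_{n_m}$, $\mathbf n\in\Z^m$, the elements $x^{(\mathbf a)}$ (images of products of $x_{11},x_{22}$ generators) act by shifting $\mathbf n$, and the elements $y^{(\mathbf b)}$ (images of $x_{12}$-generators, using that we are in the $M^-$/$M^+$ picture, resp. the $\gamma_k q^{n_k}$ eigenvalue structure of the $(s,s)$ Laurent modules) act diagonally with distinct eigenvalues as $\mathbf n$ varies. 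I would first argue, via a Vandermonde argument exactly as in the proof of Theorem \ref{prop-typmod}, that any nonzero submodule $N$ contains some basis vector $e_{\mathbf n}$: the joint eigenvalues of the commuting diagonal operators $\{y^{(\bb)}\}$ (equivalently the images of enough $\np{w}{\lambda}(\nm{w}{\lambda})\inv$-type or $c_{f,v}$ with $f,v$ extreme weight vectors) separate the basis vectors, so a nonzero element of $N$ can be hit by a suitable polynomial in these to extract one $e_{\mathbf n}$. Then I would show that from $e_{\mathbf n}$ one recovers all of $M_\ii$: the shift operators $x^{(\aa)}$ move $e_{\mathbf n}$ to any $e_{\mathbf m}$ — for the Laurent factors this is immediate since $x_{11},x_{22}$ act invertibly, and one must check that no coefficient $1+\gamma_k\eta_k q^{2i-1}$ appearing in $x_{11}e_i$ (cf.\ (\ref{eq-ssla})) vanishes, which is exactly the hypothesis $\gamma_k\eta_k\ne -q^{2\ell+1}$; for the highest/lowest-weight factors $M^h,M^l$ one has to be slightly more careful near the truncation end but the same operators suffice. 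Hence $N=M_\ii$.

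The cleanest way to organize the simplicity argument, and the one I would actually carry out, is to use the $R$-matrix commutation relations of Proposition \ref{prop-NormComRel}: modulo the already-identified annihilator $J_{w,w}$, the normal elements $\np{w}{\lambda}$ and $\nm{w}{\lambda}$ become (quasi-)normal, and after inverting the unit images $E_{w,w}$ one gets an action of the localization $\cqg_{w,w}$ on $M_\ii$. On $M_\ii$ the images of the generators $\np{w,\omega_i}$ ($i\notin\supp w$) and $\np{w}{\lambda^{(j)}}(\nm{w}{\lambda^{(j)}})\inv$ of the center $Z_{w,w}$ (Theorem \ref{thm-genzww}) act as scalars — since $w_1=w_2=w$, $\ker(w-w)$ has full dimension $n$ and these scalars are determined by the parameters $\gamma_k,\eta_k$ — so $M_\ii$ descends to a module over a single primitive quotient $\cqg_{w,w}/P$, and one checks that quotient is (Morita equivalent to) a quantum torus or tensor product of $\lqt{}$'s acting faithfully and irreducibly on $M_\ii$ by the density argument above. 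The main obstacle, and the step I expect to require real work, is the first half: showing precisely that $\ker\pi_\ii \cap R^\pm$ equals $J_w^\pm$ — the inclusion $\supseteq$ is the Demazure-module computation sketched above, but the reverse inclusion (that nothing more is killed) needs the nonvanishing of the coefficients $c_\muu$ in Proposition \ref{prop-pii}(ii)/Lemma \ref{lem-xlij}(ii), together with a linear-independence argument among the monomials $x^{\aa(\muu)}y^{\bb(\muu)}$ as $\muu$ ranges over $\pp_{\ii,\nu,\mu}$, so that no cancellation can occur between different weight strings. Once the type is established, simplicity follows from the by-now routine Vandermonde/shift argument, and the independence from the reduced word $\ii$ (up to isomorphism) follows from the fact that all such $M_\ii$ have the same annihilator $J_{w,w}$ and the same central character on $Z_{w,w}$, hence define the same point of $\prim\cqg_{w,w}$.
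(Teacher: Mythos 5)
Your outline of the ``type'' half is workable, though several details are off: the factors of $M_{\ii}$ are $(s,s)$-typical modules over the full $\cqb{i_k}$, not over the Borel quotients, so there is no natural $L_q(2)$-tensor picture here and it is Proposition \ref{prop-pii}(i) (polynomials in $x_{12},x_{21}$), not (ii), that applies; moreover $\pi_{\ii}(\np{w}{\lambda})=x_{21}^{a_1}\otimes\cdots\otimes x_{21}^{a_m}$ and $\pi_{\ii}(\nm{w}{\lambda})=x_{12}^{a_1}\otimes\cdots\otimes x_{12}^{a_m}$ (Proposition \ref{prop-PiNm}), and these act invertibly because $x_{12},x_{21}$ act by nonzero scalars on the typical modules --- not because $x_{11},x_{22}$ are invertible (they are not on $M^h,M^l$). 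Also, the ``reverse inclusion'' you flag as the hard step is not needed and is in any case free: since the codomain of $\pi_{\ii}$ is a domain and $\ker\pi_{\ii}$ is $H$-invariant, $\ker\pi_{\ii}$ is an $H$-prime containing $J_{w,w}$ and missing $E_w^\pm$, so the classification of $H$-primes (Theorems \ref{thm-Hprime}, \ref{thm-stratification}) forces $\ker\pi_{\ii}=J_{w,w}$; no linear-independence or non-cancellation analysis of weight-string monomials is required.

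The genuine gap is in the simplicity argument. Your first step --- extracting a basis vector $e_{\nn}$ from an arbitrary nonzero element by a Vandermonde argument on the commuting diagonal operators --- fails whenever the reduced word repeats a letter, i.e.\ $m>n$. The diagonal elements actually available in $\pi_{\ii}(\cqg)$ from extreme-weight matrix coefficients are the images of $\np{w}{\lambda},\nm{w}{\lambda}$ (and their products/ratios), whose eigenvalue on $e_{\nn}$ is $\prod_k\gamma_k^{a_k}q_{i_k}^{a_kn_k}$ (resp.\ with $\eta_k$) where $a_k=(w_{>k}(\lambda),\alpha_{i_k}^\vee)$ depends linearly on $\lambda$; as $\lambda$ varies these detect $\nn$ only through a lattice of linear functionals of rank at most $n<m$, so they cannot separate all basis vectors and you cannot isolate a single $e_{\nn}$. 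Your second step likewise assumes that operators shifting each tensor index independently lie in $\pi_{\ii}(\cqg)$, which is not established (the image is constrained by weight strings; the analogous question for $\tmii$ is exactly what the pivot-element machinery later in the paper is for). The paper's proof circumvents both problems by induction on $l(\ii)$, peeling off the first factor: the quasi-$\ii$-normal elements $\np{w}{\lambda}(1),\nm{w}{\lambda}(1)$ contribute the eigenvalue factor $1+b_1q_{i_1}^{2n_1+1}$, which is injective in $n_1$ alone, so a Vandermonde argument separates the first index only; twisted matrix coefficients $c^V_{T_{i_1}(f),v}$ then realize the $\cqg$-action on $M_{\ii'}$ on a fixed $e_k\otimes\ee'_k$ modulo terms that change the first index, so the induction hypothesis applies; and motion along the first factor is achieved with $c^\pm_{w,\lambda}(1)$. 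Without these ingredients (or a substitute for them), your two-step density argument does not go through; the final paragraph's appeal to the central characters and a Morita-type reduction is circular, since irreducibility is precisely what is being proved.
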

	
	Denote $w_{>k}=s_{i_{k+1}}s_{i_{k+2}}\cdots s_{i_m}$ for $0\le k\le m-1$ and $w_{>m}=e$. Let $a_k=(w_{>k}(\lambda),\alpha_{i_k}^\vee)\ge0,1\le k\le m$. We are interested in images of normal elements and its variants under $\pi_\ii$. Define \emph{$\ii$-normal elements} to be $\np{w}{\lambda},\nm{w}{\lambda},\lambda\in P^+$, and \emph{quasi-$\ii$-normal elements} to be
	\[\np{w}{\lambda}(j):=(F_{i_1}^{(j)}\otimes 1)\np{w}{\lambda},\nm{w}{\lambda}(j)=(E_{i_1}^{(j)}\otimes 1)\nm{w}{\lambda},\]
	where $\lambda\in P^+,0\le j\le(w_{>1}(\lambda),\alpha_{i_1}^\vee)$.

	\begin{prop}\label{prop-PiNm}
		Let $\lambda\in P^+$.

		(i) There exist $c_j,c_j'\in\C^*,0\le j\le a_1$ such that
		\[\pi_{\ii}(\np{w}{\lambda}(j))=c_jx_{21}^{a_1-j}x_{11}^j\otimes x_{21}^{a_2}\otimes\cdots\otimes x_{21}^{a_m},\pi_{\ii}(\nm{w}{\lambda}(j))=c_j'x_{12}^{a_1-j}x_{22}^j\otimes x_{12}^{a_2}\otimes\cdots\otimes x_{12}^{a_m}.\]

		(ii) In particular,
		\[\pi_{\ii}(\np{w}{\lambda})=x_{21}^{a_1}\otimes x_{21}^{a_2}\otimes\cdots\otimes x_{21}^{a_m},\pi_{\ii}(\nm{w}{\lambda})=x_{12}^{a_1}\otimes x_{12}^{a_2}\otimes\cdots\otimes x_{12}^{a_m},\]
		\[\pi_{\ii}(\np{w_{>1}}{\lambda})=x_{11}^{a_1}\otimes x_{21}^{a_2}\otimes\cdots\otimes x_{21}^{a_m},\pi_{\ii}(\nm{w_{>1}}{\lambda})=x_{22}^{a_1}\otimes x_{12}^{a_2}\otimes\cdots\otimes x_{12}^{a_m}.\]
	\end{prop}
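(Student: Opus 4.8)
The plan is to compute $\pi_{\ii}(\np{w}{\lambda}(j))$ directly using Proposition \ref{prop-pii}(i) together with the combinatorics of weight strings, relying crucially on the Demazure module description (Corollary \ref{cor-DemMod}) to pin down which weight strings actually contribute. Recall that $\np{w}{\lambda}(j) = (F_{i_1}^{(j)}\otimes 1)\np{w}{\lambda} = (F_{i_1}^{(j)}\otimes 1)c_{v_{w(\lambda)}^*, v_\lambda}^{V(\lambda)}$, which by the $U\otimes U$-module structure on $\cqg$ equals $c_{S(F_{i_1})^{(j)} v_{w(\lambda)}^*,\, v_\lambda}^{V(\lambda)}$ (up to the precise form of the antipode and divided powers). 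So the left tensor slot now involves a functional supported near $v_{w(\lambda)}$ rather than $v_{w(\lambda)}^*$ itself, and the right slot still has the highest weight vector $v_\lambda$. The point of feeding in $v_\lambda$ on the right is that the relevant weight strings $\muu\in\pp^{\pm}_{\ii,\nu,\lambda}$ with $\mu_m = \lambda$ are forced by Remark \ref{rem-pii}: since $v_\lambda$ is a highest weight vector, $X_{i_m}^{j_m}v_\lambda\ne 0$ requires $j_m\le 0$, i.e.\ $\mu_{m-1}-\mu_m = j_m\alpha_{i_m}$ with $j_m\le 0$; and then $T$-invariance/Demazure arguments force each $\mu_k = w_{>k}(\lambda)$, so there is a \emph{unique} contributing weight string, namely $\muu = (w(\lambda), w_{>1}(\lambda), \dots, w_{>m-1}(\lambda), \lambda)$.

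First I would establish the uniqueness of the contributing weight string. The key is a downward induction on $k$: starting from $\mu_m = \lambda$, one shows $\mu_{m-1}$ must equal $s_{i_m}(\lambda)$ or $\lambda$ depending on parities, but the constraint that $f = v_{w(\lambda)}^*$ (or its $F_{i_1}^{(j)}$-twist) is a \emph{lowest}-type functional in the appropriate sense forces the chain of weights to descend via the reduced word, landing at $\mu_0 = w(\lambda)$. More precisely, $\mu_{k-1} = s_{i_k}(\mu_k) = w_{>k-1}(\lambda)$ for each $k$, using that $(w_{>k}(\lambda),\alpha_{i_k}^\vee)\ge 0$ (which holds because $s_{i_k}w_{>k} = w_{>k-1}$ is length-increasing, a standard fact about reduced words). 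This yields $a_k = (w_{>k}(\lambda),\alpha_{i_k}^\vee)$ as the exponent, and $\mu_{k-1}+\mu_k$ paired with $\alpha_{i_k}^\vee$ gives $2(w_{>k}(\lambda),\alpha_{i_k}^\vee) - a_k = a_k$ for $k\ge 2$, while the $j$-twist on the first slot shifts things to produce the $x_{21}^{a_1-j}x_{11}^j$ factor.

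Second, with the unique weight string identified, I would invoke Lemma \ref{lem-xlij}(i) and (iii) to compute the explicit monomial in each tensor slot. For slots $k\ge 2$ the functional-vector pair in $V(l)$ with $l = a_k$ (the relevant $SL_2$-weight) is $v_{-a_k}^*, v_{a_k}$ type data, giving by Lemma \ref{lem-xlij}(iii) the monomial $x_{21}^{a_k}$ (resp.\ $x_{12}^{a_k}$ for the minus case). For the first slot, the $F_{i_1}^{(j)}$-twist means we are looking at $x^{(l)}_{-a_1+2j,\, a_1}$-type data in the $SL_2$-module of weight $a_1$; Lemma \ref{lem-xlij}(i) writes this as $P(x_{12},x_{21}) x_\pm^{(- a_1 + 2j + a_1)/2} = P(x_{12},x_{21})x_{11}^j$, and a direct check (or weight-counting as in the proof of Lemma \ref{lem-xlij}) shows $P(x_{12},x_{21}) = c_j x_{21}^{a_1-j}$ with $c_j\ne 0$, since the maximal power of $F$ available forces $l_{12}=0$ in the notation there. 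Part (ii) then follows by specializing $j=0$ for the first two formulas, and by taking $w\mapsto w_{>1}$, $\ii\mapsto (i_2,\dots,i_m)$ but keeping track that $\np{w_{>1}}{\lambda}$ has first-slot data giving $x_{11}^{a_1}$ (the highest-to-highest matrix coefficient, Lemma \ref{lem-xlij}(iii) with $i=j=l$) rather than $x_{21}^{a_1}$.

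The main obstacle will be the first slot bookkeeping: making sure the twist by $F_{i_1}^{(j)}$ interacts correctly with the antipode $S$ in the $U\otimes U$-action on $\cqg$, so that the functional really is $v_{w(\lambda)}^*$ composed with a power of $E_{i_1}$ (or $F_{i_1}$) landing on the $(a_1 - 2j)$-weight space of the $i_1$-th $SL_2$-string through $v_{w(\lambda)}$, and that the resulting scalar $c_j$ is genuinely nonzero. The nonvanishing is where Remark \ref{rem-pii}'s criterion must be checked carefully: one needs $X_{i_1}^{j_1}X_{i_2}^{j_2}\cdots X_{i_m}^{j_m}v_\lambda \ne 0$ for the unique weight string, which amounts to $F_{i_1}^{a_1-j}F_{i_2}^{a_2}\cdots F_{i_m}^{a_m}v_\lambda\ne 0$ — but this is exactly a nonzero weight vector in the Demazure module $V_{w,\lambda}^+$ by Proposition \ref{prop-DemMod}, since $(a_1-j, a_2,\dots,a_m)$ stays within the Demazure range. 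Everything else is a routine propagation of Lemma \ref{lem-xlij} through the tensor product formula, so once the unique weight string and its nonvanishing are secured the proposition follows.
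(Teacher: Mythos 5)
Your proposal takes essentially the same route as the paper: apply Proposition \ref{prop-pii}(i), use Remark \ref{rem-pii} together with the Demazure description $U_{i_{k+1}}\cdots U_{i_m}v_\lambda=U^+v_{w_{>k}(\lambda)}$ (Corollary \ref{cor-DemMod}) to see that only the weight string $(w(\lambda)+j\alpha_{i_1},w_{>1}(\lambda),\dots,w_{>m-1}(\lambda),\lambda)$ contributes, and then read off each tensor slot from Lemma \ref{lem-xlij}, with part (ii) following from $j=0$ and the highest-to-highest coefficient $x_{11}^{a_1}$ in the first slot. The only blemish is the parenthetical computation of $(\mu_{k-1}+\mu_k,\alpha_{i_k}^\vee)$, which equals $0$ rather than $a_k$ for $k\ge 2$ (it is the $x_\pm$-exponent, not the $x_{21}$-exponent), but this does not affect your stated slot values, which you correctly obtain from Lemma \ref{lem-xlij}(iii).
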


	\begin{proof}
		(i) By Corollary \ref{cor-DemMod} (i), $U_{i_{k+1}}U_{i_{k+2}}\cdots U_{i_m}v_\lambda=U^+v_{w_{>k}(\lambda)},0\le k\le m$. Apply Proposition \ref{prop-pii} (i) to calculate $\pi_{\ii}(\np{w}{\lambda}(j))$, the only nonzero summand term in the right side of (\ref{eq-pii}) is the term corresponding to the weight string $(w(\lambda)+j\alpha_{i_1},w_{>1}(\lambda),\cdots,w_{>m-1}(\lambda),\lambda)$, c.f. Remark \ref{rem-pii}. This implies the desired formula for $\pi_{\ii}(\np{w}{\lambda}(j))$. The calcuation for $\pi_{\ii}(\nm{w}{\lambda}(j))$ is similar.
		
		(ii) Similar as (i), by applying Proposition \ref{prop-pii} (i),
		\[\pi_{\ii}(\np{w}{\lambda})=\bigotimes_{k=1}^m\pi_{i_k}(c_{v_{w_{>k-1}(\lambda)}^*,v_{w_{>k}(\lambda)}}^{V(\lambda)}),\]
		\[\pi_{\ii}(\np{w_2}{\lambda})=\pi_{i_1}(c_{v_{w_2(\lambda)}^*,v_{w_2(\lambda)}}^{V(\lambda)})\otimes\bigotimes_{k=2}^m\pi_{i_k}(c_{v_{w_{>k-1}(\lambda)}^*,v_{w_{>k}(\lambda)}}^{V(\lambda)}).\]
		By Lemma \ref{lem-xlij} (iii), $\pi_{i_k}(c_{v_{w_{>k-1}(\lambda)}^*,v_{w_{>k}(\lambda)}}^{V(\lambda)})=x_{21}^{a_k}$, $\pi_{i_1}(c_{v_{w_{>1}(\lambda)}^*,v_{w_{>1}(\lambda)}}^{V(\lambda)})=x_{11}^{a_1}$. The calculation for $\pi_{\ii}(\nm{w}{\lambda})$ and $\pi_{\ii}(\nm{w_{>1}}{\lambda})$ is similar.
	\end{proof}

	\begin{cor}\label{cor-wwtype}
		$\ker\pi_{\ii}=J_{w,w}$.
	\end{cor}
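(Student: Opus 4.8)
The plan is to show the two-sided ideal $\ker\pi_{\ii}$ coincides with the $H$-prime $J_{w,w}=J_w^+R^-+R^+J_w^-$ by a double inclusion. First I would establish $J_{w,w}\subseteq\ker\pi_{\ii}$. Recall $J_w^+=\Span\{c_{f,v_\lambda}^{V(\lambda)}\mid f\in(V_{w,\lambda}^+)^\perp\}$ and similarly for $J_w^-$. By Corollary \ref{cor-DemMod}(i) we have $V_{w,\lambda}^+=U_{\ii}v_\lambda$, so applying Proposition \ref{prop-pii}(i) together with Remark \ref{rem-pii}: for $f\in(V_{w,\lambda}^+)^\perp$, every weight string $\muu\in\pp^{\pm}_{\ii,\nu,\lambda}$ contributing to $\pi_{\ii}(c_{f,v_\lambda}^{V(\lambda)})$ requires $f(X_{i_1}^{j_1}\cdots X_{i_m}^{j_m}v_\lambda)\neq0$ for the associated exponents $j_k$; but $X_{i_1}^{j_1}\cdots X_{i_m}^{j_m}v_\lambda\in U_{\ii}v_\lambda=V_{w,\lambda}^+$, on which $f$ vanishes, so $\pi_{\ii}(c_{f,v_\lambda}^{V(\lambda)})=0$. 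Hence $\pi_{\ii}(J_w^+)=0$, and since $\pi_{\ii}$ is an algebra map this forces $\pi_{\ii}(J_w^+R^-)=0$; symmetrically $\pi_{\ii}(R^+J_w^-)=0$, giving $J_{w,w}\subseteq\ker\pi_{\ii}$.

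For the reverse inclusion $\ker\pi_{\ii}\subseteq J_{w,w}$, I would exploit the stratification theory. By Theorem \ref{thm-Hprime}(ii), $J_{w,w}$ is an $H$-prime of $\cqg$, and the image algebra $M_{\ii}$ carries an $\lqi$-module structure making $\pi_{\ii}(\cqg)$ a subalgebra of a tensor product of $L_{q_{i_k}}(2)$'s; in particular $\ker\pi_{\ii}$ is a completely prime ideal (the target is a domain, being a subalgebra of a quantum torus), and one checks it is $1\otimes H$-invariant because the $M_{i_k}$ are weight modules and the maps $\pi_i^\pm$, $\iota_i^\pm$ respect the torus grading. So $\ker\pi_{\ii}$ is an $H$-prime, hence equals $J_{w_1',w_2'}$ for a unique $(w_1',w_2')\in\wtw$ by Theorem \ref{thm-Hprime}(ii). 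It remains to identify $(w_1',w_2')=(w,w)$. This is where Proposition \ref{prop-PiNm}(ii) does the work: since $J_{w_1',w_2'}\cap E_{w_1',w_2'}=\varnothing$ (the normal elements $c_{w_1',\lambda}^+$, $c_{w_2',\lambda}^-$ survive in $\cqg/J_{w_1',w_2'}$ and are nonzero there), and $J_{w_1',w_2'}$ contains $c_{u,\lambda}^+$ precisely when $u\not\leq w_1'$ (and dually for the minus side), it suffices to determine for which $u\in W$ the element $\pi_{\ii}(c_{u,\lambda}^+)$ is nonzero. From Proposition \ref{prop-PiNm}(ii), $\pi_{\ii}(\np{w}{\lambda})=x_{21}^{a_1}\otimes\cdots\otimes x_{21}^{a_m}\neq0$ (as $\lambda$ runs over $P^+$, the $a_k$ range over all nonnegative values, so $E_w^+$ maps into units of $\lqi$), whereas for $u\not\leq w$ one shows $\pi_{\ii}(\np{u}{\lambda})=0$ by an argument parallel to the $J_w^+$ computation above — using that $v_{u(\lambda)}\notin V_{w,\lambda}^+$ when $u\not\leq w$, so the unique candidate weight string is killed. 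The same holds on the minus side with the same $w$, pinning down $(w_1',w_2')=(w,w)$.

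I expect the main obstacle to be the clean identification step: rigorously arguing that $\ker\pi_{\ii}=J_{w_1',w_2'}$ forces $w_1'=w_2'=w$. One must carefully match the combinatorics — that $\np{u}{\lambda}\in J_{w_1',w_2'}$ iff $u\not\leq w_1'$ — with the explicit vanishing/non-vanishing of $\pi_{\ii}(\np{u}{\lambda})$, and verify that the non-vanishing of $\pi_{\ii}$ on the full multiplicative set $E_{w,w}$ (not just on single normal elements) is exactly what excludes any strictly larger $H$-prime. Here one invokes that units of $\lqi$ are monomials (Proposition \ref{prop-qtunit}(i)), so $\pi_{\ii}(E_{w,w})$ lands in $\lqi^\times$ and cannot meet $\ker\pi_{\ii}$, forcing $\ker\pi_{\ii}\cap E_{w,w}=\varnothing$ and hence $\ker\pi_{\ii}\subseteq J_{w,w}$ by the uniqueness in Theorem \ref{thm-stratification}(i); combined with the first inclusion and the fact that the stratum is determined by the pair, equality follows. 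A minor technical point to dispatch along the way is that $\pi_{\ii}$ factors through $R^+R^-=\cqg$ compatibly with the decomposition $\cqg=R^+R^-$, so that the two partial vanishing statements $\pi_{\ii}(J_w^+)=0$ and $\pi_{\ii}(J_w^-)=0$ indeed combine to $\pi_{\ii}(J_{w,w})=0$.
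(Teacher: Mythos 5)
Your proposal is correct and takes essentially the same route as the paper: the inclusion $J_{w,w}\subseteq\ker\pi_{\ii}$ via vanishing of $(V_{w,\lambda}^\pm)^\perp$ on $U_{\ii}v_{\pm\lambda}$ (Corollary \ref{cor-DemMod}), then $H$-primality of $\ker\pi_{\ii}$ (domain codomain plus $H$-invariance) combined with the non-vanishing of $\pi_{\ii}$ on the normal elements from Proposition \ref{prop-PiNm} and the uniqueness in Theorems \ref{thm-Hprime} and \ref{thm-stratification}. The only blemishes are cosmetic: for this corollary the codomain of $\pi_{\ii}$ is $\cqb{i_1}\otimes\cdots\otimes\cqb{i_m}$ rather than a quantum torus (it is still a domain, which is all you need), and your detour through ``$\np{u}{\lambda}\in J_{w_1',w_2'}$ iff $u\not\le w_1'$'' is superfluous given the cleaner identification you give at the end.
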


	\begin{proof}
		By Corollary \ref{cor-DemMod}, elements of $J_w^\pm$ take value 0 on $U_{\ii}$, so $J_{w,w}\subset\ker\pi_{\ii}$. Note that the codomain of $\pi_{\ii}$ is an integral domain, and $\ker\pi_{\ii}$ is $H$-invariant, thus $\ker\pi_{\ii}$ is an $H$-prime containing $J_{w,w}$. Proposition \ref{prop-PiNm} shows $\ker\pi_{\ii}\cap E_w^\pm=\varnothing$. Combining Theorem \ref{thm-Hprime} and Theorem \ref{thm-stratification}, $\ker\pi_{\ii}$ must be $J_{w,w}$.
	\end{proof}

	Before stating the proof of Theorem \ref{thm-wwmod}, we introduce some notations. For $M_{i_k}$ being $M_{i_k}^h(\gamma_k)$, $M_{i_k}^l(\eta_k)$ or $M_{i_k}(\gamma_k,\eta_k)$, $1\le k\le m$, let $\zz_k$ be the subscript set $\N,\Z_{\le0}$ or $\Z$, and let $b_k=-q_{i_k},-q_{i_k}\inv$ or $\gamma_k\eta_k$  respectively, c.f. (\ref{eq-ssla})-(\ref{eq-sslo}). For $\nn=\nnm\tr\in\zz_{\ii}:=\zz_1\times\zz_2\times\cdots\times\zz_m$, denote $e_{\nn}=e_{n_1}\otimes e_{n_2}\otimes\cdots\otimes e_{n_m}\in M_{\ii}$. Such $e_{\nn}$'s form a basis of $M_{\ii}$.

	\begin{proof}[Proof of Theorem \ref{thm-wwmod}]
		We use induction on $l(\ii)$ and show that if $M_{\ii'}$ is simple, where $\ii'=(i_2,i_3,\cdots,i_m)$, then $M_{\ii}$ is simple. 
		Let $\ee=\sum_{\nn\in\zz_{\ii}}c_{\nn}e_{\nn}$ be any nonzero element of $M_{\ii}$, where $c_{\nn}\in\C$ is all but finitely 0. We want to show $\cqg\ee=M_{\ii}$ with the following steps.
		
		\emph{Step 1.} Rewrite $\ee=\sum_{k\in\zz_1}e_k\otimes\ee'_k,\ee'_k\in M_{\ii'}$. Claim that $e_k\otimes\ee'_k\in\cqg\ee,\forall k\in\zz_1$. Choose some $\lambda\in P^+$ such that $a_1=(w_{>1}(\lambda),\alpha_{i_1}^\vee)>0$. Apply Propostion \ref{prop-PiNm},
		\[\np{w}{\lambda}\nm{w}{\lambda}e_{\nn}=C_{\nn}e_{\nn},\]
		\[\np{w}{\lambda}(1)\nm{w}{\lambda}(1)e_{\nn}=C_{\nn}'e_{\nn},\]
		where
		\[C_{\nn}=b_1^{a_1}b_2^{a_2}\cdots b_m^{a_m}q_{i_1}^{2a_1n_1}q_{i_2}^{2a_2n_2}\cdots q_{i_m}^{2a_mn_m},\]
		\[C_{\nn}'=c_1c_1'(1+b_1q_{i_1}^{2n_1+1})b_1^{a_1-1}b_2^{a_2}\cdots b_m^{a_m}q_{i_1}^{2(a_1-1)(n_1+1)}q_{i_2}^{2a_2n_2}\cdots q_{i_m}^{2a_mn_m}.\]
		Notice $C_{\nn}'/C_{\nn}=c_1c_1'q_{i_1}^{a_1}(1+b_1\inv q_{i_1}^{-2n_1-1})$, which is an injective function on $n_1$ since $q$ is generic. So for different $n_1$, $C_{\nn}$ or $C_{\nn}'$ must be different. The claim is then obtained using a Vandermonde determinant argument.

		\emph{Step 2.} For some nonzero $e_k\otimes\ee_k'$ in \emph{Step 1}, we want to show
		\[(1\otimes\pi_{\ii'}(\cqg))(e_k\otimes\ee_k')\subset\cqg(e_k\otimes\ee_k'),\]
		which leads to $e_k\otimes M_{\ii'}\subset\cqg (e_k\otimes\ee_k')$ by induction hypothesis. In fact, for any matrix coefficient $c_{f,v}^V$, where $V\in\cc,f\in(V^*)_{-\nu},v\in V_\mu,\nu,\mu\in P$, consider $c_{T_{i_1}(f),v}^V$'s action on $e_k\otimes\ee_k'$. Notice that the set $\pp_{\ii,s_{i_1}(\nu),\mu}$ of weight strings from $s_{i_1}(\nu)$ to $\mu$ can be decomposed into $\pp_1\sqcup\pp_2$, where
		\[\pp_1=\{(s_{i_1}(\nu),\nu,\mu_2,\cdots,\mu)|(\nu,\mu_2,\cdots,\mu)\in\pp_{\ii',\nu,\mu}\},\]
		\[\pp_2=\{(s_{i_1}(\nu),\mu_1,\cdots,\mu)\in\pp_{\ii,s_{i_1}(\nu),\mu}|\mu_1\ne\nu\}.\]
		Apply Proposition \ref{prop-pii} (i), one gets
		\begin{align*}
			\pi_{\ii}&(c_{T_{i_1}(f),v}^V)=P_0(x_{12},x_{21})\otimes\pi_{\ii'}(c_{f,v}^V)\\
			&+\sum_{\muu\in\pp_2}P_{\muu,1}(x_{12},x_{21})x_\pm^{a_{\muu,1}}\otimes P_{\muu,2}(x_{12},x_{21})x_\pm^{a_{\muu,2}}\otimes\cdots\otimes P_{\muu,m}(x_{12},x_{21})x_\pm^{a_{\muu,m}},
		\end{align*}
		where $P_0,P_{\muu,1},P_{\muu,2},\cdots,P_{\muu,m}\in\C[u_1,u_2],P\ne0,a_{\muu,1},a_{\muu,2},\cdots,a_{\muu,m}\in\Z,a_{\muu,1}\ne0$. Thus
		\[c_{T_{i_1}(f),v}^V(e_k\otimes\ee_k')=ce_k\otimes\pi_{\ii'}(c_{f,v}^V)\ee_k'+\sum_{k'\ne k}e_{k'}\otimes \ee_{k'}'',\]
		where $c\in\C^*,\ee_{k'}''\in M_{\ii'}$. Due to \emph{Step 1}, $e_k\otimes\pi_{\ii'}(c_{f,v}^V)\ee_k'\in\cqg (e_k\otimes\ee_k')$.

		\emph{Step 3.} For some nonzero $e_k\otimes\ee_k'$ in \emph{Step 1}, applying $c_{w,\lambda}^\pm(1)$'action on $e_k\otimes\ee_k'$, one gets $e_l\otimes\ee_l'''\in\cqg(e_k\otimes\ee_k'),\forall l\in\zz_1$, for some nonzero $\ee_l'''\in M_{\ii'}$, c.f. Proposition \ref{prop-PiNm}.

		\emph{Step 4.} Combining \emph{Step 1-3}, one gets $M_{\ii}=\sum_{l\in\zz_1}e_{l}\otimes M_{\ii'}\subset\cqg\ee$, i.e. $M_{\ii}$ is simple. Corollary \ref{cor-wwtype} tells $M_{\ii}$ is $(w,w)$ type. So we obtain the theorem.
	\end{proof}
	
	\subsection{Localization of $\cqg_{\ww}$}

	Fix some $(\ww)\in\wtw$ and $\tii=\tiim\in\I_{\ww}$. In the following subsections we study the tensor module $\tmii$ defined by (\ref{eq-Mtii}). Let $w_{\le k}^\pm,w_{>k}^\pm,1\le k\le m$ be
	\[w_{\le k}^\pm=\prod_{\pm i_j>0,j\le k}s_{|i_j|},w_{>k}^\pm=\prod_{\pm i_j>0,j>k}s_{|i_j|}.\]
	Define \emph{left quasi-$\tii$-normal elements} to be
	\begin{equation}\label{eq-quasinormL}
		\nl{\tii}{\lambda}(j)=\begin{cases}
			(F_{|i_1|}^{(j)}\otimes 1)\np{w_1}{\lambda},\lambda\in P^+,0\le j\le(\wm{1}(\lambda),\alpha_{|i_1|}^\vee), & \text{if $i_1<0$},\\
			(E_{|i_1|}^{(j)}\otimes 1)\nm{w_2}{\lambda},\lambda\in P^+,0\le j\le(\wp{1}(\lambda),\alpha_{|i_1|}^\vee), & \text{if $i_1>0$},
		\end{cases}
	\end{equation}
	and \emph{right quasi-$\tii$-normal elements} to be
	\begin{equation}\label{eq-quasinormR}
		\nr{\tii}{\lambda}(j)=\begin{cases}
			(1\otimes F_{|i_1|}^{(j)})\np{w_1}{\lambda},\lambda\in P^+,0\le j\le(\lambda,\alpha_{|i_m|}^\vee), & \text{if $i_m<0$},\\
			(1\otimes E_{|i_1|}^{(j)})\nm{w_2}{\lambda},\lambda\in P^+,0\le j\le(\lambda,\alpha_{|i_m|}^\vee), & \text{if $i_m>0$}.
		\end{cases}
	\end{equation}
	We deduce their actions on $\tmii$. Recall in Proposition \ref{prop-pii} we define
	\begin{equation}\label{eq-amubmu}
		\begin{gathered}
			\aa(\muu)=(a_{\muu,1},a_{\muu,2},\cdots,a_{\muu,m})\tr,a_{\muu,k}=(\mu_{k-1}+\mu_k,\alpha_{|\ti_k|}^\vee)/2,1\le k\le m,\\
			\bb(\muu)=(b_{\muu,1},b_{\muu,2},\cdots,b_{\muu,m})\tr,b_{\muu,k}=|(\mu_{k-1}-\mu_k,\alpha_{|\ti_k|}^\vee)|/2,1\le k\le m,
		\end{gathered}
	\end{equation}
	for weight string $\muu=(\mu_0,\mu_1,\cdots,\mu_m)$ of type $\tii$. Denote $I(\muu)=x^{\aa(\muu)}y^{\bb(\muu)}\in \lqi$ and
	\[D_{\tii}=\frac{1}{2}\diag((\alpha_{|i_1|},\alpha_{|i_1|}),(\alpha_{|i_2|},\alpha_{|i_2|}),\cdots,(\alpha_{|i_m|},\alpha_{|i_m|})).\]

	\begin{prop}\label{prop-tpinm}
		Let $\lambda\in P^+$. Keep notations above. There exist $c_j,c_j'\in\C^*$ such that
		\[\tpi_{\tii}(\nl{\tii}{\lambda}(j))=c_jI(\muu_{\tii,\lambda}^l(j)),\tpi_{\tii}(\nr{\tii}{\lambda}(j))=c_j'I(\muu_{\tii,\lambda}^r(j))\]
		where $\muu_{\tii,\lambda}^l(j),\muu_{\tii,\lambda}^r(j)$ are weight strings of type $\tii$ defined as
		\[\muu_{\tii,\lambda}^l(j)=\begin{cases}
			(w_1(\lambda)+j\alpha_{|i_1|},\wm{1}(\lambda),\cdots,\wm{m-1}(\lambda),\lambda), & \text{if $i_1<0$},\\
			(-w_2(\lambda)-j\alpha_{|i_1|},-\wp{1}(\lambda),\cdots,-\wp{m-1}(\lambda),-\lambda), & \text{if $i_1>0$},
		\end{cases}\]
		\[\muu_{\tii,\lambda}^r(j)=\begin{cases}
			(w_1(\lambda),\wm{1}(\lambda),\cdots,\wm{m-1}(\lambda),\lambda-j\alpha_{|i_m|}), & \text{if $i_m<0$},\\
			(-w_2(\lambda),-\wp{1}(\lambda),\cdots,-\wp{m-1}(\lambda),-\lambda+j\alpha_{|i_m|}). & \text{if $i_m>0$}.
		\end{cases}\]
	\end{prop}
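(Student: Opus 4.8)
The plan is to reduce the statement to a direct application of Proposition \ref{prop-pii} (ii), the same way Proposition \ref{prop-PiNm} is deduced from Proposition \ref{prop-pii} (i) in the $(w,w)$ case. First I would treat the two cases $i_1<0$ and $i_1>0$ separately, since by definition the left quasi-$\tii$-normal element is $(F_{|i_1|}^{(j)}\otimes 1)\np{w_1}{\lambda}$ in the first case and $(E_{|i_1|}^{(j)}\otimes 1)\nm{w_2}{\lambda}$ in the second; the two are interchanged by the diagram automorphism swapping the roles of $B^+$ and $B^-$, so it suffices to do one case carefully and remark that the other is symmetric.

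Take $i_1<0$. Then $\np{w_1}{\lambda}=c^{V(\lambda)}_{v^*_{w_1(\lambda)},v_\lambda}$, and applying $F_{|i_1|}^{(j)}$ on the left (i.e. to the covector slot) gives, by the definition of the $U\otimes U$-action on $\cqg$, the matrix coefficient $c^{V(\lambda)}_{F_{|i_1|}^{(j)}.v^*_{w_1(\lambda)},\,v_\lambda}$, whose covector lies in $(V(\lambda)^*)_{-\nu}$ with $\nu=w_1(\lambda)+j\alpha_{|i_1|}$. Now I would invoke Proposition \ref{prop-pii} (ii) with this $f$ and with $v=v_\lambda\in V(\lambda)_\lambda$, so that $\mu=\lambda$. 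The set $\pp_{\tii,\nu,\lambda}$ of weight strings is then examined using Remark \ref{rem-pii}: a string $\muu=(\mu_0,\dots,\mu_m)$ with $\mu_{k-1}-\mu_k=j_k\,\sgn(\ti_k)\alpha_{|\ti_k|}$ contributes a nonzero term only if $X_{|\ti_k|}^{j_k}\cdots X_{|\ti_m|}^{j_m}v_\lambda\ne 0$ for all $k$ and $f$ does not annihilate $X_{|\ti_1|}^{j_1}\cdots X_{|\ti_m|}^{j_m}v_\lambda$. Because $v^*_{w_1(\lambda)}$ (hence $F_{|i_1|}^{(j)}.v^*_{w_1(\lambda)}$ up to scalar, since $F_{|i_1|}^{(j)}$ acting on the dual corresponds to a power of $E_{|i_1|}$ moving $v^*_{w_1(\lambda)}$ toward $v^*_{w_1(\lambda)+j\alpha_{|i_1|}}$) pairs nontrivially only with the one-dimensional weight space through which the Demazure filtration passes, and because Corollary \ref{cor-DemMod} (ii) identifies $U(\tii)v_\lambda$ with the Demazure module $V^+_{w_1,\lambda}$, the vectors $v_{\wm{m-1}(\lambda)},\dots,v_{\wm{1}(\lambda)}$ are forced along the chain $\lambda\rightsquigarrow\wm{m-1}(\lambda)\rightsquigarrow\cdots\rightsquigarrow\wm{1}(\lambda)\rightsquigarrow w_1(\lambda)+j\alpha_{|i_1|}$: at step $k$ the only admissible weight drop is by $(\wm{k}(\lambda)-\wm{k-1}(\lambda))=$ a nonnegative multiple of $\alpha_{|i_k|}$ dictated by the reduced word, so the unique surviving weight string is exactly $\muu_{\tii,\lambda}^l(j)$ as claimed. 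Evaluating the single surviving term via Lemma \ref{lem-xlij} (ii)–(iii) gives $\tpi_{\tii}(\nl{\tii}{\lambda}(j))=c_j\,I(\muu_{\tii,\lambda}^l(j))$ with $c_j$ a product of the nonzero scalars $c^+_{l_k i_k j_k}$ appearing there, hence $c_j\in\C^*$.

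For the right quasi-$\tii$-normal element I would argue dually: now $F_{|i_m|}^{(j)}$ (or $E_{|i_m|}^{(j)}$ when $i_m>0$) acts on the \emph{vector} slot, so in Proposition \ref{prop-pii} (ii) I take $f=v^*_{w_1(\lambda)}$ and $v=F_{|i_m|}^{(j)}v_\lambda\in V(\lambda)_{\lambda-j\alpha_{|i_m|}}$, whence $\mu=\lambda-j\alpha_{|i_m|}$, $\nu=w_1(\lambda)$, and the same Demazure-module rigidity (Corollary \ref{cor-DemMod} (ii), Remark \ref{rem-pii}) forces the unique weight string $\muu_{\tii,\lambda}^r(j)$; the scalar $c_j'$ is again nonzero because the relevant $c^\pm_{l i j}$ from Lemma \ref{lem-xlij} (ii) is nonzero precisely when the sign condition on $i-j$ holds, which it does here by construction of the word. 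I expect the main obstacle to be the bookkeeping in the uniqueness argument: one must check carefully that no \emph{other} weight string survives, i.e. that there is no alternative chain of weights compatible both with the reduced word $\tii$ and with nonvanishing of the intermediate vectors and of the pairing with $f$. This is exactly the place where Corollary \ref{cor-DemMod} (ii) and the one-dimensionality of extremal weight spaces ($\dim V(\lambda)_{w(\lambda)}=1$) do the work, and where the computation parallels the $(w,w)$ case in Proposition \ref{prop-PiNm}; after that, the explicit identification of the surviving monomial with $I(\muu^{l/r}_{\tii,\lambda}(j))$ and the nonvanishing of the scalar are routine applications of Lemma \ref{lem-xlij}.
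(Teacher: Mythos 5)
Your proposal follows essentially the same route as the paper's proof: interpret the quasi-$\tii$-normal element as a matrix coefficient with the modified covector (resp. vector), apply Proposition \ref{prop-pii} (ii), and use Corollary \ref{cor-DemMod} (ii) together with Remark \ref{rem-pii} to see that only the extremal weight string $\muu^{l/r}_{\tii,\lambda}(j)$ contributes, exactly as in the $(w,w)$ computation of Proposition \ref{prop-PiNm}, with Lemma \ref{lem-xlij} giving the nonzero scalar. The only cosmetic blemish is the parenthetical describing $F^{(j)}_{|i_1|}$ on the dual as ``a power of $E_{|i_1|}$'' (via the antipode it pairs against $F^{(j)}_{|i_1|}$ on vectors, shifting the support of the functional to the weight space $w_1(\lambda)+j\alpha_{|i_1|}$, which need not be one-dimensional), but this does not affect the argument.
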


	\begin{proof}
		We check for $\nl{\tii}{\lambda}(j)$ when $i_1<0$. By Corollary \ref{cor-DemMod} (ii), $U(i_{k+1})U(i_{k+2})\cdots U(i_m)v_\lambda=U^+v_{\wm{k}(\lambda)},0\le k\le m$. Apply Proposition \ref{prop-pii} (ii) to calculate $\tpi_{\tii}(\nl{\tii}{\lambda}(j))$, the only nonzero summand term in the right side of (\ref{eq-tpii}) is the term corresponding to the weight string $(w_1(\lambda)+j\alpha_{|i_1|},\wm{1}(\lambda),\cdots,\wm{m-1}(\lambda),\lambda)$, c.f. Remark \ref{rem-pii}. This implies the desired formula for $\tpi_{\tii}(\nl{\tii}{\lambda}(j))$ when $i_1<0$. The computation of other cases is similar.
	\end{proof}

	\begin{cor}\label{cor-w1w2type}
		$\ker\tpi_{\tii}=J_{\ww}$.
	\end{cor}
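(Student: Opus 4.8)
The plan is to mimic the proof of Corollary~\ref{cor-wwtype} almost verbatim, replacing the single-copy normal elements by the left and right quasi-$\tii$-normal elements whose images were just computed in Proposition~\ref{prop-tpinm}. First I would note that the codomain $\lqi$ of $\tpi_{\tii}$ is an iterated quantum torus, hence a domain, and that $\tpi_{\tii}$ is $1\otimes H$-equivariant (each $\pi_i^\pm$ and each $\iota_i^\pm$ is), so $\ker\tpi_{\tii}$ is an $H$-invariant completely prime ideal of $\cqg$. By Theorem~\ref{thm-Hprime}(ii) and Theorem~\ref{thm-stratification}(i), it therefore suffices to show two things: that $J_{\ww}\subseteq\ker\tpi_{\tii}$, and that $\ker\tpi_{\tii}\cap E_{\ww}=\varnothing$; together these force $\ker\tpi_{\tii}=J_{\ww}$ by uniqueness of the stratum.

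For the inclusion $J_{\ww}\subseteq\ker\tpi_{\tii}$, recall $J_{\ww}=J_{w_1}^+R^-+R^+J_{w_2}^-$, so it is enough to show $\tpi_{\tii}$ annihilates $J_{w_1}^+R^-$ and $R^+J_{w_2}^-$; by the Hopf/multiplicative structure of $\cqg$ and of the target, and the formula $\tpi_{\tii}=(\bigotimes\iota)\circ(\bigotimes\pi^{\pm})\circ\Delta^{(m-1)}$, this reduces to showing $\tpi_{\tii}$ kills generators of $J_{w_1}^+$ and of $J_{w_2}^-$. A matrix coefficient $c_{f,v_\lambda}^{V(\lambda)}$ with $f\in(V_{w_1,\lambda}^+)^\perp$ is annihilated because, by Corollary~\ref{cor-DemMod}(ii), $V_{w_1,\lambda}^+=U(\tii)v_\lambda$, so by the expansion in Proposition~\ref{prop-pii}(ii) together with the nonvanishing criterion of Remark~\ref{rem-pii}, every surviving summand pairs $f$ against a vector of $U(\tii)v_\lambda=V_{w_1,\lambda}^+$, on which $f$ vanishes; symmetrically for $J_{w_2}^-$ using $V_{w_2,\lambda}^-=U(\tii)v_{-\lambda}$. (Equivalently, one observes that elements of $J_{w_1}^\pm$ take value $0$ on $U(\tii)$, exactly as in the proof of Corollary~\ref{cor-wwtype}.)

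For $\ker\tpi_{\tii}\cap E_{\ww}=\varnothing$, recall $E_{\ww}=E_{w_1}^+E_{w_2}^-$ consists of products $\np{w_1}{\mu}\nm{w_2}{\nu}$, $\mu,\nu\in P^+$. Taking $j=0$ in Proposition~\ref{prop-tpinm} (with the left quasi-normal element and $i_1<0$, then the $i_1>0$ case, or just directly) gives $\tpi_{\tii}(\np{w_1}{\lambda})=c_0\,I(\muu_{\tii,\lambda}^l(0))$ and $\tpi_{\tii}(\nm{w_2}{\lambda})=c_0'\,I(\muu)$ for suitable weight strings, and each $I(\muu)=x^{\aa(\muu)}y^{\bb(\muu)}$ is a nonzero monomial, hence a unit, in the quantum torus $\lqi$. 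Since the $c_0,c_0'$ are nonzero scalars and a product of units times nonzero scalars is a nonzero unit, $\tpi_{\tii}$ sends every element of $E_{\ww}$ to a unit of $\lqi$, in particular to a nonzero element; thus $\ker\tpi_{\tii}\cap E_{\ww}=\varnothing$. Combining the two facts with Theorems~\ref{thm-Hprime} and~\ref{thm-stratification} yields $\ker\tpi_{\tii}=J_{\ww}$.

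The only mildly delicate point — the ``main obstacle'' such as it is — is the bookkeeping in the first inclusion: one must be sure that the comultiplication-based formula for $\tpi_{\tii}$ genuinely evaluates matrix coefficients on the ordered product subalgebra $U(\tii)=U(\ti_1)U(\ti_2)\cdots U(\ti_m)$, so that Corollary~\ref{cor-DemMod}(ii) applies with the correct ordering dictated by the reduced word $\tii$. This is the same combinatorial input already used implicitly in Proposition~\ref{prop-pii}(ii) and Proposition~\ref{prop-tpinm}, so no new work is needed; everything else is a direct transcription of the $(w,w)$-case argument.
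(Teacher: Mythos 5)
Your proposal is correct and follows the paper's own proof essentially verbatim: you establish $J_{\ww}\subseteq\ker\tpi_{\tii}$ via Corollary \ref{cor-DemMod}(ii) (elements of $J_{w_1}^+$, $J_{w_2}^-$ vanish on $U(\tii)$), observe that $\ker\tpi_{\tii}$ is an $H$-invariant completely prime ideal since the codomain is a domain, use Proposition \ref{prop-tpinm} with $j=0$ to get $\ker\tpi_{\tii}\cap E_{\ww}=\varnothing$, and conclude by Theorems \ref{thm-Hprime} and \ref{thm-stratification}. The extra detail you supply on why matrix coefficients in $J_{w_1}^+$, $J_{w_2}^-$ are killed (via Proposition \ref{prop-pii}(ii) and Remark \ref{rem-pii}) is a harmless elaboration of the same argument.
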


	\begin{proof}
		By Corollary \ref{cor-DemMod} (ii), elements of $J_{w_1}^+$ and $J_{w_2}^-$ take value 0 on $U(\tii)$, so $J_{\ww}\subset\ker\tpi_{\tii}$. Note that the codomain of $\tpi_{\tii}$ is an integral domain, and $\ker\tpi_{\tii}$ is $H$-invariant, thus $\ker\tpi_{\tii}$ is an $H$-prime containing $J_{\ww}$. Proposition \ref{prop-tpinm} shows $\ker\tpi_{\tii}\cap E_{\ww}=\varnothing$. Combining Theorem \ref{thm-Hprime} and Theorem \ref{thm-stratification}, $\ker\tpi_{\tii}$ must be $J_{\ww}$.
	\end{proof}

	Denote by $R_{\tii}\subset \lqi$ the image of $\cqg$ under $\tpi_{\tii}$. The corollary above shows $R_{\tii}\simeq\cqg/J_{\ww}$. Denote by $\lqi^\times=\{cx^{\aa}y^{\bb}|\aa,\bb\in\Z^m,c\in\C^*\}$ the set of units in $\lqi$ (c.f. Proposition \ref{prop-qtunit}). For any subalgebra $A\subset \lqi$, denote $A^\times=A\cap \lqi^\times$. In particular, denote $S_{\tii}=R_{\tii}^\times$. Proposition $\ref{prop-tpinm}$ tells (left and right) quasi-$\tii$-normal elements are in $S_{\tii}$. Before considering the localization of $R_{\tii}$ with respect to $S_{\tii}$, we need the following lemma.

	\begin{lem}\label{lem-ore}
		Let $R$ be an associative algebra over field $\F$, and $q\in\F^*$ is not a root of unity. Given $a,b\in R$ such that $b=\sum b_i$ (finite sum), $ab_i=q^ib_ia$. Then $a$ is an Ore element of $\F\langle a,b\rangle$ and $\F\langle a,b\rangle[a\inv]$=$\F\langle a^\pm,b_i\rangle$.
	\end{lem}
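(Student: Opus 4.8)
The plan is to first extract the key decomposition of $b$ into $a$-eigenvectors and use it to set up a normal form, then verify the Ore condition directly, and finally identify the localization with $\F\langle a^{\pm1},b_i\rangle$. First I would observe that since $ab_i = q^i b_i a$, each $b_i$ satisfies $a b_i a^{-1} = q^i b_i$ \emph{formally}; the point of the lemma is that $a$ becomes invertible in a suitable localization and then $b_i = a^{-1}(q^i b_i a)$ makes sense. I would begin by showing that the subalgebra $\F\langle a,b\rangle$ is spanned by words in $a$ and $b$, and that using the relation $b a = \sum_i q^{-i}(a b_i)$ — more precisely $b_i a = q^{-i} a b_i$, so $b a = a\big(\sum_i q^{-i} b_i\big)$ — one can always move $a$'s to the left past $b$'s at the cost of rescaling the pieces $b_i$. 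Iterating, every element of $\F\langle a,b\rangle$ can be written as $\sum_{k\ge 0} a^k p_k(b_i)$ where $p_k$ is a noncommutative polynomial in the $b_i$'s (with no $a$), or symmetrically with $a$'s on the right.

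Next I would check the (left and right) Ore condition for the multiplicative set $\{a^n : n\ge 0\}$ in $R' := \F\langle a,b\rangle$. Given $r\in R'$ and $a^n$, write $r = \sum_k a^k p_k$ as above; then $a^N r = \sum_k a^{N+k} p_k$, and I want $a^N r \in R' a^n$, i.e. to move an $a^n$ to the right. Since $a b_i = q^i b_i a$, we have $a^n p(b_i) = p^{(n)}(b_i)\, a^n$ where $p^{(n)}$ is $p$ with each monomial in the $b_i$'s rescaled by the appropriate power of $q$ (each occurrence of $b_i$ contributing a factor $q^i$ per application of $a$); this is where genericity of $q$ is not even needed — it is a clean automorphism-type computation. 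Hence $a^n r = \big(\sum_k a^k p_k^{(n)}\big) a^n \in R' a^n$, giving the right Ore condition; the left Ore condition is symmetric. Therefore $a$ is an Ore element and the Ore localization $R'[a^{-1}]$ exists.

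Finally I would identify $R'[a^{-1}]$ with $\F\langle a^{\pm1}, b_i\rangle$. In $R'[a^{-1}]$ the elements $b_i = a^{-1}(q^i b_i a) = a^{-1} \cdot (a b_i)$ are honest elements (the notation $\F\langle a^{\pm1},b_i\rangle$ should be read inside $R'[a^{-1}]$ as the subalgebra they generate), and since $\sum_i b_i = b$ already lies in $R'$, we certainly have $\F\langle a^{\pm1}, b_i\rangle \supseteq R'$; together with $a^{-1}$ this gives $\F\langle a^{\pm1},b_i\rangle \supseteq R'[a^{-1}]$. For the reverse inclusion, each $b_i$ lies in $R'[a^{-1}]$ — and here is the only genuinely substantive point — because the $b_i$ are the distinct $q^i$-eigencomponents of $b$ under conjugation by $a$, and since $q$ is not a root of unity the exponents $i$ appearing are \emph{distinct modulo the order of $q$} (which is infinite), so the $b_i$ can be recovered from $b$ by a Vandermonde/interpolation argument applied to the finite set $\{a^j b a^{-j} = \sum_i q^{ij} b_i : j = 0,1,\dots,N\}$, where $N+1$ is the number of nonzero $b_i$'s. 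Inverting the Vandermonde matrix $(q^{ij})$ — invertible precisely because $q$ is not a root of unity — expresses each $b_i$ as an $\F$-linear combination of $a^j b a^{-j}$, all of which lie in $R'[a^{-1}]$. This gives $\F\langle a^{\pm1},b_i\rangle \subseteq R'[a^{-1}]$, completing the proof.

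The main obstacle is the last step: recovering the individual $b_i$ from $b$ inside the localization. This is where the hypothesis that $q$ is not a root of unity is essential (a Vandermonde determinant $\prod_{i<i'}(q^i - q^{i'})$ must be nonzero), and it is worth stating carefully that the normal-form manipulations in the first two paragraphs do \emph{not} need genericity, only this one spectral-separation argument does. Everything else — the normal form, the Ore condition — is a routine bookkeeping computation with the single relation $ab_i = q^i b_i a$.
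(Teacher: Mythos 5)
There is a genuine gap in your verification of the Ore condition, and it sits exactly where you claim genericity of $q$ is ``not even needed.'' Your normal form writes $r\in\F\langle a,b\rangle$ as $\sum_k a^k p_k(b_i)$ and then $a^n r=\bigl(\sum_k a^k p_k^{(n)}(b_i)\bigr)a^n$; but the cofactor $\sum_k a^k p_k^{(n)}(b_i)$ is only visibly an element of the larger algebra $\F\langle a,b_1,b_2,\dots\rangle\subseteq R$, \emph{not} of $\F\langle a,b\rangle$. The individual $p_k(b_i)$ are not elements of $\F\langle a,b\rangle$ (only the combination $\sum_k a^kp_k$ is), and after the $q$-rescaling the recombined sum has no reason to land back in $\F\langle a,b\rangle$. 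Since the lemma asserts that $a$ is an Ore element \emph{of} $\F\langle a,b\rangle$, the Ore condition must be witnessed inside that subalgebra, so this membership is precisely what has to be proved --- and it is where the hypothesis on $q$ enters. The paper's proof deploys the Vandermonde separation at this earlier stage: from $a^kba^{N-k}=\sum_i q^{ki}b_ia^N$ and $a^{N-k}ba^k=\sum_i q^{-ki}a^Nb_i$ for $0\le k\le N$ (all manifestly in $\F\langle a,b\rangle$), invertibility of the Vandermonde matrix $(q^{ki})$ gives $b_ia^N,\,a^Nb_i\in\F\langle a,b\rangle$, whence $a^{N+1}b=b'a$ and $ba^{N+1}=ab''$ with $b',b''\in\F\langle a,b\rangle$, and the Ore condition follows. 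Your own Vandermonde argument appears only at the end, applied to $a^jba^{-j}$ inside $\F\langle a,b\rangle[a\inv]$ --- which presupposes that the localization already exists.

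The fix is essentially to move your final paragraph to the front, in the form the paper uses (eigencomponent separation before inverting $a$, i.e.\ $b_ia^N,a^Nb_i\in\F\langle a,b\rangle$ rather than $b_i\in\F\langle a,b\rangle[a\inv]$), and then run the Ore verification and the identification of the localization; your concluding identification $\F\langle a,b\rangle[a\inv]=\F\langle a^{\pm1},b_i\rangle$ is fine once the localization is known to exist. As written, however, the claim that the Ore step is a genericity-free ``bookkeeping computation'' is not correct: if $q$ were a root of unity the components $b_i$ could not be separated and the cofactor need not lie in $\F\langle a,b\rangle$ at all, so the argument as structured does not establish the lemma.
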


	\begin{proof}
		Let $N\in\N$ be greater than $|\{i|b_i\ne0\}|$. Write
		\[a^kba^{N-k}=\sum q^{ki}b_ia^N,a^{N-k}ba^k=\sum q^{-ki}a^Nb_i,0\le k\le N.\]
		So a Vandermonde determinant argument shows $b_ia^N,a^Nb_i\in\F\langle a,b\rangle$, and there exist $b',b''\in\F\langle a,b\rangle$ such that $a^{N+1}b=b'a,ba^{N+1}=ab''$. Thus $a$ satisfies the left and right Ore conditions. The rest of the lemma follows consequently.
	\end{proof}

	\begin{rem}
		In settings of the lemma above, we call two elements $a,b\in R$ are \emph{$q$-commute} if $ab=q^kba,k\in\Z$. We call $k$ the \emph{$q$-commute index of $a,b$} and denote it by $[a,b]_q=k$.
	\end{rem}

	\begin{cor}\label{cor-ore}
		$S_{\tii}\subset R_{\tii}$ satisfies the Ore condition.
	\end{cor}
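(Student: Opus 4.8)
The plan is to obtain Corollary \ref{cor-ore} as a pairwise application of Lemma \ref{lem-ore}. First I would observe that $S_{\tii}=R_{\tii}\cap\lqi^\times$ is indeed a multiplicative subset of $R_{\tii}$: it contains $1$, and it is closed under multiplication because both the subalgebra $R_{\tii}$ and the unit group $\lqi^\times$ are. Since $\lqi$ is a quantum torus, hence a domain, every element of $S_{\tii}$ is a non-zero-divisor in $R_{\tii}$, so it suffices to verify the left and right Ore conditions for each pair $(s,r)$ with $s\in S_{\tii}$, $r\in R_{\tii}$.

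So fix such a pair. By Proposition \ref{prop-qtunit}(i), the unit $s$ is a non-zero scalar multiple of a single monomial $x^{\aa}y^{\bb}$ of $\lqi$. On the other hand $r=\tpi_{\tii}(c)$ for some $c\in\cqg$, and by Theorem \ref{thm-PW} $c$ is a finite linear combination of matrix coefficients, so by linearity of $\tpi_{\tii}$ together with Proposition \ref{prop-pii}(ii), $r$ is a finite $\C$-linear combination of monomials $x^{\aa'}y^{\bb'}$ of $\lqi$. Because any two monomials of a quantum torus $q$-commute, grouping the monomial terms of $r$ according to their $q$-commute index with $s$ yields an expression $r=\sum_i r_i$ (finite sum) with $s\,r_i=q^i r_i\,s$. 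This is precisely the hypothesis of Lemma \ref{lem-ore} with $R=\lqi$, $a=s$, $b=r$.

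Applying the lemma, $s$ is an Ore element of $\C\langle s,r\rangle$: for a suitable $N$ there are $r',r''\in\C\langle s,r\rangle\subseteq R_{\tii}$ with $s^{N+1}r=r's$ and $r\,s^{N+1}=s\,r''$. Since $R_{\tii}$ is an algebra and $s^{N+1}$ is again a unit of $\lqi$, we have $s^{N+1}\in S_{\tii}$, so these two identities are exactly the left and right Ore conditions for $(s,r)$. As $s$ and $r$ were arbitrary, $S_{\tii}\subset R_{\tii}$ satisfies the Ore condition. I do not expect a genuine obstacle here; the only point needing a line of care is the reduction in the second paragraph, namely that \emph{every} element of $R_{\tii}$ — not merely the image of one matrix coefficient — is a finite sum of $\lqi$-monomials, so that the grouping by $q$-commute index makes sense and Lemma \ref{lem-ore} applies verbatim.
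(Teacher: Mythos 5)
Your proposal is correct and follows essentially the same route as the paper: the paper's one-line proof likewise combines Lemma \ref{lem-ore} with the $q$-commutation relation (\ref{eq-qcomm}) among monomials of $\lqi$, the decomposition of $r$ by $q$-commute index with the unit $s$ (a scalar multiple of a single monomial, by Proposition \ref{prop-qtunit}) being exactly the intended application. Your filling-in of the details — multiplicativity of $S_{\tii}$, $s^{N+1}\in S_{\tii}$, and $r',r''\in\C\langle s,r\rangle\subseteq R_{\tii}$ — is accurate.
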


	\begin{proof}
		This can be seen from the lemma above and the $q$-commutative relation
		\begin{equation}\label{eq-qcomm}
			[x^{\aa}y^{\bb},x^{\aa'}y^{\bb'}]_q=\aa\tr D_{\tii}\bb'-\aa^{\prime\mathrm{T}}D_{\tii}\bb,\aa,\aa',\bb,\bb'\in\Z^m,
		\end{equation}
		where $\aa,\aa',\bb,\bb'$ are viewed as column vectors.
	\end{proof}

	Denote $\ttii=R_{\tii}[S_{\tii}\inv]$. Since $E_{\ww}\subset \cqg/J_{\ww}\simeq R_{\tii}$ is contained in $S_{\tii}$ (Proposition \ref{prop-tpinm} when $j=0$), $\ttii$ is also a localization of $\cqg_{\ww}$. We determine its structure using quasi-$\tii$-normal elements. Denote $\pp_{\tii}=\sqcup_{(\nu,\mu)\in P\times P}\pp_{\tii,\nu,\mu}$ the set of weight strings of type $\tii$.

	\begin{defn}
		In view of Lemma \ref{lem-ore}, we call two weight strings $\muu=(\mu_0,\mu_1,\cdots,\mu_m)$, $\muu'=(\mu_0',\mu_1',\cdots,\mu_m')\in\pp_{\tii}$ are \emph{$\tii$-separable} if there exists some $x^{\aa}y^{\bb}\in S_{\tii}$ such that $[x^{\aa}y^{\bb},I(\muu)]_q\ne[x^{\aa}y^{\bb},I(\muu')]_q$.
		
		Given $1\le k\le m$, we call $\muu,\muu'$ are \emph{$k^+$-different} if $\mu_i=\mu_i',i<k,\mu_k\ne\mu_k'$, \emph{$k^-$-different} if $\mu_i=\mu_i',i\ge k,\mu_{k-1}\ne\mu_{k-1}'$.
	\end{defn}

	\begin{prop}\label{prop-sielt}
		Elements of $S_{\tii}$ are nonzero scalar multiples of $I(\muu)$'s in $R_{\tii}$, $\muu\in\pp_{\tii}$.
	\end{prop}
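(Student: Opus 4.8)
The plan is to show that any unit of $R_{\tii}$ must be a scalar multiple of some $I(\muu)$ with $\muu\in\pp_{\tii}$. First, recall from Proposition~\ref{prop-qtunit}(i) that every unit of $\lqi$ is of the form $cx^{\aa}y^{\bb}$, so an element $r\in S_{\tii}=R_{\tii}^\times$ is simultaneously a unit of $\lqi$ and an element of $R_{\tii}$. By Corollary~\ref{cor-w1w2type} we may write $r=\tpi_{\tii}(c_{f,v}^V)$ for some $V\in\cc$, $f\in(V^*)_{-\nu}$, $v\in V_\mu$ (after decomposing $V$ into weight components of $f$ and $v$, which is harmless since the resulting pieces land in different monomial supports and $r$ is already a single monomial). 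Proposition~\ref{prop-pii}(ii) then expresses $r$ as $\sum_{\muu\in\pp_{\tii,\nu,\mu}}c_{\muu}x^{\aa(\muu)}y^{\bb(\muu)}$. So it remains to argue that only one term can survive: if two distinct weight strings $\muu,\muu'\in\pp_{\tii,\nu,\mu}$ had nonzero coefficients, then $x^{\aa(\muu)}y^{\bb(\muu)}$ and $x^{\aa(\muu')}y^{\bb(\muu')}$ are linearly independent monomials in $\lqi$ (the map $\muu\mapsto(\aa(\muu),\bb(\muu))$ is injective: from $\aa(\muu)$ and $\bb(\muu)$ one recovers $(\mu_{k-1}+\mu_k,\alpha_{|\ti_k|}^\vee)$ and $|(\mu_{k-1}-\mu_k,\alpha_{|\ti_k|}^\vee)|$, hence—using that $\mu_{k-1}-\mu_k$ is a nonnegative multiple of $\sgn(\ti_k)\alpha_{\ti_k}$ by the definition of weight string of type $\tii$—the full chain $\mu_0,\dots,\mu_m$ is determined), and a sum of two or more distinct monomials is never a unit in a quantum torus. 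Therefore $r=c_{\muu}x^{\aa(\muu)}y^{\bb(\muu)}=c_{\muu}I(\muu)$ for a single $\muu$, and $c_{\muu}\ne0$ since $r\ne0$.

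Conversely I should check that each $I(\muu)$ with $\muu\in\pp_{\tii}$ genuinely lies in $S_{\tii}$, i.e.\ that the $c_{\muu}$ appearing is nonzero for a suitable matrix coefficient; this is supplied by Proposition~\ref{prop-tpinm}, which exhibits the quasi-$\tii$-normal elements and their products mapping (up to nonzero scalars) onto $I(\muu_{\tii,\lambda}^l(j))$ and $I(\muu_{\tii,\lambda}^r(j))$, and more generally by Remark~\ref{rem-pii}, which gives the precise nonvanishing criterion: pick $V(\lambda)$ large and $f=v^*_{\mu_0}$, $v=v_{\mu_m}$ so that the unique surviving weight string is the prescribed $\muu$. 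Strictly speaking the proposition as stated only asserts one direction ("elements of $S_{\tii}$ are scalar multiples of $I(\muu)$'s"), so the core of the argument is the forward inclusion; but it is worth recording the reverse for later use in Theorem~\ref{thm-localization}.

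The main obstacle is the reduction from a general matrix coefficient to a sum of monomials indexed cleanly by weight strings in a \emph{single} $\pp_{\tii,\nu,\mu}$: a priori $c_{f,v}^V$ need not be homogeneous for the $(U\otimes U)$-weight grading, so I must first project $f$ and $v$ onto weight spaces and observe that matrix coefficients with $f\in(V^*)_{-\nu}$, $v\in V_\mu$ for varying $(\nu,\mu)$ have images in $\lqi$ with disjoint monomial supports (the exponents $\aa(\muu),\bb(\muu)$ determine $\mu_0=\nu$ and $\mu_m=\mu$). Once that homogeneity is in place, the injectivity of $\muu\mapsto(\aa(\muu),\bb(\muu))$ and Proposition~\ref{prop-qtunit}(i) finish the argument essentially formally. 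I expect no serious technical difficulty beyond carefully justifying this support-disjointness and the injectivity claim, both of which are direct consequences of the definition of weight string of type $\tii$ together with the shape of $\aa(\muu),\bb(\muu)$ in \eqref{eq-amubmu}.
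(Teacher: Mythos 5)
Your core argument is the paper's own: Proposition \ref{prop-pii} shows every element of $R_{\tii}$ is a linear combination of the monomials $I(\muu)$, $\muu\in\pp_{\tii}$, and Proposition \ref{prop-qtunit}(i) says a unit of $\lqi$ is a nonzero scalar multiple of a single monomial, so any element of $S_{\tii}$ must be a nonzero multiple of some $I(\muu)$. That is exactly the paper's (two-line) proof, and your write-up reaches the same conclusion.

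Two caveats on the extra material. First, your justification for reducing to a single weight-homogeneous matrix coefficient --- that images of matrix coefficients with different $(\nu,\mu)$ have disjoint monomial supports because $(\aa(\muu),\bb(\muu))$ determines $\mu_0$ and $\mu_m$ --- is false in general: the exponents only determine the pairings $(\mu_{k-1},\alpha_{|\ti_k|}^\vee)$ and $(\mu_k,\alpha_{|\ti_k|}^\vee)$, so when $\supp(\ww)\ne[1,n]$ two weight strings whose entries differ by a weight orthogonal to all $\alpha_{|\ti_k|}$ (e.g.\ shifted by a fundamental weight $\omega_i$ with $i\notin\supp(\ww)$) produce identical monomials. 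This is harmless only because the reduction is unnecessary: apply the unit-monomial argument directly to the expansion of a general element of $R_{\tii}$ over all of $\pp_{\tii}$, as the paper does; a surviving monomial is automatically one of the $I(\muu)$'s. Second, your converse aside --- that every $I(\muu)$, $\muu\in\pp_{\tii}$, lies in $S_{\tii}$ by choosing $V(\lambda)$ and weight vectors so that only $\muu$ survives --- is unjustified and stronger than anything the paper proves: only the quasi-$\tii$-normal situations of Proposition \ref{prop-tpinm} give a single surviving string, and for general $\muu$ the paper obtains $I(\muu)^{\pm1}$ only in the localization $\ttii$, via Proposition \ref{prop-separable}, Lemma \ref{lem-ore} and Corollary \ref{prop-accessible}, not in $R_{\tii}$ itself. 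Since the proposition asserts only the forward inclusion, neither point invalidates your proof of the stated result.
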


	\begin{proof}
		Proposition \ref{prop-pii} tells that elements of $R_{\tii}$ are linear combinations of $I(\muu)$'s, $\muu\in\pp_{\tii}$. The proposition then follows from Proposition \ref{prop-qtunit}.
	\end{proof}

	\begin{prop}\label{prop-separable}
		For $1\le k\le m$, any two $k^\pm$-different weight strings $\muu,\muu'\in\pp_{\tii}$ are $\tii$-separable.
	\end{prop}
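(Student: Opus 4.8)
The goal is to produce, for every $k^\pm$-different pair $\muu,\muu'\in\pp_{\tii}$, a weight string $\nuu\in\pp_{\tii}$ with $I(\nuu)\in S_{\tii}$ and $[I(\nuu),I(\muu)]_q\ne[I(\nuu),I(\muu')]_q$; by Proposition~\ref{prop-sielt} this is exactly what $\tii$-separability asks for. Writing $\Delta\aa=\aa(\muu)-\aa(\muu')$ and $\Delta\bb=\bb(\muu)-\bb(\muu')$, the $q$-commutation rule \eqref{eq-qcomm} turns the required inequality into
\[
\aa(\nuu)\tr D_{\tii}\,\Delta\bb\;-\;\Delta\aa\tr D_{\tii}\,\bb(\nuu)\;\ne\;0 .
\]
From Definition~\ref{defn-wtstr} and \eqref{eq-amubmu}, if $\muu,\muu'$ are $k^+$-different then $\Delta\aa_l=\Delta\bb_l=0$ for $l<k$, while $\mu_k-\mu_k'$ is a nonzero integer multiple of $\alpha_{|\ti_k|}$, so $\Delta\aa_k\ne0$ and $\Delta\bb_k=-\sgn(\ti_k)\Delta\aa_k\ne0$. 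Since $D_{\tii}$ is diagonal, the left-hand side above depends only on the entries of $\nuu$ in positions $k-1,k,\dots,m$; in particular the ``head'' $\nu_0,\dots,\nu_{k-2}$ of $\nuu$ may be chosen arbitrarily (e.g. all equal to $\nu_{k-1}$) without affecting anything. The $k^-$-different case is the mirror image (reverse the word $\tii$, which exchanges the two ends and swaps the roles of $\mu_0$ and $\mu_m$), so it suffices to treat $k^+$.

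I would use two families of candidates for $\nuu$. First, the \emph{constant} strings $\nuu=(\rho,\dots,\rho)$, $\rho\in P$: here $\bb(\nuu)=0$, $\aa(\nuu)=((\rho,\alpha_{|\ti_1|}^\vee),\dots,(\rho,\alpha_{|\ti_m|}^\vee))$, and using $\tfrac12(\alpha_{|\ti_l|},\alpha_{|\ti_l|})\,\alpha_{|\ti_l|}^\vee=\alpha_{|\ti_l|}$ the displayed quantity becomes $(\rho,\Sigma)$ with $\Sigma:=\sum_l\Delta\bb_l\,\alpha_{|\ti_l|}$. Thus a constant string separates $\muu,\muu'$ as soon as $\Sigma\ne0$. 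Second, for $n\ge1$ the \emph{single-jump} string $\nuu^{(n)}$ which is constant $\equiv\mu_{k-1}$ on positions $0,\dots,k-1$ and constant $\equiv\mu_{k-1}-n\,\sgn(\ti_k)\alpha_{|\ti_k|}$ on positions $k,\dots,m$; this lies in $\pp_{\tii}$ and $\nuu^{(n)}=\nuu^{(0)}+n\delta$ in the monoid $\pp_{\tii}$, where $\nuu^{(0)}$ is the constant string at $\mu_{k-1}$ and $\delta\in\pp_{\tii}$ is the elementary one-step string (the jump of size $1$ at position $k$, $0$ elsewhere), so $I(\nuu^{(n)})$ is a scalar multiple of $x^{\aa(\nuu^{(0)})}I(\delta)^{\,n}$. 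A direct substitution shows that for $\nuu^{(n)}$ the displayed quantity is affine in $n$, with constant term $(\mu_{k-1},\Sigma)$ and increment (per unit $n$)
\[
-\sgn(\ti_k)\Bigl(\alpha_{|\ti_k|},\ \textstyle\sum_{l>k}\Delta\bb_l\,\alpha_{|\ti_l|}\Bigr);
\]
when $\Sigma=0$ one substitutes $\sum_{l>k}\Delta\bb_l\,\alpha_{|\ti_l|}=-\Delta\bb_k\,\alpha_{|\ti_k|}$ to see that this increment equals $\sgn(\ti_k)\Delta\bb_k(\alpha_{|\ti_k|},\alpha_{|\ti_k|})\ne0$, so $\nuu^{(1)}$ already separates. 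Hence, granted realisability, the constant strings handle $\Sigma\ne0$ and the single-jump string handles $\Sigma=0$, covering all cases (the base case $m=1$ falls under $\Sigma\ne0$).

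It remains --- and this is the real content --- to verify that the $\nuu$ chosen above actually satisfies $I(\nuu)\in S_{\tii}=R_{\tii}\cap \lqi^\times$, i.e. that the monomial $I(\nuu)$ lies in the image $R_{\tii}=\tpi_{\tii}(\cqg)$. For a constant string, $I((\rho,\dots,\rho))$ should be realised as $\tpi_{\tii}$ of a suitable matrix coefficient $c^{V(\lambda)}_{v_{w(\lambda)}^*,\,v_{w'(\lambda)}}$ with extremal weight data; for the single-jump string it suffices (since $I(\nuu^{(0)})$ is a constant-string unit) to realise $I(\delta)$, and one looks for $\lambda$, a weight vector $v$ and a weight covector $f$ such that in the expansion of Proposition~\ref{prop-pii}(ii) only the string $\delta$ survives --- by Remark~\ref{rem-pii} this means choosing $v$, $f$ extremal enough that every competing path $X^{j_1}_{|\ti_1|}\cdots X^{j_m}_{|\ti_m|}v$ with some $j_l\ne0$ for $l\ne k$ is killed. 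This is exactly the mechanism used in the proof of Proposition~\ref{prop-tpinm}, where the Demazure module description (Corollary~\ref{cor-DemMod}) forces the collapse to a single term; the delicate point, and the step I expect to cost the most, is to make these choices uniformly in $k$ and in $\sgn(\ti_k)$ --- equivalently, to pin down enough single-monomial images (the ``one-step'' minors) inside $R_{\tii}$.
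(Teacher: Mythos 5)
There is a genuine gap, and it sits exactly where you flagged it: the realisability step $I(\nuu)\in S_{\tii}$ for your chosen candidates (constant strings and the single\-/jump string $\delta$) is not just ``the step that costs the most'' --- it fails in general, and in the paper's logical order it is actually a \emph{consequence} of the proposition you are trying to prove. The monomials $I(\omegaa_i)$ and $I(\muu_{\tii,k})$ (which are, up to a constant shift, exactly your constant and single\-/jump candidates) are shown in Corollary~\ref{prop-accessible} to lie only in the localization $\ttii=R_{\tii}[S_{\tii}\inv]$, and that corollary is deduced \emph{from} Proposition~\ref{prop-separable} via Lemma~\ref{lem-ore}; using them as separating elements in $S_{\tii}$ is therefore circular. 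Concretely, they need not lie in $R_{\tii}$ at all once $\supp(w_1)\cap\supp(w_2)\neq\varnothing$: already for $G=SL_2$ and $\tii=(1,-1)$, every matrix coefficient $c_{f,v}^{V}$ with $f,v$ of weight $\omega$ has image $\sum_{j\ge0}c_j\,x^{1-j}y^{j}\otimes x^{1-j}y^{j}$ where the coefficient of the $j$-th term is a fixed nonzero multiple of $f(E^jF^jv)$, and a triangularity argument on the highest weights occurring shows that no finite linear combination can keep the $j=0$ term while cancelling all $j\ge1$ terms; hence the constant monomial $x\otimes x$ is not in $R_{\tii}$, so not in $S_{\tii}$. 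Your suggested mechanism (``choose $v,f$ extremal enough that every competing path is killed'') cannot repair this, because the strings that survive for a coefficient at an interior position $k$ involve motion both above and below the constant level, and extremality of $v$ or $f$ alone does not kill them --- what makes the collapse to a single monomial work in Proposition~\ref{prop-tpinm} is the Demazure-module structure along the \emph{entire} word (Corollary~\ref{cor-DemMod}), which is only available for the ``staircase'' strings $\muu^l_{\tii,\lambda}(j)$, $\muu^r_{\tii,\lambda}(j)$ attached to quasi-$\tii$-normal elements, not for constant or single-jump strings.

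The paper's proof is built precisely to avoid this obstruction: for $k=1$ it separates using the genuine elements of $S_{\tii}$ coming from $\nl{\tii}{\lambda}(0)$ and $\nl{\tii}{\lambda}(1)$ (whose images are single monomials by Proposition~\ref{prop-tpinm}), and for $k\ge2$ it does \emph{not} exhibit a single monomial of $S_{\tii}$ localized at position $k$; instead it runs an induction on $k$, using the already-established separability at positions $<k$ together with Lemma~\ref{lem-ore} to conclude $I(\nuu^l_{\tii,\lambda_k}(k,j))\in\ttii$, and then extracts the separating element of $S_{\tii}$ as a numerator or denominator --- i.e.\ a \emph{product} of quasi-normal images (cf.\ Remark~\ref{rem-separable}), not a constant or one-step monomial. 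Your commutation computations (the reduction to $\aa(\nuu)\tr D_{\tii}\Delta\bb-\Delta\aa\tr D_{\tii}\bb(\nuu)\ne0$, the evaluation $(\rho,\Sigma)$ for constant strings, and the increment for $\nuu^{(n)}$) are correct and would suffice if the candidates were admissible, and your reduction of $k^-$ to $k^+$ by reflecting the word is plausible though unjustified as stated; but without an argument placing suitable monomials in $S_{\tii}$ that does not presuppose the proposition, the proof as proposed does not go through.
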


	\begin{proof}
		We prove the $k^+$-different case. Write $\muu=(\mu_0,\mu_1,\cdots,\mu_m)$, $\muu'=(\mu_0',\mu_1',\cdots,\mu_m')$. Use induction on $k$. When $k=1$, $\mu_0=\mu_0'$ and $\mu_1\ne\mu_1'$. Suppose $\mu_1=\mu_0-j_1\sgn(i_1)\alpha_{|i_1|},\mu_1'=\mu_0-j_1'\sgn(i_1)\alpha_{|i_1|},j_1,j_1'\ge0,j_1\ne j_1'$. Choose $\lambda_1\in P^+$ such that $(w_{>1}^{-\sgn(i_1)}(\lambda_1),\alpha_{|i_1|}^\vee)>0$, i.e. $\nl{\tii}{\lambda_1}(1)$ is well defined (c.f. (\ref{eq-quasinormL})). A direct calculation using Proposition \ref{prop-tpinm} shows
		\[d_{\tii,\lambda_1}^l(1):=\tpi_{\tii}(\nl{\tii}{\lambda_1}(1))\inv\tpi_{\tii}(\nl{\tii}{\lambda_1}(0))=Cx^{\sgn(i_1)}y\otimes1\cdots\otimes1,\exists C\in\C^*,\]
		thus
		\[[d_{\tii,\lambda_1}^l(1),I(\muu)]_q-[d_{\tii,\lambda_1}^l(1),I(\muu')]_q=[d_{\tii,\lambda_1}^l(1),I(\muu)/I(\muu')]_q=(j_1-j_1')(\alpha_{|i_1|},\alpha_{|i_1|})\ne0.\]
		So one of $\tpi_{\tii}(\nl{\tii}{\lambda_1}(1))$ and $\tpi_{\tii}(\nl{\tii}{\lambda_1}(0))$ must have different $q$-commute indices with $I(\muu)$ and $I(\muu')$, i.e. $\muu$ and $\muu'$ are $\tii$-separable.

		\sloppy Suppose any two $i^+$-different weight strings are $\tii$-separable, $1\le i\le k-1$. Let $\muu,\muu'$ be $k^+$-different, i.e. $\mu_i=\mu_i',0\le i\le k-1,\mu_k=\mu_{k-1}-j_k\sgn(i_k)\alpha_{|i_k|},\mu_k'=\mu_{k-1}-j_k'\sgn(i_k)\alpha_{|i_k|},j_k,j_k'\ge0,j_k\ne j_k'$. We want to show $\muu,\muu'$ are $\tii$-separable. Denote $\tii_{\ge k}=(\ti_k,\ti_{k+1},\cdots,\ti_m)$. Choose $\lambda_k\in P^+$ such that $\nl{\tii_{\ge k}}{\lambda_k}(1)$ is well defined, i.e. $(w_{>k}^{-\sgn(i_k)}(\lambda_k),\alpha_{|i_k|}^\vee)>0$. For $j=0,1$, define weight strings $\nuu^l_{\tii,\lambda_k}(k,j)=(\nu_{j0},\nu_{j1},\cdots,\nu_{jm})\in\pp_{\tii}$ such that $(\nu_{j(k-1)},\nu_{jk},\cdots,\nu_{jm})=\muu_{\tii_{\ge k},\lambda_k}^l(j)$ (c.f. Proposition \ref{prop-tpinm}) and $\nu_{j0}=\nu_{j1}=\cdots=\nu_{j(k-1)}$.
		
		Apply Proposition \ref{prop-pii} (ii) to compute $\tpi_{\tii}(\nl{\tii_{\ge k}}{\lambda_k}(j)),j=0,1$. There is only one nonzero summand term corresponding to a weight string $\nuu=(\nu_0,\nu_1,\cdots,\nu_m)\in\pp_{\tii}$ such that $\nu_0=\nu_1=\cdots=\nu_{k-1}=\nu_{j(k-1)}$, i.e. such $\nuu$ must be $\nuu^l_{\tii,\lambda_k}(k,j)$, c.f. Remark \ref{rem-pii} and Corollary \ref{cor-DemMod}. This shows weight strings of other nonzero summand terms must be $i^+$-different with $\nuu^l_{\tii,\lambda_k}(k,j)$, for some $1\le i\le k-1$. In view of Lemma \ref{lem-ore}, the induction hypothesis indicates that $I(\nuu^l_{\tii,\lambda_k}(k,j))\in \ttii=R_{\tii}[S_{\tii}\inv],j=0,1$. Denote $\muu_{\ge k-1}=(\mu_{k-1},\mu_k,\cdots,\mu_m),\muu_{\ge k-1}'=(\mu_{k-1}',\mu_k',\cdots,\mu_m')$ and
		\[d_{\tii,\lambda_k}^l(k):=I(\nuu^l_{\tii,\lambda_k}(k,1))\inv I(\nuu^l_{\tii,\lambda_k}(k,0))\in \lqi^\times.\]
		Notice $\muu,\muu'$ are $k^+$-different, calculate using the $k=1$ case
		\[[d_{\tii,\lambda_k}^l(k),I(\muu)]_q-[d_{\tii,\lambda_k}^l(k),I(\muu')]_q=[d_{\tii,\lambda_k}^l(k),I(\muu)/I(\muu')]_q\]
		\[=[d_{\tii_{\ge k},\lambda_k}^l(1),I(\muu_{\ge k-1})/I(\muu'_{\ge k-1})]_q=(j_k-j_k')(\alpha_{|j_k|},\alpha_{|j_k|})\ne0.\]
		Therefore one of $I(\nuu^l_{\tii,\lambda_k}(k,0))$ and $I(\nuu^l_{\tii,\lambda_k}(k,1))$ has different $q$-commute indices with $I(\muu)$ and $I(\muu')$, so does one of their numerators and denominators. Note the numerators of $I(\nuu^l_{\tii,\lambda_k}(k,j))\in\ttii$ fall in $\lqi^\times\cap R_{\tii}=S_{\tii},j=0,1$, as well as the denominators. This implies some element in $S_{\tii}$ has different $q$-commute indices with $I(\muu)$ and $I(\muu')$, i.e. $\muu$ and $\muu'$ are $\tii$-separable.

		For the $k^-$-different case, use downward induction on $k$. Choose $\lambda_k'\in P^+$ such that $(\lambda_k',\alpha_{|i_k|}^\vee)>0$. For $j=0,1$, define weight strings $\nuu^r_{\tii,\lambda_k'}(k,j)=(\nu_{j0},\nu_{j1},\cdots,\nu_{jm})\in\pp_{\tii}$ such that $(\nu_{j0},\nu_{j1},\cdots,\nu_{jk})=\muu_{\tii_{\le k},\lambda_k'}^r(j)$ and $\nu_{jk}=\nu_{j(k+1)}=\cdots=\nu_{jm}$. Denote $d_{\tii,\lambda_k'}^r(k):=I(\nuu^r_{\tii,\lambda_k'}(k,1))\inv I(\nuu^r_{\tii,\lambda_k'}(k,0))\in \lqi^\times$. The rest of proof is similar to that of the $k^+$-different case.
	\end{proof}

	\begin{rem}\label{rem-separable}
		By induction on $k$, the proof above shows for $k^+$-different weight strings $\muu,\muu'$, $1\le k\le m$, there exists some element in $S_{\tii}$ of the form
		\[\prod _{j=1}^k I(\nuu^l_{\tii,\lambda_j}(j,0))^{h_j}I(\nuu^l_{\tii,\lambda_j}(j,1))^{h_j'},h_j,h_j'\ge0,\lambda_j\in P^+,\]
		which has different $q$-commute indices with $I(\muu)$ and $I(\muu')$. Similarly, for $k^-$-different weight strings $\muu,\muu'$, $1\le k\le m$, there exists some element in $S_{\tii}$ of the form
		\[\prod _{j=k}^m I(\nuu^r_{\tii,\lambda_j'}(j,0))^{k_j}I(\nuu^r_{\tii,\lambda_j'}(j,1))^{k_j'},k_j,k_j'\ge0,\lambda_j'\in P^+,\]
		which has different $q$-commute indices with $I(\muu)$ and $I(\muu')$.
	\end{rem}

	\begin{cor}\label{prop-accessible}
		$I(\muu)^{\pm 1}\in\ttii,\forall\muu\in\pp_{\tii}$.
	\end{cor}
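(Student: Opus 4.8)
The plan is to establish that $I(\muu)\in\ttii$ for every $\muu\in\pp_{\tii}$; invertibility of $I(\muu)$ inside $\ttii$ is then automatic. Indeed, once $I(\muu)\in\ttii=R_{\tii}[S_{\tii}\inv]$, write it as a right fraction $I(\muu)=rs\inv$ with $r\in R_{\tii}$ and $s\in S_{\tii}$. Then $r=I(\muu)s$ is a product of two units of $\lqi$, hence a unit of $\lqi$; since also $r\in R_{\tii}$, we get $r\in R_{\tii}^\times=S_{\tii}$, so $r\inv\in\ttii$ and therefore $I(\muu)\inv=s\,r\inv\in\ttii$. Thus it suffices to prove membership, not invertibility.

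So fix $\muu=(\mu_0,\mu_1,\cdots,\mu_m)\in\pp_{\tii,\nu,\mu}$ and write $\mu_{k-1}-\mu_k=j_k\alpha_{|\ti_k|}$. The first step is to exhibit an element of $R_{\tii}$ in whose expansion (as provided by Proposition \ref{prop-pii}(ii)) $I(\muu)$ occurs with a nonzero coefficient. For this I would take $V=V(\lambda)$ with $\lambda\in P^+$ sufficiently dominant (note $\nu-\mu\in\sum_i\Z\alpha_i$, so for $\lambda$ large all of $\mu_0,\cdots,\mu_m$ are weights of $V(\lambda)$), a generic weight vector $v\in V(\lambda)_\mu$, and a covector $f\in(V(\lambda)^*)_{-\nu}$; then $\tpi_{\tii}(c_{f,v}^{V(\lambda)})=\sum_{\muu'\in\pp_{\tii,\nu,\mu}}c_{\muu'}I(\muu')$, and by Remark \ref{rem-pii} the coefficient $c_{\muu}$ is nonzero as soon as none of the vectors $X_{|\ti_k|}^{j_k}X_{|\ti_{k+1}|}^{j_{k+1}}\cdots X_{|\ti_m|}^{j_m}v$ ($1\le k\le m$) vanishes and $f\big(X_{|\ti_1|}^{j_1}\cdots X_{|\ti_m|}^{j_m}v\big)\ne0$. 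For $\lambda$ dominant enough the operators $X_{|\ti_k|}^{j_k}\cdots X_{|\ti_m|}^{j_m}$ do not kill $V(\lambda)_\mu$, so each of the finitely many bad loci is a proper subspace and a generic $v$ avoids them all (a $\C$-vector space is not a finite union of proper subspaces); once $v$ is chosen, $f$ can be taken dual to a basis vector occurring in $X_{|\ti_1|}^{j_1}\cdots v\in V(\lambda)_\nu$. This nonvanishing verification, requiring some care with sufficiently dominant weights, is the one genuinely fiddly point, but it is routine. Denote the resulting element $r=\sum_{l=1}^{N}c_lI(\muu_l)\in R_{\tii}$, with $\muu_1,\cdots,\muu_N\in\pp_{\tii,\nu,\mu}$ pairwise distinct, $c_l\in\C^*$, and $\muu=\muu_1$.

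The conceptual heart of the argument is then an extraction by induction on $N$. If $N=1$, then $I(\muu)=c_1\inv r\in R_{\tii}\subseteq\ttii$. If $N\ge2$, all the $\muu_l$ share the endpoints $\mu_0=\nu$ and $\mu_m=\mu$, so any two distinct ones agree on a nonempty initial segment and are therefore $k^+$-different for some $1\le k\le m$; by Proposition \ref{prop-separable} they are $\tii$-separable. Choosing $i\ne j$ and $s\in S_{\tii}$ with $d_i:=[s,I(\muu_i)]_q\ne[s,I(\muu_j)]_q=:d_j$, and putting $d_l=[s,I(\muu_l)]_q$, we have $s\,I(\muu_l)\,s\inv=q^{d_l}I(\muu_l)$ in $\lqi$ (and hence in $\ttii\subseteq\lqi$), so
\[
s^p\,r\,s^{-p}=\sum_{d\in D}(q^d)^p\,r_d\in\ttii,\qquad D:=\{d_l:1\le l\le N\},\quad r_d:=\sum_{l:\,d_l=d}c_lI(\muu_l),
\]
for every $p\ge0$. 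Since $q$ is not a root of unity the scalars $q^d$ ($d\in D$) are pairwise distinct, so the $|D|\times|D|$ Vandermonde matrix $\big((q^d)^p\big)_{0\le p\le|D|-1,\ d\in D}$ is invertible over $\C$; inverting it expresses each $r_d$ as a $\C$-linear combination of the elements $s^p\,r\,s^{-p}\in\ttii$, whence $r_d\in\ttii$. Because $d_i\ne d_j$ we have $|D|\ge2$, so each $r_d$ is a linear combination of fewer than $N$ of the $I(\muu_l)$'s; the induction hypothesis yields $I(\muu_l)\in\ttii$ for every $l$, in particular $I(\muu)\in\ttii$. Together with the reduction in the first paragraph, this gives $I(\muu)^{\pm1}\in\ttii$ for all $\muu\in\pp_{\tii}$. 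The main obstacle I anticipate is the nonvanishing claim in the second paragraph; the localization bookkeeping and the Vandermonde manipulation are then straightforward once Proposition \ref{prop-separable} is available.
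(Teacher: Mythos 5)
Your extraction machinery is sound and close in spirit to the paper's: the inverse-reduction (the numerator of $I(\muu)=rs\inv$ is a unit of $\lqi$ lying in $R_{\tii}$, hence in $S_{\tii}$) is exactly the paper's remark, and the conjugate-by-$s$ plus Vandermonde splitting is the content of Lemma \ref{lem-ore} combined with Proposition \ref{prop-separable} (your observation that two distinct strings in $\pp_{\tii,\nu,\mu}$ are automatically $k^+$-different, and that they yield distinct monomials, is correct). The genuine gap is the step you call ``fiddly but routine'': the claim that for an \emph{arbitrary} weight string $\muu$ one can find a dominant $\lambda$, $v\in V(\lambda)_\mu$ and $f\in(V(\lambda)^*)_{-\nu}$ so that $I(\muu)$ occurs with nonzero coefficient in $\tpi_{\tii}(c^{V(\lambda)}_{f,v})$. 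By Remark \ref{rem-pii} this is equivalent to the ordered monomial $X_{|\ti_1|}^{j_1}X_{|\ti_2|}^{j_2}\cdots X_{|\ti_m|}^{j_m}$ (with the $j_k\ge0$ arbitrary, prescribed by $\muu$) not annihilating the fixed weight space $V(\lambda)_\mu$ for $\lambda$ sufficiently dominant. Your genericity argument (``each bad locus is a proper subspace'') presupposes precisely this: that $\ker\big(X_{|\ti_k|}^{j_k}\cdots X_{|\ti_m|}^{j_m}\big)\cap V(\lambda)_\mu$ is proper for each $k$, i.e.\ that these composites are nonzero on the whole weight space. Nothing in the paper gives this (Corollary \ref{cor-DemMod} only controls monomials applied to extremal vectors along the reduced word with exponents bounded by $\lambda$), faithfulness of $\bigoplus_\lambda V(\lambda)$ does not localize to a prescribed source weight $\mu$ that stays fixed while $\lambda$ grows, and mixed $E$/$F$ patterns with large exponents make the statement a real claim requiring proof, not a generic-position triviality.

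The paper avoids this issue entirely by a different order of operations: it decomposes $\muu=\sum_i l_i\omegaa_i+\sum_k j_k\muu_{\tii,k}$ and uses multiplicativity $I(\muu_1+\muu_2)=I(\muu_1)I(\muu_2)$, so it only has to extract the finitely many elementary monomials $I(\omegaa_i)$ and $I(\muu_{\tii,k})$ from the images of the \emph{explicit} elements $\np{e}{\omega_i}$ and $c^{\sgn(i_k)}_{s_{|i_k|},\omega_{|i_k|}}$, where the leading coefficient is visibly nonzero (the relevant vectors are highest/extremal weight vectors, cf.\ Proposition \ref{prop-tpinm}); the general $I(\muu)$ is then obtained by multiplying already-extracted units inside $\ttii$, where no cancellation can occur. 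If you want to salvage your argument, the cheapest repair is exactly this reduction: prove membership only for the elementary strings (where your realizing matrix coefficients are explicit and the nonvanishing is immediate) and then invoke multiplicativity, rather than attempting to realize an arbitrary $\muu$ by a single matrix coefficient. As a minor point, your induction should be stated for elements of $\ttii$ (not $R_{\tii}$), since the pieces $r_d$ produced by the Vandermonde step lie in $\ttii$ only; as written this is cosmetic and easily fixed.
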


	\begin{proof}
		First we notice that $I(\muu_1+\muu_2)=I(\muu_1)I(\muu_2)$ for $\muu_1,\muu_2\in\pp_{\tii}$. Write $\muu=(\mu_0,\mu_1,\cdots,\mu_m),\mu_{k-1}-\mu_k=j_k\sgn(i_k)\alpha_{|i_k|},j_k\ge 0,1\le k\le m$ and $\mu_0=l_1\omega_1+l_2\omega_2+\cdots+l_n\omega_n$. $\muu$ can be decomposed into
		\[\muu=\sum_{i=1}^n l_i\omegaa_{i}+\sum_{k=1}^m j_k\muu_{\tii,k},\]
		where
		\begin{equation}\label{eq-omegamu}
			\begin{aligned}
				\omegaa_{i}&=(\omega_i,\omega_i,\cdots,\omega_i)\in\pp_{\tii},1\le i\le n,\\
				\muu_{\tii,k}&=(\underbrace{0,\cdots,0}_{k\text{ times}},-\sgn(i_k)\alpha_{|i_k|},\cdots,-\sgn(i_k)\alpha_{|i_k|})\in\pp_{\tii},1\le k\le m.
			\end{aligned}
		\end{equation}
		Thus
		\[I(\muu)=\prod_{i=1}^n I(\omegaa_{i})^{l_i}\prod_{k=1}^m I(\muu_{\tii,k})^{j_k}.\]
		It suffices to prove $I(\omegaa_{i})^{\pm 1}\in\ttii,1\le i\le n,I(\muu_{\tii,k})^{\pm 1}\in\ttii,1\le k\le m$. Calculate $\tpi_{\tii}(\np{e}{\omega_i})$ using Proposition \ref{prop-tpinm},
		\[\tpi_{\tii}(\np{e}{\omega_i})=c_{\omegaa_i}I(\omegaa_i)+\sum_{\muu'}c_{\muu'}I(\muu'),c_{\omegaa_i}\ne0,\]
		where the summation is over $\muu'\in\pp_{\tii,\omega_i,\omega_i}-\{\omegaa_i\}$, so $\muu'$ is $k^\pm$-different from $\omegaa_i$ for some $1\le k\le m$. Proposition \ref{prop-separable} then shows $\muu'$ is $\tii$-separable with $\omegaa_i$. By Lemma \ref{lem-ore}, $I(\omegaa_i)\in\ttii$. The numerator and denominator of $I(\omegaa_i)\in\ttii=R_{\tii}[S_{\tii}\inv]$ are all in $S_{\tii}$, thus $I(\omegaa_i)\inv\in\ttii$.

		Denote $\muu_{\tii,k}'=\sgn(i_k)\omegaa_{|i_k|}+\muu_{\tii,k}\in\pp_{\tii}$. Calculate $\tpi_{\tii}(c^{\sgn(i_k)}_{s_{|i_k|},\omega_i})$ using Proposition \ref{prop-tpinm},
		\[\tpi_{\tii}(c^{\sgn(i_k)}_{s_{|i_k|},\omega_i})=c'_{\muu_{\tii,k}'}I(\muu_{\tii,k}')+\sum_{\muu'}c'_{\muu'}I(\muu'),c'_{\muu_{\tii,k}'}\ne0,\]
		where the summation is over $\muu'\in\pp_{\tii,\sgn(i_k)\omega_{|i_k|},\sgn(i_k)s_{|i_k|}(\omega_{|i_k|})}-\{\muu_{\tii,k}'\}$. Similarly, by Proposition \ref{prop-separable} and Lemma \ref{lem-ore}, we have $I(\muu_{\tii,k}')=I(\omegaa_{|i_k|})^{\sgn(i_k)}I(\muu_{\tii,k})\in\ttii$. Thus $I(\muu_{\tii,k})^{\pm 1}\in\ttii$.
	\end{proof}

	We are ready to state the main theorem of this subsection.

	\begin{thm}\label{thm-localization}
		$\ttii$ is a quantum torus with $q$-commute generators $I(\muu)^{\pm1},\muu\in\pp_{\tii}$.
	\end{thm}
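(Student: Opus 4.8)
The plan is to assemble the result from the pieces already in place. By Corollary \ref{prop-accessible} we know $I(\muu)^{\pm1}\in\ttii$ for every $\muu\in\pp_{\tii}$, so the subalgebra of $\ttii$ generated by these elements is defined, and by the $q$-commutation relation (\ref{eq-qcomm}) any two generators $q$-commute; hence this subalgebra is a homomorphic image of a quantum torus. What remains is to show (a) the generators $I(\muu)^{\pm1}$ actually generate all of $\ttii$, and (b) there are no unexpected relations, i.e. the natural surjection from the abstract quantum torus on the lattice $\pp_{\tii}$ onto this subalgebra is an isomorphism.

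For (a): an arbitrary element of $\ttii=R_{\tii}[S_{\tii}\inv]$ is of the form $r s\inv$ with $r\in R_{\tii}$, $s\in S_{\tii}$. By Proposition \ref{prop-pii}(ii), $r$ is a $\C$-linear combination of monomials $I(\muu)$ with $\muu\in\pp_{\tii}$, each of which lies in the generated subalgebra by Corollary \ref{prop-accessible}; and by Proposition \ref{prop-sielt}, $s$ is a nonzero scalar multiple of a single $I(\muu')$, whose inverse is again among our generators. Hence $rs\inv$ lies in the subalgebra generated by the $I(\muu)^{\pm1}$, proving (a).

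For (b): the point is that $\lqi$ itself is a quantum torus (a tensor product of copies of $\lqt{|\ti_k|}$), and a quantum torus is a domain in which any nonzero element is a unit times a single monomial $x^{\aa}y^{\bb}$ — in particular, the monomials $\{x^{\aa}y^{\bb}\}$ are linearly independent. Since $I(\muu)=x^{\aa(\muu)}y^{\bb(\muu)}$ is such a monomial and distinct weight strings $\muu$ need not give distinct monomials, let $\Lambda\subset\Z^m\times\Z^m$ be the image of the lattice map $\muu\mapsto(\aa(\muu),\bb(\muu))$; then the $I(\muu)$ together with their inverses span exactly the sub-quantum-torus $T_\Lambda:=\Span_\C\{x^{\aa}y^{\bb}\mid(\aa,\bb)\in\Lambda\}$ of $\lqi$, which is honestly a quantum torus on $\Lambda$ (choosing a $\Z$-basis of $\Lambda$ gives $q$-commuting generators, and by Corollary \ref{prop-accessible} their inverses lie in $\ttii$). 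Combined with (a), $\ttii=T_\Lambda$, and the defining relations of this quantum torus are precisely the $q$-commutation relations (\ref{eq-qcomm}) among the $I(\muu)$; no further relations can occur because the monomials are linearly independent in $\lqi$.

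I expect the main subtlety to be purely bookkeeping: verifying that the set of generators is closed in the right way, i.e. that $\Lambda$ really is a sublattice (so that $I(\muu_1)I(\muu_2)$ is again of the form $\pm$ scalar times $I(\muu_3)$ up to a power of $q$, which follows since $\muu\mapsto(\aa(\muu),\bb(\muu))$ is additive by the Remark after Definition \ref{defn-wtstr} and the formulas (\ref{eq-amubmu})), and that inverses stay inside $\ttii$ — but both of these are already delivered by Corollary \ref{prop-accessible} and the lattice structure of $\pp_{\tii}$. There is no hard analytic or structural obstacle here; the theorem is essentially a clean corollary of Corollary \ref{prop-accessible}, Proposition \ref{prop-pii}(ii), Proposition \ref{prop-sielt}, and Proposition \ref{prop-qtunit}.
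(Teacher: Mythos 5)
Your argument is correct and follows essentially the same route as the paper: one containment comes from Corollary \ref{prop-accessible}, the other from Proposition \ref{prop-pii}(ii) together with Proposition \ref{prop-sielt}, and the quantum-torus structure is read off from the fact that the $I(\muu)$ are monomials in $\lqi$ whose exponent vectors generate a sublattice. The only quibble is your aside that in a quantum torus ``any nonzero element is a unit times a single monomial'' (false, e.g.\ $1+x$); what you actually need, and use, is that the monomials form a basis of $\lqi$, so the slip is harmless.
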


	\begin{proof}
		Corollary \ref{prop-accessible} shows $\ttii$ contains the quantum torus generated by $I(\muu)^{\pm1},\muu\in\pp_{\tii}$. On the other hand, Proposition $\ref{prop-pii}$ tells images of matrix coefficients under $\tpi$ are in the subalgebra generated by $I(\muu),\muu\in\pp_{\tii}$. Combined with Proposition \ref{prop-sielt}, $\ttii$ is contained in the subalgebra generated by $I(\muu)^{\pm1},\muu\in\pp_{\tii}$.
	\end{proof}

	\subsection{Weight Spaces of $\tmii$}

	In this subsection we make further observations of $\ttii$ and $\tmii$ as $\ttii$-module. First we determine $n(\tii)$, the dimension of $\ttii$ as quantum torus. Theorem \ref{thm-localization} tells that $\ttii$ is generated by $I(\muu)^{\pm1}=x^{\pm\aa(\muu)}y^{\pm\bb(\muu)},\muu\in\pp_{\tii}$. Denote by $\Z_{\tii}=\bigoplus_{\muu\in\pp_{\tii}}\Z\,\aa(\muu)\oplus\bb(\muu)\subset \Z^{2m}$ the group of exponent vectors of elements in $\ttiit$, where $\oplus$ stands for concatenation. $n(\tii)$ is then the rank of $\Z_{\tii}$.
	
	Recall weight strings $\omegaa_{i},1\le i\le n$ and $\muu_{\tii,k},1\le k\le m$ defined by (\ref{eq-omegamu}). $\Z_{\tii}$ is then generated by $\aa(\omegaa_{i})\oplus\bb(\omegaa_{i}),1\le i\le n$ and $\aa(\muu_{\tii,k})\oplus\bb(\muu_{\tii,k}),1\le k\le m$. Put these column vectors together to obtain a $2m\times(n+m)$ matrix $\Phi(\tii)$
	\begin{equation}\label{eq-matTii}
		\begin{gathered}
			\Phi(\tii):=\begin{pmatrix}
				\Omega(\tii)_{m\times n} & \Lambda(\tii)_{m\times m}\\
				0_{m\times n} & I_m
			\end{pmatrix},\\
			\Omega(\tii)=(\aa(\omegaa_{1}),\aa(\omegaa_{2}),\cdots,\aa(\omegaa_{n})),\Omega(\tii)_{st}=(\alpha_{|i_s|}^\vee,\omega_t),\\
			\Lambda(\tii)=(\aa(\muu_{\tii,1}),\aa(\muu_{\tii,2}),\cdots,\aa(\muu_{\tii,m})),\Lambda(\tii)_{st}=\begin{cases}
				0, & s<t,\\
				-\sgn(i_t), & s=t,\\
				-\sgn(i_t)(\alpha_{|i_s|}^\vee,\alpha_{|i_t|}), & s>t.
			\end{cases}
		\end{gathered}
	\end{equation}
	Therefore
	\begin{equation}\label{eq-nii}
		n(\tii)=\rr(\Phi(\tii))=m+\rr(\Omega(\tii))=l(w_1)+l(w_2)+|\supp(\ww)|.
	\end{equation}

	\sloppy Next we determine $d(\tii)$, the dimension of $\ttii$'s center $Z(\ttii)$ as Laurent polynomial algebra, c.f. Corollary \ref{cor-qtcenter}. Use (\ref{eq-qcomm}) and (\ref{eq-matTii}) to calculate $q$-commute indices between $x^{\aa(\omegaa_{i})}y^{\bb(\omegaa_{i})},1\le i\le n$ and $x^{\aa(\muu_{\tii,k})}y^{\bb(\muu_{\tii,k})},1\le k\le m$, one gets the $(n+m)\times(n+m)$ skew symmetric matrix $H(\tii)$ for $\ttii$
	\[H(\tii):=\begin{pmatrix}
		0_{n\times n} & \Omega(\tii)\tr D_{\tii}\\
		-D_{\tii}\Omega(\tii) & \Lambda(\tii)\tr D_{\tii}-D_{\tii}\Lambda(\tii)
	\end{pmatrix}.\]
	We claim $\rr(H(\tii))=m+\rr(w_1-w_2)$, and leave the tedious calcuation to Section \ref{sec-appen1}. Thus
	\begin{equation}\label{eq-dii}
		d(\tii)=m+n-\rr(\hh(\tii))=n-\rr(w_1-w_2)=\dim\ker(w_1-w_2).
	\end{equation}

	We will illustrate some applications of the calcuations above.

	\begin{prop}\label{prop-prim}
		$Z_{\ww}=Z(\ttii)$. Therefore $\prim\cqg_{\ww}$ is homeomorphic to the maximal spectrum of $Z(\ttii)$.
	\end{prop}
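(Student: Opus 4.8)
The plan is to prove the equality $Z_{\ww}=Z(\ttii)$ by establishing the two inclusions separately, and then to deduce the topological statement directly from Yakimov's stratification theorem. For the set-up I would first record that $\cqg_{\ww}$ sits inside $\ttii$ as a sub-localization: by Corollary~\ref{cor-w1w2type} we have $\cqg/J_{\ww}\cong R_{\tii}$, by the $j=0$ case of Proposition~\ref{prop-tpinm} the Ore set $E_{\ww}$ is carried into $S_{\tii}$, so $\cqg_{\ww}=(\cqg/J_{\ww})[E_{\ww}\inv]\hookrightarrow R_{\tii}[S_{\tii}\inv]=\ttii$ (the map is injective since everything in sight is a domain), and $\ttii$ is a further Ore localization of $\cqg_{\ww}$. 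The first inclusion is then immediate: take $z\in Z_{\ww}=Z(\cqg_{\ww})$; it commutes with all of $R_{\tii}\cong\cqg/J_{\ww}\subseteq\cqg_{\ww}$, in particular with every $s\in S_{\tii}\subseteq R_{\tii}$, and therefore also with $s\inv$; since $R_{\tii}$ together with $\{s\inv:s\in S_{\tii}\}$ generates $\ttii$, we conclude $z\in Z(\ttii)$. Thus $Z_{\ww}\subseteq Z(\ttii)$, and by Theorem~\ref{thm-stratification}(ii) and by Corollary~\ref{cor-qtcenter} applied to the quantum torus $\ttii$ (Theorem~\ref{thm-localization}) together with the count (\ref{eq-dii}), both $Z_{\ww}$ and $Z(\ttii)$ are Laurent polynomial rings of dimension $\dim\ker(w_1-w_2)$.

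For the reverse inclusion it suffices to show that the image of $Z_{\ww}$ under the embedding $\cqg_{\ww}\hookrightarrow\ttii$ is all of $Z(\ttii)$; since that embedding is injective on $\cqg_{\ww}\supseteq Z_{\ww}$, this identity forces $Z_{\ww}=Z(\ttii)$. By Corollary~\ref{cor-qtcenter}, $Z(\ttii)$ has $\C$-basis the monomials $x^{v}$ with $v$ in the radical sublattice $\Z_{\tii}^{0}\subseteq\Z_{\tii}$ of the $q$-commutation form, of rank $\dim\ker(w_1-w_2)$ by (\ref{eq-dii}). On the other hand, by Theorem~\ref{thm-genzww}, $Z_{\ww}$ is generated as a Laurent polynomial algebra by $\np{w_1}{\omega_i}$ for $i\notin\supp(\ww)$ and by $\np{w_1}{\lambda^{(j)}}(\nm{w_2}{\lambda^{(j)}})\inv$ for $1\le j\le p(\ww)$, together with their inverses. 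Applying Proposition~\ref{prop-tpinm} and the Demazure-module description of Corollary~\ref{cor-DemMod}(ii) — which, as in Remark~\ref{rem-pii}, forces a unique contributing weight string in each case — I would check that each of these generators is sent by $\tpi_{\tii}$ to a single Laurent monomial $I(\muu)^{\pm 1}$ of $\ttii$, and I would write down its exponent vector in $\Z^{2m}$ explicitly from the root data of $\tii$. By the first inclusion these exponent vectors lie in $\Z_{\tii}^{0}$, and they are linearly independent over $\mathbb{Q}$ because $Z_{\ww}$ is a Laurent polynomial ring of dimension $\dim\ker(w_1-w_2)$ on the corresponding elements; hence they span a subgroup $G'\subseteq\Z_{\tii}^{0}$ of finite index.

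The hard part — the step I expect to be the main obstacle — is to upgrade this to the equality $G'=\Z_{\tii}^{0}$, i.e.\ to show that the exponent vectors of the Theorem~\ref{thm-genzww} generators form a $\Z$-basis, and not merely a $\mathbb{Q}$-basis, of the radical lattice of the skew form on $\Z_{\tii}$, ruling out any finite-index discrepancy between the two equidimensional Laurent polynomial rings. This is an explicit integral linear-algebra computation with the matrices $\Omega(\tii)$, $\Lambda(\tii)$, $D_{\tii}$ of (\ref{eq-matTii}) and $H(\tii)$, of the same flavour as the rank identity $\rr(H(\tii))=m+\rr(w_1-w_2)$ deferred to Section~\ref{sec-appen1}; the point at which integrality enters is that $\lambda^{(1)},\dots,\lambda^{(p(\ww))}$ is chosen to be a $\Z$-basis of $\ker(w_1-w_2)\cap\bigoplus_{i\in\supp(\ww)}\Z\omega_i$, not just a rational spanning set, and one verifies saturation directly. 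Granting this, the image of $Z_{\ww}$ in $\ttii$ is the span of $\{x^{v}:v\in G'\}=\{x^{v}:v\in\Z_{\tii}^{0}\}$, which is exactly $Z(\ttii)$, so $Z_{\ww}=Z(\ttii)$. Finally, the topological assertion is then formal: by Theorem~\ref{thm-stratification}(ii) the map $i_{\ww}^{*}$ is a homeomorphism $\spec\cqg_{\ww}\xrightarrow{\ \sim\ }\spec Z_{\ww}$, and by part (iii) $\prim\cqg_{\ww}=(i_{\ww}^{*})\inv(\specm Z_{\ww})$, which is homeomorphic to $\specm Z_{\ww}=\specm Z(\ttii)$.
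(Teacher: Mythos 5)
Your strategy is the same as the paper's: the easy inclusion $Z_{\ww}\subseteq Z(\ttii)$ via the localization $\cqg_{\ww}\subseteq\ttii$, the equality of dimensions coming from Theorem \ref{thm-stratification}(ii) and (\ref{eq-dii}), the observation that the Theorem \ref{thm-genzww} generators become monomials of $\ttii$ whose exponents lie in the radical of the $q$-commutation form, and the purely formal deduction of the homeomorphism at the end. But you stop exactly at the decisive step ("Granting this\,\dots"): showing that these exponent vectors give a $\Z$-basis, not merely a finite-index subgroup, of the radical lattice is the entire content of the paper's proof. Without it your argument only yields that $Z(\ttii)$ is a finite extension of the image of $Z_{\ww}$, which is strictly weaker than the proposition; so as written there is a genuine gap, even though you correctly located it and correctly guessed that it is an integral linear-algebra verification in which the choice of $\lambda^{(1)},\dots,\lambda^{(p(\ww))}$ as a $\Z$-basis matters.

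For comparison, the paper closes this gap as follows. Rather than working directly with the rank-$d(\tii)$ lattice spanned by the exponents of the central generators, it proves that the larger lattice $\zew$ of exponents of $\C^*E_{\ww}E_{\ww}\inv$ is a direct summand of $\Z_{\tii}$; since the Theorem \ref{thm-genzww} generators lie in $E_{\ww}E_{\ww}\inv$, their exponents are then part of a basis of $\Z_{\tii}$, they lie in the radical because the elements are central, and a saturated rank-$d(\tii)$ sublattice of the radical must be the whole radical. Concretely, $\zew$ is generated by the vectors attached to the weight strings $\muu_{\tii,\omega_s}^{\pm}$; expanding these in the coordinates $\omegaa_i,\muu_{\tii,k}$ produces the integer matrix $\Psi(\tii)$, which is reduced by unimodular block operations (built from $P(w_1\inv)\tr$ and $P(w_2\inv)\tr$) to a block-triangular matrix whose essential block is $\bigl((\omega_s,w_{\le t}^{\sgn(i_t)}(\alpha_{|i_t|}^\vee))\bigr)$; its determinantal divisors are $1$ because $\{w_{\le t}^{\sgn(i_t)}(\alpha_{|i_t|}^\vee)\}_{1\le t\le m}$ is $\Z$-equivalent to $\{\alpha_{|i_t|}^\vee\}_{1\le t\le m}$. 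Supplying this unimodular reduction (or an equivalent Smith-normal-form computation) is precisely what your write-up still needs to become a proof.
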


	\begin{proof}
		Theorem \ref{thm-genzww} tells that $Z_{\ww}$ has generators in $E_{\ww}E_{\ww}\inv$. Denote $\zew=\{\aa\oplus\bb\in\Z^{2m}|x^{\aa}y^{\bb}\in \C^*E_{\ww}E_{\ww}\inv\}$. Since $Z_{\ww}$ and $Z(\ttii)$ have the same dimension $d(\tii)$ as Laurent polynomial algebra (c.f. Theorem \ref{thm-stratification}), it suffices to show $\zew$ is a direct summand of $\Z_{\tii}$, which implies the generators of $Z_{\ww}$ in Theorem \ref{thm-genzww} is part of algebracally independent generators of $\ttii$ (as quantum torus), thus generate $Z(\ttii)$. $\zew$ is generated by $\aa(\muu_{\tii,\lambda}^\pm)\oplus\bb(\muu_{\tii,\lambda}^\pm),\lambda\in P^+$, where $\muu_{\tii,\lambda}^\pm$ are weight strings
		\[\muu_{\tii,\lambda}^-=(w_1(\lambda),\wm{1}(\lambda),\cdots,\wm{m-1}(\lambda),\lambda),\]
		\[\muu_{\tii,\lambda}^+=(-w_2(\lambda),-\wp{1}(\lambda),\cdots,-\wp{m-1}(\lambda),-\lambda).\]
		Write
		\[\muu_{\tii,\lambda}^-=\sum_{i=1}^n(w_1(\lambda),\alpha_i^\vee)\omegaa_i+\sum_{k=1}^m\chi_k^-(\wm{k}(\lambda),\alpha_{|i_k|}^\vee)\muu_{\tii,k},\]
		\[\muu_{\tii,\lambda}^+=\sum_{i=1}^n-(w_2(\lambda),\alpha_i^\vee)\omegaa_i+\sum_{k=1}^m\chi_k^+(\wp{k}(\lambda),\alpha_{|i_k|}^\vee)\muu_{\tii,k},\]
		where $\chi_k^\pm=(1\pm\sgn(i_k))/2$. It suffices to show the matrix
		\[\Psi(\tii):=\begin{pmatrix}
			\big((w_1(\omega_s),\alpha_t^\vee)\big)_{1\le s,t\le n} & \big(\chi_t^-(\wm{t}(\omega_s),\alpha_{|i_t|}^\vee)\big)_{1\le s\le n,1\le t\le m}\\
			-\big((w_2(\omega_s),\alpha_t^\vee)\big)_{1\le s,t\le n} & \big(\chi_t^+(\wp{t}(\omega_s),\alpha_{|i_t|}^\vee)\big)_{1\le s\le n,1\le t\le m}
		\end{pmatrix}\]
		has determinantal divisors 1 except 0. For $w\in W$, denote $P(w)\in GL_n(\Z)$ such that
		\[(w(\omega_1),w(\omega_2),\cdots,w(\omega_n))=(\omega_1,\omega_2,\cdots,\omega_n)P(w).\]
		Then
		\[\begin{pmatrix}
			I_n & \\
			I_n & I_n
		\end{pmatrix}
		\begin{pmatrix}
			P(w_1\inv)\tr & \\
			& P(w_2\inv)\tr
		\end{pmatrix}
		\Psi(\tii)=\begin{pmatrix}
			I_n & \big(\chi_t^-(\omega_s,w_{\le t}^-(\alpha_{|i_t|}^\vee))\big)_{1\le s\le n,1\le t\le m}\\
			0 & \big((\omega_s,w_{\le t}^{\sgn(i_t)}(\alpha_{|i_t|}^\vee))\big)_{1\le s\le n,1\le t\le m}
		\end{pmatrix}.\]
		So it suffices to prove $\big((\omega_s,w_{\le t}^{\sgn(i_t)}(\alpha_{|i_t|}^\vee))\big)_{1\le s\le n,1\le t\le m}$ has determinantal divisors 1 except 0. This is true since $\{w_{\le t}^{\sgn(i_t)}(\alpha_{|i_t|}^\vee)\}_{1\le t\le m}$ is $\Z$-equivalent to $\{\alpha_{|i_t|}^\vee\}_{1\le t\le m}$. 

		Now we have proved $Z_{\ww}=Z(\ttii)$. Therefore by Theorem \ref{thm-stratification} (iii), $\prim\cqg_{\ww}\simeq\specm Z_{\ww}=\specm Z(\ttii)$ as homeomorphism.
	\end{proof}
	
	For $\nn=\nnm\tr\in\Z^m$, denote $\te_{\nn}=e_{n_1}\otimes e_{n_2}\otimes\cdots\otimes e_{n_m}\in\tmii$. Such $\te_{\nn}$'s form a basis of $\tmii$. We have
	\begin{equation}\label{eq-xyaction}
		x^{\aa}\te_{\nn}=\te_{\nn-\aa},\,y^{\bb}\te_{\nn}=\gamma_1\gamma_2\cdots\gamma_mq^{\bb\tr D_{\tii}\nn}\te_{\nn},\aa,\bb\in\Z^m.
	\end{equation}
	Therefore $\tmii$ is a $\langle y^{\bb},\bb\in\Z^m\rangle$-weight module. Let $\ttiz=\ttii\cap\langle y^{\bb},\bb\in\Z^m\rangle$. Notice in (\ref{eq-matTii}), the block $\Lambda(\tii)$ has determinant $\pm1$, thus the column vectors of
	\begin{equation}\label{eq-tPhi}
		\tPhi(\tii):=
		\begin{pmatrix}
			0_{m\times n} & I_m \\
			\Lambda(\tii)\inv\Omega(\tii) & \Lambda(\tii)\inv
		\end{pmatrix}
	\end{equation}
	are $\Z$-equivalent to those of $\Phi(\tii)$ and generate $\Z_{\tii}$. Denote
	\[\tOmega(\tii)=\Lambda(\tii)\inv\Omega(\tii)=(\bb_{\tii,1},\bb_{\tii,2},\cdots,\bb_{\tii,n}).\]
	Then $\ttiz$ is a Laurent polynomial algebra generated by $y^{\bb_{\tii,1}},y^{\bb_{\tii,2}},\cdots,y^{\bb_{\tii,n}}$ of dimension $s(\tii):=\rr(\tOmega(\tii))=|\supp(\ww)|$. Let $\Theta(\tii)=(\ccc_{\tii,1},\ccc_{\tii,2},\cdots,\ccc_{\tii,s(\tii)})$, where $\ccc_{\tii,1},\ccc_{\tii,2},\cdots,\ccc_{\tii,s(\tii)}$ form a basis of the $\Z$-span of column vectors of $\tOmega(\tii)\tr D_{\tii}$. $\ttiz$-weight spaces of $\tmii$ are of the form
	\begin{equation}\label{eq-wtspace}
		\tmii(\mm):=\bigoplus_{\tOmega(\tii)\tr D_{\tii}\nn=\Theta(\tii)\mm}\C\te_{\nn},\mm\in\Z^{s(\tii)},
	\end{equation}
	with scalar $\ttiz$-actions on it. Let $\cc_{\tii}=C_{\ttii}(\ttiz)$ be the centralizer of $\ttiz$ in $\ttii$. Any $\tmii(\mm),\mm\in\Z^{s(\tii)}$ is a $\cc_{\tii}$-module. Denote $\S(\tmii)$ the category of $\ttii$-submodules of $\tmii$, $\S(\tmii(\mm))$ the category of $\cc_{\tii}$-submodules of $\tmii(\mm)$. We show they are equivalent categories in the next proposition.

	\begin{rem}\label{rem-xaa}
		For any $\aa\in\Z^m$, (\ref{eq-tPhi}) shows we can choose some $\bb'\in\Z^m$, such that $x^{\aa}y^{\bb'}\in\ttii$. Denote this element by $x(\aa)\in\ttii$. Notice that elements of $\ttiit$ are of the form $x(\aa)\tt,\aa\in\Z^m,\tt\in\ttizt$, and for any $\aa_1,\aa_2\in\Z^m,x(\aa_1)x(\aa_2)\in x(\aa_1+\aa_2)\ttizt$.
	\end{rem}

	\begin{prop}\label{prop-equivcat}
		$\S(\tmii)$ and $\S(\tmii(\mm))$ are equivalent categories via the functors
		\[\varphi_{\tii,\mm}:\S(\tmii)\to\S(\tmii(\mm)),\varphi_{\tii,\mm}(N)=N\cap\tmii(\mm),N\in\S(\tmii),\]
		\[\psi_{\tii,\mm}:\S(\tmii(\mm))\to\S(\tmii),\psi_{\tii,\mm}(N_0))=\ttii N_0,N_0\in\S(\tmii(\mm)).\]
	\end{prop}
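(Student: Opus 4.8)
The plan is to check that $\varphi_{\tii,\mm}$ and $\psi_{\tii,\mm}$ are inclusion-preserving, mutually inverse bijections on objects; since $\S(\tmii)$ and $\S(\tmii(\mm))$ are the posets of submodules of $\tmii$ and of $\tmii(\mm)$ ordered by inclusion, this already yields the asserted equivalence of categories.

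First I would record the grading structure that drives the argument. By (\ref{eq-xyaction}) and (\ref{eq-wtspace}), $\ttiz$ acts on each weight space $\tmii(\mm)$ through a single character, and these characters are pairwise distinct since $\Theta(\tii)$ has full column rank; linear independence of distinct characters then forces every $\ttiz$-submodule of $\tmii$, and hence every $N\in\S(\tmii)$, to decompose as $N=\bigoplus_{\mm'}\big(N\cap\tmii(\mm')\big)$. On the algebra side, using that the columns of $\tPhi(\tii)$ generate the exponent lattice $\Z_{\tii}$ of $\ttiit$, I would show that the assignment sending a monomial unit $x^{\aa}y^{\bb}$ to the unique $d\in\Z^{s(\tii)}$ with $\Theta(\tii)d=\tOmega(\tii)\tr D_{\tii}\aa$ is a surjective homomorphism $\Z_{\tii}\twoheadrightarrow\Z^{s(\tii)}$, yielding a $\Z^{s(\tii)}$-grading $\ttii=\bigoplus_d(\ttii)_d$ with $(\ttii)_d\,\tmii(\mm')\subseteq\tmii(\mm'-d)$; by (\ref{eq-qcomm}) the condition $\tOmega(\tii)\tr D_{\tii}\aa=0$ is precisely the condition that $x^{\aa}y^{\bb}$ commute with the generators $y^{\bb_{\tii,j}}$ of $\ttiz$, so the degree-zero part is $(\ttii)_0=\cc_{\tii}$. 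In particular $\cc_{\tii}$ preserves every $\tmii(\mm')$, so $\varphi_{\tii,\mm}(N)$ really is a $\cc_{\tii}$-submodule and both functors are clearly inclusion-preserving; moreover, by Remark \ref{rem-xaa} and $\det\Lambda(\tii)=\pm1$, for each $d$ one may choose $\aa_d\in\Z^m$ so that $x(\aa_d)\in(\ttii)_d$ is a unit of $\ttii$ restricting to a bijection $\tmii(\mm')\xrightarrow{\ \sim\ }\tmii(\mm'-d)$.

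With this in place, computing the two composites is routine. For a $\cc_{\tii}$-submodule $N_0\subseteq\tmii(\mm)$ one has $\ttii N_0=\sum_d(\ttii)_d N_0$ with $(\ttii)_d N_0\subseteq\tmii(\mm-d)$, so intersecting $\psi_{\tii,\mm}(N_0)$ with $\tmii(\mm)$ isolates the $d=0$ summand $(\ttii)_0 N_0=\cc_{\tii}N_0=N_0$; thus $\varphi_{\tii,\mm}\circ\psi_{\tii,\mm}=\id$. Conversely, for $N\in\S(\tmii)$ the units $x(\aa_d)^{\pm1}\in\ttii$ preserve $N$ and interchange $\tmii(\mm)$ with $\tmii(\mm-d)$, so $x(\aa_d)\big(N\cap\tmii(\mm)\big)=N\cap\tmii(\mm-d)$; summing over $d$ and using the weight decomposition of $N$ gives $\ttii\big(N\cap\tmii(\mm)\big)\supseteq N$, and the reverse inclusion is automatic, so $\psi_{\tii,\mm}\circ\varphi_{\tii,\mm}=\id$.

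I expect the only real obstacle to be the second paragraph: verifying that the exponent-to-degree rule is a well-defined surjective homomorphism on $\Z_{\tii}$ --- so that the $\Z^{s(\tii)}$-grading of $\ttii$ exists --- and identifying its degree-zero part with $\cc_{\tii}$. This is bookkeeping with the matrices $\Omega(\tii),\Lambda(\tii),\tOmega(\tii)$ of (\ref{eq-matTii})--(\ref{eq-tPhi}) and the commutation rule (\ref{eq-qcomm}) (in particular the fact $\det\Lambda(\tii)=\pm1$, which guarantees the elements $x(\aa)$ realize all the weight shifts needed in the third paragraph) rather than anything conceptually deep; once it is set up, the remaining verifications are formal.
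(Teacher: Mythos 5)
Your proposal is correct and follows essentially the same route as the paper's proof: decompose any $\ttii$-submodule into its $\ttiz$-weight components (the paper's Vandermonde argument, your distinct-characters argument), use monomial units $x(\aa)$ with $\tOmega(\tii)\tr D_{\tii}\aa=\Theta(\tii)d$ to shift between weight spaces, and observe that the degree-zero (centralizer) part $\cc_{\tii}$ recovers $N_0$ from $\ttii N_0\cap\tmii(\mm)$. Your explicit $\Z^{s(\tii)}$-grading of $\ttii$ merely formalizes what the paper leaves implicit, so there is nothing to add.
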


	\begin{proof}
		For any $N\in\S(\tmii)$, denote $N(\mm')=N\cap\tmii(\mm'),\mm'\in\Z^{s(\tii)}$. A Vandermonde determinant argument shows
		\[N=\bigoplus_{\mm'\in\Z^{s(\tii)}}N(\mm').\]
		Besides, for any $\mm_1,\mm_2\in\Z^{s(\tii)}$, there exists some $\nn\in\Z^m$ such that $\tOmega(\tii)\tr D_{\tii}\nn=\Theta(\tii)(\mm_1-\mm_2)$, thus
		\[x(\nn)N(\mm_1)\subset N(\mm_2),x(-\nn)N(\mm_2)\subset N(\mm_1),\]
		which leads to $x(\nn)N(\mm_1)=N(\mm_2)$ since $x(\nn)x(-\nn)\in\ttizt$ acts as nonzero scalars on weight spaces, c.f. (\ref{eq-xyaction}) and (\ref{eq-wtspace}). Therefore $N=\ttii N(\mm)$, i.e. $\psi_{\tii,\mm}\varphi_{\tii,\mm}=\id_{\S(\tmii)}$. On the other hand, for any $N_0\in\S(\tmii(\mm))$, (\ref{eq-wtspace}) implies
		\[\ttii N_0\cap\tmii(\mm)=\cc_{\tii}N_0=N_0,\]
		this shows $\varphi_{\tii,\mm}\psi_{\tii,\mm}=\id_{\S(\tmii(\mm))}$.
	\end{proof}

	The proposition above suggests the $\ttii$-module structure of $\tmii$ is closely related to the $\cc_{\tii}$-module structure of $\tmii(\mm),\mm\in\Z^{s(\tii)}$, which in turn is related to the center $Z(\cc_{\tii})$ of $\cc_{\tii}$.

	\begin{prop}\label{prop-ccenter}
		$Z(\cc_{\tii})=\ttiz Z(\ttii)$.
	\end{prop}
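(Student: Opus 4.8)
The plan is to prove the two inclusions separately. The inclusion $\ttiz Z(\ttii)\subseteq Z(\cc_{\tii})$ is purely formal, and I would dispatch it first: since $\ttiz$ is commutative it lies in its own centralizer $\cc_{\tii}=C_{\ttii}(\ttiz)$, and by the very definition of $\cc_{\tii}$ every element of $\cc_{\tii}$ commutes with every element of $\ttiz$, so $\ttiz\subseteq Z(\cc_{\tii})$. Likewise $Z(\ttii)$ commutes with all of $\ttii$, in particular with $\ttiz$, so $Z(\ttii)\subseteq\cc_{\tii}$, and being central in $\ttii$ it is central in $\cc_{\tii}$ as well; hence $Z(\ttii)\subseteq Z(\cc_{\tii})$. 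As $Z(\cc_{\tii})$ is a subalgebra, it contains $\ttiz Z(\ttii)$. The substance of the proposition is the reverse inclusion, which I would reduce to a statement about the exponent lattice of $\ttii$.

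To that end, note first that $\cc_{\tii}$ is again a quantum torus: in the monomial basis of $\ttii$ indexed by $\Z_{\tii}$, with $q$-commute indices governed by the skew form $\beta$ of (\ref{eq-qcomm}), an element $\sum_v c_v x^v$ commutes with a monomial $y^{\bb_{\tii,j}}$ if and only if $c_v=0$ whenever $[x^v,y^{\bb_{\tii,j}}]_q\ne 0$ (using that $q$ is not a root of unity). Hence $\cc_{\tii}=\bigoplus_{v\in L_1}\mathbb{C}x^v$, where $L_1=\{v\in\Z_{\tii}:\beta(v,L_0)=0\}$ and $L_0\subseteq\Z_{\tii}$ is the exponent lattice of $\ttiz$. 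By Corollary \ref{cor-qtcenter} the center of this quantum torus is spanned by the $x^v$ with $v\in L_1$ and $\beta(v,L_1)=0$, while $\ttiz Z(\ttii)$ is spanned by the $x^v$ with $v\in L_0+R$, where $R$ is the radical of $\beta$ on $\Z_{\tii}$, i.e. the exponent lattice of $Z(\ttii)$ (Corollary \ref{cor-qtcenter} again, together with $Z(\ttii)=Z_{\ww}$ from Proposition \ref{prop-prim}). A short check, using that $L_0$ is $\beta$-isotropic (as $\ttiz$ is commutative) and $\beta(R,-)=0$, gives $L_0+R\subseteq\{v\in L_1:\beta(v,L_1)=0\}$, so the proposition is equivalent to the reverse containment of lattices: $v\in L_1$ with $\beta(v,L_1)=0$ must lie in $L_0+R$.

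Over $\mathbb{Q}$ that containment is automatic, since $\beta$ descends to a nondegenerate skew form on $(\Z_{\tii}/R)\otimes\mathbb{Q}$ in which the image $W_0$ of $L_0$ is isotropic, $L_1$ maps onto $W_0^\perp$, and $(W_0^\perp)^\perp=W_0$. The real content — and the step I expect to be the main obstacle — is the integral statement that $L_0+R$ is \emph{saturated} in $\Z_{\tii}$ (equivalently, that $L_1\cap L_1^\perp$, which is automatically saturated, equals $L_0+R$ rather than some proper finite-index overlattice); this genuinely fails for an arbitrary isotropic sublattice together with the radical, so the particular shape of $\tii$ must be used. I would handle it via the block presentation (\ref{eq-tPhi}): because $\Lambda(\tii)\in GL_m(\Z)$, the columns of $\tPhi(\tii)$ generate $\Z_{\tii}$ and projection to the $x$-coordinates is a surjection $p\colon\Z_{\tii}\twoheadrightarrow\Z^m$ with kernel precisely $L_0$ (cf. Remark \ref{rem-xaa}); hence $\Z_{\tii}/(L_0+R)\cong\Z^m/p(R)$ and it suffices to prove $p(R)$ saturated in $\Z^m$. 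Since $R$ is the $\Z$-span of the exponents of the generators of $Z(\ttii)=Z_{\ww}$ from Theorem \ref{thm-genzww}, computing these exponents via Proposition \ref{prop-tpinm} presents $p(R)$ as the image of the saturated lattice $\ker(w_1-w_2)\cap\bigoplus_{i\in\supp(\ww)}\Z\omega_i$ under a pairing $\lambda\mapsto\big((\lambda,(w^{\pm}_{>k})\inv\alpha^\vee_{|\ti_k|})\big)_{k}$ (the sign on $w^{\pm}_{>k}$ being $-\sgn(\ti_k)$); by the same combinatorial $\Z$-equivalence with $\{\alpha^\vee_{|\ti_k|}\}$ exploited in the proof of Proposition \ref{prop-prim}, a suitable $|\supp(\ww)|$ of these $m$ coordinates pair $\bigoplus_{i\in\supp(\ww)}\Z\omega_i$ perfectly with $\bigoplus_{i\in\supp(\ww)}\Z\alpha^\vee_i$, so projection of $\Z^m$ onto those coordinates carries $p(R)$ isomorphically onto a saturated sublattice of $\Z^{|\supp(\ww)|}$; an elementary argument then upgrades this to saturation of $p(R)$ in $\Z^m$, completing the proof.
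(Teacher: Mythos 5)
Your reduction of the problem is sound, and it is in substance the same symplectic linear algebra as the paper's own argument: the inclusion $\ttiz Z(\ttii)\subseteq Z(\cc_{\tii})$ is formal, $\cc_{\tii}$ and its center are spanned by the monomials whose exponents lie in $L_1$ and in $L_1\cap L_1^{\perp}$ respectively, and over $\Q$ the containment $L_1\cap L_1^{\perp}\subseteq L_0+R$ is exactly the orthogonal-complement computation that the paper performs by splitting the column space of $\tPhi(\tii)$ as $V_1\oplus V_2'\oplus V_2''$ with $g_{\tii}(V_1,V_2')=0$ and a perfect pairing on $V_1\times V_2''$. The paper phrases this with $\C$-spans of exponent vectors and passes from the containment of spans to the containment of algebras without isolating the integral question, so your decision to make the saturation of $L_0+R$ the centerpiece is a legitimate sharpening, not a misreading. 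Your bookkeeping is also correct where it is checkable: $\ker p|_{\Z_{\tii}}=L_0$ because $\Lambda(\tii)\in GL_m(\Z)$, hence $\Z_{\tii}/(L_0+R)\cong\Z^m/p(R)$, and your closing "upgrade" lemma (a sublattice on which a coordinate projection is injective with saturated image is itself saturated) is valid.

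The genuine gap is in the one step you yourself identify as the real content. From the $\Z$-equivalence of $\{(w^{-\sgn(\ti_k)}_{>k})\inv\alpha^{\vee}_{|\ti_k|}\}_k$ with $\{\alpha^{\vee}_{|\ti_k|}\}_k$ you conclude that "a suitable $|\supp(\ww)|$ of these $m$ coordinates pair $\bigoplus_{i\in\supp(\ww)}\Z\omega_i$ perfectly with $\bigoplus_{i\in\supp(\ww)}\Z\alpha^{\vee}_i$". That inference fails in general: a family obtained from one containing a $\Z$-basis by a $GL_m(\Z)$ change of the indexing need not contain a basis as a subfamily (already $\{2,3\}$ generates $\Z$ while no single member does), so the existence of such a coordinate subset is precisely what remains to be proved. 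The conclusion you want is nonetheless reachable on your route, because $\Z$-equivalence does preserve Smith normal form: the $m\times|\supp(\ww)|$ matrix $\big((\omega_i,(w^{-\sgn(\ti_k)}_{>k})\inv\alpha^{\vee}_{|\ti_k|})\big)$ has the same determinantal divisors as $\big((\omega_i,\alpha^{\vee}_{|\ti_k|})\big)$, namely all equal to $1$, so the image of $\bigoplus_{i\in\supp(\ww)}\Z\omega_i$ in $\Z^m$ is a direct summand; the image of the saturated sublattice $\ker(w_1-w_2)\cap\bigoplus_{i\in\supp(\ww)}\Z\omega_i$ under this injective map is then saturated, which gives saturation of $p(R)$ directly and makes the coordinate-subset detour unnecessary. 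This is essentially the determinantal-divisor computation with $\Psi(\tii)$ carried out in the proof of Proposition \ref{prop-prim}. One further point to make explicit when you describe $R$ via Theorem \ref{thm-genzww}: the generators $\np{w_1}{\omega_i}$ with $i\notin\supp(\ww)$ have exponent $0$ in $\lqi$, so only the $\ker(w_1-w_2)\cap\bigoplus_{i\in\supp(\ww)}\Z\omega_i$ part contributes to $R$, as your formula tacitly assumes. With these repairs your argument closes, by a route genuinely different from (and more arithmetic than) the paper's.
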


	\begin{proof}
		Obviously $Z(\cc_{\tii})\supset\ttiz Z(\ttii)$. Define the skew symmetric bilinear form $g_{\tii}(\cdot,\cdot)$ on $\C^{2m}\times\C^{2m}$ by
		\[g_{\tii}(\aa\oplus\bb,\aa'\oplus\bb')=\aa\tr D_{\tii}\bb'-\aa^{\prime\mathrm{T}}D_{\tii}\bb,\aa,\aa',\bb,\bb'\in\C^m.\]
		Thus ${g_{\tii}(\aa\oplus\bb,\aa'\oplus\bb')}=[x^{\aa}y^{\bb},x^{\aa'}y^{\bb'}]_q,\aa,\aa',\bb,\bb'\in\Z^m$. Denote by $V,V_1,V_2$ the column spaces of $\tPhi(\tii),(0_{n\times m},\tOmega(\tii)\tr)\tr,(I_m,(\Lambda(\tii)\inv)\tr)\tr$ respectively, c.f. (\ref{eq-tPhi}). Use symbol $\perp$ for the orthogonal complement under $g_{\tii}$. It follows that $V=V_1\oplus V_2,g_{\tii}|_{V_1\times V_1}=0$ and $V_1\cap V_2^\perp=0$. So we can write $V_2=V_2'\oplus V_2''$ such that $g_{\tii}(V_1,V_2')=0$ and $g|_{V_1\times V_2''}$ is a perfect pairing. Then $V_1$ and $V_3:=V_1\oplus V_2'$ are spanned by exponent vectors of elements in $\ttizt$ and $\cc_{\tii}=C_{\ttii}(\ttiz)$ respectively. Besides, $V_3\cap V_3^\perp=V_1\oplus(V_2'\cap (V_2')^\perp)$ is spanned by exponent vectors of elements in $Z(\cc_{\tii})^\times$. Notice for any $v\in V_2'\cap(V_2')^\perp$, there exists some $v'\in V_1$ such that $g_{\tii}(v-v',V_2'')=0$, thus $g_{\tii}(v-v',V)=0$ and $v-v'\in V^\perp$. This implies $V_3\cap V_3^\perp\subset V_1+V^\perp$ and therefore $Z(\cc_{\tii})\subset\ttiz Z(\ttii)$.
	\end{proof}

	In view of (\ref{eq-xyaction}),(\ref{eq-wtspace}) and Corollary \ref{cor-qtcenter}, the above proposition indicates

	\begin{cor}\label{cor-ccdecomp}
		Let $k(\tii)=(m-d(\tii)-s(\tii))/2$. There exist nonnegative integers $m_{\tii,1}\le m_{\tii,2}\le\cdots\le m_{\tii,k(\tii)}$ such that $\cc_{\tii}\simeq \ttiz\otimes\cc_{\tii}'$, where
		\[\cc_{\tii}'=Z(\ttii)\otimes L_{q^{m_{\tii,1}}}(2)\otimes L_{q^{m_{\tii,2}}}(2)\otimes\cdots\otimes L_{q^{m_{\tii,k(\tii)}}}(2)\]
		is a quantum torus of dimension $m-s(\tii)$ and $\tmii(\mm)\simeq\cc_{\tii}'$ as (left) $\cc_{\tii}'$-module, $\mm\in\Z^{s(\tii)}$.
	\end{cor}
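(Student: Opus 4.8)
The plan is to refine the linear-algebra picture of the proof of Proposition~\ref{prop-ccenter} and then read off the tensor decomposition. Recall from there the skew form $g_{\tii}$ on $\C^{2m}$ whose value on integer exponent vectors is the $q$-commute index (\ref{eq-qcomm}), the span $V$ of the exponent vectors of $\ttiit$, the subspace $V_1\subseteq V$ spanned by the exponent vectors of $\ttizt$, the span $V_3$ of the exponent vectors of $\cc_{\tii}^\times$, and the decompositions $V=V_1\oplus V_2$, $V_2=V_2'\oplus V_2''$, $V_3=V_1\oplus V_2'$ with $g_{\tii}|_{V_1}=0$, $g_{\tii}(V_1,V_2')=0$ and $g_{\tii}|_{V_1\times V_2''}$ a perfect pairing; in particular $V_1\subseteq\rad(g_{\tii}|_{V_3})$. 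I would first check that the exponent lattice $L_1=\sum_i\Z\,\bb_{\tii,i}$ of $\ttizt$, with the $\bb_{\tii,i}$ the columns of $\tOmega(\tii)=\Lambda(\tii)\inv\Omega(\tii)$, is saturated in the exponent lattice $L_3$ of $\cc_{\tii}^\times$: since $\Omega(\tii)_{st}=(\alpha_{|i_s|}^\vee,\omega_t)=\delta_{|i_s|,t}$, the nonzero columns of $\Omega(\tii)$ have pairwise disjoint support and hence span a direct summand of $\Z^m$, so the same holds for $L_1$ because $\Lambda(\tii)\in GL_m(\Z)$; and a pure-$y$ monomial of $\ttii$ automatically lies in $\ttiz$, whence $L_1=L_3\cap(\{0\}\oplus\Z^m)$. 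Lifting a $\Z$-basis of $L_3/L_1$ gives $L_3=L_1\oplus L_2'$ for some $L_2'$, which is $g_{\tii}$-orthogonal to $L_1$ because $V_1\subseteq\rad(g_{\tii}|_{V_3})$; Proposition~\ref{prop-qtiso} then yields an algebra isomorphism $\cc_{\tii}\simeq\ttiz\otimes\cc_{\tii}'$ with $\cc_{\tii}'$ the quantum torus on $L_2'$. Using $\dim V_1=\dim V_2''=s(\tii)$ and $\dim V=n(\tii)=m+s(\tii)$ from (\ref{eq-nii}), one gets $\dim V_3=m$ and $\dim\cc_{\tii}'=\dim V_3-\dim V_1=m-s(\tii)$.

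The second step is to identify $Z(\cc_{\tii}')$. Since the defining form of $\cc_{\tii}$ is block diagonal with respect to $L_1\oplus L_2'$ (the $L_1$-block being zero), Corollary~\ref{cor-qtcenter} gives $Z(\cc_{\tii})=\ttiz\otimes Z(\cc_{\tii}')$, hence $\dim Z(\cc_{\tii})=s(\tii)+\dim Z(\cc_{\tii}')$. On the other hand $Z(\cc_{\tii})=\ttiz\,Z(\ttii)$ by Proposition~\ref{prop-ccenter}, and $\ttiz\cap Z(\ttii)=\C$ because the exponent vectors of $\ttiz\cap Z(\ttii)$ lie in $V_1\cap V^\perp\subseteq V_1\cap V_2^\perp=0$, the last equality being recorded in the proof of Proposition~\ref{prop-ccenter}; therefore $\dim Z(\cc_{\tii})=s(\tii)+\dim Z(\ttii)=s(\tii)+d(\tii)$ by (\ref{eq-dii}). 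Comparing the two expressions gives $\dim Z(\cc_{\tii}')=d(\tii)$, and since the center of a quantum torus is a Laurent polynomial algebra, $Z(\cc_{\tii}')\simeq Z(\ttii)=Z_{\ww}$. Applying Corollary~\ref{cor-qtcenter} to $\cc_{\tii}'$ itself then produces nonnegative integers $m_{\tii,1}\le\cdots\le m_{\tii,l}$ with $\cc_{\tii}'\simeq L_{q^{m_{\tii,1}}}(2)\otimes\cdots\otimes L_{q^{m_{\tii,l}}}(2)\otimes Z(\cc_{\tii}')$ and $l=(\dim\cc_{\tii}'-\dim Z(\cc_{\tii}'))/2=(m-s(\tii)-d(\tii))/2=k(\tii)$, which is the asserted form.

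For the module statement, fix $\nn_0$ with $\tOmega(\tii)\tr D_{\tii}\nn_0=\Theta(\tii)\mm$ and consider the left $\cc_{\tii}'$-linear map $\theta:\cc_{\tii}'\to\tmii(\mm)$, $u\mapsto u\,\te_{\nn_0}$; this is well defined since $\cc_{\tii}'\subseteq\cc_{\tii}$ preserves each $\ttiz$-weight space and $\ttiz$ acts on $\tmii(\mm)$ by scalars. For $\aa$ in the null lattice $N=\{\aa\in\Z^m:\tOmega(\tii)\tr D_{\tii}\aa=0\}$, the element $x(\aa)$ of Remark~\ref{rem-xaa} lies in $\cc_{\tii}$, since by (\ref{eq-qcomm}) its $q$-commute index with $y^{\bb_{\tii,i}}$ is the $i$-th entry of $(\tOmega(\tii)\tr D_{\tii}\aa)\tr=0$; writing $x(\aa)$ as (scalar)$\cdot y^{\beta}\cdot z^{\ccc}$ with $y^{\beta}\in\ttizt$ and $z^{\ccc}\in(\cc_{\tii}')^\times$ via the tensor splitting, (\ref{eq-xyaction}) gives $z^{\ccc}\te_{\nn_0}\in\C^*\,\te_{\nn_0-\aa}$. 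As $\{\te_{\nn_0-\aa}:\aa\in N\}$ is precisely the $\te$-basis of $\tmii(\mm)$ by (\ref{eq-wtspace}), $\theta$ is surjective; it is injective because the $\Z$-linear map $L_2'\to\Z^m$ sending the exponent of a monomial of $\cc_{\tii}'$ to its $x$-part is injective --- a pure-$y$ monomial of $\cc_{\tii}'$ lies in $\cc_{\tii}'\cap\ttiz=\C$ --- with image all of $N$ by the preceding sentence, so $\theta$ carries a monomial basis of $\cc_{\tii}'$ bijectively, up to nonzero scalars, onto the $\te$-basis of $\tmii(\mm)$. Hence $\tmii(\mm)\simeq\cc_{\tii}'$ as left $\cc_{\tii}'$-modules.

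The step I expect to be the main obstacle is making the tensor splitting $\cc_{\tii}\simeq\ttiz\otimes\cc_{\tii}'$ canonical enough to compute $Z(\cc_{\tii}')$ --- concretely, establishing the saturation of $L_1$ in $L_3$ and the vanishing $\ttiz\cap Z(\ttii)=\C$ --- since once these are in place the remaining dimension count is routine bookkeeping with Corollary~\ref{cor-qtcenter}, and the module step is a direct computation with (\ref{eq-xyaction}) and Remark~\ref{rem-xaa}.
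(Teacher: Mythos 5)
Your proposal is correct and takes essentially the route the paper intends: the paper presents this corollary as an immediate consequence of Proposition \ref{prop-ccenter} together with (\ref{eq-xyaction}), (\ref{eq-wtspace}) and Corollary \ref{cor-qtcenter}, and your argument simply supplies the implicit details (saturation of the exponent lattice of $\ttiz$ so that the splitting $L_3=L_1\oplus L_2'$ exists and is $g_{\tii}$-orthogonal, the vanishing $\ttiz\cap Z(\ttii)=\C$ giving the dimension count for $Z(\cc_{\tii}')$, and the identification of $\tmii(\mm)$ with the left regular $\cc_{\tii}'$-module via the $x$-part bijection $L_2'\simeq N$). I see no gaps.
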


	We want to construct simple $\ttii$-modules through quotient modules of $\tmii$. The discussion above implies maximal $\ttii$-submodules of $\tmii$ are in bijection with maximal left ideals of $\cc_{\tii}'$. Therefore we have proved the following theorem.

	\begin{thm}\label{thm-simquot}
		$\tmii$ as $\ttii$-module is indecomposable, and its simple quotient modules are in bijection with simple $\cc_{\tii}'$-modules.
	\end{thm}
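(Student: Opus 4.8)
The plan is to transport the lattice of $\ttii$-submodules of $\tmii$, step by step, onto the lattice of left ideals of the quantum torus $\cc_{\tii}'$, after which both assertions become formal consequences of results already in hand. Concretely, Proposition~\ref{prop-equivcat} reduces the study of $\S(\tmii)$ to that of $\S(\tmii(\mm))$ for a fixed $\mm\in\Z^{s(\tii)}$, and Corollary~\ref{cor-ccdecomp} identifies $\tmii(\mm)$ with the left regular module of $\cc_{\tii}'$.

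First I would note that the mutually inverse functors $\varphi_{\tii,\mm},\psi_{\tii,\mm}$ of Proposition~\ref{prop-equivcat} are in fact an isomorphism of submodule lattices: since $\varphi_{\tii,\mm}(N)=N\cap\tmii(\mm)$ and, by the Vandermonde argument in that proof, every submodule $N$ is the direct sum of its $\ttiz$-weight components, the functor $\varphi_{\tii,\mm}$ commutes with intersections and with sums of submodules; hence it carries maximal submodules to maximal submodules and direct-sum decompositions to direct-sum decompositions, and likewise for $\psi_{\tii,\mm}$. Next, on the weight space $\tmii(\mm)$ the subalgebra $\ttiz$ acts by scalars, by (\ref{eq-xyaction}) and (\ref{eq-wtspace}), so under the splitting $\cc_{\tii}\simeq\ttiz\otimes\cc_{\tii}'$ of Corollary~\ref{cor-ccdecomp} the $\cc_{\tii}$-submodules of $\tmii(\mm)$ are exactly its $\cc_{\tii}'$-submodules; combined with the identification $\tmii(\mm)\simeq\cc_{\tii}'$ of left $\cc_{\tii}'$-modules from the same corollary, this matches $\S(\tmii(\mm))$ with the lattice of left ideals of $\cc_{\tii}'$.

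Chaining the two reductions, $\S(\tmii)$ is isomorphic as a lattice to the left-ideal lattice of $\cc_{\tii}'$. Indecomposability of $\tmii$ then follows at once: a splitting $\tmii=N_1\oplus N_2$ into nonzero $\ttii$-submodules would split the quantum torus $\cc_{\tii}'$ as a nonzero direct sum of left ideals over itself, contrary to Proposition~\ref{prop-qtunit}(ii). Likewise, maximal $\ttii$-submodules $N$ of $\tmii$ correspond bijectively to maximal left ideals $\mathfrak m\subset\cc_{\tii}'$; sending $N$ to the simple quotient $\tmii/N$ on one side and $\mathfrak m$ to the simple module $\cc_{\tii}'/\mathfrak m$ on the other yields the asserted bijection between simple quotient modules of $\tmii$ and simple $\cc_{\tii}'$-modules.

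As for where the work sits: all of the substance has already been done in Propositions~\ref{prop-equivcat}, \ref{prop-ccenter} and Corollary~\ref{cor-ccdecomp}, so the only genuinely new points are the routine verification that the category equivalence respects the lattice operations — so that maximality and direct-sum decompositions transfer — together with the elementary fact that a quantum torus is indecomposable over itself, Proposition~\ref{prop-qtunit}(ii). If one insists that the bijection of the last sentence be a bijection of isomorphism classes compatible with the module structures, then the point to be careful about is that the $\cc_{\tii}'$-module $\tmii(\mm)/\big(N\cap\tmii(\mm)\big)$ extracted from $\tmii/N$ should not depend on the choice of weight $\mm$, which reduces to checking that the $\ttii$-translations $x(\nn)$ of Remark~\ref{rem-xaa} between distinct weight spaces intertwine the relevant restricted $\cc_{\tii}'$-actions up to isomorphism. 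I expect this compatibility check to be the only place demanding real care; everything else is the bookkeeping assembled above.
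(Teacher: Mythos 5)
Your proposal is correct and follows essentially the same route as the paper: Proposition~\ref{prop-equivcat} to pass from $\ttii$-submodules of $\tmii$ to $\cc_{\tii}$-submodules of a weight space, Corollary~\ref{cor-ccdecomp} to identify that weight space with the left regular $\cc_{\tii}'$-module (with $\ttiz$ acting by scalars), and Proposition~\ref{prop-qtunit}(ii) for indecomposability. The only difference is that you spell out the lattice bookkeeping and the independence of the choice of $\mm$, which the paper leaves implicit.
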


	In light of (\ref{eq-tirr}) and the theorem above, construct
	\begin{equation}\label{eq-tirrti}
		\tirr\ttii:=\tirr\cc_{\tii}'=\T^{d(\tii)}\times\irr L_{q^{m_{\tii,1}}}(2)\times\irr L_{q^{m_{\tii,2}}}(2)\times\cdots\times\irr L_{q^{m_{\tii,k(\tii)}}}(2)
	\end{equation}
	to be a bundle of simple $\ttii$-modules over $\T^{d(\tii)}\simeq\specm Z(\ttii)=\specm Z_{\ww}$.

	\subsection{Calculation of $\rr(H(\tii)$)}\label{sec-appen1}

	\sloppy We need the following notations. Given $\jj=(j_1,j_2,\cdots,j_{l(\jj)})\in\I_w,w\in W$, let
	\[\beta_{\jj,k}=s_{j_1}s_{j_2}\cdots s_{j_{k-1}}(\alpha_{j_k}),1\le k\le l(\jj).\]
	Define $l(\jj)\times l(\jj)$ matrices $B(\jj),\tb(\jj),A(\jj),\ta(\jj),D(\jj),P(\jj),\tp(\jj)$ and $n\times l(\jj)$ matrices $C(\jj),\tc(\jj)$ to be
	\[B(\jj)_{st}=(\beta_{\jj,s}^\vee,\beta_{\jj,t}),\tb(\jj)_{st}=(\alpha_{j_s}^\vee,\alpha_{j_t}),s<t;B(\jj)_{st}=\tb(\jj)_{st}=0,s\ge t;\]
	\[A(\jj)=B(\jj)-B(\jj)\tr,\ta(\jj)=\tb(\jj)-\tb(\jj)\tr,\]
	\[C(\jj)_{st}=(\omega_s,\beta_{\jj,t}),\tc(\jj)_{st}=(\omega_s,\alpha_{j_t}),\]
	\[D(\jj)=\frac{1}{2}\diag((\alpha_{j_1},\alpha_{j_1}),(\alpha_{j_2},\alpha_{j_2}),\cdots,(\alpha_{j_{l(\jj)}},\alpha_{j_{l(\jj)}}))\]
	\[=\frac{1}{2}\diag((\beta_{\jj,1},\beta_{\jj,1}),(\beta_{\jj,2},\beta_{\jj,2}),\cdots,(\beta_{\jj,l(\jj)},\beta_{\jj,l(\jj)})),\]
	\[P(\jj)=I_{l(\jj)}+D(\jj)\inv B(\jj),\tp(\jj)=I_{l(\jj)}+D(\jj)\inv\tb(\jj).\]
	If given another $\jj'=(j_1',j_2',\cdots,j_{l(\jj)}')\in\I_{w'},w'\in W$, define $l(\jj)\times l(\jj')$ matrix $\tc(\jj,\jj')$ to be $\tc(\jj,\jj')_{st}=(\alpha_{j_s},\alpha_{j_t'})$.
	
	Exchanging rows and columns of $H(\tii)$ by considering $q$-commute indices between reordered generators of $\ttii$,
	\[x^{\aa(\muu_{\tii,k})}y^{\bb(\muu_{\tii,k})},\ti_k>0;x^{\aa(\muu_{\tii,k})}y^{\bb(\muu_{\tii,k})},\ti_k<0;x^{\aa(\omegaa_{i})}y^{\bb(\omegaa_{i})},1\le i\le n,\]
	one gets an $(m+n)\times(m+n)$ skew symmetric matrix which is congruent with $H(\tii)$,
	\[\begin{pmatrix}
		-\ta(\tii^+) & -\tc(\tii^+,\tii^-) & -\tc(\tii^+)\tr\\
		\tc(\tii^+,\tii^-)\tr & \ta(\tii^-) & -\tc(\tii^-)\tr\\
		\tc(\tii^+) & \tc(\tii^-) & 0
	\end{pmatrix},\]
	where $\tii^+$ (resp. $\tii^-$) stands for reduced expression of $w_2$ (resp. $w_1$) extracted from $\tii$. This matrix can be simplified by eliminating the blocks $-\tc(\tii^+,\tii^-),\tc(\tii^+,\tii^-)\tr$ using rows and columns of blocks $\tc(\tii^-)$ and $-\tc(\tii^-)\tr$. After changing signs of the blocks $\tc(\tii^-)$ and $-\tc(\tii^-)\tr$, one obtains a matrix $\th(\tii)$ which is congruent with $H(\tii)$,
	\[\th(\tii):=\begin{pmatrix}
		-\ta(\tii^+) & 0 & -\tc(\tii^+)\tr\\
		0 & \ta(\tii^-) & \tc(\tii^-)\tr\\
		\tc(\tii^+) & -\tc(\tii^-) & 0
	\end{pmatrix}.\]

	Define $\hh(\tii)$ to be the matrix
	\[\hh(\tii):=\begin{pmatrix}
		A(\tii^+) & 0 & -C(\tii^+)\tr\\
		0 & -A(\tii^-) & C(\tii^-)\tr\\
		C(\tii^+) & -C(\tii^-) & 0
	\end{pmatrix}.\]
	De. Concini and Procesi proved $\rr(\hh(\tii))=l(w_1)+l(w_2)+\rr(w_1-w_2)$ in \cite{DP}*{Proposition 4.10}, see also \cite{DKP}*{Section 3.2-3.4}. We claim that $\th(\tii)$ and $\hh(\tii)$ are congruent, thus $\rr(H(\tii))=\rr(\hh(\tii))$.	We need the following lemma.
	\begin{lem}\label{lem-alphabeta}
		(i) $(\beta_{\jj,1},\beta_{\jj,2},\cdots,\beta_{\jj,l(\jj)})=(\alpha_{j_1},\alpha_{j_2},\cdots,\alpha_{j_{l(\jj)}})\tp(\jj)$.

		(ii) $(\alpha_{j_1},\alpha_{j_2},\cdots,\alpha_{j_{l(\jj)}})=(\beta_{\jj,1},\beta_{\jj,2},\cdots,\beta_{\jj,l(\jj)})P(\jj)$.

		(iii) $\tp(\jj)P(\jj)=I_{l(\jj)}$.		
	\end{lem}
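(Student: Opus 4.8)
The three identities are all elementary statements about the transition between the two families of roots $\alpha_{j_1},\dots,\alpha_{j_{l(\jj)}}$ and $\beta_{\jj,1},\dots,\beta_{\jj,l(\jj)}$ attached to the word $\jj$. The plan is to prove (i) by a direct induction on $l(\jj)$, deduce (ii) from the mirror-image computation, and obtain (iii) formally from the first two.

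For (i), I would induct on $l(\jj)$, peeling off the \emph{first} letter. Writing $\jj'=(j_2,\dots,j_{l(\jj)})\in\I_{w'}$ (still reduced, of length $l(\jj)-1$), one has $\beta_{\jj,1}=\alpha_{j_1}$ and $\beta_{\jj,k}=s_{j_1}\bigl(\beta_{\jj',k-1}\bigr)$ for $k\ge 2$. Applying the reflection formula $s_{j_1}(\gamma)=\gamma-(\gamma,\alpha_{j_1}^\vee)\alpha_{j_1}$ to $\gamma=\beta_{\jj',k-1}$ and substituting the inductive expression $\beta_{\jj',k-1}=\sum_{s\ge 2}\alpha_{j_s}\,\tp(\jj')_{s-1,k-1}$, one sees that the columns indexed by $2,\dots,l(\jj)$ are reproduced verbatim (matching the block embedding of $\tp(\jj')$ inside $\tp(\jj)$), while the new coefficient of $\alpha_{j_1}$ equals $-\sum_{s\ge 2}\tp(\jj')_{s-1,k-1}(\alpha_{j_s},\alpha_{j_1}^\vee)=-\tfrac{2}{(\alpha_{j_1},\alpha_{j_1})}(\beta_{\jj',k-1},\alpha_{j_1})$. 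So (i) reduces to checking that this quantity is the $(1,k)$-entry of $D(\jj)\inv\tb(\jj)$, which is where the normalization $(\alpha_{j_s}^\vee,\alpha_{j_t})=2(\alpha_{j_s},\alpha_{j_t})/(\alpha_{j_s},\alpha_{j_s})$ and the definition of $D(\jj)$ enter.

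For (ii), the cleanest route is to run the same induction solved the other way: since $s_{j_1}^2=1$, the relation $\beta_{\jj,k}=s_{j_1}(\beta_{\jj',k-1})$ recovers $\alpha_{j_1}$ (and, inductively, each $\alpha_{j_k}$) as an explicit combination of $\beta_{\jj,1},\dots,\beta_{\jj,k}$, whose transition matrix one identifies with $P(\jj)=I+D(\jj)\inv B(\jj)$. (Alternatively, once (i) is established one observes that $\tp(\jj)$ is unipotent upper triangular, hence invertible, so it is enough to verify that $P(\jj)$ is its inverse.) Then (iii) follows by combining (i) and (ii): one gets $(\alpha_{j_1},\dots,\alpha_{j_{l(\jj)}})=(\alpha_{j_1},\dots,\alpha_{j_{l(\jj)}})\,\tp(\jj)P(\jj)$, and since $\tp(\jj)$ and $P(\jj)$, hence their product, depend only on the Cartan data along $\jj$, the matrix identity $\tp(\jj)P(\jj)=\id$ may be verified in any realization in which $\alpha_{j_1},\dots,\alpha_{j_{l(\jj)}}$ are linearly independent — for instance the free $\Z$-module on $l(\jj)$ generators equipped with the bilinear form copied from $\jj$ — where it is immediate. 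Equivalently, (iii) unwinds to the purely matrix relation $\tb(\jj)+B(\jj)+\tb(\jj)D(\jj)\inv B(\jj)=0$, provable the same way.

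The obstacle here is not conceptual but bookkeeping: carefully matching the coefficients produced by iterated reflections with the entries of $D(\jj)\inv\tb(\jj)$ and $D(\jj)\inv B(\jj)$ while tracking the $(\alpha,\alpha)$-normalizations, and, in step (iii), being mindful that the simple roots $\alpha_{j_k}$ appearing along $\jj$ may repeat — so $\tp(\jj)P(\jj)=\id$ genuinely requires the reduction to a linearly independent realization rather than following at once from the vector identity. This is presumably why the verification is deferred to the present appendix.
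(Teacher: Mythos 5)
Your inductions for (i) and (ii) are essentially the computations the paper itself performs (peel the leftmost reflection for one identity, the innermost one for the other), but the step you defer as ``bookkeeping'' is exactly where the argument breaks. The coefficient of $\alpha_{j_1}$ that your induction produces, $-\tfrac{2}{(\alpha_{j_1},\alpha_{j_1})}(\beta_{\jj',k-1},\alpha_{j_1})$, equals $(\beta_{\jj,k},\beta_{\jj,1}^\vee)$ (use $\beta_{\jj,k}=s_{j_1}(\beta_{\jj',k-1})$ and invariance of the form under $s_{j_1}$); that is an entry of $D(\jj)\inv B(\jj)$, built from pairings of the $\beta_{\jj,i}$'s, \emph{not} the $(1,k)$-entry of $D(\jj)\inv\tb(\jj)$. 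The two genuinely differ: already for $\jj=(1,2)$ in type $A_2$ one has $\beta_{\jj,2}=\alpha_1+\alpha_2$, so the coefficient is $+1$, while $(\alpha_{j_1}^\vee,\alpha_{j_2})=-1$; for $\jj=(1,2,1)$ and $k=3$ the coefficient is $-1$ versus $(\alpha_1^\vee,\alpha_1)=2$. In other words, first-letter peeling proves $(\beta_{\jj,1},\dots,\beta_{\jj,l(\jj)})=(\alpha_{j_1},\dots,\alpha_{j_{l(\jj)}})P(\jj)$ and last-letter peeling proves $(\alpha_{j_1},\dots,\alpha_{j_{l(\jj)}})=(\beta_{\jj,1},\dots,\beta_{\jj,l(\jj)})\tp(\jj)$ --- which is what the paper's own proof derives and what is used afterwards (the application $\tc(\jj)P(\jj)=C(\jj)$ requires $(\beta)=(\alpha)P(\jj)$). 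As printed, parts (i) and (ii) have $\tp(\jj)$ and $P(\jj)$ interchanged, so the identification you postponed is false as stated; your scheme is fine, but the bookkeeping must actually be carried out, and it attaches the matrices the other way round. Part (iii) is unaffected by the swap.

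For (iii) your route is genuinely different from the paper's and is sound. The paper proves $\tp(\jj)P(\jj)=I_{l(\jj)}$ by a separate induction on $l(\jj)$, writing both matrices in block form, reducing to an identity between the last columns, and verifying that identity by applying the expansion to the reversed word $\jj''=(j_k,\dots,j_1)$, acting by $s_{j_1}\cdots s_{j_k}$ and pairing with $\beta_{\jj,k+1}$. Your alternative --- noting that (i) and (ii) only give an identity among possibly linearly dependent vectors, and transferring the matrix identity to the free module on $l(\jj)$ generators equipped with the bilinear form copied from $\jj$, where the generators are independent --- works, because the inductive computations behind (i) and (ii) use nothing beyond the reflection formula and invariance of the form, hence hold verbatim in that auxiliary realization and produce the same matrices $\tp(\jj),P(\jj)$; you correctly flagged the repetition issue that makes this reduction necessary. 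So once (i) and (ii) are repaired as above, your proof of (iii) is a valid, arguably cleaner, substitute for the paper's explicit block induction.
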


	\begin{proof}
		(i) For $1\le k\le l(\jj)$, use induction and calculate $\beta_{\jj,k}=s_{j_1}s_{j_2}\cdots s_{j_{k-1}}(\alpha_{j_k})=s_{j_2}\cdots s_{j_{k-1}}(\alpha_{j_k})-(s_{j_2}\cdots s_{j_{k-1}}(\alpha_{j_k}),\alpha_{j_1}^\vee)\alpha_{j_1}=s_{j_2}\cdots s_{j_{k-1}}(\alpha_{j_k})+(\beta_{\jj,1},\alpha_{j_1}^\vee)\alpha_{j_1}=\cdots=(\beta_{\jj,k},\beta_{\jj,1}^\vee)\alpha_{j_1}+(\beta_{\jj,k},\beta_{\jj,2}^\vee)\alpha_{j_2}+\cdots+(\beta_{\jj,k},\beta_{\jj,k-1}^\vee)\alpha_{j_{k-1}}+\alpha_{j_k}$.

		(ii) For $1\le k\le l(\jj)$, use induction and calculate $\beta_{\jj,k}=s_{j_1}s_{j_2}\cdots s_{j_{k-1}}(\alpha_{j_k})=s_{j_1}s_{j_2}\cdots s_{j_{k-2}}(\alpha_{j_k}-(\alpha_{j_k},\alpha_{j_{k-1}}^\vee)\alpha_{j_{k-1}})=s_{j_1}s_{j_2}\cdots s_{j_{k-2}}(\alpha_{j_k})-(\alpha_{j_k},\alpha_{j_{k-1}}^\vee)\beta_{\jj,k-1}=\cdots=\alpha_{j_k}-(\alpha_{j_k},\alpha_{j_1}^\vee)\beta_{\jj,1}-(\alpha_{j_k},\alpha_{j_2}^\vee)\beta_{\jj,2}-\cdots-(\alpha_{j_k},\alpha_{j_{k-1}}^\vee)\beta_{\jj,k-1}$.

		(iii) Use induction on $l(\jj)$. When $l(\jj)=1$, there is nothing to prove. Suppose the conclusion is true for $\jj'=(j_1,j_2,\cdots,j_k)$, we want to prove it is ture for reduced expression $\jj=(j_1,j_2,\cdots,j_k,j_{k+1})$. Write
		\[\tp(\jj)=\begin{pmatrix}
			\tp(\jj') & \tilde{\delta}_k\\
			0 & 1
		\end{pmatrix},
		P(\jj)=\begin{pmatrix}
			P(\jj') & \delta_k\\
			0 & 1
		\end{pmatrix},\]
		where
		\[\tilde{\delta}_k=((\alpha_{j_1}^\vee,\alpha_{k+1}),(\alpha_{j_2}^\vee,\alpha_{k+1}),\cdots,(\alpha_{j_k}^\vee,\alpha_{k+1}))\tr,\]
		\[\delta_k=((\beta_{\jj,1}^\vee,\beta_{\jj,k+1}),(\beta_{\jj,2}^\vee,\beta_{\jj,k+1}),\cdots,(\beta_{\jj,k}^\vee,\beta_{\jj,k+1}))\tr.\]
		Then $\tp(\jj)P(\jj)=I_{k+1}$ is equivalent to $P(\jj')\delta_k=-\tilde{\delta}_k$. Denote $\jj''=(j_k,j_{k-1},\cdots,j_1)$, apply (ii) to $\jj''$, which reads
		\begin{align*}
			\alpha_{j_l}&=(\alpha_{j_l},\alpha_{j_k}^\vee)\alpha_{j_k}+(\alpha_{j_l},\alpha_{j_{k-1}}^\vee)s_{j_k}(\alpha_{j_{k-1}})+\cdots\\
			+&(\alpha_{j_l},\alpha_{j_{l+1}}^\vee)s_{j_k}s_{j_{k-1}}\cdots s_{j_{l+2}}(\alpha_{j_{l+1}})+s_{j_k}s_{j_{k-1}}\cdots s_{j_{l+1}}(\alpha_{j_l}),1\le l\le k.
		\end{align*}
		Apply $s_{j_1}s_{j_2}\cdots s_{j_k}$ to both sides of the identity above,
		\[s_{j_1}s_{j_2}\cdots s_{j_k}(\alpha_{j_l})=-\beta_{\jj,l}-(\alpha_{j_l},\alpha_{j_{l+1}}^\vee)\beta_{\jj,l+1}-\cdots-(\alpha_{j_l},\alpha_{j_{k-1}}^\vee)\beta_{\jj,k-1}-(\alpha_{j_l},\alpha_{j_k}^\vee)\beta_{\jj,k},\]
		thus
		\begin{align*}
			-(\alpha_{j_l}^\vee,&\alpha_{j_{k+1}})=(-s_{j_1}s_{j_2}\cdots s_{j_k}(\alpha_{j_l}^\vee),\beta_{\jj,k+1})\\
			&=(\beta_{\jj,l}^\vee,\beta_{\jj,k+1})+(\alpha_{j_l}^\vee,\alpha_{j_{l+1}}^\vee)(\beta_{\jj,l+1},\beta_{\jj,k+1})+\cdots+(\alpha_{j_l}^\vee,\alpha_{j_k}^\vee)(\beta_{\jj,k},\beta_{\jj,k+1})\\
			&=(\beta_{\jj,l}^\vee,\beta_{\jj,k+1})+(\alpha_{j_l}^\vee,\alpha_{j_{l+1}})(\beta_{\jj,l+1}^\vee,\beta_{\jj,k+1})+\cdots+(\alpha_{j_l}^\vee,\alpha_{j_k})(\beta_{\jj,k}^\vee,\beta_{\jj,k+1}),
		\end{align*}
		which implies $P(\jj')\delta_k=-\tilde{\delta}_k$.
	\end{proof}

	Part (ii) of the lemma indicates $\tc(\jj)P(\jj)=C(\jj)$. Part (iii) tells that 
	\[(I_{l(\jj)}+D(\jj)\inv\tb(\jj))(I_{l(\jj)}+D(\jj)\inv B(\jj))=I_{l(\jj)},\]
	thus $B(\jj)=-\tb(\jj)P(\jj)$. So
	\[P(\jj)\tr \ta(\jj) P(\jj)=P(\jj)\tr\tb(\jj)P(\jj)-P(\jj)\tr\tb(\jj)\tr P(\jj)=-P(\jj)\tr B(\jj)+B(\jj)\tr P(\jj)\]
	\[=-(I_{l(\jj)}+B(\jj)\tr D(\jj)\inv)B(\jj)+B(\jj)\tr(I_{l(\jj)}+D(\jj)\inv B(\jj))=-B(\jj)+B(\jj)\tr=-A(\jj).\]
	Denote $Q(\tii)=\diag(P(\tii^+),P(\tii^-),I_n)$, combining these results, one then obtains
	\[Q(\tii)\tr\th(\tii)Q(\tii)=\hh(\tii),\]
	which ends the calculation of $\rr(H(\tii))$.

	\subsection{Pivot Elements and Simple Quotient Modules}

	In general, $\tmii$ as $R_{\tii}$-module admits ``finer" submodule structure than it as $\ttii$-module, i.e. the set of $R_{\tii}$-submodules contains the set of $\ttii$-submodules. However, under some reasonable conditions, the vice versa holds, as we will show in this subsection. This leads to a characterization of simple quotient $R_{\tii}$-modules of $\tmii$ like the $\ttii$ case.

	We need some notations. Given $\uu=\sum_{\aa,\bb\in\Z^m} c_{\aa,\bb}x^{\aa}y^{\bb}\in \lqi,c_{\aa,\bb}\in\C$, denote
	\[\supp(\uu)=\{\aa\oplus\bb\in\Z^{2m}|c_{\aa,\bb}\ne0\},\supp_x(\uu)=\{\aa\in\Z^m|\exists\bb\in\Z^m,c_{\aa,\bb}\ne0\}\]
	and $\supp_x(S_{\tii})=\bigcup_{\uu\in S_{\tii}}\supp_x(\uu)$. For $\aa\in\supp_x(\uu)$, define the \emph{multiplicity of $\aa$ for $\uu$} to be the cardinality of the fiber of the natural projection $\supp(\uu)\twoheadrightarrow\supp_x(\uu)$ at $\aa$. For $1\le k\le m$, let $<_k$ (resp. $\le_k,>_k,\ge_k$) be the binary relation on $\Z^{m}$ or $\Z^{2m}$ given by the the relation $<$ (resp. $\le,>,\ge$) on the $k$-th component. For $I\subset[1,m]$, let $<_I$ (resp. $\le_I,>_I,\ge_I$) be the intersection of all relations $<_k$ (resp. $\le_k,>_k,\ge_k$) for $k\in I$. Use symbol $\odot$ for the component-wise multiplication of column vectors.

	\begin{defn}
		Let $\aa\in(\Z\backslash\{0\})^m,I\subset[1,m],k\in I$, an element $\uu\in \lqi$ is called an \emph{$\aa$-pivot element of type $(I,k)$} if $\supp_x(\uu)$ has an element $\ccc$ of multiplicity 1 for $\uu$ such that $\aa\odot\ccc\le_I0,\aa\odot\ccc<_k0$ and for any other $\ccc'\in\supp(\uu),\aa\odot(\ccc'-\ccc)\ge_I0,\aa\odot(\ccc'-\ccc)>_k0$.
	\end{defn}

	\begin{defn}\label{defn-enoughpv}
		We say there are \emph{enough pivot elements} in $R_{\tii}$ if for some permutation $n_1,n_2,\cdots,n_m$ of $[1,m]$ there exists $\aa_k$-pivot element $\uu_k$ of type $([1,m]\backslash\{n_1,n_2,\cdots,n_{k-1}\},n_k)$ in $R_{\tii}$ for some $\aa_k\in\supp_x(S_{\tii})\cap(\Z\backslash\{0\})^m, 1\le k\le m$.
	\end{defn}

	Here is one sufficient condition for $R_{\tii}$ having enough pivot elements. In the next subsection we will show more examples when $G=SL_3(\C)$.

	\begin{prop}\label{prop-expivot}
		If $\supp(w_1)\cap\supp(w_2)=\varnothing$, then $R_{\tii}$ has enough pivot elements.
	\end{prop}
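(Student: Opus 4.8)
The plan is to exploit the hypothesis $\supp(w_1)\cap\supp(w_2)=\varnothing$ to build, for each index position $k$, a quasi-$\tii$-normal element whose image under $\tpi_{\tii}$ is, up to a monomial unit, a genuine two-term "pivot" sum. Concretely, I would first fix a double reduced expression $\tii = \tiim$ and, for each $k$, choose $\lambda_k \in P^+$ with $(\alpha_{|i_k|}^\vee,\lambda_k)$ large enough so that the relevant left or right quasi-$\tii$-normal element $\nl{\tii_{\ge k}}{\lambda_k}(1)$ (equivalently $\nr{\cdots}{}(1)$) is well defined; Proposition \ref{prop-tpinm} gives $\tpi_{\tii}$ of each quasi-normal element as a single monomial $I(\muu)$. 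Then I would look at $\tpi_{\tii}(\np{e}{\omega_{|i_k|}})$ or $\tpi_{\tii}(c^{\sgn(i_k)}_{s_{|i_k|},\omega_{|i_k|}})$ — exactly the elements analyzed in the proof of Corollary \ref{prop-accessible} — whose expansion via Proposition \ref{prop-pii}(ii) is $c_{\omegaa}I(\omegaa)+\sum_{\muu'}c_{\muu'}I(\muu')$ over weight strings $\muu'$ that are $k'^{\pm}$-different from $\omegaa$. The disjointness of supports is what forces the geometry of the exponent vectors to be "triangular": a root in $\supp(w_1)$ can never be undone or interfered with by a reflection coming from $\supp(w_2)$, so the $x$-exponent vectors $\aa(\muu')$ differ from $\aa(\omegaa)$ only by nonnegative (resp.\ nonpositive) multiples of coroots indexed in a single support, hence line up with a fixed sign pattern $\aa_k\odot(\ccc'-\ccc)\ge_I 0$.

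The key steps, in order, are: (1) Write down the permutation $n_1,\dots,n_m$ of $[1,m]$ explicitly — I expect to take it to be the positions in $\tii$ coming from $w_2$ first (in some order compatible with the reduced word), then those from $w_1$, or something symmetric, so that peeling off $n_1,\dots,n_{k-1}$ leaves a sub-word whose relevant boundary reflection is still "extremal". (2) For position $n_k$, produce the candidate pivot element $\uu_k \in R_{\tii}$ as (a suitable power product or single matrix-coefficient image of) a quasi-normal-type element; compute $\tpi_{\tii}(\uu_k)$ via Proposition \ref{prop-pii}(ii) and Remark \ref{rem-pii}, identifying the distinguished term $\ccc = \aa(\muu_0)$ (from the "extremal" weight string, the one with all the $j$'s as small or as large as possible) and checking it has multiplicity $1$ — multiplicity $1$ follows because the extremal weight string is the unique one with that $x$-exponent, a consequence of the Demazure-module structure (Corollary \ref{cor-DemMod}) plus the sign constraints. (3) Verify $\aa_k \in \supp_x(S_{\tii})\cap(\Z\setminus\{0\})^m$: here $\aa_k$ should be the exponent vector of one of the honest units $I(\muu_{\tii,\lambda}^\pm)$ or a quasi-normal element, which lies in $S_{\tii}$ by Proposition \ref{prop-tpinm}; nonvanishing of every component uses that every simple reflection index of $w_1$ or $w_2$ actually appears. (4) Check the inequality conditions $\aa_k\odot\ccc \le_I 0$, $\aa_k\odot\ccc <_k 0$, and $\aa_k\odot(\ccc'-\ccc)\ge_I 0$, $>_k 0$ for the other support vectors $\ccc'$ of $\uu_k$ — this is the combinatorial heart, and it reduces to: (i) the $k$-th coordinate genuinely changes (strict), and (ii) in every other coordinate in $I$ the change is monotone in the right direction. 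Both follow from the disjointness: the $x$-exponent at position $j$ is $(\mu_{j-1}+\mu_j,\alpha_{|i_j|}^\vee)/2$, and replacing the extremal weight string by another one only shifts the intermediate weights $\mu_{j}$ by a $\Z_{\ge 0}$-combination of $\sgn(i_j)\alpha_{|i_j|}$ with $|i_j|$ in a fixed support, which — because that support does not meet the other one — changes each such coordinate in one direction only.

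I would then wrap up by invoking Definition \ref{defn-enoughpv}: the family $\uu_1,\dots,\uu_m$ with the chosen permutation witnesses "enough pivot elements". The step I expect to be the main obstacle is step (4), specifically verifying the \emph{uniformity} of the sign pattern across \emph{all} non-distinguished support vectors $\ccc'$ of $\uu_k$ simultaneously — it is easy to see each individual competing weight string pushes the exponents in a consistent direction, but one must be careful that after truncating the index set to $I = [1,m]\setminus\{n_1,\dots,n_{k-1}\}$ the already-processed coordinates (outside $I$) do not spoil the multiplicity-$1$ property of $\ccc$ or reintroduce cancellation; I would handle this by choosing the $\uu_k$ to "live on" the sub-word indexed by $I$ only (e.g.\ a quasi-normal element for $\tii_{\ge k}$ or the analogous right-hand truncation), so that its support is literally supported on coordinates in $I$ and the truncated coordinates are irrelevant. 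A secondary technical point is ensuring $\ccc$ itself has multiplicity exactly $1$ rather than just being extremal — this should follow from Lemma \ref{lem-xlij}(iii) (the corner matrix coefficients are honest monomials, no polynomial factor) applied at the extremal weight vectors $v_{w_{>j}(\lambda)}$.
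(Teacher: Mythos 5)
There is a genuine gap, and it sits exactly where you predicted: your step (4), the uniform sign pattern over \emph{all} competing weight strings, is never established, and for the elements you name it is not automatic. For instance $c^{\sgn(i_k)}_{s_{|i_k|},\omega_{|i_k|}}$ expands under $\tpi_{\tii}$ into one term for each occurrence of the letter $i_k$ in $\tii$ (the single step $-\sgn(i_k)\alpha_{|i_k|}$ can be taken at any of those positions), and the difference of $x$-exponents between two such terms has $l$-th coordinate proportional to $(\alpha_{|i_k|},\alpha_{|\ti_l|}^\vee)$, which is positive when $|\ti_l|=|i_k|$ and nonpositive when $|\ti_l|$ is adjacent to $|i_k|$ in the Dynkin diagram; so the claimed monotone direction in every coordinate of $I$ does not hold on the nose, and would have to be rescued by a careful choice of $\aa_k$ that you do not specify. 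Your proposal also only uses the hypothesis $\supp(w_1)\cap\supp(w_2)=\varnothing$ in a heuristic way (``triangularity''), whereas it is precisely this hypothesis that makes the whole elaborate construction unnecessary.

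The paper's proof is a two-line observation you missed. For any $\mu\in P$, a weight string in $\pp_{\tii,\mu,\mu}$ satisfies $\sum_{\ti_k>0}j_k\alpha_{|\ti_k|}=\sum_{\ti_k<0}j_k\alpha_{|\ti_k|}$ with all $j_k\ge0$; disjointness of the supports (and linear independence of simple roots) forces every $j_k=0$, so $\pp_{\tii,\mu,\mu}=\{\muu_0\}$ with $\muu_0=(\mu,\dots,\mu)$, and likewise $\pp_{\tii,-\mu,-\mu}=\{-\muu_0\}$. Hence by Proposition \ref{prop-pii} both $I(\muu_0)=x^{\aa(\muu_0)}$ and $I(-\muu_0)=x^{-\aa(\muu_0)}$ lie in $S_{\tii}$, where $\aa(\muu_0)=((\mu,\alpha_{|\ti_1|}^\vee),\dots,(\mu,\alpha_{|\ti_m|}^\vee))$. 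Choosing $\mu$ with all $(\mu,\alpha_{|\ti_k|}^\vee)\ne0$, the single monomial $I(-\muu_0)$ has $x$-support a one-point set of multiplicity one, $\aa(\muu_0)\odot(-\aa(\muu_0))$ is strictly negative in every coordinate, and the conditions on ``other'' support vectors are vacuous; so it is an $\aa(\muu_0)$-pivot element of arbitrary type, and taking $\uu_k=I(-\muu_0)$, $\aa_k=\aa(\muu_0)$ for every $k$ (with any permutation) gives enough pivot elements. In short: under the hypothesis there is no competition between weight strings at all, so none of the multiplicity or sign bookkeeping you set up is needed; without that hypothesis, your sketch as written would not close.
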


	\begin{proof}
		Let $\mu\in P$. $\pp_{\tii,\mu,\mu}$ consisits of weight strings of the form $(\mu_0,\mu_1,\cdots,\mu_m),\mu_0=\mu_m=\mu,\mu_{k-1}-\mu_k=j_k\sgn(\ti_k)\alpha_{|\ti_k|},j_k\ge0,1\le k\le m$. Thus
		\[\sum_{\ti_k>0}j_k\alpha_{|\ti_k|}=\sum_{\ti_k<0}j_k\alpha_{|\ti_k|}.\]
		Since $\supp(w_1)\cap\supp(w_2)=\varnothing$, we have $j_k=0,1\le k\le m$. Therefore $\pp_{\tii,\mu,\mu}=\{\muu_0=(\mu,\mu,\cdots,\mu)\}$. Similarly $\pp_{\tii,-\mu,-\mu}=\{-\muu_0=(-\mu,-\mu,\cdots,-\mu)\}$. So by Proposition \ref{prop-pii},
		\[I(\muu_0)=x^{\aa(\muu_0)}\in S_{\tii},I(-\muu_0)=x^{-\aa(\muu_0)}\in S_{\tii},\]
		where $\aa(\muu_0)=((\mu,\alpha_{|\ti_1|}^\vee),(\mu,\alpha_{|\ti_2|}^\vee),\cdots,(\mu,\alpha_{|\ti_m|}^\vee))$. Choose $\mu\in P$ such that $\aa(\muu_0)\in(\Z\backslash\{0\})^m$, then $I(-\muu_0)$ is an $I(\muu_0)$-pivot element of arbitrary type, so the conclusion follows.
	\end{proof}

	\begin{cor}\label{cor-expivot}
		If $w_1=e$ or $w_2=e$, then $R_{\tii}$ has enough pivot elements.
	\end{cor}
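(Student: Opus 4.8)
The plan is to reduce the statement directly to Proposition~\ref{prop-expivot}. Recall that $\supp(w)$ is the set of indices $i$ for which $s_i$ occurs in a reduced expression of $w$, a set which is independent of the chosen reduced expression; in particular $\supp(e)=\varnothing$, since the identity has the empty word as its only reduced expression. Hence if $w_1=e$ we have $\supp(w_1)=\varnothing$, so $\supp(w_1)\cap\supp(w_2)=\varnothing$ holds vacuously, and Proposition~\ref{prop-expivot} immediately gives that $R_{\tii}$ has enough pivot elements. The case $w_2=e$ is handled identically (or by exchanging the two copies of $W$ and appealing to the case $w_1=e$). The only ``step'' is the observation $\supp(e)=\varnothing$, so there is no real obstacle.

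For concreteness one can also trace through the argument in this degenerate case. Suppose $w_1=e$ and fix $\tii=\tiim\in\I_{\ww}$; then every $\ti_k$ is positive. For $\mu\in P$, a weight string in $\pp_{\tii,\mu,\mu}$ has the form $(\mu_0,\mu_1,\cdots,\mu_m)$ with $\mu_0=\mu_m=\mu$ and $\mu_{k-1}-\mu_k=j_k\alpha_{\ti_k}$, $j_k\ge0$; summing these differences forces $\sum_{k=1}^m j_k\alpha_{\ti_k}=0$, whence $j_k=0$ for all $k$. Thus $\pp_{\tii,\mu,\mu}=\{(\mu,\mu,\cdots,\mu)\}$, and likewise $\pp_{\tii,-\mu,-\mu}=\{(-\mu,-\mu,\cdots,-\mu)\}$. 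By Proposition~\ref{prop-pii} the monomials $x^{\aa}$ and $x^{-\aa}$ with $\aa=((\mu,\alpha_{|\ti_1|}^\vee),(\mu,\alpha_{|\ti_2|}^\vee),\cdots,(\mu,\alpha_{|\ti_m|}^\vee))$ both lie in $S_{\tii}$; choosing $\mu$ so that every entry of $\aa$ is nonzero, $x^{-\aa}$ is an $x^{\aa}$-pivot element of arbitrary type, which verifies Definition~\ref{defn-enoughpv}. This is precisely the proof of Proposition~\ref{prop-expivot} specialized to $w_1=e$, which is why nothing new is needed.
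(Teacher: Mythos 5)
Your proposal is correct and matches the paper's intent exactly: the paper states this corollary without proof as an immediate consequence of Proposition~\ref{prop-expivot}, the only observation needed being $\supp(e)=\varnothing$, so the disjointness hypothesis holds automatically. Your optional tracing of the pivot-element construction in the degenerate case is also accurate, but nothing beyond the one-line reduction is required.
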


	Recall the notion of $x(\aa)\in\ttiit,\aa\in\Z^m$ in Remark \ref{rem-xaa}. Let
	\[\epss_i=(0,\cdots,0,\underset{i\text{-th}}{1},0,\cdots,0),1\le i\le m\]
	be the standard basis of $\Z^m$. We will illustrate the consequense of $R_{\tii}$ having enough pivot elements.

	\begin{prop}\label{prop-ritiequiv}
		If $R_{\tii}$ has enough pivot elements, then for any weight vector $\tee_{\mm}\in\tmii(\mm),\mm\in\Z^{s(\tii)}$, $R_{\tii}\tee_{\mm}=\ttii\tee_{\mm}$.
	\end{prop}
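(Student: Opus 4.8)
The plan is to show both inclusions; since $R_{\tii}\subset\ttii$ gives $R_{\tii}\tee_{\mm}\subseteq\ttii\tee_{\mm}$ for free, the real content is $\ttii\tee_{\mm}\subseteq R_{\tii}\tee_{\mm}$. By Theorem \ref{thm-localization} and Remark \ref{rem-xaa}, $\ttii$ is spanned by elements of the form $x(\aa)\tt$ with $\aa\in\Z^m$ and $\tt\in\ttizt$. Since $\ttizt$ acts on the weight vector $\tee_{\mm}$ by nonzero scalars (cf. (\ref{eq-xyaction}), (\ref{eq-wtspace})), it therefore suffices to prove that $x(\aa)\tee_{\mm}\in R_{\tii}\tee_{\mm}$ for every $\aa\in\Z^m$; equivalently, that every basis vector $\te_{\nn}$ lying in $R_{\tii}\tee_{\mm}\cdot(\text{appropriate scalar})$ is reached. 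The key point is that $x(\aa)$ shifts the $x$-degree by $\aa$, so the goal is to realize, for each $\aa\in\Z^m$, some element of $R_{\tii}$ whose action on $\tee_{\mm}$ produces $x(\aa)\tee_{\mm}$ plus a correction that we can remove by induction.

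First I would record the mechanism by which a pivot element ``acts invertibly''. If $\uu\in R_{\tii}$ is an $\aa$-pivot element of type $(I,k)$ with distinguished exponent $\ccc\in\supp_x(\uu)$ of multiplicity $1$, then by the definition and (\ref{eq-xyaction}) the action of $\uu$ on the weight vector $\te_{\nn}$ is $\uu\,\te_{\nn} = c\,x(\ccc)\te_{\nn} + (\text{terms }x(\ccc')\te_{\nn}\text{ with }\aa\odot(\ccc'-\ccc)\ge_I0,\ >_k0)$, with $c\neq0$ because the fiber over $\ccc$ is a single monomial. The inequalities say the ``error'' terms are strictly further out in the $n_k$-direction and no closer in the remaining directions of $I$. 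I would then peel off the variables one at a time following the permutation $n_1,\dots,n_m$ in Definition \ref{defn-enoughpv}: using $\uu_1$ (pivot of type $([1,m],n_1)$) together with a Vandermonde-type separation argument in the $n_1$-coordinate — exactly the style used in the proof of Theorem \ref{thm-wwmod} and Lemma \ref{lem-ore} — one extracts from $R_{\tii}\tee_{\mm}$ the individual $x(\ccc_1 + t\,\epss_{n_1})$-shifts of $\tee_{\mm}$ for all $t$, reducing to a module where the $n_1$-coordinate is pinned. Iterating with $\uu_2,\dots,\uu_m$, each acting on the space where coordinates $n_1,\dots,n_{k-1}$ have already been normalized and each moving the $n_k$-coordinate strictly while controlling the already-fixed ones, one sweeps out every $x(\aa)\tee_{\mm}$.

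More precisely, the induction is: let $V_0 = R_{\tii}\tee_{\mm}$, and having produced inside $V_0$ a translate $x(\bb)\tee_{\mm}$ with the coordinates indexed by $\{n_1,\dots,n_{k-1}\}$ arbitrary, apply $\uu_k$; since $\aa_k\in\supp_x(S_{\tii})$, there is a unit $v_k\in S_{\tii}$ with $\supp_x$ meeting $\aa_k$, and combining $\uu_k$ with powers of $v_k^{\pm1}$ (which are invertible in $R_{\tii}$ modulo the scalar action, via Lemma \ref{lem-ore} applied to the $q$-commuting decomposition) lets one invert the leading term: the distinguished exponent $\ccc_k$ has multiplicity $1$, so solving a finite triangular system — triangular precisely because of the $\ge_I$/$>_k$ conditions — gives $x(\bb - \ccc_k')\tee_{\mm}\in V_0$ for the relevant shifts in the $n_k$-coordinate. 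After $m$ steps all of $\Z^m$ is covered, so $x(\aa)\tee_{\mm}\in R_{\tii}\tee_{\mm}$ for all $\aa$, whence $\ttii\tee_{\mm}\subseteq R_{\tii}\tee_{\mm}$.

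The main obstacle, and the step I would be most careful about, is making the triangularity genuinely usable: the pivot inequalities are only weak ($\ge_I$) off the distinguished coordinate, so one must check that the ``already-normalized'' coordinates are not disturbed when inverting the leading term at step $k$ — this is why Definition \ref{defn-enoughpv} removes $n_1,\dots,n_{k-1}$ from the index set of the $k$-th pivot's type, and the proof should explicitly invoke that the pivot $\uu_k$ has type $([1,m]\setminus\{n_1,\dots,n_{k-1}\}, n_k)$ so that its error terms satisfy $\aa_k\odot(\ccc'-\ccc_k)\ge_{[1,m]\setminus\{n_1,\dots,n_{k-1}\}}0$, leaving the coordinates $n_1,\dots,n_{k-1}$ entirely free to be handled by the Vandermonde argument rather than constrained by $\uu_k$. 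The second delicate point is ensuring the Vandermonde/$q$-generic separation can always be run: this uses that $q$ is not a root of unity together with the fact (from (\ref{eq-xyaction})) that distinct $x$-exponents produce $y$-eigenvalues on $\tee_{\mm}$ whose ratios are non-trivial powers of $q$ along the $n_k$-direction — which holds because $D_{\tii}$ has nonzero diagonal entries — so the finitely many monomials appearing can be separated by acting repeatedly and inverting a Vandermonde matrix.
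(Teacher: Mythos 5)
Your reduction — show that the set $\cM=\{\aa\in\Z^m\mid x(\aa)\tee_{\mm}\in R_{\tii}\tee_{\mm}\}$ equals $\Z^m$ — and your use of the multiplicity-one exponent of each pivot to guarantee a nonzero leading coefficient both match the paper's proof. The gap is in the mechanism you propose for disposing of the error terms. You want to ``extract the individual shifts'' by a Vandermonde separation in the $n_k$-coordinate and by combining $\uu_k$ with powers of $v_k^{\pm1}$ ``invertible in $R_{\tii}$ modulo the scalar action, via Lemma \ref{lem-ore}''. Neither step is available. Inverses of elements of $S_{\tii}$ exist only in the localization $\ttii$, so using them inside $R_{\tii}$ is circular: the proposition being proved is precisely that the $R_{\tii}$- and $\ttii$-orbits of $\tee_{\mm}$ coincide. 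And Vandermonde separation only distinguishes vectors on which diagonal (pure $y$) elements act with distinct eigenvalues; by (\ref{eq-xyaction}) and (\ref{eq-wtspace}) the diagonal part of $\ttii$ is $\ttiz$, so two shifts $x^{\ccc}\tee_{\mm}$ and $x^{\ccc'}\tee_{\mm}$ with $\tOmega(\tii)\tr D_{\tii}(\ccc-\ccc')=0$ lie in the same $\ttiz$-weight space and cannot be separated this way — and such coincidences occur exactly when $s(\tii)<m$, i.e.\ in every case where the proposition has content. So the ``finite triangular system'' cannot be solved by separation, and ``pinning the $n_1$-coordinate'' is likewise not justified.

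What is missing is the forward-reachability step that makes the triangularity usable. The paper first proves (its Step 1) that from any $\ccc\in\cM$ a whole translated signed orthant $\ccc+\ccc_k'+\sum_{l}\N\sgn(a_{kl})\epss_l$ lies in $\cM$; this uses $\aa_k\in\supp_x(S_{\tii})\cap(\Z\backslash\{0\})^m$, the additive monoid structure of $\supp_x(S_{\tii})$, and the decompositions $x(\epss_j)=\ds\inv\rrr_j$ with $\ds,\rrr_j\in S_{\tii}$ — a use of the hypothesis $\aa_k\in\supp_x(S_{\tii})$ that your proposal never makes. Only with such a cone in hand does the pivot work: applying $\uu_k$ to $x(\aa'-\bb_k)\tee_{\mm}$, the error exponents $\aa'+\bb_{ki}$ already lie in $\cM$ (they point into the known cone, by the $\ge_{I_k}$, $>_{n_k}$ conditions), so the error terms are subtracted outright, no separation needed, and one marches backwards one step in the $n_k$-coordinate; induction then converts the $\N$-direction $n_k$ into a full $\Z$-direction, and induction on $k$ (your correct observation about the shrinking type sets) finishes the proof. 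Your proposal gestures at ``removing the correction by induction'' but never establishes that the corrections are reachable in the first place, and the substitute mechanisms offered do not close that gap.
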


	\begin{proof}
		\sloppy Let $n_1,n_2,\cdots,n_m\in[1,m],\aa_k=(a_{k1},a_{k2},\cdots,a_{km})\tr,\uu_k\in\Z^m,1\le k\le m$ be as in Definition \ref{defn-enoughpv}. Denote $\cM:=\{\aa\in\Z^m|x(\aa)\tee_{\mm}\in R_{\tii}\tee_{\mm}\}$, it suffices to show $\cM=\Z^m$.

		\emph{Step1.} We prove the following fact: for $1\le k\le m$, there exists $\ccc_k'\in\Z^m$ such that for any $\ccc\in\cM$,
		\[\ccc+\ccc_k'+\sum_{l=1}^m\N\sgn(a_{kl})\epss_l\subset\cM.\]
		Write $x(\epss_j)=\ds\inv\rrr_j$, where $\ds,\rrr_j\in S_{\tii},1\le j\le m$. Then $\ds\in x(\taa)\ttizt,\rrr_j\in x(\taa+\epss_j)\ttizt$ for some $\taa\in\Z^m$. Let $\taa_k=\taa+t_k\aa_k$, $t_k\in\N$ large enough such that $\aa_k\odot\taa_k>_{[1,m]}0$. We have $\taa_k,\taa_k+\epss_i\in\supp_x(S_{\tii})$, thus $\N\taa_k+\sum_{i=1}^m\N(\taa_k+\epss_i)\subset\supp_x(S_{\tii})$. Let $\taa_k=(\tla_{k1},\tla_{k2},\cdots,\tla_{km})\tr$, note that
		\[|\tla_{ki}|\taa_k+j\epss_i=(|\tla_{ki}|-j)\taa_k+j(\taa_k+\epss_i)\in\supp_x(S_{\tii}),0\le j\le|\tla_{ki}|,1\le i\le m,\]
		i.e. $|\tla_{ki}|\taa_k+[0,|\tla_{ki}|]\epss_i\subset\supp_x(S_{\tii}),1\le i\le m$, therefore
		\begin{align*}
			\supp_x(S_{\tii})&\supset\N\taa_k+\sum_{i=1}^m(|\tla_{ki}|\taa_k+[0,|\tla_{ki}|]\epss_i)\\
			&\supset(\sum_{i=1}^m|\tla_{ki}|)\taa_k+\N\taa_k+[0,|\tla_{k1}|]\times[0,|\tla_{k2}|]\times\cdots\times[0,|\tla_{km}|]\\
			&=(\sum_{i=1}^m|\tla_{ki}|)\taa_k+\sgn(a_{k1})\N\times\sgn(a_{k2})\N\times\cdots\times\sgn(a_{km})\N.
		\end{align*}
		Consider $S_{\tii}$'s action on $x(\ccc)\tee_{\mm}$, we see $\ccc+\supp_x(S_{\tii})\subset\cM$. So the fact follows from letting $\ccc_k'=(\sum_{i=1}^m|\tla_{ki}|)\taa_k$.
		
		\emph{Step 2.} Denote $I_k=\{n_k,n_{k+1},\cdots,n_m\},1\le k\le m$ and $I_{m+1}=\varnothing$. Assume $\supp_x(\uu_k)=\{\bb_k,\bb_k+\bb_{k1},\bb_k+\bb_{k2},\cdots,\bb_k+\bb_{kp_k}\}$ where $\bb_k$ has multiplicity 1 for $\uu_k$, $\aa_k\odot\bb_k\le_{I_k}0,\aa_k\odot\bb_k<_{n_k}0,\aa_k\odot\bb_{ki}\ge_{I_k}0,\aa_k\odot\bb_{ki}>_{n_k}0,1\le i\le p_k,p_k\ge0$. For $\aa\in\Z^m$, let $\aa'=\aa-\sgn(a_{kn_k})\epss_{n_k}$. Consider $\uu_k$'s action on $x(\aa'-\bb_k)\tee_{\mm}$, we see if $\aa'-\bb_k\in\cM$ and $\aa'+\bb_{ki}\in\cM,1\le i\le p_k$, then $\aa'\in\cM$. Since
		\[\aa'-\bb_k,\aa'+\bb_{ki}\in\aa+\sum_{l\in I_k}\N\sgn(a_{kl})\epss_l+\sum_{l\notin I_k}\Z\epss_l,\]
		we conclude that if
		\[\aa+\sum_{l\in I_k}\N\sgn(a_{kl})\epss_l+\sum_{l\notin I_k}\Z\epss_l\subset\cM,\]
		then $\aa'=\aa-\sgn(a_{kn_k})\epss_{n_k}\in\cM$ and thus $\aa-\sgn(a_{kn_k})\epss_{n_k}+\sum_{l\in I_k}\N\sgn(a_{kl})\epss_l+\sum_{l\notin I_k}\Z\epss_l\subset\cM$, which by induction implies
		\[\aa-\N\sgn(a_{kn_k})\epss_{n_k}+\sum_{l\in I_k}\N\sgn(a_{kl})\epss_l+\sum_{l\notin I_k}\Z\epss_l\subset\cM,\]
		i.e.
		\[\aa+\sum_{l\in I_{k+1}}\N\sgn(a_{kl})\epss_l+\sum_{l\notin I_{k+1}}\Z\epss_l\subset\cM.\]

		\emph{Step 3.} We use induction on $k$ to prove the following claim: for $1\le k\le m$, there exists $\ccc_k\in\Z^m$ such that
		\[\ccc_k+\sum_{l\in I_{k+1}}\N\sgn(a_{kl})\epss_l+\sum_{l\notin I_{k+1}}\Z\epss_l\subset\cM.\]
		In \emph{Step 1}, choose $k=1$ and $\ccc=0$, then by \emph{Step 2} we obtain the claim for $k=1$. Suppose the claim is true for some $k\le m-1$, then by \emph{Step 1} for $k+1$ we see
		\begin{align*}
			\ccc_k+\ccc_{k+1}'+&\sum_{l\in I_{k+1}}\N\sgn(a_{(k+1)l})\epss_l+\sum_{l\notin I_{k+1}}\Z\epss_l\\
			\subset&\Big(\ccc_k+\sum_{l\in I_{k+1}}\N\sgn(a_{kl})\epss_l+\sum_{l\notin I_{k+1}}\Z\epss_l\Big)+\Big(\ccc_{k+1}'+\sum_{l=1}^m\N\sgn(a_{(k+1)l})\epss_l\Big)\subset\cM.
		\end{align*}
		The claim for $k+1$ then follows from \emph{Step 2} if we choose $\ccc_{k+1}=\ccc_k+\ccc_{k+1}'$. Finally, the claim for $k=m$ leads to the proposition.
	\end{proof}

	\begin{cor}\label{cor-ritiequiv}
		If $R_{\tii}$ has enough pivot elements, then for any $\tee\in\tmii,R_{\tii}\tee=\ttii\tee$.
	\end{cor}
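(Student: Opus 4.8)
The plan is to lift Proposition \ref{prop-ritiequiv} from weight vectors to arbitrary vectors by decomposing along weight spaces and then separating the (finitely many) weight components using elements of $R_{\tii}$ alone. Since $R_{\tii}\subseteq\ttii$, the inclusion $R_{\tii}\tee\subseteq\ttii\tee$ is automatic, and $R_{\tii}\tee$ is an $R_{\tii}$-submodule of $\tmii$ containing $\tee$; so the task is to show $\ttii\tee\subseteq R_{\tii}\tee$. By the $\ttiz$-weight decomposition (\ref{eq-wtspace}), write $\tee=\sum_{j=1}^{r}\tee_{\mm_j}$ with $0\ne\tee_{\mm_j}\in\tmii(\mm_j)$ and $\mm_1,\dots,\mm_r\in\Z^{s(\tii)}$ pairwise distinct. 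Proposition \ref{prop-ritiequiv}, applied to each weight vector $\tee_{\mm_j}$, gives $\ttii\tee_{\mm_j}=R_{\tii}\tee_{\mm_j}$, whence $\ttii\tee\subseteq\sum_{j}\ttii\tee_{\mm_j}=\sum_{j}R_{\tii}\tee_{\mm_j}$. Thus it suffices to prove that each component $\tee_{\mm_j}$ already lies in $R_{\tii}\tee$, for then $R_{\tii}\tee_{\mm_j}\subseteq R_{\tii}(R_{\tii}\tee)=R_{\tii}\tee$ and we are done.

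For the separation I would argue as follows. By (\ref{eq-xyaction}) and (\ref{eq-wtspace}) every unit of $\ttiz$ acts by a scalar on each $\tmii(\mm)$, and these scalars are the values of a character of the rank-$s(\tii)$ lattice of $\ttiz$; since $q$ is generic, one can choose a unit $t$ of $\ttiz$ whose scalars $\lambda_1,\dots,\lambda_r$ on $\tmii(\mm_1),\dots,\tmii(\mm_r)$ are pairwise distinct (the $\mm_j$ give pairwise distinct linear forms, so a generic lattice element separates them, and $q$ is not a root of unity). Now $t\in\ttii$, so by Proposition \ref{prop-ritiequiv} applied to the weight vector $\tee_{\mm_j}$ there is $\tau_j\in R_{\tii}$ with $\tau_j\tee_{\mm_j}=t\tee_{\mm_j}=\lambda_j\tee_{\mm_j}$; because this value lies in $\tmii(\mm_j)$ again, the element $\tau_j$ must shift every $\ttiz$-weight by $0$ (the components of $\tau_j\tee_{\mm_j}$ in the various weight spaces are independent and $\tau_j$'s monomials act invertibly on $\tmii$), i.e. $\tau_j$ is weight-preserving. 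The point is then to manufacture a single weight-preserving element $\tau\in R_{\tii}$ acting as $\lambda_j$ on $\tmii(\mm_j)$ for every $j$ simultaneously — a finite interpolation over the weight-preserving part of $R_{\tii}$ seen on the finite-weight subspace $\bigoplus_{j}\tmii(\mm_j)$, governed exactly by the hypothesis that $R_{\tii}$ has enough pivot elements (again through Proposition \ref{prop-ritiequiv}). Granting such a $\tau$, the idempotents $\prod_{l\ne j}(\tau-\lambda_l)(\lambda_j-\lambda_l)\inv\in R_{\tii}$ act on $\tee$ as the projections onto the $\mm_j$-components, so $\tee_{\mm_j}\in R_{\tii}\tee$; equivalently, a Vandermonde determinant argument on $\tee,\tau\tee,\dots,\tau^{r-1}\tee\in R_{\tii}\tee$ isolates each $\tee_{\mm_j}$ inside $R_{\tii}\tee$.

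I expect the manufacture of the common weight-preserving element $\tau\in R_{\tii}$ — rather than merely of the one-component-at-a-time substitutes $\tau_j$ — to be the main obstacle: the obvious weight-separating elements, namely the units of $\ttiz$ themselves or the central generators of $Z_{\ww}=Z(\ttii)$ from Theorem \ref{thm-genzww} (which involve inverses of normal elements), live a priori only in $\ttii$, not in $R_{\tii}$. Resolving this is exactly where enough pivot elements are used: by Proposition \ref{prop-ritiequiv} the elements of $R_{\tii}$ reproduce, on each individual weight space, the action of the full localization $\ttii$, and one must patch these local coincidences together across the finitely many weight spaces occurring in $\tee$. Everything else — the verification that $\tau_j$ is weight-preserving, the Vandermonde/spectral-projector step, and the bookkeeping with the decomposition $\cc_{\tii}\simeq\ttiz\otimes\cc_{\tii}'$ of Corollary \ref{cor-ccdecomp} together with the category equivalence of Proposition \ref{prop-equivcat} — is routine.
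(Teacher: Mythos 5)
Your reduction to showing $\tee_{\mm_j}\in R_{\tii}\tee$ is the right first move and coincides with the paper's, but the separation step contains a genuine gap — precisely the one you flag yourself. The existence of a single element $\tau\in R_{\tii}$ acting by pairwise distinct scalars on the finitely many weight spaces $\tmii(\mm_j)$ occurring in $\tee$ is never established: Proposition \ref{prop-ritiequiv} only says that, for one weight vector at a time, $R_{\tii}\tee_{\mm_j}=\ttii\tee_{\mm_j}$; it produces no element of $R_{\tii}$ with a prescribed simultaneous scalar action on several weight spaces, and the proposed ``finite interpolation over the weight-preserving part of $R_{\tii}$'' is an unproved claim, not an argument. (A smaller flaw: from $\tau_j\tee_{\mm_j}=\lambda_j\tee_{\mm_j}$ you cannot conclude that $\tau_j$ is weight-preserving; it only forces the weight-shifting components of $\tau_j$ to annihilate $\tee_{\mm_j}$.) So as written the proof is conditional on the manufacture of $\tau$, which is exactly the hard point.

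The paper sidesteps this obstacle rather than solving it. The projecting elements are taken in $\ttiz\subset\ttii$, not in $R_{\tii}$: a Vandermonde argument in the commutative algebra $\ttiz$, which acts by scalars on each $\tmii(\mm)$ and separates the finitely many weights since $q$ is generic, yields $\tt_{\mm}\in\ttiz$ with $\tee_{\mm}=\tt_{\mm}\tee$. One then clears denominators: writing $\tt_{\mm}=\ds_{\mm}\inv\rrr_{\mm}$ with $\ds_{\mm}\in S_{\tii}$ and $\rrr_{\mm}\in R_{\tii}$ gives $\ds_{\mm}\tee_{\mm}=\rrr_{\mm}\tee\in R_{\tii}\tee$. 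Since $\ds_{\mm}\tee_{\mm}$ is again a weight vector, Proposition \ref{prop-ritiequiv} applied to it yields $\tee_{\mm}\in\ttii\ds_{\mm}\tee_{\mm}=R_{\tii}\ds_{\mm}\tee_{\mm}\subset R_{\tii}\tee$, and then $\ttii\tee\subset\sum_{\mm}\ttii\tee_{\mm}=\sum_{\mm}R_{\tii}\tee_{\mm}\subset R_{\tii}\tee$. The moral is that the projector need not lie in $R_{\tii}$ nor be weight-preserving: only its numerator is needed, and the unit denominator in $S_{\tii}$ is undone by one more application of the weight-vector proposition. Replacing your manufacture of $\tau$ by this clearing-of-denominators closes your argument.
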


	\begin{proof}
		Let $\tee=\sum_{\mm\in\Z^{s(\tii)}} \tee_{\mm}$, where $\tee_{\mm}\in\tmii(\mm)$ is all but finitely 0. A Vandermonde determinant argument shows there exists $\tt_{\mm}\in\ttiz$ such that $\tee_{\mm}=\tt_{\mm}\tee,\forall\mm\in\Z^{s(\tii)}$. Write $\tt_{\mm}=\ds_{\mm}\inv\rrr_{\mm},\ds_{\mm}\in S_{\tii},\rrr_{\mm}\in R_{\tii}$, then $\ds_{\mm}\tee_{\mm}=\rrr_{\mm}\tee$, i.e. $\ds_{\mm}\tee_{\mm}\in R_{\tii}\tee,\forall\mm\in\Z^{s(\tii)}$. The proposition above shows $\tee_{\mm}\in\ttii\ds_{\mm}\tee_{\mm}=R_{\tii}\ds_{\mm}\tee_{\mm}\subset R_{\tii}\tee$. Therefore by the proposition again,
		\[\ttii\tee=\ttii\sum_{\mm\in\Z^{s(\tii)}}\tee_{\mm}\subset\sum_{\mm\in\Z^{s(\tii)}}\ttii\tee_{\mm}=\sum_{\mm\in\Z^{s(\tii)}} R_{\tii}\tee_{\mm}\subset\sum_{\mm\in\Z^{s(\tii)}} R_{\tii}\tee=R_{\tii}\tee,\]
		thus $R_{\tii}\tee=\ttii\tee$.
	\end{proof}

	In view of Theorem \ref{thm-simquot} and the corollary above, we have proved the following theorem.

	\begin{thm}\label{thm-main}
		If $R_{\tii}$ has enough pivot elements, then $\tmii$ as $R_{\tii}$-module is indecomposable, and its simple quotient modules are in bijection with simple $\cc_{\tii}'$-modules.
	\end{thm}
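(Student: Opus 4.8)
The plan is to deduce Theorem~\ref{thm-main} from Theorem~\ref{thm-simquot} by showing that, under the hypothesis of enough pivot elements, the lattice of $R_{\tii}$-submodules of $\tmii$ coincides with the lattice of $\ttii$-submodules, so that the indecomposability and the description of simple quotients proved in the $\ttii$ setting carry over verbatim.

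First I would record the elementary inclusion: every $\ttii$-submodule of $\tmii$ is in particular an $R_{\tii}$-submodule, since $R_{\tii}\subset\ttii$. For the converse, let $N$ be an $R_{\tii}$-submodule of $\tmii$. For each $\tee\in N$, Corollary~\ref{cor-ritiequiv} gives $\ttii\tee=R_{\tii}\tee\subset N$; summing over $\tee\in N$ yields $\ttii N\subset N$, so $N$ is a $\ttii$-submodule. Hence $\S(\tmii)$ is the same whether computed with $R_{\tii}$- or with $\ttii$-submodules, and consequently the notions ``proper submodule'', ``maximal submodule'' and ``simple quotient module $\tmii/N$'' agree for the two module structures: a quotient of $\tmii$ carries an $R_{\tii}$-module structure precisely when it carries a $\ttii$-module structure, the two being identified by restriction along $R_{\tii}\hookrightarrow\ttii$.

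Indecomposability of $\tmii$ as $R_{\tii}$-module is then immediate: a direct-sum decomposition $\tmii=N_1\oplus N_2$ into $R_{\tii}$-submodules is, by the previous paragraph, also a decomposition into $\ttii$-submodules, which by Theorem~\ref{thm-simquot} forces $N_1=0$ or $N_2=0$. Likewise, $N\subset\tmii$ is a maximal $R_{\tii}$-submodule if and only if it is a maximal $\ttii$-submodule, so the simple quotient $R_{\tii}$-modules of $\tmii$ are exactly the simple quotient $\ttii$-modules of $\tmii$. By Theorem~\ref{thm-simquot} the latter are in bijection with the simple $\cc_{\tii}'$-modules, and this is the asserted bijection.

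The substance of the argument has already been dispatched in Proposition~\ref{prop-ritiequiv} and Corollary~\ref{cor-ritiequiv} (the pivot-element combinatorics that upgrades $R_{\tii}\tee\subset\ttii\tee$ to an equality on each weight vector, then on arbitrary vectors via the Vandermonde/weight-space argument); what is left here is only the bookkeeping that equality of cyclic submodules propagates to equality of submodule lattices, and thence to the coincidence of the indecomposability and simple-quotient structure for the two actions. The one point to treat with care is that ``simple quotient module'' is consistently read as $\tmii/N$ with $N$ a \emph{maximal} submodule, so that the identification of maximal submodules for the two structures genuinely suffices; no further obstruction arises.
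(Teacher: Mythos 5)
Your proposal is correct and follows the paper's own route: the paper proves this theorem precisely by combining Theorem~\ref{thm-simquot} with Corollary~\ref{cor-ritiequiv}, and your argument simply makes explicit the bookkeeping (cyclic-submodule equality $R_{\tii}\tee=\ttii\tee$ forces the $R_{\tii}$- and $\ttii$-submodule lattices of $\tmii$ to coincide, hence indecomposability and the classification of simple quotients transfer) that the paper leaves implicit.
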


	Notice Proposition \ref{prop-prim} and (\ref{eq-tirrti}), we have the following corollary.

	\begin{cor}\label{cor-bundle}
		If $R_{\tii}$ has enough pivot elements, then the $\tirr\ttii$ defined in (\ref{eq-tirrti}) becomes a bundle of $(\ww)$ type simple $\cqg$-modules onto $\prim\cqg_{\ww}$ by taking annihilators, with fiber
		\[\irr L_{q^{m_{\tii,1}}}(2)\times\irr L_{q^{m_{\tii,2}}}(2)\times\cdots\times\irr L_{q^{m_{\tii,k(\tii)}}}(2).\]
	\end{cor}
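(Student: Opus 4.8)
\emph{Proof proposal for Corollary \ref{cor-bundle}.}
The plan is to deduce the statement by feeding the two structural inputs already at hand — the bijection of Theorem \ref{thm-main} and the identification $Z_{\ww}=Z(\ttii)$ of Proposition \ref{prop-prim} — into Joseph's homeomorphism $\prim\cqg_{\ww}\cong\specm Z_{\ww}$ of Theorem \ref{thm-stratification}, and then to read off the annihilator map on the explicit parameter space $\tirr\ttii$.

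First I would check that every simple quotient $R_{\tii}$-module $N$ of $\tmii$ is genuinely a simple $\cqg$-module of $(\ww)$ type carrying a $\ttii$-action. Since $E_{\ww}\subset S_{\tii}$ (Proposition \ref{prop-tpinm} at $j=0$) and units of $\lqi$ act invertibly on $\tmii$, the localization $\cqg_{\ww}=(\cqg/J_{\ww})[E_{\ww}\inv]$ embeds in $\ttii$ and acts on $\tmii$; moreover, under the pivot hypothesis, Corollary \ref{cor-ritiequiv} gives $R_{\tii}\tee=\ttii\tee$ for every $\tee$, so the $R_{\tii}$- and $\ttii$-submodule lattices of $\tmii$ coincide. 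Hence $N$ is at once a simple $\ttii$-module (so $E_{\ww}$ acts invertibly on it), a simple $\cqg_{\ww}$-module, and — via $\cqg\twoheadrightarrow R_{\tii}=\cqg/J_{\ww}$ (Corollary \ref{cor-w1w2type}) — a simple $\cqg$-module. Its annihilator in $\cqg$ contains $J_{\ww}$ and meets $E_{\ww}$ trivially, so by Theorem \ref{thm-stratification}(i) it lies in $\spec\cqg_{\ww}$; being primitive, $N$ is of $(\ww)$ type.

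Next I would compute the annihilator map on $\tirr\ttii$. By Theorem \ref{thm-main} the simple quotient $R_{\tii}$-modules of $\tmii$ are the simple $\cc_{\tii}'$-modules, and $\tirr\ttii$ of (\ref{eq-tirrti}) singles out those built as tensor products $\C\mathbf{1}_a\otimes N_1\otimes\cdots\otimes N_{k(\tii)}$ using the decomposition $\cc_{\tii}'=Z(\ttii)\otimes L_{q^{m_{\tii,1}}}(2)\otimes\cdots\otimes L_{q^{m_{\tii,k(\tii)}}}(2)$ of Corollary \ref{cor-ccdecomp}; by construction (cf. (\ref{eq-tirr})) the central Laurent subalgebra $Z(\ttii)$ acts on such a module through evaluation at the point $a\in\T^{d(\tii)}=\specm Z(\ttii)$, so $\mathrm{Ann}_{Z(\ttii)}(N)=\mathfrak{m}_a$. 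Now Proposition \ref{prop-prim} identifies $Z(\ttii)$ with $Z_{\ww}$, and Theorem \ref{thm-stratification}(ii)--(iii) says that contraction $J\mapsto J\cap Z_{\ww}$ restricts to a homeomorphism $\prim\cqg_{\ww}\xrightarrow{\sim}\specm Z_{\ww}$. Therefore $\mathrm{Ann}_{\cqg_{\ww}}(N)$ is the unique primitive ideal contracting to $\mathfrak{m}_a$, hence depends only on $a$. This shows the map $\tirr\ttii\to\prim\cqg_{\ww}$ given by taking annihilators is, up to the homeomorphisms $\prim\cqg_{\ww}\cong\specm Z_{\ww}\cong\T^{d(\tii)}$, exactly the projection $\T^{d(\tii)}\times\irr L_{q^{m_{\tii,1}}}(2)\times\cdots\times\irr L_{q^{m_{\tii,k(\tii)}}}(2)\to\T^{d(\tii)}$. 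That projection is surjective — each $a\in\T^{d(\tii)}$ occurs, e.g. on a module all of whose $L_{q^{m_{\tii,j}}}(2)$-factors are basic modules — with fibers $\irr L_{q^{m_{\tii,1}}}(2)\times\cdots\times\irr L_{q^{m_{\tii,k(\tii)}}}(2)$, which is the asserted bundle.

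The step I expect to need the most care is this last identification of $\mathrm{Ann}_{\cqg}(N)$ with the datum $a$ alone: one must chain together Theorem \ref{thm-stratification}(iii), Proposition \ref{prop-prim}, and the explicit tensor-product description of $\cc_{\tii}'$-modules to see that the finer $L_q(2)$-parameters leave the annihilator unchanged. Once that is in place, surjectivity and the shape of the fibers are formal, and the $(\ww)$-type claim is immediate from the invertibility of $E_{\ww}$ on the $\ttii$-submodules being quotiented out.
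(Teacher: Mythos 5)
Your proposal is correct and follows essentially the route the paper intends: the paper derives this corollary directly from Theorem \ref{thm-main}, Proposition \ref{prop-prim} and the construction (\ref{eq-tirrti}), and your argument simply spells out the same chain — the pivot hypothesis identifies simple quotient $R_{\tii}$-modules with simple $\cc_{\tii}'$-modules, the $(\ww)$-type claim comes from $J_{\ww}\subset\ker\tpi_{\tii}$ together with invertibility of $E_{\ww}$ and Theorem \ref{thm-stratification}(i), and the annihilator is pinned down by the central character in $Z(\ttii)=Z_{\ww}$ via Theorem \ref{thm-stratification}(ii)--(iii). No gaps; the added detail on why the finer $L_q(2)$-parameters do not change the annihilator is exactly the point the paper leaves implicit.
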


	As an application we deduce the equivalent condition for $\tmii$ being simple.

	\begin{thm}\label{thm-irrtenmod}
		The $\cqg$-module $\tmii$ is simple if and only if $s(\tii)=m$, i.e. $|\ti_1|,|\ti_2|,\cdots,|\ti_m|$ are distinct.
	\end{thm}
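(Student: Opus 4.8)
The plan is to unwind the two reductions already established — the equivalence of categories in Proposition \ref{prop-equivcat} and the decomposition in Corollary \ref{cor-ccdecomp} — and then treat the two directions separately. Recall $\tmii$ is a $\ttii$-module, and by Proposition \ref{prop-equivcat} its $\ttii$-submodule lattice is isomorphic to the $\cc_{\tii}$-submodule lattice of a single weight space $\tmii(\mm)$; by Corollary \ref{cor-ccdecomp}, $\tmii(\mm)\simeq\cc_{\tii}'$ as a $\cc_{\tii}'$-module, where $\cc_{\tii}'=Z(\ttii)\otimes L_{q^{m_{\tii,1}}}(2)\otimes\cdots\otimes L_{q^{m_{\tii,k(\tii)}}}(2)$. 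So the $\ttii$-module $\tmii$ is simple if and only if $\cc_{\tii}'$ is a simple module over itself, which (since $Z(\ttii)$ is a nontrivial Laurent polynomial algebra whenever $d(\tii)>0$, and each $L_{q^{m_{\tii,j}}}(2)$ is a nontrivial quantum torus) happens precisely when $d(\tii)=0$ and $k(\tii)=0$. Using $d(\tii)=\dim\ker(w_1-w_2)$ and $k(\tii)=(m-d(\tii)-s(\tii))/2$ with $s(\tii)=|\supp(\ww)|$, the condition $d(\tii)=k(\tii)=0$ is equivalent to $s(\tii)=m$, i.e. the indices $|\ti_1|,\dots,|\ti_m|$ are pairwise distinct. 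This settles the statement at the level of $\ttii$-modules.

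To pass from $\ttii$-modules to $\cqg$-modules (equivalently $R_{\tii}$-modules), observe first that whenever $s(\tii)=m$ we have $w_1$ or $w_2$ trivially... no: rather, $s(\tii)=m$ forces $\supp(w_1)\cap\supp(w_2)=\varnothing$ (indeed if some index $i$ appeared in both a factor of $w_1$ and a factor of $w_2$ it would be repeated among $|\ti_1|,\dots,|\ti_m|$, since each simple reflection occurs at least once in any reduced word), so Proposition \ref{prop-expivot} applies and $R_{\tii}$ has enough pivot elements. Then by Corollary \ref{cor-ritiequiv} the $R_{\tii}$-submodules of $\tmii$ coincide with the $\ttii$-submodules, so $\tmii$ is simple as an $R_{\tii}$-module, hence as a $\cqg$-module (it acts through $\tpi_{\tii}$ with image $R_{\tii}$). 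This gives the "if" direction.

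For the "only if" direction, suppose $s(\tii)<m$, so either $d(\tii)>0$ or $k(\tii)>0$. Here I must produce a proper nonzero $\cqg$-submodule, and since $R_{\tii}$-submodules are a priori finer than $\ttii$-submodules, I can use any proper nonzero $\ttii$-submodule of $\tmii$: one exists because, via Proposition \ref{prop-equivcat} and Corollary \ref{cor-ccdecomp}, $\tmii(\mm)\simeq\cc_{\tii}'$ has a proper nonzero left ideal (a Laurent polynomial algebra of positive dimension, or a nontrivial quantum torus $L_{q^{m}}(2)$, each has proper nonzero left ideals), and pulling this back along $\psi_{\tii,\mm}$ gives a proper nonzero $\ttii$-submodule $N$; since $N$ is in particular an $R_{\tii}$-submodule and $R_{\tii}=\tpi_{\tii}(\cqg)$, $N$ is a proper nonzero $\cqg$-submodule of $\tmii$. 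Hence $\tmii$ is not simple. The main obstacle is the bookkeeping in the "only if" step — one must verify that the $\ttii$-submodule obtained is genuinely proper and nonzero inside all of $\tmii$ (not just inside $\tmii(\mm)$), which is exactly what the equivalence $\varphi_{\tii,\mm}\psi_{\tii,\mm}=\id$ in Proposition \ref{prop-equivcat} guarantees, together with the explicit identification $\tmii(\mm)\simeq\cc_{\tii}'$; the rest is the identity $s(\tii)=m\iff d(\tii)=k(\tii)=0$, which is immediate from $m=s(\tii)+d(\tii)+2k(\tii)$.
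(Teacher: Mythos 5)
Your proposal is correct and follows essentially the same route as the paper: the ``only if'' direction reduces to $\ttii$-simplicity and the identification $\tmii(\mm)\simeq\cc_{\tii}'$ from Proposition \ref{prop-equivcat} and Corollary \ref{cor-ccdecomp}, forcing $\cc_{\tii}'$ to be trivial, i.e. $s(\tii)=m$; the ``if'' direction uses $\supp(w_1)\cap\supp(w_2)=\varnothing$, Proposition \ref{prop-expivot}, and the pivot-element machinery (your Corollary \ref{cor-ritiequiv} is exactly the ingredient behind the paper's appeal to Theorem \ref{thm-main}). The only cosmetic difference is that you spell out the construction of a proper nonzero submodule in the non-simple case, which the paper leaves implicit.
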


	\begin{proof}
		If the $R_{\tii}$-module $\tmii$ is simple, so is the $\ttii$-module $\tmii$. Corollary \ref{cor-ccdecomp} then tells that $\cc_{\tii}'$ must be trivial, so the dimension  $m-s(\tii)$ of $\cc_{\tii}'$ as quantum torus is 0. On the contrary, if $|\ti_1|,|\ti_2|,\cdots,|\ti_m|$ are distinct, then $\supp(w_1)\cap\supp(w_2)=\varnothing$, so $R_{\tii}$ has enough pivot elements by Proposition \ref{prop-expivot}. Apply Theorem \ref{thm-main}, since $\cc_{\tii}'=\C$ is trivial, $\tmii$ is simple.
	\end{proof}

	\subsection{Wiring Diagrams and $G=SL_3(\C)$ Case}

	In this subsection we apply the theory of previous subsections to the type A case. Let $G=SL_{n+1}$. Take simple roots as $\alpha_{i}=\varepsilon_{i}-\varepsilon_{i+1}$, where $\varepsilon_1,\varepsilon_2,\cdots,\varepsilon_{n+1}$ form a orthonormal basis in some Euclidean space. The Weyl group $W\simeq S_{n+1}$ is generated by simple reflections $s_i=(i,i+1),1\le i\le n$. Recall in Remark \ref{rem-quanmat}, $\C_q[SL_{n+1}]$ is generated by $x_{ij},1\le i,j\le n+1$ subject to relations (\ref{eq-quanmat}). For subsets $A,B\subset[1,n+1],|A|=|B|$, define the \emph{quantum minor} $\Delta_{q,A,B}$ to be the quantum determinant of the submatrix of $(x_{ij})$ with row set $A$ and column set $B$. Denote $A_{ij}=\Delta_{q,[1,n+1]\backslash\{i\},[1,n+1]\backslash\{j\}}$. The Hopf algebra structure of $\C_q[SL_{n+1}]$ is then given by
	\begin{equation}\label{eq-hopfquanmat}
		\Delta(x_{ij})=\sum_{k=1}^{n+1}x_{ik}\otimes x_{kj},\varepsilon(x_{ij})=\delta_{ij},S(x_{ij})=(-q)^{j-i}A_{ji}.
	\end{equation}

	Our idea of the \emph{wiring diagram} is to study the actions of quantum minors on $\tmii$ in a combinatorial way, which is slightly different from that of Fomin and Zelevinsky in \cite{FZ}*{Section 4.1}. For $\tii=\tiim\in\I_{\ww}$, construct the wiring diagram $\Gamma(\tii)$ as the concatenation of the ``elementary'' wiring diagrams $\Gamma_{\ti_1},\Gamma_{\ti_2},\cdots,\Gamma_{\ti_m}$ from left to right, where $\Gamma_{i}$ is constructed by numbering $n+1$ horizontal edges with $1,2\cdots,n+1$ from bottom to top and drawing a diagonal edge connecting level $|i|$ and $|i|+1$ depending on $\sgn(i)$, c.f. Figure \ref{fig-elmwd}.

	\begin{figure}[H]
		\centering
		\subfigure[$\Gamma_i,i>0$]{
			\begin{tikzpicture}
				\draw (0,0)--(1,0);
				\draw (0,3)--(1,3);
				\node at (0,0) [circle,fill,inner sep=1pt] {};
				\node at (1,0) [circle,fill,inner sep=1pt] {};
				\node at (0,3) [circle,fill,inner sep=1pt] {};
				\node at (1,3) [circle,fill,inner sep=1pt] {};
				\node at (0,2) [circle,fill,inner sep=1pt] {};
				\node at (1,1) [circle,fill,inner sep=1pt] {};
				\node at (-0.2,0) {$1$};
				\node at (-0.3,1) {$|i|$};
				\node at (-0.6,2) {$|i|+1$};
				\node at (-0.5,3) {$n+1$};
				\node at (1.2,0) {$1$};
				\node at (1.3,1) {$|i|$};
				\node at (1.6,2) {$|i|+1$};
				\node at (1.5,3) {$n+1$};
				\node at (0.5,0.5) {$\cdots$};
				\node at (0.5,2.5) {$\cdots$};
				\node at (0.5,0.15) {$\scriptstyle 1$};
				\node at (0.5,3.15) {$\scriptstyle 1$};
				\boxl{1}{2}
			\end{tikzpicture}
		}
		\subfigure[$\Gamma_i,i<0$]{
			\begin{tikzpicture}
				\draw (0,0)--(1,0);
				\draw (0,3)--(1,3);
				\node at (0,0) [circle,fill,inner sep=1pt] {};
				\node at (1,0) [circle,fill,inner sep=1pt] {};
				\node at (0,3) [circle,fill,inner sep=1pt] {};
				\node at (1,3) [circle,fill,inner sep=1pt] {};
				\node at (0,1) [circle,fill,inner sep=1pt] {};
				\node at (1,2) [circle,fill,inner sep=1pt] {};
				\node at (-0.2,0) {$1$};
				\node at (-0.3,1) {$|i|$};
				\node at (-0.6,2) {$|i|+1$};
				\node at (-0.5,3) {$n+1$};
				\node at (1.2,0) {$1$};
				\node at (1.3,1) {$|i|$};
				\node at (1.6,2) {$|i|+1$};
				\node at (1.5,3) {$n+1$};
				\node at (0.5,0.5) {$\cdots$};
				\node at (0.5,2.5) {$\cdots$};
				\node at (0.5,0.15) {$\scriptstyle 1$};
				\node at (0.5,3.15) {$\scriptstyle 1$};
				\boxl{1}{-2}
			\end{tikzpicture}
		}
		\caption{\label{fig-elmwd}Elementary wiring diagrams}
	\end{figure}

	Let $x^\pm,y^\pm$ be the generators of $\lqt{}$ such that $xy=qyx$. In each $\Gamma_i$, we associate weights $x,x\inv,y,1$ to the edge of level $|i|$, the edge of level $|i|+1$, the edge connecting level $|i|$ and $|i|+1$, and all other edges respectively, c.f. Figure \ref{fig-elmwd}. All edges are given the orientation from left to right. Figure \ref{fig-wd} shows an example of $\Gamma(\tii)$ when $n=3$, with all edges labeled by weights.

	\begin{figure}[H]
		\centering
		\begin{tikzpicture}
			\boxdiagraml{4}{{-2,1,-3,3,2,-1,-2,1,-1}}
		\end{tikzpicture}
		\caption{\label{fig-wd}$\Gamma(\tii)$ for $\tii=(-2,1,-3,3,2,-1,-2,1,-1)$}
	\end{figure}

	Denote the set of paths in $\Gamma(\tii)$ from level $i$ in the left side to level $j$ in the right side by $\Gamma(\tii)_{ij}$. For a path $p$ in $\Gamma(\tii)$, define its weight $I(p)\in\lqi$ to be the tensor product of weights of all edges in the path from left to right. Notice for $1\le k\le n$,
	\begin{equation}\label{eq-quanmatres}
		\iota_k^\pm\circ\tpi_k^\pm(x_{ij})=\begin{cases}
			1, & i=j\notin\{k,k+1\},\\
			x, & i=j=k,\\
			x\inv, & i=j=k+1,\\
			y, & i=(2k+1\mp1)/2,j=(2k+1\pm1)/2,\\
			0, & \text{otherwise}.
		\end{cases}
	\end{equation}
	Therefore one can easily check the following formula, in light of (\ref{eq-hopfquanmat}),
	\begin{equation}\label{eq-tpixij}
		\tpi_{\tii}(x_{ij})=\sum_{p\in\Gamma(\tii)_{ij}}I(p).
	\end{equation}

	We generalize (\ref{eq-tpixij}) to the quantum minor case, i.e. a quantum version of Lindstr\"{o}m's lemma. Let $A,B\subset[1,n+1],|A|=|B|=k$, a family of $k$ paths $\ppp=\{p_1,p_2,\cdots,p_k\}$ is called \emph{$(A,B)$ type}, if $p_1,p_2,\cdots,p_k$ are disjoint paths from levels in $A$ in the left side to levels in $B$ in the right side, c.f. Figure \ref{fig-fampath}. Define weights of such $\ppp$'s to be $I(\ppp)=I(p_1)I(p_2)\cdots I(p_k)$, which is independent of the order of $p_i$'s. Denote the set of families of paths of type $(A,B)$ in $\Gamma(\tii)$ by $\Gamma(\tii)_{A,B}$. We are ready to state the following proposition.

	\begin{figure}[H]
		\centering
		\begin{tikzpicture}
			\boxdiagram{4}{{-2,1,-3,3,2,-1,-2,1,-1}}
			\drawpath{{3,2,2,2,2,3,3,3,3,3}}
			\drawpath{{1,1,1,1,1,1,1,1,2,1}}
		\end{tikzpicture}
		\caption{\label{fig-fampath}A type $(\{1,3\},\{1,3\})$ family of paths in $\Gamma(\tii)$}
	\end{figure}

	\begin{prop}[Quantum Lindstr\"{o}m's lemma]\label{prop-quanlind}
		Keep notations above, then
		\[\tpi_{\tii}(\Delta_{q,A,B})=\sum_{\ppp\in\Gamma(\tii)_{A,B}}I(\ppp).\]
	\end{prop}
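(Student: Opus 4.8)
The plan is to reduce Proposition \ref{prop-quanlind} to two ingredients: the coproduct (Laplace-type) formula for quantum minors, and a completely explicit computation in a single elementary wiring diagram; then concatenation over the $m$ letters of $\tii$ assembles the two.

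\emph{Step 1: coproduct of quantum minors.} I would first record the standard identity in $\C_q[SL_{n+1}]$
\[\Delta(\Delta_{q,A,B})=\sum_{|C|=|A|}\Delta_{q,A,C}\otimes\Delta_{q,C,B},\]
which is the $q$-Laplace expansion of $\Delta$ on minors; it follows from (\ref{eq-hopfquanmat}) (equivalently from (\ref{eq-cqghopf}), since $\Delta_{q,A,B}$ is, up to normalisation, a matrix coefficient of $V(\omega_{|A|})\cong\Lambda_q^{|A|}V(\omega_1)$ for the weight basis indexed by subsets), cf. also \cite{BFZ,FZ}. Iterating gives, for every $m\ge1$,
\[\Delta^{(m-1)}(\Delta_{q,A,B})=\sum_{A=C_0,C_1,\dots,C_m=B}\Delta_{q,C_0,C_1}\otimes\Delta_{q,C_1,C_2}\otimes\cdots\otimes\Delta_{q,C_{m-1},C_m},\]
the sum being over chains $A=C_0,C_1,\dots,C_m=B$ of subsets of $[1,n+1]$ all of cardinality $|A|$.

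\emph{Step 2: the elementary case.} Fix $i$ with $1\le|i|\le n$ and put $k=|i|$. By (\ref{eq-quanmatres}) the matrix $M_{st}=\iota_k^{\sgn(i)}\circ\pi_k^{\sgn(i)}(x_{st})$ is diagonal with $M_{ss}=1$ for $s\notin\{k,k+1\}$, $M_{kk}=x$, $M_{k+1,k+1}=x\inv$, and exactly one nonzero off-diagonal entry, namely $M_{k,k+1}=y$ if $i>0$ and $M_{k+1,k}=y$ if $i<0$. I would then claim: for $C,D\subset[1,n+1]$ with $|C|=|D|$,
\[\iota_k^{\sgn(i)}\circ\pi_k^{\sgn(i)}(\Delta_{q,C,D})=\begin{cases}I(\ppp),&\text{if }\Gamma_i\text{ admits a (necessarily unique) disjoint path family }\ppp\text{ of type }(C,D),\\ 0,&\text{otherwise.}\end{cases}\]
To see this, write $C=\{c_1<\dots<c_r\}$, $D=\{d_1<\dots<d_r\}$ and expand $\Delta_{q,C,D}=\sum_{\tau\in S_r}(-q)^{l(\tau)}M_{c_1d_{\tau(1)}}\cdots M_{c_rd_{\tau(r)}}$; a nonzero term forces, for each $p$, either $c_p=d_{\tau(p)}$ or $(c_p,d_{\tau(p)})$ to be the unique off-diagonal position of $M$. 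A short case analysis shows this happens only when (a) $C=D$, the surviving permutation being $\mathrm{id}$ and the term $\prod_pM_{c_pc_p}$; or (b) $C,D$ agree off $\{k,k+1\}$ with $k\in C\setminus D$, $k+1\in D\setminus C$ (if $i>0$), resp.\ $k+1\in C\setminus D$, $k\in D\setminus C$ (if $i<0$). The decisive point is that in case (b) the ``crossing'' index occupies the \emph{same} position in the sorted lists $C$ and $D$ (because deleting it from each list leaves the same sorted list), so the surviving permutation is again $\mathrm{id}$ and the coefficient is $(-q)^0=1$; in both cases the surviving monomial is exactly $I(\ppp)$ for the evident path family (all straight if $C=D$; otherwise the level-$k$ or level-$(k+1)$ strand takes the diagonal and the rest go straight), which is the only disjoint family of type $(C,D)$ in $\Gamma_i$.

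\emph{Step 3: concatenation.} Applying $\tpi_{\tii}=(\iota_{|\ti_1|}^{\sgn(\ti_1)}\otimes\cdots\otimes\iota_{|\ti_m|}^{\sgn(\ti_m)})\circ(\pi_{|\ti_1|}^{\sgn(\ti_1)}\otimes\cdots\otimes\pi_{|\ti_m|}^{\sgn(\ti_m)})\circ\Delta^{(m-1)}$ to $\Delta_{q,A,B}$ and substituting Step 1 and then Step 2 slotwise,
\[\tpi_{\tii}(\Delta_{q,A,B})=\sum_{A=C_0,\dots,C_m=B}\ \bigotimes_{k=1}^m\iota_{|\ti_k|}^{\sgn(\ti_k)}\circ\pi_{|\ti_k|}^{\sgn(\ti_k)}(\Delta_{q,C_{k-1},C_k}),\]
where a chain contributes iff every step $C_{k-1}\to C_k$ is realised by the unique disjoint family $\ppp_k$ in $\Gamma_{\ti_k}$, the term then being $\bigotimes_{k}I(\ppp_k)$. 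I would finish by observing that recording, for a disjoint family $\ppp=\{p_1,\dots,p_r\}$ of type $(A,B)$ in $\Gamma(\tii)$, the set $C_k$ of levels occupied by $\ppp$ at the $k$-th internal slice is a bijection between such $\ppp$ and pairs (chain $A=C_0,\dots,C_m=B$; disjoint families $\ppp_k$ of type $(C_{k-1},C_k)$ in $\Gamma_{\ti_k}$). Since $\lqi=\lqt{|\ti_1|}\otimes\cdots\otimes\lqt{|\ti_m|}$ has component-wise multiplication, $I(\ppp)=\prod_jI(p_j)=\bigotimes_{k=1}^m(\prod_jw_{j,k})=\bigotimes_{k=1}^mI(\ppp_k)$, where $w_{j,k}\in\lqt{|\ti_k|}$ is the weight of the edge of $p_j$ in $\Gamma_{\ti_k}$ and each slotwise product $\prod_jw_{j,k}$ is unambiguous because at most one of the non-central generators $x^{\pm1},y$ occurs among the $w_{j,k}$ for fixed $k$ (if the diagonal edge is used, disjointness rules out the two horizontal edges at levels $k,k+1$; otherwise only commuting powers of $x$ and $1$'s appear). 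Summing over $\ppp\in\Gamma(\tii)_{A,B}$ gives the assertion.

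The main obstacle I expect is the bookkeeping inside Step 2: checking that the quantum-determinant sign $(-q)^{l(\tau)}$ is always trivial, i.e.\ that the unique surviving permutation is the identity precisely because the crossing index sits at the same position in the sorted row and column sets — this is where the combinatorics of subsets meets the normalisation in $\det_q$. Everything else (Steps 1 and 3) is formal once that point is settled, modulo the routine verification that the slotwise weight products are order-independent.
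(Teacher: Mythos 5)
Your argument is correct, but it takes a genuinely different route from the paper's. The paper proves the lemma by induction on $m=l(\tii)$, peeling off the last elementary diagram: it derives a recursion expressing $\tpi_{\tii}(x_{ij})$ through $\tpi_{\tii'}$ for $\tii'=(\ti_1,\ldots,\ti_{m-1})$, substitutes this into the permutation expansion of $\Delta_{q,A,B}$, and treats four cases according to whether $i_m$, $i_m+1$ lie in $B$; in the case where both do, the unwanted terms are killed by an explicit $q$-cancellation, pairing $\tau$ with $s_h\tau$ and using $xy=qyx$ so that $(-q)^{l(\tau)}xy+(-q)^{l(s_h\tau)}yx=0$. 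You instead invoke the iterated coproduct identity $\Delta(\Delta_{q,A,B})=\sum_{C}\Delta_{q,A,C}\otimes\Delta_{q,C,B}$, compute each elementary slot exactly from (\ref{eq-quanmatres}) (your key observation — that the unique surviving permutation is the identity because the crossing index occupies the same position in the sorted row and column sets, so the $(-q)^{l(\tau)}$ factor is trivial — is correct and is the real content of your Step 2), and conclude with the slice-by-slice bijection between disjoint families in $\Gamma(\tii)$ and chains of subsets with elementary families. In effect, the cancellation the paper performs by hand is absorbed into the quantum Laplace/coproduct identity for minors, which you import as a standard fact (it follows from the comodule structure of the quantum exterior powers, i.e., from identifying $\Delta_{q,A,B}$ with suitably normalized matrix coefficients of $V(\omega_{|A|})$, compatibly with (\ref{eq-hopfquanmat})); the paper's induction is self-contained and never needs that identity. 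What your route buys is a cleaner local-to-global structure closer to the classical Lindstr\"om argument; what it costs is reliance on the external identity (whose normalization — no stray powers of $q$ in $\Delta_{q,A,C}$ versus the matrix coefficients — should be checked or cited precisely) and the need to fix that ``disjoint'' means vertex-disjoint, which is exactly what makes your elementary families unique and your slotwise weight products in $\lqi$ unambiguous.
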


	\begin{proof}
		Assume elements of $A$ are $l_1<l_2<\cdots<l_k$ and elements of $B$ are $l_1'<l_2'<\cdots<l_k'$. Denote $y_{ij}=x_{l_il_j'},1\le i,j\le k$. Then by definition of $\Delta_{q,A,B}$,
		\begin{equation}\label{eq-tpidelta}
			\tpi_{\tii}(\Delta_{q,A,B})=\sum_{\tau\in S_k}(-q)^{l(\tau)}\tpi_{\tii}(y_{1\tau(1)})\tpi_{\tii}(y_{2\tau(2)})\cdots\tpi_{\tii}(y_{k\tau(k)}).
		\end{equation}
		Use induction on $m=l(\tii)$. If $m=1$, then in (\ref{eq-tpidelta}), all summand terms corresponding to $\tau\ne e$ are zero. The conclusion then follows from (\ref{eq-quanmatres}). Suppose the proposition is true for $\tii'=(\ti_1,\ti_2,\cdots,\ti_{m-1})$, we prove it is true for $\tii$. Without loss of generality we may assume $\ti_m>0$. By (\ref{eq-tpixij}) we have (c.f. Figure \ref{fig-tpixij})
		\begin{equation}\label{eq-tpiind}
			\tpi_{\tii}(x_{ij})=\begin{cases}
				\tpi_{\tii'}(x_{ij})\otimes 1, & j\ne i_m,i_m+1,\\
				\tpi_{\tii'}(x_{ij})\otimes x, & j=i_m,\\
				\tpi_{\tii'}(x_{ij})\otimes x\inv+\tpi_{\tii'}(x_{i(j-1)})\otimes y, & j=i_m+1.
			\end{cases}
		\end{equation}

		\begin{figure}[h]
			\centering
			\begin{tikzpicture}
				\draw[thick,dashed] (0,0)--(0,4) (3,0)--(3,4) (4,0)--(4,4);
				\node at (-0.2,1) {$\scriptstyle i$};
				\node at (4.3,2) {$\scriptstyle i_m$};
				\node at (4.45,3) {$\scriptstyle i_m+1$};
				\node at (4.85,1) {$\scriptstyle j\ne i_m,i_m+1$};
				\node at (3.5,1.2) {$\scriptstyle1$};
				\node at (1.2,3.1) {$\scriptstyle\tpi_{\tii'}(x_{i,i_m+1})$};
				\node at (1.8,1.5) {$\scriptstyle\tpi_{\tii'}(x_{i,i_m})$};
				\node at (1.6,0.3) {$\scriptstyle\tpi_{\tii'}(x_{ij})$};
				\node (i) at (0,1) [circle,fill,inner sep=1pt] {};
				\node (k) at (3,2) {};
				\node (kp1) at (3,3) [circle,fill,inner sep=1pt] {};
				\node at (4,2) [circle,fill,inner sep=1pt] {};
				\node (j) at (3,1) {};
				\draw (i) to[bend left] (k);
				\draw (i) to[bend left] (kp1);
				\draw (i) to[bend right] (j);
				\draw (3,1)--(4,1);
				\node at (3,1) [circle,fill,inner sep=1pt] {};
				\node at (4,1) [circle,fill,inner sep=1pt] {};
				\boxl{4}{3};
				\draw [thick,decorate,decoration={brace,amplitude=10pt,mirror},xshift=0.4pt,yshift=-0.4pt](0,0) -- (4,0) node[black,midway,yshift=-0.6cm] {$\scriptstyle\Gamma(\tii)$};
				\draw [thick,decorate,decoration={brace,amplitude=10pt},xshift=0.4pt,yshift=-0.4pt](0,4) -- (3,4) node[black,midway,yshift=0.6cm] {$\scriptstyle\Gamma(\tii')$};
			\end{tikzpicture}
			\caption{Calculaion of $\tpi_{\tii}(x_{ij})$\label{fig-tpixij}}
		\end{figure}

		For any $\ppp=\{p_1,p_2,\cdots,p_k\}\in\Gamma(\tii)_{A,B}$, denote $\ppp'=\{p_1',p_2',\cdots,p_k'\}$, where $p_i'$ is the intersection of $p_i$ with $\Gamma(\tii')$. We will check the induction step case by case (c.f. Figure \ref{fig-tpixij}).
		
		(i) If $\ti_m,i_m+1\notin B$, then with (\ref{eq-tpiind}), the right side of (\ref{eq-tpidelta}) becomes $\tpi_{\tii'}(\Delta_{q,A,B})\otimes 1$, which results in $\sum_{\tpp\in\Gamma(\tii')_{A,B}}I(\tpp)\otimes 1$ by induction hypothesis. On the other hand, $\Gamma(\tii)_{A,B}\simeq\Gamma(\tii')_{A,B}$ via $\ppp\mapsto\ppp',I(\ppp)=I(\ppp')\otimes 1,\ppp\in\Gamma(\tii)_{A,B}$. So the proposition holds for $\tii$ in this case.

		(ii) If $\ti_m\in B,\ti_m+1\notin B$, then with (\ref{eq-tpiind}), the right side of (\ref{eq-tpidelta}) becomes $\tpi_{\tii'}(\Delta_{1,A,B})\otimes x$, which results in $\sum_{\tpp\in\Gamma(\tii')_{A,B}}I(\tpp)\otimes x$ by induction hypothesis. On the other hand, $\Gamma(\tii)_{A,B}\simeq\Gamma(\tii')_{A,B}$ via $\ppp\mapsto\ppp',I(\ppp)=I(\ppp')\otimes x,\ppp\in\Gamma(\tii)_{A,B}$. So the proposition holds for $\tii$ in this case.

		(iii) If $\ti_m\notin B,\ti_m+1=l_h'\in B$, let $B'=(B\backslash\{\ti_m+1\})\cup\{\ti_m\}$. Then with (\ref{eq-tpiind}), the right side of (\ref{eq-tpidelta}) becomes $\tpi_{\tii'}(\Delta_{q,A,B})\otimes x\inv+\tpi_{\tii'}(\Delta_{q,A,B'})\otimes y$, which results in $\sum_{\tpp\in\Gamma(\tii')_{A,B}}I(\tpp)\otimes x\inv+\sum_{\tpp\in\Gamma(\tii')_{A,B'}}I(\tpp)\otimes y$ by induction hypothesis. On the other hand, $\Gamma(\tii)_{A,B}\simeq\Gamma(\tii')_{A,B}\sqcup\Gamma(\tii')_{A,B'}$ via $\ppp\mapsto\ppp'$, and for $\ppp\in\Gamma(\tii)_{A,B},I(\ppp)=I(\ppp')\otimes x\inv$ or $I(\ppp)=I(\ppp')\otimes y$ depending on $\ppp'\in\Gamma(\tii')_{A,B}$ or $\ppp'\in\Gamma(\tii')_{A,B'}$. So the proposition holds for $\tii$ in this case.

		(iv) If $\ti_m=l_h'\in B,\ti_m+1=l_{h+1}'\in B$, let $y_{ij}'=y_{ij}=x_{l_il_j'},j\ne h+1,y_{i(h+1)}'=y_{ih}'=x_{l_ih}$. Then with (\ref{eq-tpiind}), the right side of (\ref{eq-tpidelta}) becomes $\tpi_{\tii'}(\Delta_{q,A,B})\otimes 1+I_0$, where
		\[I_0=\sum_{\tau\in S_k}(-q)^{l(\tau)}\tpi_{\tii'}(y_{1\tau(1)}'y_{2\tau(2)}'\cdots y_{k\tau(k)}')\otimes I_\tau,I_\tau=\begin{cases}
			xy, & \tau\inv(h)<\tau\inv(h+1), \\
			yx, & \tau\inv(h)>\tau\inv(h+1).
		\end{cases}\]
		Denote $S_k(h)=\{\tau\in S_k|\tau\inv(h)<\tau\inv(h+1)\}$. Then for $\tau\in S_k(h),l(s_h\tau)=l(\tau)+1$ and $S_k=S_k(h)\sqcup s_hS_k(h)$. So we can rewrite $I_0$ as
		\[I_0=\sum_{\tau\in S_k(h)}\tpi_{\tii'}(y_{1\tau(1)}'y_{2\tau(2)}'\cdots y_{k\tau(k)}')\otimes((-q)^{l(\tau)}xy+(-q)^{l(s_h\tau)}yx)=0.\]
		By induction hypothesis, $\tpi_{\tii'}(\Delta_{q,A,B})\otimes 1$ becomes $\sum_{\tpp\in\Gamma(\tii')_{A,B}}I(\tpp)\otimes 1$. On the other hand, $\Gamma(\tii)_{A,B}\simeq\Gamma(\tii')_{A,B}$ via $\ppp\mapsto\ppp',I(\ppp)=I(\ppp')\otimes 1,\ppp\in\Gamma(\tii)_{A,B}$. So the proposition holds for $\tii$ in this case.
	\end{proof}

	Now let $G=SL_3(\C)$, we want to construct some simple modules for each primitive ideal using the bundle $\tirr\ttii$, c.f. Corollary \ref{cor-bundle}. This involves constructions of enough pivot elements in $R_{\tii}$ for some $\tii\in\I_{\ww},\forall(\ww)\in\wtw\simeq S_3\times S_3$. Write $S_3=\{(1),(12),(23),(123),(132),(13)\}$, with longest element $w_0=(13)$. Our main tool of calculation is Quantum Lindstr\"{o}m's lemma (Proposition \ref{prop-quanlind}). First we rule out the cases described by Proposition \ref{prop-expivot}, i.e. when $\supp(w_1)\cap\supp(w_2)=\varnothing$. Besides, we notice that a horizontal or vertical flip of wiring diagrams does not affect the exsitence of enough pivot elements. So the constructions for $(\ww)$ case work equivalently for $(w_2\inv,w_1\inv)$ and $(w_0\inv w_2w_0,w_0\inv w_1w_0)$ case, after suitable adjustments according to the flip. We list constructions for all the remaining non-equivalent cases in Table \ref{tab-constructions}. Here we adopt the notations $n_k,\aa_k,\uu_k$ as in Definition \ref{defn-enoughpv}, and denote $\xx_k,\xx_k'\in \C_q[SL_3]$ such that $\supp_x(\tpi_{\tii}(\xx_k))=\{\aa_k\},\tpi_{\tii}(\xx_k')=\uu_k$.

	\begin{ex}
		We check the construction of enough pivot elements for $(\ww)=((123),(13)),\tii=(1,2,1,-1,-2)$ in Table \ref{tab-constructions}. Draw the wiring diagram $\Gamma(\tii)$ as in Figure \ref{fig-exwd}. Using Quantum Lindstr\"{o}m's lemma and calculate
		\begin{align*}
			\tpi_{\tii}(\xx_1)&=\tpi_{\tii}(\xx_2)=\tpi_{\tii}(x_{21}^2x_{33}\Delta_{q,23,23})=x^{-3}\otimes x\otimes x^{-3}\otimes y^2x\inv\otimes x\inv,\\
			\tpi_{\tii}(\xx_3)&=\tpi_{\tii}(\xx_4)=\tpi_{\tii}(\xx_5)=\tpi_{\tii}(x_{33}\Delta_{q,23,23})=x\inv\otimes x\inv\otimes x\inv\otimes x\inv\otimes x\inv,\\
			\tpi_{\tii}(\xx_1')&=\tpi_{\tii}(\xx_2')=\tpi_{\tii}(\Delta_{q,13,12})\\
			&=y\otimes 1\otimes x\inv\otimes y\inv\otimes y+x\otimes x\inv\otimes y\otimes y\otimes y+x\otimes x\inv\otimes x\otimes x\otimes y,\\
			\tpi_{\tii}(\xx_3')&=\tpi_{\tii}(\xx_4')=\tpi_{\tii}(\Delta_{q,12,12})\\
			&=1\otimes y\otimes y\otimes y\otimes y+1\otimes y\otimes x\otimes x\otimes y+1\otimes x\otimes 1\otimes 1\otimes x,\\
			\tpi_{\tii}(\xx_5)&=\tpi_{\tii}(x_{12})\\
			&=y\otimes y\otimes 1\otimes 1\otimes y+y\otimes x\otimes x\inv\otimes x\inv\otimes x+x\otimes 1\otimes y\otimes x\inv\otimes x.
		\end{align*}
		Thus
		\begin{align*}
			&\aa_1=\aa_2=(-3,1,-3,-1,-1)\tr,\\
			&\aa_3=\aa_4=\aa_5=(-1,-1,-1,-1,-1)\tr,\\
			&\supp_x(\uu_1)=\supp_x(\uu_2)=\{(0,0,-1,-1,0)\tr,(1,-1,0,0,0)\tr,(1,-1,1,1,0)\tr\},\\
			&\supp_x(\uu_3)=\supp_x(\uu_4)=\{(0,0,0,0,0)\tr,(0,0,1,1,0)\tr,(0,1,0,0,1)\tr\},\\
			&\supp_x(\uu_5)=\{(0,0,0,0,0)\tr,(0,1,-1,-1,1)\tr,(1,0,0,-1,1)\tr\}.
		\end{align*}
		So we see $\uu_1$ is $\aa_1$-pivot element of type $([1,5],3)$, $\uu_2$ is $\aa_2$-pivot element of type $(\{1,2,4,5\},4)$, $\uu_3$ is $\aa_3$-pivot element of type $(\{1,2,5\},2)$, $\uu_4$ is $\aa_4$-pivot element of type $(\{1,5\},5)$, $\uu_5$ is $\aa_5$-pivot element of type $(\{1\},5)$. Therefore $(n_1,n_2,n_3,n_4,n_5)=(3,4,2,5,1)$.
	\end{ex}

	\begin{figure}[h]
		\centering
		\begin{tikzpicture}
			\boxdiagraml{3}{{1,2,1,-1,-2}}
		\end{tikzpicture}
		\caption{\label{fig-exwd}$\Gamma(\tii)$ when $\tii=(1,2,1,-1,-2)$}
	\end{figure}

	\begin{table}[h]
		\addtolength{\tabcolsep}{-2pt}
		\fontsize{9pt}{12pt}\selectfont
		\centering
		\caption{\label{tab-constructions}Constructions of enough pivot elements for $G=SL_3(\C)$}
		\vspace{0.2\baselineskip}
		\begin{tabular}{c|c|c|c|c}
			\hline
			$\ww$ & $\tii\in\I_{\ww}$ & $(n_k)$ & $\xx_k$ & $\xx_k'$ \\
			\hline
			(12),(12) & (-1,1) & (1,2) & $\xx_1=\xx_2=x_{11}$ & $\xx_1'=\xx_2'=x_{22}$\\
			\hline
			(12),(123) & (-1,1,2) & (1,2,3) & $\begin{array}{c}\xx_1=\xx_2=\xx_3\\=x_{11}x_{33}\end{array}$ & $\begin{array}{c}\xx_1'=\xx_2'=x_{22}\\\xx_3'=x_{12}\end{array}$\\
			\hline
			(12),(132) & (-1,2,1) & (1,2,3) & $\begin{array}{c}\xx_1=\xx_2=\xx_3\\=x_{11}x_{33}\end{array}$ & $\xx_1'=\xx_2'=\xx_3'=x_{22}$\\
			\hline
			(12),(13) & (-1,1,2,1) & (1,2,3,4) & $\begin{array}{c}\xx_1=\xx_2=\xx_3=\xx_4\\=x_{11}\Delta_{q,12,12}\end{array}$ & $\begin{array}{c}\xx_1'=\xx_2'=x_{23}\\\xx_3'=x_{33}\\\xx_4=x_{12}\end{array}$\\
			\hline
			(123),(123) & (-1,-2,1,2) & (3,2,4,1) & $\begin{array}{c}\xx_1=\xx_2=\xx_3=\xx_4\\=x_{32}^2x_{11}\Delta_{q,12,12}\end{array}$ & $\begin{array}{c}\xx_1'=x_{21}\\\xx_2'=\xx_3'=x_{33}\\\xx_4'=x_{23}\end{array}$\\
			\hline
			(123),(132) & (-1,-2,2,1) & (2,3,1,4) & $\begin{array}{c}\xx_1=\xx_2=\xx_3=\xx_4\\=x_{11}\Delta_{q,12,12}\end{array}$ & $\begin{array}{c}\xx_1'=\xx_2'=x_{33}\\\xx_3'=\xx_4'=x_{22}\end{array}$\\
			\hline
			(123),(13) & (1,2,1,-1,-2) & (3,4,2,5,1) & $\begin{array}{c}\xx_1=\xx_2\\=x_{21}^2x_{33}\Delta_{q,23,23}\\\xx_3=\xx_4=\xx_5\\=x_{33}\Delta_{q,23,23}\end{array}$ & $\begin{array}{c}\xx_1'=\xx_2'=\Delta_{q,13,12}\\\xx_3'=\xx_4'=\Delta_{q,12,12}\\\xx_5'=x_{12}\end{array}$\\
			\hline
			(132),(123) & (-2.-1,1,2) & (2,3,1,4) & $\begin{array}{c}\xx_1=\xx_2=\xx_3=\xx_4\\=x_{11}\Delta_{q,12,12}\end{array}$ & $\begin{array}{c}\xx_1'=\xx_2'=\Delta_{q,23,23}\\\xx_3'=\xx_4=x_{33}\end{array}$\\
			\hline
			(132),(13) & (1,2,1,-2,-1) & (2,3,4,1,5) & $\begin{array}{c}\xx_1=\xx_2=\xx_3\\=\xx_4=\xx_5\\=x_{33}\Delta_{q,13,12}^2\\\Delta_{q,23,23}\end{array}$ & $\begin{array}{c}\xx_1'=\xx_2'=\xx_3'=x_{21}\\\xx_4'=\Delta_{q,23,13}\\\xx_5'=\Delta_{q,12,13}\end{array}$\\
			\hline
			(13),(13) & (-1,1,-2,2,-1,1) & (3,4,1,2,5,6) & $\begin{array}{c}\xx_1=\xx_2=\xx_3\\=\xx_4=\xx_5=\xx_6\\=x_{13}x_{31}\Delta_{q,12,23}\\\Delta_{q,23,12}\end{array}$ & $\begin{array}{c}\xx_1'=\xx_2'=x_{33}\\\xx_3'=\xx_4'=x_{23}\\\xx_5'=\xx_6'=x_{32}\end{array}$\\
			\hline
		\end{tabular}
	\end{table}

	\begin{conj}
		For any semisimple Lie group $G$ and $(\ww)\in\wtw$, there exists some $\tii\in\I_{\ww}$ such that $\tirr\ttii$ is a bundle of type $(\ww)$ simple $\cqg$-modules onto $\prim\cqg_{\ww}$.
	\end{conj}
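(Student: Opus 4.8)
The plan is to reduce the conjecture, by Corollary \ref{cor-bundle}, to the combinatorial statement that for every $(\ww)\in\wtw$ there is a double reduced word $\tii\in\I_{\ww}$ for which $R_{\tii}$ has enough pivot elements in the sense of Definition \ref{defn-enoughpv}; this is a sufficient (and not obviously necessary) condition, but it is the only handle the present machinery provides. Proposition \ref{prop-expivot} already settles every $(\ww)$ with $\supp(w_1)\cap\supp(w_2)=\varnothing$, so the whole problem lives over the overlapping-support locus, and the horizontal/vertical flip symmetries of the wiring picture further identify $(\ww)$ with $(w_2\inv,w_1\inv)$ and with $(w_0\inv w_2 w_0,w_0\inv w_1 w_0)$, so one only has to produce pivot families for a set of representatives.

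First I would fix a convenient shape for $\tii$ — for instance $\tii=(\tii^-,\tii^+)$ with $\tii^-$ a reduced word for $w_1$ in negative indices and $\tii^+$ one for $w_2$ in positive indices, allowing a controlled amount of interleaving near the splice only if the clean splitting fails. With $\tii$ fixed, the weight-string formula of Proposition \ref{prop-pii} together with the explicit monomials of Proposition \ref{prop-tpinm} for (quasi-)normal elements gives good control of $\tpi_{\tii}$ on the generalized quantum minors $c^{V(\omega_i)}_{v^*_{u(\omega_i)},v_{u'(\omega_i)}}$. The heart of the argument is to assemble, out of products of such minors and the units $I(\muu)$, a family $\uu_1,\dots,\uu_m$ whose $x$-supports form a staircase: after reordering the coordinates as $n_1,\dots,n_m$, each $\uu_k$ should carry a distinguished multiplicity-one exponent $\ccc$ that is the $\aa_k$-weighted minimum over the surviving coordinates $[1,m]\backslash\{n_1,\dots,n_{k-1}\}$ and strictly minimal in coordinate $n_k$. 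In type A the staircase can be read off the wiring diagram $\Gamma(\tii)$ via Quantum Lindstr\"{o}m's Lemma (Proposition \ref{prop-quanlind}): the exponents of $\tpi_{\tii}(\Delta_{q,A,B})$ are sums of path weights, and the lowest path family (hugging the bottom of the diagram) provides the distinguished term. For a general semisimple $G$ one needs either a path/graph model playing the role of the wiring diagram for the fundamental-weight matrix coefficients, or a direct induction on $l(w_1)+l(w_2)$ that peels one letter off $\tii$ and tracks how the staircase degenerates, in the spirit of the four-case induction in the proof of Proposition \ref{prop-quanlind}.

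The hard part — and the reason this is still a conjecture — is the uniform construction of the pivot family: Table \ref{tab-constructions} already shows that for $SL_3$ the elements $\xx_k,\xx_k'$ are chosen case by case with no visible pattern, and the conjecture asks for such a construction for infinitely many $(\ww)$ across all Dynkin types. Two specific difficulties stand out. The multiplicity-one condition on the distinguished exponent is fragile: products of quantum minors readily collide in $\supp_x$, and one must choose the minors — and sometimes multiply through by extra normal elements, as the factors $x_{21}^2$ and $\Delta_{q,13,12}^2$ in the table already signal — so that exactly one minimal path family survives. And the existence of a \emph{single} order $n_1,\dots,n_m$ that works simultaneously for all $k$ is a global constraint; the orders coming naturally from $\tii$ need not suffice, so one may have to optimize over reorderings of the double reduced word as well. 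A plausible route that would address both issues is to interpret $\cqg_{\ww}$ through its quantum cluster structure and take the pivot elements to be quantum cluster monomials in the seed attached to $\tii$, exploiting the essentially triangular behaviour of their degrees/$g$-vectors; making this work for all $(\ww)$ and all types is where I expect the bulk of the effort to lie.
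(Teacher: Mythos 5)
The statement you are asked about is stated in the paper as a \emph{conjecture}: the paper offers no proof of it, so there is nothing to compare your argument against on the paper's side. What you have written is, by your own admission, a research program rather than a proof. Its first move is sound and matches the mechanism the paper itself sets up: by Corollary \ref{cor-bundle} it suffices to find, for each $(\ww)$, some $\tii\in\I_{\ww}$ such that $R_{\tii}$ has enough pivot elements in the sense of Definition \ref{defn-enoughpv}; Proposition \ref{prop-expivot} disposes of the disjoint-support cases, the flip symmetries cut down the remaining ones, and in type A Quantum Lindstr\"{o}m's lemma (Proposition \ref{prop-quanlind}) is the natural computational tool, exactly as in Table \ref{tab-constructions}.

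The genuine gap is that the heart of the matter --- the uniform construction of the pivot family $\uu_1,\dots,\uu_m$ for arbitrary $(\ww)$ and arbitrary Dynkin type --- is never carried out. You describe what such a family should look like (a ``staircase'' of $x$-supports with a multiplicity-one minimal exponent in each surviving coordinate, together with a single compatible order $n_1,\dots,n_m$), and you correctly identify why this is hard: collisions in $\supp_x$ destroy the multiplicity-one condition, the $SL_3$ table shows no visible pattern in the choice of minors and correction factors, and no path model analogous to $\Gamma(\tii)$ is available outside type A. But naming the obstacles is not overcoming them; the proposed cluster-monomial route is speculative and no verification is attempted even for, say, $SL_4$ or rank-two non-simply-laced cases. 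Note also that ``enough pivot elements'' is only a \emph{sufficient} condition for Corollary \ref{cor-bundle}, so even the reduction is one-directional: if pivot elements fail to exist for some $(\ww)$, the conjecture could still hold by another mechanism, and your plan would give no information. As it stands, the conjecture remains open and your text does not close it.
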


	\begin{bibdiv}
		\begin{biblist}
			\bib{Ba}{article}{
				author={Bavula, V.},
				title={Classification of the simple modules of the quantum Weyl algebra
				and the quantum plane},
				conference={
					title={Quantum groups and quantum spaces},
					address={Warsaw},
					date={1995},
				},
				book={
					series={Banach Center Publ.},
					volume={40},
					publisher={Polish Acad. Sci. Inst. Math., Warsaw},
				},
				date={1997},
				pages={193--201},
				review={\MR{1481744}},
			}
			\bib{Bl}{article}{
				author={Block, Richard E.},
				title={The irreducible representations of the Lie algebra $\mathfrak{sl}(2)$ and of the Weyl algebra},
				journal={Adv. in Math.},
				volume={39},
				date={1981},
				number={1},
				pages={69--110},
				issn={0001-8708},
				review={\MR{605353}},
				doi={10.1016/0001-8708(81)90058-X},
			}
			\bib{BFZ}{article}{
				author={Berenstein, A.},
				author={Fomin, S.},
				author={Zelevinsky, A.},
				title={Parametrizations of canonical bases and totally positive matrices},
				journal={Adv. Math.},
				volume={122},
				date={1996},
				number={1},
				pages={49--149},
				issn={0001-8708},
				review={\MR{1405449}},
				doi={10.1006/aima.1996.0057},
			}
			\bib{DK1}{article}{
				author={De Concini, C.},
				author={Kac, V. G.},
				title={Representations of quantum groups at roots of $1$},
				conference={
					title={Operator algebras, unitary representations, enveloping
					algebras, and invariant theory},
					address={Paris},
					date={1989},
				},
				book={
					series={Progr. Math.},
					volume={92},
					publisher={Birkh\"{a}user Boston, Boston, MA},
				},
				date={1990},
				pages={471--506},
				review={\MR{1103601}},
			}
			\bib{DK2}{article}{
				author={De Concini, C.},
				author={Kac, V. G.},
				title={Representations of quantum groups at roots of $1$: reduction to
				the exceptional case},
				conference={
					title={Infinite analysis, Part A, B},
					address={Kyoto},
					date={1991},
				},
				book={
					series={Adv. Ser. Math. Phys.},
					volume={16},
					publisher={World Sci. Publ., River Edge, NJ},
				},
				date={1992},
				pages={141--149},
				review={\MR{1187546}},
				doi={10.1142/s0217751x92003756},
			}
			\bib{DKP}{article}{
				author={De Concini, C.},
				author={Kac, V. G.},
				author={Procesi, C.},
				title={Some quantum analogues of solvable Lie groups},
				conference={
					title={Geometry and analysis},
					address={Bombay},
					date={1992},
				},
				book={
					publisher={Tata Inst. Fund. Res., Bombay},
				},
				date={1995},
				pages={41--65},
				review={\MR{1351503}},
			}
			\bib{DL}{article}{
				author={De Concini, C.},
				author={Lyubashenko, V.},
				title={Quantum function algebra at roots of $1$},
				journal={Adv. Math.},
				volume={108},
				date={1994},
				number={2},
				pages={205--262},
				issn={0001-8708},
				review={\MR{1296515}},
				doi={10.1006/aima.1994.1071},
			}
			\bib{DP}{article}{
				author={De Concini, C.},
				author={Procesi, C.},
				title={Quantum Schubert cells and representations at roots of $1$},
				conference={
					title={Algebraic groups and Lie groups},
				},
				book={
					series={Austral. Math. Soc. Lect. Ser.},
					volume={9},
					publisher={Cambridge Univ. Press, Cambridge},
				},
				date={1997},
				pages={127--160},
				review={\MR{1635678}},
			}
			\bib{FZ}{article}{
				author={Fomin, S.},
				author={Zelevinsky, A.},
				title={Double Bruhat cells and total positivity},
				journal={J. Amer. Math. Soc.},
				volume={12},
				date={1999},
				number={2},
				pages={335--380},
				issn={0894-0347},
				review={\MR{1652878}},
				doi={10.1090/S0894-0347-99-00295-7},
			}
			\bib{Go}{article}{
				author={Goodearl, K. R.},
				title={Semiclassical limits of quantized coordinate rings},
				conference={
					title={Advances in ring theory},
				},
				book={
					series={Trends Math.},
					publisher={Birkh\"{a}user/Springer Basel AG, Basel},
				},
				date={2010},
				pages={165--204},
				review={\MR{2664671}},
				doi={10.1007/978-3-0346-0286-0\_12},
			}
			\bib{GL}{article}{
				author={Goodearl, K. R.},
				author={Letzter, E. S.},
				title={The Dixmier-Moeglin equivalence in quantum coordinate rings and
				quantized Weyl algebras},
				journal={Trans. Amer. Math. Soc.},
				volume={352},
				date={2000},
				number={3},
				pages={1381--1403},
				issn={0002-9947},
				review={\MR{1615971}},
				doi={10.1090/S0002-9947-99-02345-4},
			}
			\bib{HL1}{article}{
				author={Hodges, T. J.},
				author={Levasseur, T.},
				title={Primitive ideals of ${\bf C}_q[{\rm SL}(3)]$},
				journal={Comm. Math. Phys.},
				volume={156},
				date={1993},
				number={3},
				pages={581--605},
				issn={0010-3616},
				review={\MR{1240587}},
			}
			\bib{HL2}{article}{
				author={Hodges, T. J.},
				author={Levasseur, T.},
				title={Primitive ideals of ${\bf C}_q[{\rm SL}(n)]$},
				journal={J. Algebra},
				volume={168},
				date={1994},
				number={2},
				pages={455--468},
				issn={0021-8693},
				review={\MR{1292775}},
				doi={10.1006/jabr.1994.1239},
			}
			\bib{HLT}{article}{
				author={Hodges, T. J.},
				author={Levasseur, T.},
				author={Toro, Margarita},
				title={Algebraic structure of multiparameter quantum groups},
				journal={Adv. Math.},
				volume={126},
				date={1997},
				number={1},
				pages={52--92},
				issn={0001-8708},
				review={\MR{1440253}},
				doi={10.1006/aima.1996.1612},
			}
			\bib{Hu}{book}{
				author={Humphreys, J. E.},
				title={Introduction to Lie algebras and representation theory},
				series={Graduate Texts in Mathematics},
				volume={9},
				note={Second printing, revised},
				publisher={Springer-Verlag, New York-Berlin},
				date={1978},
				pages={xii+171},
				isbn={0-387-90053-5},
				review={\MR{499562}},
			}
			\bib{Ja}{book}{
				author={Jantzen, J. C.},
				title={Lectures on quantum groups},
				series={Graduate Studies in Mathematics},
				volume={6},
				publisher={American Mathematical Society, Providence, RI},
				date={1996},
				pages={viii+266},
				isbn={0-8218-0478-2},
				review={\MR{1359532}},
				doi={10.1090/gsm/006},
			}
			\bib{Jo1}{article}{
				author={Joseph, A.},
				title={On the prime and primitive spectra of the algebra of functions on a quantum group},
				journal={J. Algebra},
				volume={169},
				date={1994},
				number={2},
				pages={441--511},
				issn={0021-8693},
				review={\MR{1297159}},
				doi={10.1006/jabr.1994.1294},
			}
			\bib{Jo}{book}{
				author={Joseph, A.},
				title={Quantum groups and their primitive ideals},
				series={Ergebnisse der Mathematik und ihrer Grenzgebiete (3) [Results in Mathematics and Related Areas (3)]},
				volume={29},
				publisher={Springer-Verlag, Berlin},
				date={1995},
				pages={x+383},
				isbn={3-540-57057-8},
				review={\MR{1315966}},
				doi={10.1007/978-3-642-78400-2},
			}
			\bib{KLS}{article}{
				author={Kaliszewski, R.},
				author={Lambright, J.},
				author={Skandera, M.},
				title={Bases of the quantum matrix bialgebra and induced sign characters of the Hecke algebra},
				journal={J. Algebraic Combin.},
				volume={49},
				date={2019},
				number={4},
				pages={475--505},
				issn={0925-9899},
				review={\MR{3954432}},
				doi={10.1007/s10801-018-0832-4},
			}
			\bib{KN}{article}{
				author={Kashiwara, M.},
				author={Nakashima, T.},
				title={Crystal graphs for representations of the $q$-analogue of
				classical Lie algebras},
				journal={J. Algebra},
				volume={165},
				date={1994},
				number={2},
				pages={295--345},
				issn={0021-8693},
				review={\MR{1273277}},
				doi={10.1006/jabr.1994.1114},
			}
			\bib{KR}{article}{
				author={Kirillov, A. N.},
				author={Reshetikhin, N.},
				title={$q$-Weyl group and a multiplicative formula for universal $R$-matrices},
				journal={Comm. Math. Phys.},
				volume={134},
				date={1990},
				number={2},
				pages={421--431},
				issn={0010-3616},
				review={\MR{1081014}},
			}
			\bib{LS}{article}{
				author={Levendorski\u{\i}, S.},
				author={Soibelman, Y.},
				title={Algebras of functions on compact quantum groups, Schubert cells and quantum tori},
				journal={Comm. Math. Phys.},
				volume={139},
				date={1991},
				number={1},
				pages={141--170},
				issn={0010-3616},
				review={\MR{1116413}},
			}
			\bib{LS1}{article}{
				author={Levendorski\u{\i}, S.},
				author={Soibelman, Y.},
				title={Some applications of the quantum Weyl groups},
				journal={J. Geom. Phys.},
				volume={7},
				date={1990},
				number={2},
				pages={241--254},
				issn={0393-0440},
				review={\MR{1120927}},
				doi={10.1016/0393-0440(90)90013-S},
			}
			
			\bib{Na}{article}{
				author={Narayanan, B.},
				title={Representations of affine quantum function algebras},
				journal={J. Algebra},
				volume={272},
				date={2004},
				number={2},
				pages={775--800},
				issn={0021-8693},
				review={\MR{2028081}},
				doi={10.1016/j.jalgebra.2003.06.002},
			}
			\bib{Ro}{book}{
				author={Rosenberg, A. L.},
				title={Noncommutative algebraic geometry and representations of quantized
				algebras},
				series={Mathematics and its Applications},
				volume={330},
				publisher={Kluwer Academic Publishers Group, Dordrecht},
				date={1995},
				pages={xii+315},
				isbn={0-7923-3575-9},
				review={\MR{1347919}},
				doi={10.1007/978-94-015-8430-2},
			}
			\bib{Sa}{article}{
				author={Saito, Y.},
				title={Quantized coordinate rings, PBW-type bases and $q$-boson algebras},
				journal={J. Algebra},
				volume={453},
				date={2016},
				pages={456--491},
				issn={0021-8693},
				review={\MR{3465362}},
				doi={10.1016/j.jalgebra.2016.01.007},
			}
			\bib{Ta}{article}{
				author={Tanisaki, T.},
				title={Modules over quantized coordinate algebras and PBW-bases},
				journal={J. Math. Soc. Japan},
				volume={69},
				date={2017},
				number={3},
				pages={1105--1156},
				issn={0025-5645},
				review={\MR{3685038}},
				doi={10.2969/jmsj/06931105},
			}
			\bib{Ya}{article}{
				author={Yakimov, M.},
				title={On the spectra of quantum groups},
				journal={Mem. Amer. Math. Soc.},
				volume={229},
				date={2014},
				number={1078},
				pages={vi+91},
				issn={0065-9266},
				isbn={978-0-8218-9174-2},
				review={\MR{3185525}},
			}
			\bib{Zh}{article}{
				author={Zhang, H. C.},
				title={Representations of quantum coordinate algebras},
				journal={SCIENTIA SINICA Mathematica},
				volume={47},
				date={2017},
				number={11},
				pages={1481--1490},
				publisher={Science China Press}
			}
		\end{biblist}
	\end{bibdiv}
	
\end{document}